\numberwithin{equation}{section}
\newtheorem{Def}{Definition}[section]
\newtheorem{Thm}[Def]{Theorem}
\newtheorem{Lem}[Def]{Lemma}
\newtheorem{Prop}[Def]{Proposition}
\newtheorem{Cor}[Def]{Corollary}
\newtheorem{Cla}[Def]{Claim}
\theoremstyle{remark}
\newtheorem{Rem}[Def]{Remark}
\newcommand{\MM}{\mathbb{M}}
\newcommand{\RR}{\mathbb{R}}
\newcommand{\SSp}{\mathbb{S}}
\newcommand{\ZZ}{\mathbb{Z}}
\newcommand{\cC}{\mathcal{C}}
\newcommand{\cE}{\mathcal{E}}
\newcommand{\cH}{\mathcal{H}}
\newcommand{\cJ}{\mathcal{J}}
\newcommand{\cL}{\mathcal{L}}
\newcommand{\cN}{\mathcal{N}}
\newcommand{\cS}{\mathcal{S}}
\newcommand{\cT}{\mathcal{T}}
\newcommand{\mbfA}{\mathbf{A}}
\newcommand{\mbfB}{\mathbf{B}}
\newcommand{\mbfE}{\mathbf{E}}
\newcommand{\mbfG}{\mathbf{G}}
\newcommand{\mbfO}{\mathbf{O}}
\newcommand{\mbfS}{\mathbf{S}}
\newcommand{\mbfT}{\mathbf{T}}
\newcommand{\mbfV}{\mathbf{V}}
\newcommand{\mbfd}{\mathbf{d}}
\newcommand{\mbfe}{\mathbf{e}}
\newcommand{\mbfp}{\mathbf{p}}
\newcommand{\mbfq}{\mathbf{q}}
\newcommand{\mbfv}{\mathbf{v}}
\newcommand{\mbfw}{\mathbf{w}}
\newcommand{\mbfx}{\mathbf{x}}
\newcommand{\mbfy}{\mathbf{y}}
\newcommand{\mbfz}{\mathbf{z}}
\newcommand{\mbfo}{\mathbf{0}}
\newcommand{\frT}{\mathfrak{T}}
\newcommand{\supp}{\operatorname{spt}}
\newcommand{\spt}{\operatorname{spt}}
\newcommand{\Area}{\operatorname{Area}}
\newcommand{\diverg}{\operatorname{div}}
\newcommand{\Div}{\operatorname{div}}
\newcommand{\graph}{\operatorname{graph}}
\newcommand{\injrad}{\operatorname{injrad}}
\newcommand{\dist}{\operatorname{dist}}
\newcommand{\proj}{\operatorname{proj}}
\newcommand{\reg}{\operatorname{reg}}
\newcommand{\sing}{\operatorname{sing}}
\newcommand{\Rm}{\operatorname{Rm}}
\newcommand{\Id}{\operatorname{id}}
\newcommand{\Jac}{\operatorname{Jac}}
\newcommand{\spine}{\operatorname{spine}}
\newcommand{\Span}{\operatorname{span}}
\newcommand{\Ord}{\operatorname{ord}}
\newcommand{\Sdist}{\overline{\dist}}
\newcommand{\Rot}{\operatorname{Rot}}
\DeclareMathOperator{\Dom}{dom}
\newcommand{\orig}{\mathbf{0}}
\newcommand{\Lip}{\mathrm{Lip}}
\newcommand{\eps}{\varepsilon}
\newcommand{\llbracket}{[\![}
\newcommand{\rrbracket}{]\!]}
\newcommand{\mres}{\lfloor}
\newcommand{\loc}{\mathrm{loc}}
\newcommand{\param}{\kappa}
\newcommand{\Rdist}{\widehat{\dist}}
\title{Generic regularity for minimizing hypersurfaces in dimension 11}
\author{Otis Chodosh} 
\address{OC: Department of Mathematics, Bldg.\ 380, Stanford University, Stanford, CA 94305, USA}
\email{ochodosh@stanford.edu}
\author{Christos Mantoulidis} 
\address{CM: Department of Mathematics, Rice University, Houston, TX 77005, USA}
\email{christos.mantoulidis@rice.edu}
\author{Felix Schulze}
\address{FS: Department of Mathematics, Zeeman Building, University of Warwick, Gibbet Hill Road, Coventry CV4 7AL, UK}
\email{felix.schulze@warwick.ac.uk} 
\author{Zhihan Wang}
\address{ZW: Department of Mathematics, Cornell University, Ithaca, New York 14850, USA}
\email{zw782@cornell.edu}
\date{\today}
\begin{document}

\begin{abstract}
We prove that area-minimizing hypersurfaces are generically smooth in ambient dimension $11$ in the context of the Plateau problem and of area minimization in integral homology. For higher ambient dimensions, $n+1 \geq 12$, we prove in the same two contexts that area-minimizing hypersurfaces have at most an $n-10-\epsilon_n$ dimensional singular set after an arbitrarily $C^\infty$-small perturbation of the Plateau boundary or the ambient Riemannian metric, respectively.
\end{abstract}

\maketitle

\textit{This article is dedicated to Leon Simon on the upcoming occasion of his 80th birthday for his groundbreaking contributions to the study of singularities in geometric analysis.}

\setcounter{tocdepth}{1}
\tableofcontents

\section{Introduction}

Consider a smooth, closed, oriented $(n-1)$-dimensional submanifold $\Gamma\subset \RR^{n+1}$. We are interested here in Plateau's problem. Among all smooth, compact hypersurfaces $M\subset \RR^{n+1}$ with $\partial M = \Gamma$, we want to find one of least area. It's now well-known that such $M$ always exists for $n+1\leq 7$, while when $n+1\geq 8$ and for certain choices of $\Gamma$, no minimizer $M$ can be found among smooth hypersurfaces. Using geometric measure theory, one can prove the existence of a minimizer among a wider class of objects which are smooth hypersurfaces except perhaps along an $(n-7)$-dimensional singular set. See \cite{Federer:GMT, MassariMiranda,Giusti,Maggi:finite-per,Fleming:plateau, DeGiorgi:bernstein, Almgren:regularity, Simons:minvar, BDG:Simons,Hardt-Simon:boundary-regularity}. 

In $\RR^8$, the first dimension that singularities can appear, a fundamental result of Hardt--Simon \cite{HardtSimon:isolated} shows that for a {generic} choice of Plateau boundary $\Gamma$, there does exist a smooth $M$ minimizing area. An analogous result in $8$-dimensional manifolds was proven by Smale \cite{Smale:generic}. These generic regularity results were recently extended to cover $\RR^9$ and $\RR^{10}$ in \cite{CMS:generic.9.10} using new ideas from the works of the first three authors with K.\ Choi on generic mean curvature flows, and specifically \cite{CCMS:gen.1, CCMS:gen.low.ent.1}. 

In this paper we prove that solutions to Plateau's problem in $\RR^{11}$ are generically smooth. We also prove that in any $\RR^{n+1}$, an area-minimizing $M$ will have a $\leq n-10-\epsilon_n$ dimensional singular set after perhaps a $C^\infty$-perturbation of the Plateau boundary. See Theorem \ref{theo:main.rn.11}. We had previously obtained the upper bound $\leq n - 9 - \epsilon_n'$ in \cite{CMS:generic.codim.plus.2}.

We also prove the analogous results in the context of area-minimization in integral homology classes of a closed oriented manifold $(N^{n+1}, g)$. See Theorem \ref{theo:main.homology.11}. As is well-known, this extends Schoen--Yau's stable minimal hypersurface obstruction to positive scalar curvature up to dimension $11$ and also implies the positive mass theorem in these dimensions after a well-known reduction of Lohkamp. See also the work of Schoen--Yau and Lohkamp \cite{SY:sing,Lohkamp:PSC}.

\subsection{Non-technical overview of new ideas} Before giving the precise statement of the results, we describe the new contributions in this paper as compared to \cite{CMS:generic.9.10,CMS:generic.codim.plus.2}. Recall that the idea there was to study and exploit the interaction of singular points in a ``foliation'' (possibly with gaps) of area-minimizers coming from variations of the Plateau boundary or Riemannian metric. There were two main ingredients. (i) A cone-splitting argument implied that the singular set of the entire foliation obeys the same $(n-7)$-dimensional Hausdorff estimate as a single leaf does. (ii) Analyzing positive Jacobi fields on minimizing hypercones (following L.\ Simon \cite{Simon:decay}) implied that distinct leaves of the foliation obey a quantitative separation estimate at small scales. Using a Sard-type theorem and a stratification argument, the rate of quantitative separation in (ii) is sufficient to overcome the $(n-7)$-dimensional singular set in (i) when $n+1\in\{8,9,10\}$. 

Our starting point here is the observation that for each possible spine dimension $k$ of a tangent hypercone, i.e., $\cC = \cC_\circ \times \RR^k$ with $\cC_\circ \subset \RR^{n+1-k}$, the singular points \emph{least} amenable to being perturbed away by our previous strategy are {product cylinders} $\cC$ over {quadratic hypercones} (generalizations of Simons cones) $\cC_\circ \subset \RR^{n+1-k}$. 

We proceed to closely analyze the behavior of the minimizers near such singularities.\footnote{In fact, we consider a broader class of cylindrical hypercones. See Remark \ref{rema:intro.quadratic.generalized}.} We go one blow-up level deeper and model the minimizer locally as a graph of a Jacobi field over such a cylindrical hypercone. This requires the use of {non-concentration estimates} pioneered by L.\ Simon in \cite{Simon:cylindrical-sing,Simon:uniqueness.some}.\footnote{Simon's non-concentration estimates have been extremely influential in regularity theory for minimal hypersurfaces; we refer to the works \cite{Wickramasekera:stable,CES:tetrahedron,MW:structure,Szkelyhidi:unique,Szkelyhidi:cylindrical,EdelenSzekelyhidi:liouville}.} Crucial in our analysis is the rate of decay toward the origin of these graph Jacobi fields. It is measured by a novel discrete {Almgren frequency} with a suitable monotonicity.\footnote{Almgren's frequency monotonicity has also played a key role in regularity theory of higher codimension minimizing surfaces; we refer to the works \cite{Almgren:book, DeLellisSpadaro:iii, KrummelWickramasekera:i, KrummelWickramasekera:ii, DeLellisSkorobogatova:i, DeLellisSkorobogatova:ii, DeLellisMinterSkorobogatova:iii}.} (See Definition \ref{defi:doubling.constant.nonlinear} for the definition.) The decay is super-linear (``fast'') or linear (``slow''). These two cases bootstrap our previous genericity strategy in different ways. The case of fast decay (which, a posteriori, implies tangent hypercone uniqueness) improves the separation estimate in (ii). The case of slow decay does not, but new cone-splitting arguments at the Jacobi field level (still exploiting the pairwise disjointness of two such graphical models in $\RR^{n+1}$) provide a second-order refinement of (i) in the previous strategy.

As we explain in Section \ref{subsec:strat-proof}, our previous strategy in $\RR^9$, $\RR^{10}$ was borderline in $\RR^{11}$ for the ``least non-generic'' hypercones in $\RR^{11}$ of largest spine, i.e., $\cC_\circ \times \RR^3$, with $\cC_\circ \subset \RR^8$ any quadratic hypercone (e.g., a Simons cone). Our refined second order analysis near generalizations of this setting provides enough of an improvement to imply generic regularity in $\RR^{11}$. The improved dimension of the singular set in all dimensions $\geq 12$ follows similarly.

Let us end with a brief remark concerning this work in the context of the generic regularity theorem of Figalli--Ros-Oton--Serra for the obstacle problem \cite{FROS:ihes}. Already in \cite{CMS:generic.9.10} it was observed that the strategy of combining an estimate for the singular set of a ``foliation'' with a ``separation'' estimate was a posteriori similar to the strategy used in \cite{FROS:ihes}. The nature of the area minimization problem is such that \cite{CMS:generic.9.10} had to approach these estimates very differently than \cite{FROS:ihes}. In this paper, we obtain improved estimates over \cite{CMS:generic.9.10} by further analyzing higher-order ``blowup'' information (here, the Jacobi field). This remains in the spirit of \cite{FROS:ihes} (see also \cite{FigalliSerra}) but, again, there are important differences in this problem. A core issue is that our Almgren-type frequency monotonicity quantity involves geometric data, but its monotonicity is at the linear (``blow up'') level. Going between the two requires wrestling with the possibility of tangent cone non-uniqueness, the lack of an equation for the singular set, and with non-concentration estimates. We must also treat curved background metrics, and curvature interferes with second-order analysis, particularly non-concentration estimates. These facts create considerable technical complications. On the other hand, we do not carry out nearly as refined of an analysis of second-order blowups; cf. Remark \ref{rem:R12}.

\subsection{The main theorems}
We now describe the results of this paper more precisely.  We obtain here the following generalizations of the main result of \cite{CMS:generic.9.10, CMS:generic.codim.plus.2}. All submanifolds in this paper are embedded, and if $\Sigma$ is such, we denote $\sing \Sigma = \bar \Sigma \setminus \Sigma$. All singular set dimensions are Hausdorff dimensions. 

For the Plateau problem in $\RR^{n+1}$ we have:
\begin{Thm} \label{theo:main.rn.11}
	Consider a smooth, closed, oriented, $(n-1$)-dimensional submanifold $\Gamma \subset \RR^{n+1}$. There exist $C^\infty$-small perturbations $\Gamma'$ of $\Gamma$ (in the space of $C^\infty$ submanifolds) such that every minimizing integral $n$-current with boundary $\llbracket \Gamma' \rrbracket$ is of the form $\llbracket \Sigma' \rrbracket$ for a smooth, precompact, oriented hypersurface $\Sigma' \subset \RR^{n+1}$ with $\partial \Sigma' = \Gamma'$, and
	\[ \sing \Sigma' = \emptyset \text{ if } n+1 \leq 11, \text{ else } \dim \sing \Sigma' \leq n-10-\epsilon_n, \]
	where $\epsilon_n > 0$ is a dimensional constant.
\end{Thm}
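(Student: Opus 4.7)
The plan is to adapt the foliation strategy of \cite{CMS:generic.9.10,CMS:generic.codim.plus.2} by carrying out one more blow-up level of analysis at the borderline singular strata. First I would produce a smoothly parametrized perturbation $\{\Gamma_t\}_{t \in [0,1]}$ of $\Gamma$ and a corresponding (possibly gappy) ``foliation'' by minimizing integral currents $\llbracket \Sigma_t \rrbracket$ with $\partial \llbracket \Sigma_t \rrbracket = \llbracket \Gamma_t \rrbracket$, as in \cite{HardtSimon:isolated,Smale:generic,CMS:generic.9.10}. The goal reduces to bounding the Hausdorff dimension of $\bigcup_t \sing \Sigma_t$ by balancing two competing ingredients: (i) a cone-splitting/dimension-reduction bound giving $\dim \bigcup_t \sing \Sigma_t \leq n-7$, and (ii) a quantitative separation estimate, valid at singular points whose tangent cone admits a positive decaying Jacobi field of order $\alpha$, forcing nearby leaves $\Sigma_t, \Sigma_{t'}$ to separate at a rate controlled by $|t-t'|$ and $\alpha$. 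A Sard-type stratification argument combining (i) and (ii) closes generic regularity through $\RR^{10}$ but is critical in $\RR^{11}$.

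Inspecting which points saturate (i)+(ii), for each spine dimension $k$ the obstruction concentrates at singular points whose tangent cone is a cylinder $\cC = \cC_\circ \times \RR^k$ over a quadratic hypercone $\cC_\circ \subset \RR^{n+1-k}$ (generalizing Simons cones); in $\RR^{11}$ the sharpest case is $k = 3$ with $\cC_\circ \subset \RR^8$ a quadratic hypercone, which just fails under (i)+(ii). I would therefore analyze minimizers near such cylindrical singularities more carefully. Using L.\ Simon's non-concentration estimates \cite{Simon:cylindrical-sing,Simon:uniqueness.some}, I would graphically approximate $\Sigma_t$ near such a point as the graph of a Jacobi field $u_t$ on the model cone $\cC$, and measure the rate of decay of $u_t$ toward the vertex by a discrete Almgren-type frequency (cf.\ Definition \ref{defi:doubling.constant.nonlinear}), which I expect to be almost monotone along dyadic scales.

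This yields a dichotomy. In the \emph{fast} (super-linear) regime, the frequency forces tangent-cone uniqueness and upgrades the separation estimate (ii). In the \emph{slow} (linear) regime, separation cannot be improved, but I would run a new cone-splitting argument \emph{at the Jacobi-field level}, exploiting the fact that graphs of distinct $u_t$ correspond to disjoint hypersurfaces in $\RR^{n+1}$; this gains one extra dimension's worth of control on the singular stratum and refines (i). In either branch, the refined estimates close the arithmetic in $\RR^{11}$ to give $\sing \Sigma' = \emptyset$, and give the bound $\dim \sing \Sigma' \leq n - 10 - \epsilon_n$ in all higher dimensions. A standard Baire-category upgrade over the one-parameter family $\{\Gamma_t\}$ then promotes the generic-$t$ conclusion to a genuinely generic $C^\infty$-small perturbation $\Gamma'$ of $\Gamma$.

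The hardest step will be proving non-concentration and the frequency almost-monotonicity in the cylindrical setting: the spine directions $\RR^k$, the ambient curvature contributed by the Plateau boundary, and the geometric (rather than purely linear) nature of the frequency interact delicately, and one must wrestle with the possibility of tangent-cone non-uniqueness and the lack of any direct PDE for $\sing \Sigma_t$. A secondary obstacle is bookkeeping: ensuring that the Jacobi-field cone-splitting in the slow regime is compatible stratum-by-stratum with the ambient cone-splitting underlying (i), so that the improved dimension bounds can actually be collated across spine dimensions $k$ and across the full singular set.
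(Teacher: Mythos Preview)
Your proposal is correct and follows essentially the same approach as the paper: the foliation setup, the identification of cylinders over quadratic hypercones as the borderline obstruction, the graphical modeling via Simon's non-concentration estimates, the discrete decay-order (Almgren-type frequency) with its fast/slow dichotomy, and the Jacobi-field cone-splitting in the slow regime are all exactly the paper's ingredients. The only cosmetic difference is the endgame: rather than a Baire-category upgrade, the paper packages everything into Theorem~\ref{theo:small.sing.dim} applied to a one-parameter family $\{\Gamma_t\}_{t\in[0,1]}$ with a Lipschitz timestamp $\frT$, invokes Corollary~\ref{coro:small.sing.dim} to get the conclusion for a.e.\ $t$, and then simply selects a good $t$ near $0$ (after first perturbing $\Gamma$ to make the minimizer unique and multiplicity one, as in \cite[\S6]{CMS:generic.codim.plus.2}).
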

For the homological Plateau problem in a manifold we have:
\begin{Thm} \label{theo:main.homology.11}
	Consider a closed, oriented, $(n+1$)-dimensional Riemannian manifold $(N, g)$. Let $[\alpha] \in H_n(N, \ZZ) \setminus \{ [0] \}$. There exist $C^\infty$-small perturbations $g'$ of $g$ such that every $g'$-minimizing integral $n$-current in $[\alpha]$ is of the form $\sum_{i=1}^Q k'_i \llbracket \Sigma'_i \rrbracket$ for disjoint, smooth, precompact, oriented hypersurfaces $\Sigma'_1, \ldots, \Sigma'_Q \subset N$ without boundary and
	\[ \sing \Sigma'_i = \emptyset \text{ if } n+1 \leq 11, \text{ else } \dim \sing \Sigma'_i \leq n-10-\epsilon_n, \]
	and multiplicities $k'_1, \ldots, k'_Q \in \ZZ$; again, $\epsilon_n > 0$ is a dimensional constant.
\end{Thm}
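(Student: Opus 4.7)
The plan is to adapt the strategy of \cite{CMS:generic.9.10, CMS:generic.codim.plus.2}—combining a cone-splitting dimension bound on the singular set of a family of area-minimizers with a quantitative separation estimate between leaves—to the closed Riemannian setting, and then to feed into it the refined blowup analysis sketched in the introduction. The starting point is a smooth one-parameter family of metric perturbations $g_t = g + t h$ with $|t| < 1$, in the direction of a generic symmetric $(0,2)$-tensor $h$. By compactness and lower semicontinuity in the integral current topology, for each $t$ there is a $g_t$-minimizing current $T_t \in [\alpha]$; by interior regularity theory it decomposes as $T_t = \sum_i k_i(t) \llbracket \Sigma_i(t) \rrbracket$ with $\dim \sing \Sigma_i(t) \leq n-7$. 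It suffices to show that for a generic (Baire or measure) choice of $t$, every $\Sigma_i(t)$ has the claimed singular set dimension.

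Next, I would organize $\{T_t\}$ into a foliation-like structure (possibly with gaps at non-generic $t$) and deploy the two ingredients inherited from \cite{CMS:generic.9.10}: (i) a cone-splitting argument controlling the full-family singular set $\sing \cF$, where $\cF = \bigcup_t \spt T_t$, by $\dim \sing \cF \leq n-7$; and (ii) L.\ Simon's positive-Jacobi-field analysis on minimizing hypercones, which yields a quantitative separation estimate between distinct leaves. A Sard-type/Fubini slicing then picks a generic $t$ for which $\Sigma_i(t)$ avoids $\sing \cF$, provided the separation rate in (ii) beats the dimension estimate in (i). This balance already works in ambient dimensions $\{8,9,10\}$; the refinement is what makes it work in $\RR^{11}$.

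The borderline case in $\RR^{11}$ is a tangent cone of the form $\cC = \cC_\circ \times \RR^3$ with $\cC_\circ \subset \RR^8$ a quadratic hypercone (e.g.\ a Simons cone). Near such cylindrical cones I would model a leaf as a graph of a Jacobi field $u$ over $\cC$ via Simon-style non-concentration estimates adapted to a curved background, and attach to $u$ the discrete Almgren frequency of Definition~\ref{defi:doubling.constant.nonlinear}, which is monotone at a dyadic sequence of scales. Its size dichotomizes decay into a super-linear (``fast'') and a linear (``slow'') regime. The fast regime sharpens the separation estimate in (ii) (and, as a bonus, gives tangent cone uniqueness at the point). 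The slow regime uses the geometric fact that two leaves $u, v$ parametrize \emph{disjoint} hypersurfaces in $N$: this forces extra translation invariance of the linear blowup—a cone-splitting at the Jacobi field level—improving the dimension bound in (i). Either way, the gain suffices to close the Sard argument in $\RR^{11}$ and analogously yields the $\epsilon_n$-improvement when $n+1 \geq 12$.

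The main obstacle I anticipate is establishing non-concentration and frequency-monotonicity estimates on a curved Riemannian background: Simon's original framework is Euclidean, and ambient curvature produces error terms that threaten to absorb the delicate gains in the linear regime. A secondary obstacle is the a priori non-uniqueness of tangent cones and the fact that $\sing \cF$ is not defined by a single equation, which is why one must work with a discrete rather than continuous frequency and execute a careful multi-scale bookkeeping. Finally, specific to Theorem~\ref{theo:main.homology.11} (as opposed to Theorem~\ref{theo:main.rn.11}), one must track how the components $\Sigma_i(t)$ and multiplicities $k_i(t)$ depend on $t$, since components can bifurcate or merge as $t$ varies; this is handled by localizing the entire argument and invoking stability of minimizers under varifold convergence to reduce to the punctual blowup analysis.
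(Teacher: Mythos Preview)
Your high-level description of the technical ingredient (the fast/slow decay dichotomy via the discrete frequency, yielding either improved separation or improved cone-splitting) matches the paper's Theorem~\ref{theo:small.sing.dim} and Corollary~\ref{coro:small.sing.dim}. However, the reduction from Theorem~\ref{theo:main.homology.11} to that technical ingredient has genuine gaps in your proposal.

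The central issue is your choice of perturbation $g_t = g + th$ for a ``generic'' $h$. The technical ingredient requires a family $\mathscr{F}$ of minimizing boundaries with \emph{pairwise disjoint supports} and a \emph{Lipschitz} timestamp function $\frT$ (conditions (2) and (4) of Theorem~\ref{theo:small.sing.dim}). For a generic symmetric $2$-tensor $h$ there is no reason minimizers $T_{t^+}$ and $T_{t^-}$ should be ordered or disjoint, and you give no mechanism to obtain the Lipschitz bound $|t^+ - t^-| \leq c\, d_g(\spt T^+, \spt T^-)$. The paper instead uses a very specific \emph{conformal} deformation $g_t = e^{2tw} g$, with $w$ supported in a small Fermi cylinder over a chosen regular point of $T$ and satisfying a sign condition $\partial_z w \leq -c^{-1}(|\nabla w| + |w|)$. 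This choice is what drives the cut-and-paste argument (Claim~\ref{clai:homology.min} (iv)) showing disjointness of leaves, and the maximum-principle/sliding argument (Claim~\ref{clai:homology.min} (v)) showing the Lipschitz separation. Your ``organize $\{T_t\}$ into a foliation-like structure'' does not supply either of these.

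Secondly, your handling of multiplicities (``track how components $\Sigma_i(t)$ and multiplicities $k_i(t)$ depend on $t$ \ldots\ by localizing'') is not what the paper does and is unlikely to work directly: the technical ingredient applies to minimizing \emph{boundaries}, not to integral currents with multiplicity. The paper first perturbs $g$ so that the minimizer is unique (Claim~\ref{clai:homology.min.unique}), then strips the multiplicities to form a multiplicity-one current $T_1'$ in an auxiliary class $[\alpha_1]$, applies Proposition~\ref{prop:homology.min} to that, and finally reassembles using \cite[Corollary~3]{White:boundary-regularity-multiplicity}. Without this reduction, you cannot invoke Theorem~\ref{theo:small.sing.dim} at all.
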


The respective proofs are given in Sections \ref{sec:plateau.rn} and \ref{sec:homology.min}.

\begin{Rem}
    It is a well-known consequence of Allard's interior regularity theorem \cite{Allard:first-variation} and Hardt--Simon's boundary regularity theorem \cite{Hardt-Simon:boundary-regularity} that $\sing \Sigma' = \emptyset$ is an open condition in such a multiplicity-one setting. Therefore, when $n+1 \leq 11$, the set of $\Gamma'$, $g'$ for which the corresponding minimizers are smooth objects is simultaneously open (by this observation) and dense (by Theorems \ref{theo:main.rn.11}, \ref{theo:main.homology.11}), and thus Baire generic.
\end{Rem}

\subsection{The technical ingredient}

Theorems \ref{theo:main.rn.11} and \ref{theo:main.homology.11} will be consequences (after some appropriate geometric considerations) of the following main result.\footnote{The geometric construction in the homological setting is improved here relative to \cite{CMS:generic.9.10}, so we can work here with a genuine ``foliation'' rather than ``sufficiently dense set of leaves.'' This relieves some complications in this part of \cite{CMS:generic.9.10}.} 

\begin{Thm} \label{theo:small.sing.dim}
	Fix an $(n+1)$-dimensional Riemannian manifold without boundary $(M, g)$, $n+1 \geq 8$. Let $\mathscr{F}$ be a set of minimizing boundaries in $M$. Denote:
	\begin{align*}
		\supp \mathscr{F} & := \cup_{T \in \mathscr{F}} \supp T, \\
		\sing \mathscr{F} & := \cup_{T \in \mathscr{F}} \sing T.
	\end{align*}
    Assume the properties:
	\begin{enumerate}
		\item[(1)] $\sing \mathscr{F} \Subset M$,
		\item[(2)] elements of $\mathscr{F}$ have pairwise disjoint supports.
	\end{enumerate}
    Moreover, let $\frT : \supp \mathscr{F} \to \RR$ be such that:
	\begin{enumerate}
		\item[(3)] $\frT$ is constant on each $\supp T$, $T \in \mathscr{F}$, and
		\item[(4)] $\frT$ is Lipschitz continuous with the induced distance metric from $(M, g)$.
	\end{enumerate}
	Then we have the following Hausdorff dimension bounds:
	\begin{enumerate}
		\item[(i)] $\dim \frT(\sing \mathscr{F}) \leq \min \{ \mathfrak{d}_{n}^{\textnormal{img}}, 1 \}$, and
		\item[(ii)] For a.e. $t \in \RR$, every $\Sigma \in \mathscr{F}$ with $\frT = t$ on $\supp \Sigma$ has $\dim \sing \Sigma \leq \max \{ \mathfrak{d}_{n}^{\textnormal{dom}}, 0 \}$.
	\end{enumerate}
	Above, $\mathfrak{d}_{n}^{\textnormal{img}}$, $\mathfrak{d}_{n}^{\textnormal{dom}}$ are:
	\begin{equation} \label{eq:delta.n}
		\mathfrak{d}_{n}^{\textnormal{img}} := \max_{k = 0, 1, \ldots, n-7} \left\{ \tfrac{k}{1+ \alpha_{n-k} + \min \{ \Delta^{\textnormal{non-qd}}_{n-k}, \Delta^{\textnormal{qd}}_{n-k} \} }, \tfrac{k-1}{1 + \alpha_{n-k}} \right\},
	\end{equation}
	\begin{equation} \label{eq:cee.n}
		\mathfrak{d}_{n}^{\textnormal{dom}} := \max_{k = 0, 1, \ldots, n-7} \left\{ k - (1 + \alpha_{n-k} + \min \{ \Delta^{\textnormal{non-qd}}_{n-k},  \Delta^{\textnormal{qd}}_{n-k} \} ) \right \},
	\end{equation}
	where
	\begin{itemize}
		\item $\alpha_{n-k} \in (1, 2]$ is as in Lemma \ref{lemm:kappa.zhu},
		\item $\Delta^{\textnormal{non-qd}}_{n-k} \in (0, \infty]$ is as in Lemma \ref{lemm:kappa.zhu.nonqd},
		\item $\Delta^{\textnormal{qd}}_{n-k} \in (0, 1]$ is as in \eqref{eq:delta.hf.gap}.
	\end{itemize}
\end{Thm}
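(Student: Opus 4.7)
The plan is to stratify the singular set $\sing \mathscr{F}$ by the spine dimension of tangent hypercones and, on each stratum, to combine the two ingredients highlighted in the introduction: a cone-splitting estimate that confines the stratum to a $k$-dimensional subset, and a quantitative separation estimate between neighboring leaves of $\mathscr{F}$, both sharpened by the new frequency analysis near quadratic cylindrical hypercones. A Marstrand-type slicing step then converts these into the two claimed Hausdorff bounds.

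More concretely, I would first write $\sing \mathscr{F} = \bigsqcup_{k=0}^{n-7} \mathcal{S}_k$, where $\mathcal{S}_k$ collects those singular points whose tangent hypercones have a $k$-dimensional spine, and then apply a cone-splitting argument in the spirit of \cite{CMS:generic.9.10} to obtain $\dim \mathcal{S}_k \leq k$. Combining Lemma \ref{lemm:kappa.zhu}, Lemma \ref{lemm:kappa.zhu.nonqd} and the quadratic-cone gap \eqref{eq:delta.hf.gap} with the pairwise disjointness of the leaves (property (2)) yields a separation rate $r^{1+\alpha_{n-k}+\Delta_{n-k}}$ between leaves at scale $r$, where $\Delta_{n-k} := \min\{\Delta^{\textnormal{non-qd}}_{n-k}, \Delta^{\textnormal{qd}}_{n-k}\}$. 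Since $\frT$ is constant on leaves and Lipschitz transversally (properties (3)--(4)), this separation translates into a H\"older estimate $|\frT(p)-\frT(q)| \leq C|p-q|^{1+\alpha_{n-k}+\Delta_{n-k}}$ on $\mathcal{S}_k$ outside an exceptional subset $\mathcal{E}_k \subset \mathcal{S}_k$ of dimension at most $k-1$ (where either tangent cone non-uniqueness or the slow-decay regime obstructs the $\Delta$-improvement); on $\mathcal{E}_k$ only the weaker estimate with exponent $1/(1+\alpha_{n-k})$ survives.

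For (i), a H\"older map of exponent $1/(1+\alpha_{n-k}+\Delta_{n-k})$ on a set of dimension $\leq k$ has image of dimension $\leq k/(1+\alpha_{n-k}+\Delta_{n-k})$, while on $\mathcal{E}_k$ the weaker estimate gives image of dimension $\leq (k-1)/(1+\alpha_{n-k})$; maximizing over both terms and over $k$ produces $\mathfrak{d}_n^{\textnormal{img}}$, and the $\min\{\cdot,1\}$ is the trivial bound $\dim \frT(\sing \mathscr{F}) \leq 1$. For (ii), I would invoke a Marstrand-type slicing lemma: a H\"older map $f : X \to \RR$ of exponent $\beta$ on a set $X$ of dimension $\leq k$ has $\dim f^{-1}(t) \leq k - 1/\beta$ for $\mathcal{H}^1$-a.e.\ $t$. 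Applied with $\beta = 1/(1+\alpha_{n-k}+\Delta_{n-k})$ and $X = \mathcal{S}_k \setminus \mathcal{E}_k$, this yields slices of dimension at most $k - (1+\alpha_{n-k}+\Delta_{n-k})$; since $\frT(\mathcal{E}_k)$ is $\mathcal{H}^1$-null in $\RR$ (its dimension being $\leq (k-1)/(1+\alpha_{n-k}) < 1$ throughout the relevant range of $k$), the exceptional contribution can be discarded for a.e.\ $t$, giving the bound $\mathfrak{d}_n^{\textnormal{dom}}$; the outer $\max\{\cdot, 0\}$ is the trivial bound on Hausdorff dimension.

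The main obstacle will be extracting the $\Delta$-improvement near cylindrical quadratic-type hypercones. This requires modeling each leaf in $\mathscr{F}$ by a Jacobi field graph over the cone on carefully chosen annular scales, controlling an Almgren-type frequency for this Jacobi field (Definition \ref{defi:doubling.constant.nonlinear}), and then dichotomizing between a fast-decay case, which upgrades the separation estimate directly and a posteriori forces tangent cone uniqueness, and a slow-decay case, which instead yields additional cone-splitting at the Jacobi-field level and thus cuts the effective local dimension of the stratum. Both halves must be carried out with curvature error terms from $(M,g)$ and in harmony with the non-concentration estimates \`a la \cite{Simon:cylindrical-sing, Simon:uniqueness.some}. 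The bookkeeping between the two cases is what produces the $\min\{\Delta^{\textnormal{non-qd}}_{n-k}, \Delta^{\textnormal{qd}}_{n-k}\}$ improvement entering both $\mathfrak{d}_n^{\textnormal{img}}$ and $\mathfrak{d}_n^{\textnormal{dom}}$; the exceptional set $\mathcal{E}_k$ captures the borderline points where neither half of the dichotomy can be pushed through with the improvement, and the challenge is to control its $M$-dimension by $k-1$ via a secondary stratification.
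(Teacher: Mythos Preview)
Your outline has the right ingredients but mischaracterizes the central dichotomy in a way that creates a real gap. You describe an exceptional set $\mathcal{E}_k$ of dimension $\leq k-1$ on which the $\Delta$-improved H\"older estimate fails, with the improved estimate holding on the complement. In the paper the split works differently: the decay-order dichotomy (Lemma~\ref{lemm:main.dichotomy}) partitions $\cS(\eta,L_0)$ into a \emph{fast-decay} piece $\cS^{>1}_\cC$ and \emph{slow-decay} pieces $\cS^{=1}_{L_1}$, and the two cases receive \emph{different types} of improvement. On $\cS^{>1}_\cC$ the H\"older exponent is boosted to $1+\alpha+\Delta^{\textnormal{qd}}_{n-k}$ (via Corollary~\ref{Cor:unif-high-freq-quantitative} and Proposition~\ref{prop:close-to-cone-close-to-eachother}), with coarse dimension still $k$. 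On $\cS^{=1}_{L_1}$ the H\"older exponent stays at $1+\alpha$, but at every scale where the model cone is quadratic-cylindrical the slow-decay points are confined to the $\theta$-neighborhood of a \emph{proper} subspace of the spine (Theorem~\ref{theo:freq-1-subspace}), so the coarse covering dimension drops to $k-1$. This is a \emph{per-scale} confinement fed into the covering tree of Proposition~\ref{prop:covering} as $\mathfrak{D}(B_j)=k-1$, not a globally $(k-1)$-dimensional exceptional set. Because tangent cones need not be unique, the analysis cannot be organized by a fixed spine-dimension stratification $\mathcal{S}_k$; the paper instead uses the density-drop strata $\cS(\eta,L_0)$ and scale-dependent data $(\mathfrak{D}(B_j),\mathfrak{h}(B_j))$.

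Two further points. The gain $\Delta^{\textnormal{non-qd}}_{n-k}$ does not come from any frequency argument; it is a spectral gap for $\alpha(\cC)$ among non-quadratic cones (Lemma~\ref{lemm:kappa.zhu.nonqd}, fed through Proposition~\ref{prop:alpha.sigma.good}) and enters the slow-decay covering at scales where the model cone is \emph{not} in $\mathscr{C}_n^{\textnormal{qd-cyl}}$, where one keeps dimension $k$ but uses the improved $\alpha(\cC;\sigma)$. And your route for (ii), discarding $\mathcal{E}_k$ because $\dim\frT(\mathcal{E}_k)\leq (k-1)/(1+\alpha_{n-k})<1$, fails once $k\geq 3$ (already $k=4$, $n\geq 11$ gives a ratio $>1$); one must instead apply the Marstrand-type slicing (Proposition~\ref{prop:covering}(ii)) to both pieces and then use $\Delta^{\textnormal{qd}}_{n-k}\leq 1$ to absorb the $k-1-(1+\alpha_{n-k})$ term into the maximum, as in the Remark following the theorem.
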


\begin{Rem}
    It will appear at first sight that \eqref{eq:cee.n} takes a maximum over fewer terms than \eqref{eq:delta.n}. Specifically, it seems as if we are neglecting to include a term of the form
    \[ k-1-(1+\alpha_{n-k}) \]
    in \eqref{eq:cee.n}. However, accounting for such a term is unnecessary since the aforementioned fact that $\Delta^{\textnormal{qd}}_{n-k} \in (0, 1]$ guarantees
    \begin{align*}
        k-1-(1+\alpha_{n-k}) 
            & \leq k-(1 + \alpha_{n-k} + \Delta^{\textnormal{qd}}_{n-k}) \\
            & \leq k-(1 + \alpha_{n-k} + \min \{ \Delta^{\textnormal{non-qd}}_{n-k}, \Delta^{\textnormal{qd}}_{n-k} \}).
    \end{align*} 
\end{Rem}

It is not hard to see by tracing out the algebraic inequalities involved in \eqref{eq:delta.n}, \eqref{eq:cee.n} that 
\[ \mathfrak{d}_{n}^{\textnormal{img}} < 1 \iff \mathfrak{d}_{n}^{\textnormal{dom}} < 0 \iff n+1 \leq 11. \]
Tracing these inequalities carefully in fact yields:

\begin{Cor} \label{coro:small.sing.dim}
	Assume the setting of Theorem \ref{theo:small.sing.dim}. 
	\begin{enumerate}
		\item[(i)] Assume $n+1 \leq 11$. For a.e. $t \in \RR$,
			\[ \Sigma \in \mathscr{F}, \; \frT = t \text{ on } \supp \Sigma \implies \sing \Sigma = \emptyset. \]
		\item[(ii)] Assume $n+1 \geq 12$. There exists $\epsilon_n > 0$ such that for a.e. $t \in \RR$, 
			\[ \Sigma \in \mathscr{F}, \; \frT = t \text{ on } \supp \Sigma \implies \dim \sing \Sigma \leq n-10-\epsilon_n. \]
	\end{enumerate}
\end{Cor}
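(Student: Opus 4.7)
My plan is to apply Theorem \ref{theo:small.sing.dim} in both cases; the essential work is the algebraic verification (highlighted just before the corollary) that $\mathfrak{d}_n^{\textnormal{img}} < 1 \iff \mathfrak{d}_n^{\textnormal{dom}} < 0 \iff n + 1 \leq 11$, which reduces to checking finitely many explicit inequalities at each $k \in \{0, \ldots, n-7\}$ using the range constraints $\alpha_m \in (1,2]$, $\Delta_m^{\textnormal{non-qd}} \in (0, \infty]$, $\Delta_m^{\textnormal{qd}} \in (0,1]$ provided by Lemmas \ref{lemm:kappa.zhu}, \ref{lemm:kappa.zhu.nonqd} and \eqref{eq:delta.hf.gap}.

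For (i), assume $n + 1 \leq 11$, so that $n - 7 \leq 3$. I would trace each term in the maximum in \eqref{eq:delta.n}. The second term $(k-1)/(1+\alpha_{n-k})$ stays below $1$ since $k - 1 \leq 2 < 1 + \alpha_{n-k}$ (using $\alpha_{n-k} > 1$). The first term $k/(1 + \alpha_{n-k} + \min\{\Delta_{n-k}^{\textnormal{non-qd}}, \Delta_{n-k}^{\textnormal{qd}}\})$ stays below $1$ in the easy range $k \leq n - 8 \leq 2$; the borderline occurs only when $k = n-7 = 3$ (so $n = 10$), where the inequality reduces to the numerical fact $\alpha_7 + \min\{\Delta_7^{\textnormal{non-qd}}, \Delta_7^{\textnormal{qd}}\} > 2$. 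Once $\mathfrak{d}_n^{\textnormal{img}} < 1$ is established, Theorem \ref{theo:small.sing.dim}(i) gives that $\frT(\sing \mathscr{F})$ has Lebesgue measure zero. Since $\sing \Sigma \subset \supp \Sigma$ and $\frT \equiv t$ on $\supp \Sigma$ by hypothesis (3), a nonempty $\sing \Sigma$ would place $t$ in $\frT(\sing \mathscr{F})$; so for a.e.\ $t$, $\sing \Sigma = \emptyset$ for every such $\Sigma$.

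For (ii), assume $n + 1 \geq 12$. The plan is to show $\mathfrak{d}_n^{\textnormal{dom}} < n - 10$ strictly, again by checking each term in the finite maximum in \eqref{eq:cee.n}. For $k \leq n - 8$ one has $k - 1 - \alpha_{n-k} - \min\{\ldots\} < n - 9 - 1 = n - 10$ using $\alpha_{n-k} > 1$; for $k = n - 7$, the same numerical input $\alpha_7 + \min\{\Delta_7^{\textnormal{non-qd}}, \Delta_7^{\textnormal{qd}}\} > 2$ gives $n - 8 - \alpha_7 - \min\{\ldots\} < n - 10$. Setting $\epsilon_n := n - 10 - \mathfrak{d}_n^{\textnormal{dom}} > 0$, and noting $\mathfrak{d}_n^{\textnormal{dom}} \geq 0$ in this range (from the $k = n - 7$ term, using the upper bounds $\alpha_7 \leq 2$ and $\min\{\ldots\} \leq 1$), Theorem \ref{theo:small.sing.dim}(ii) yields $\dim \sing \Sigma \leq \mathfrak{d}_n^{\textnormal{dom}} \leq n - 10 - \epsilon_n$ for a.e.\ $t$.

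The main obstacle is not the logic of this corollary, which is essentially bookkeeping given Theorem \ref{theo:small.sing.dim}. The real content sits upstream, in the quantitative auxiliary results providing $\alpha_m$ and $\Delta_m^{\textnormal{non-qd}}, \Delta_m^{\textnormal{qd}}$, and in particular the critical inequality $\alpha_7 + \min\{\Delta_7^{\textnormal{non-qd}}, \Delta_7^{\textnormal{qd}}\} > 2$ at spine dimension $m = 7$ — precisely the borderline case ($\cC_\circ \times \RR^3$ with $\cC_\circ \subset \RR^8$) highlighted in the non-technical overview as the threshold where the second-order Jacobi-field analysis of this paper is needed beyond the first-order strategy of \cite{CMS:generic.9.10}.
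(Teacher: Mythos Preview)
Your proposal is correct and follows essentially the same approach as the paper's proof: both split the finite maximum in \eqref{eq:delta.n} (resp.\ \eqref{eq:cee.n}) into the cases $k \leq 2$ versus $k = 3$ for part (i), and $k \leq n-8$ versus $k = n-7$ for part (ii), using $\alpha_{n-k} > 1$ in the generic range and the borderline fact $\alpha_7 = 2$ (equivalently, your $\alpha_7 + \min\{\Delta_7^{\textnormal{non-qd}}, \Delta_7^{\textnormal{qd}}\} > 2$) at top spine dimension. The only cosmetic difference is that the paper writes down an explicit formula for $\epsilon_n$ rather than defining it implicitly as $n - 10 - \mathfrak{d}_n^{\textnormal{dom}}$, and leaves the passage from $\mathfrak{d}_n^{\textnormal{img}} < 1$ to $\sing \Sigma = \emptyset$ (via the measure-zero image argument you spell out) implicit.
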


\subsection{Strategy of proof} \label{subsec:strat-proof} Let us briefly compare this result and its strategy to that of the previous results in  \cite{CMS:generic.9.10, CMS:generic.codim.plus.2, CMS:generic.mcf.high.dim}.\footnote{The proof in \cite{CMS:generic.mcf.high.dim} is most similar, as it required (the parabolic analog of) $\dim \spine \cC$-aware estimates for $\alpha(\cC)$, unlike those in \cite{CMS:generic.9.10, CMS:generic.codim.plus.2}.} The strategy in our previous works involved modifications of the following main theme:
\begin{enumerate}
	\item[(i)] Use cone-splitting arguments to show that, near points of $\sing \mathscr{F}$ suitably modeled by a hypercone $\cC$, we have
		\[ \sing \mathscr{F} \text{ is locally (coarsely)} \leq (\dim \spine \cC)\text{-dimensional}. \]
		 See also Proposition \ref{prop:conical.structure} (ii).
	\item[(ii)] Use positive Jacobi field decay estimates (originally due to Simon \cite{Simon:decay}) to show, near points of $\cS$ modeled by a hypercone $\cC$, that
		\[ \frT \text{ is locally (coarsely) } \mathfrak{h}\text{-H\"older} \text{ for all } \mathfrak{h} < 1 + \alpha(\cC), \]
		with $\alpha(\cC)$ as in Definition \ref{defi:kappa.lambda}. See also Proposition \ref{prop:conical.structure} (iii) and Lemma \ref{lemm:conical.structure.iter}.
	\item[(iii)] Deduce using a covering argument such as \cite[Corollary 7.8]{FROS:ihes} that
		\[ \dim \frT(\sing \mathscr{F}) \leq \sup_{\cC} \tfrac{\dim \spine \cC}{1 + \alpha(\cC)}, \]
		and that, for a.e. $t \in \RR$, every $\Sigma \in \mathscr{F}$ with $\frT(\supp \Sigma) = t$ also satisfies
		\[ \dim \sing \Sigma \leq \sup_{\cC} (\dim \spine \cC - (1 + \alpha(\cC))). \]
		The supremum is over all nonflat minimizing hypercones $\cC \subset \RR^{n+1}$. See also Proposition \ref{prop:covering}.
\end{enumerate}

This strategy establishes full generic regularity for minimizers up to ambient dimension $n+1 \leq 10$. That is because, in this case, one has $\inf_\cC (1 + \alpha(\cC)) > 2$ (see Lemma \ref{lemm:kappa.zhu}) and $\dim \spine \cC \leq n-7 \leq 2$, so
\[ \sup_{\cC} \tfrac{\dim \spine \cC}{1 + \alpha(\cC)} < 1. \]
This strategy will no longer suffice in ambient dimension $n+1=11$. Indeed, when
\[ \cC \in \Rot(\cC_\circ \times \RR^3), \; \cC_\circ \subset \RR^8 \text{ any minimizing quadratic hypercone}, \]
(e.g., $\cC_\circ$ could be a Simons cone) then Lemma \ref{lemm:kappa.zhu} and Proposition \ref{prop:kappa.spine} predict  that
\[ \tfrac{\dim \spine \cC}{1 + \alpha(\cC)} = \tfrac{3}{3} = 1, \]
which is insufficient for showing $\frT(\sing \mathscr{F}) \subset \RR$ has measure zero. See Appendix \ref{app:quadratic} for some background on quadratic hypercones.

In this paper we delve into a refined analysis of this ``borderline'' case, i.e., the case where $\sing \mathscr{F}$ is locally suitably modeled by a hypercone
\[ \cC \subset \RR^{n+1}, \; \cC \in \Rot(\cC_\circ \times \RR^k), \]
where
\[ \cC_\circ \subset \RR^{n+1-k} \text{ is any minimizing quadratic hypercone}. \]
We show a dichotomy in this situation. One can either improve the bound $\leq \dim \spine \cC$ in (i) or prove H\"older-regularity beyond $1 + \alpha(\cC)$ in (ii). 

More specifically, we make use of the fact that Jacobi fields on such $\cC$ which can be used to model nearby minimizing hypersurfaces obey an important \emph{gap} in their degrees of homogeneity: either the degree equals $1$, or it is $\geq 1 + \Delta^{>1}_\cC$ for some $\Delta^{>1}_\cC > 0$ (see Remark \ref{rema:gamma.star.c}). These cases behave differently from each other in a qualitative manner:

\begin{enumerate}
	\item Near points modeled by $\cC$ with degree $\geq 1 + \Delta^{>1}_\cC$, we can improve the (coarse) H\"older regularity in (ii) from $1 + \alpha(\cC)$ to $1 + \alpha(\cC) + \Delta^{>1}_\cC$. See Sections \ref{sec:nonlinear.hf}, \ref{sec:hf.qd.points}.
	\item Near points modeled by $\cC$ with degree equal to $1$, we can improve the (coarse) dimension bound in (i) from $\dim \spine \cC$ to $\dim \spine \cC - 1$. See Sections \ref{sec:nonlinear.lf}, \ref{sec:lf.nonqd.points}.
\end{enumerate}

\begin{Rem} \label{rema:intro.quadratic.generalized}
	Our technical ingredients in fact apply more generally to all 
	\[ \cC \subset \RR^{n+1}, \; \cC \in \Rot(\cC_\circ \times \RR^k), \]
	where $\cC_\circ$ is any hypercone which is:
	\begin{itemize}
		\item regular (i.e., $\sing \cC_\circ = \{ 0 \}$),
		\item strictly stable (Definition \ref{defi:strictly.stable}), 
		\item strictly minimizing (Definition \ref{def:strictly-minimizing}), and 
		\item strongly integrable (Definition \ref{defi:strongly.integrable}). 
	\end{itemize}
	All minimizing quadratic hypercones $\cC_\circ$ satisfy these properties. See Appendix \ref{app:quadratic}.
\end{Rem}

\begin{Rem}\label{rem:R12}
The classification of area-minimizing hypercones in $\RR^8$ is a well-known open problem. It is not known if there exist non-strictly stable, non-strictly minimizing, or non strongly integrable hypercones in $\RR^8$ (or higher dimensions). Such a question is likely to present a major issue in any study of generic regularity in $\RR^{12}$ and beyond.
\end{Rem}

\subsection{An explanation in the slow-decay case: computing the homogeneous degree-one Jacobi fields on cylinders over Simons cones}
Let $\cC_\circ = C(\mathbb{S}^3\times \mathbb{S}^3) \subset \RR^8$ be the Simons cone. We briefly describe the homogeneous degree-one Jacobi fields on $\cC_\circ \times \RR^k$ and indicate how the improved ``effective spine'' estimate (see Section \ref{subsec:strat-proof}) can be seen at the linear level. 

The homogeneous Jacobi fields $w$ on $\cC_\circ$ (cf.\ Definition \ref{def:gamma.c}) satisfy $w(r\omega) = r^\gamma \psi(\omega)$ for $\psi$ an eigenfunction of $-(\Delta_{\cL_\circ} + |A_{\cL_\circ}|^2)$ with corresponding eigenvalue $\gamma^2 + 5 \gamma$ where $\cL_\circ =\cC_\circ \cap \mathbb{S}^7$. Using spectral theory on $\mathbb{S}^3$, it's easy to compute (cf.\ \cite{SimonSolomon:quadratic}) the small eigenvalues of $-(\Delta_{\cL_\circ} + |A_{\cL_\circ}|^2)$ to be $-6$ (with constant eigenfunction), $0$ (corresponding to translations of the cone) and $6$ (corresponding to rotations). The associated degrees of homogeneous eigenfunctions on $\cC_\circ$ are (respectively) $\gamma = -2,0$ and $1$. 

Suppose that $u$ is an appropriate (cf.\ Definition \ref{defi:cylindrical.energies}) Jacobi field on $\cC_\circ \times \RR^k$ that's homogeneous of degree one. (Due to Simon's non-concentration result, this is exactly the  model for minimal surfaces with a $\cC_\circ \times \RR^k$  tangent cone that are exhibiting ``slow decay.'') Using coordinates $(r,\omega,y)$ on $\cC_\circ \times \RR^k$ where $r\omega \in\cC_\circ$ is a ``polar coordinate'' and $y \in \RR^k$ is the direction along the spine $\{0\}\times \RR^k$, work of Simon shows that $u$ can be decomposed as $u=u_{-2}+u_0 + u_1$ where 
\[
u_\gamma(r,\omega,y) = r^\gamma \psi(\omega) p_{1-\gamma}(r,y)
\]
for $\psi$ an eigenfunction of $-(\Delta_{\cL_\circ} + |A_{\cL_\circ}|^2)$ with eigenvalue corresponding to $\gamma$ as above and $p_{1-\gamma}(r,y)$ a polynomial in $(r,y)$ with only even powers of $r$ that's homogeneous of degree  $1-\gamma$ satisfying the ``$\beta$-harmonic equation''
\[
\left( \partial^2_r + \frac{1+\beta}{r} \partial_r + \Delta_y\right) p_{1-\gamma} = 0
\]
for $\beta = 2\gamma+5$ (cf.\ Lemma \ref{Lem_spect proj Jac are beta harmonic}). By adjusting by an appropriate rotation, it's possible (when $\cC_\circ$ is strongly integrable, as is the case for the Simons cone; cf.\ Definition \ref{defi:strongly.integrable}) to ensure that\footnote{To clarify this, in the strongly integrable case, for $\gamma=1$, we see that $p_0$ is constant so $u_1$ corresponds to a rotation fixing the spine $\{0\}\times \RR^k$. Similarly, for $\gamma=0$, $p_1$ is a linear function in $y$ (it cannot contain $r$ since it only contains powers of $r^2$), so this corresponds to a rotation fixing $\cC_\circ$ but rotating the spine.   } $u_0=u_1=0$. It thus remains to consider $\gamma=-2$. In this case, $\psi(\omega) = 1$ (up to a constant) and $\beta = 1$. We need to determine all $1$-harmonic polynomials $p_3(r, y)$ in $(r,y)$ that are homogeneous of degree $3$ and contain only powers of $r^2$. 

When $k=1$, i.e., when $\cC=\cC_\circ\times \RR$, there's a one-dimensional space of $p_3$, spanned by $y^3-r^2 y$ by \cite[Section 3]{Simon:liouville}. As such, if a minimal hypersurface $\Sigma$ in $\RR^9$ has a tangent cone $\cC_\circ\times \RR$, and exhibits slow decay at some scale, then we'll see that (after a slight rotation), the hypersurface is more closely modeled on a Jacobi field of the form
\[
u(r,\omega,y) = c (r^{-2}y^3-y). 
\]
for some $c\neq 0$. In particular, at any non-zero point $(0,0) \neq (0,y) \in \{0\}\times \RR $ in the spine of $\cC$ we see $r \mapsto u(r,\omega,y)$ is growing at rate $r^{-2}$ as $r\to 0$. On the other hand, if $\Sigma$ is singular (and modeled on $\cC$ with slow decay) near $(0,y)$, this decay rate must be linear (or faster). In sum, the traditional analysis of the singular set of $\Sigma$ near the origin based on the singular set of the cone $\cC$ would suggest that there could be a one-dimensional singular set near the origin. However, in this slow-decay case this estimate can be improved to show that there are no nearby singularities modeled on $\cC$ (i.e.\ this part of the singular set is $0$-dimensional). 

We have ignored several crucial issues here. First of all, in the actual analysis, we need to consider $\cC_\circ\times \RR^k$ for $k\geq 3$. In this case, there are multiple possibilities for $p_{3}$. For example, when $k=3$ one possibility is the $1$-dimensional solution extended to trivially to $\RR^3$:
\[
p_3(r,y_1,y_2,y_3) =  y_1^3 -  r^2 y_1
\]
(see \cite[Section 3]{Simon:liouville} for the general solution) in which the Jacobi field would be
\[
u(r,\omega,y_1,y_2,y_3) = c(r^{-2}(y_1)^3-y_1)
\]
which has linear decay rate as $r\to0$ precisely on the $2$ dimensional subspace $\{y_1  =0\}\subset \{0\}\times \RR^3$. Thus, if $\Sigma$  has a tangent cone $\cC=\cC_\circ\times \RR^3$ and is modeled to higher order by this Jacobi field, we expect that the singularities of $\Sigma$ that are also modeled on $\cC$ with slow decay are concentrated near $\{y_1=0\}$. The reader may check that less trivial possibilties for $p_3$ such as $p_3(r,y_1,y_2,y_3) = 3 (y_1)^2 y_2 - r^2 y_2$ have similar behavior (but the resulting ``effective spine'' has  even smaller dimension). 

Secondly, we have omitted several essential additional geometric considerations, the most important of which are as follows. We need to allow for the case that  $u$  also contains a translation term, corresponding to a singularity near---but not exactly on---the spine of the cone. Moreover, we need to extend the above analysis to the case of two disjoint minimizers $\Sigma,\Sigma^+$. We describe here how to handle the second issue. In particular, we prove that if $\Sigma$ has slow-decay, and thus is modeled on $\cC = \cC_\circ\times \RR^k$ and homogeneous degree-one Jacobi field $u$ as above, then under appropriate conditions, a disjoint minimizer $\Sigma^+$ is modeled on $\cC$ and a Jacobi field in the span of $u$, a translation, and a positive (corresponding to $\Sigma,\Sigma^+$ being disjoint) Jacobi field. Using Edelen--Sz\'ekelyhidi's \cite{EdelenSzekelyhidi:liouville} Liouville theorem for positive Jacobi fields (with appropriate decay) on $\cC$ we find that this positive Jacobi field is equal to a multiple of $r^{-2}$ (cf.\ Lemma \ref{Lem_Jac field_Jac_(trl, rot, pos)}). The key observation in this case is that adding $r^{-2}$ (or translations) cannot sufficiently cancel the decay of $u$ to create linear decay at some new points on the spine of $\cC$. For example, when $k=1$ we have (in the case where there's only a positive Jacobi field and no translation)
\[
u(r,\omega,y)+dr^{-2} = (cy^3+d)r^{-2} - c y,
\]
so even if $cy^3+d=0$ at some $y\in\RR$, the decay of the remaining term $-cy$ will be $O(1)$ rather than $O(r)$ as $r\to 0$ at such a point. Translation terms can be handled in an analogous manner. 

\subsection{Acknowledgements}
We are grateful to Nick Edelen, Xavier Ros-Oton, Rick Schoen, Joaquim Serra, Leon Simon, Ao Sun, G\'abor Sz\'ekelyhidi and Jinxin Xue for their encouragement and interest. O.C. was supported by a Terman Fellowship and an NSF grant
(DMS-2304432). C.M. was supported by an NSF grant (DMS-2403728).

\section{Preliminaries and notation}

In addition to standard notation in geometric measure theory (cf.\ \cite{Simon:GMT}) we adopt the following notation:
\begin{align}
	\mathscr{C}_n & := \{ \text{nonflat mininimizing hypercones } \cC^n \subset \RR^{n+1} \}, \label{eq:mathscr.n} \\
	\mathscr{C}_n^{\textnormal{qd}} & := \{ \text{quadratic } \cC^n \subset \RR^{n+1} \}, \label{eq:mathscr.n.qd} \\
        \mathscr{C}_n^{\textnormal{qd-cyl}} & := \cup_{k=0,1,\ldots,n-7} \{ \cC \in \mathscr{C}_n : \cC \in \Rot(\cC_\circ \times \RR^k), \; \cC_\circ \in \mathscr{C}_{n-k}^{\textnormal{qd}} \}. \label{eq:mathscr.n.qd-cyl}
\end{align}
For a subset $S\subset \RR^n$ we write $\Rot(S) : = \{O(S) : O \in SO(n)\}$. 

When it's unlikely to cause confusion we will conflate peremeter-minimizing Caccioppoli sets $E \subset (M,g)$ with their reduced boundary $\Sigma = \partial^*E$. We then set $\sing \Sigma = \bar\Sigma\setminus \Sigma$. For $p \in \bar \Sigma$ we define the \emph{regularity scale} $r_\Sigma(p)$ to be the supremum of $r \in (0,\textrm{inj}_{(M,g)}(x))$ so that $\bar\Sigma \cap B_r(p)$ is a smooth hypersurface with second fundamental form $|A|\leq r^{-1}$. As in \cite[Lemma 2.4]{CMS:generic.9.10} the regularity scale is continuous with respect to weak convergence $\Sigma$ and convergence of $x$. We also define the relatively open sets
\[
\reg_{>\rho}\Sigma : = \{p \in \Sigma : r_\Sigma(p) > \rho\} \subset \Sigma. 
\]

\section{Jacobi fields on cylindrical hypercones}

\subsection{General notions}

Let $\cC \subset \RR^{n+1}$ a minimal hypercone, not necessarily regular.\footnote{In our subsequent geometric applications, we will take $\cC$ to be $\cC = \cC_\circ \times \RR^k$, with $\cC_\circ$ some minimizing quadratic hypercone (see Appendix \ref{app:quadratic}).} 

Recall that we adopt the convention that $\cC$ coincides with its regular part, and can be viewed as an $n$-dimensional submanifold of $\RR^{n+1}$. In particular, the covariant derivative $\nabla_\cC$ and Laplace--Beltrami operator $\Delta_\cC$ and the second fundamental form $A_\cC$ are well-defined on $\cC$.

\begin{Def} \label{defi:jf}
We denote the space of Jacobi fields on $\cC$ by 
  \[
  \Jac(\cC): = \{w \in C^2_\loc(\cC) : \Delta_\cC u + |A_{\cC}|^2 u = 0\}. 
  \]
We will also be interested in locally defined Jacobi fields within some open $U\subset \RR^{n+1}$:
  \[
  \Jac(\cC \cap U): = \{w \in C^2_\loc(\cC\cap U) : \Delta_\cC u + |A_{\cC}|^2 u = 0\}. 
  \]
\end{Def}

Certain canonical Jacobi fields, which are induced by ambient isometries, will play an important role in our analysis. Fix a global unit normal $\nu_\cC$ defined on $\cC$.

\begin{Def} \label{defi:translation.jf}
  Any fixed $\mbfv \in \RR^{n+1}$ induces a \textbf{translation Jacobi field},
  \[ W_\mbfv(x) : = \langle \mbfv, \nu_\cC(x) \rangle, \; x \in \cC. \]
  The space of such Jacobi fields will be denoted
  \[
  \Jac_\textnormal{trl}(\cC) : =\{W_\mbfv : \mbfv \in \RR^{n+1}\}. 
  \]
\end{Def}

\begin{Def} \label{defi:rotation.jf}
Any fixed $\mbfA \in \mathfrak{so}(n+1)$ induces a \textbf{rotation Jacobi field}
 \[ W_\mbfA(x) : = \langle \mbfA x, \nu_\cC(x) \rangle, \; x \in \cC. \]
 The space of such Jacobi fields will be denoted
  \[
  \Jac_\textnormal{rot}(\cC) : = \{W_\mbfA : \mbfA \in \mathfrak{so}(n+1)\}. 
  \]
\end{Def}

In performing our spectral analysis of Jacobi fields on suitable minimal hypercones, we will be especially interested in homogeneous Jacobi fields:

  \begin{Def}
  A Jacobi field $w\in \Jac(\cC)$ is \textbf{homogeneous of degree $\gamma \in \RR$} if
  \[ w(tx) = t^\gamma w(x) \]
  for every $t>0$ and $x \in \cC$. The space of such Jacobi fields will be denoted
  \[ \Jac_\gamma(\cC) = \{ w \in \Jac(\cC) \text{ is homogeneous of degree $\gamma$} \}. \]
\end{Def}

Clearly:
\begin{align} 
	\Jac_\textnormal{trl}(\cC) \subset \Jac_0(\cC), \label{eq:translation.inclusion} \\
	\Jac_\textnormal{rot}(\cC) \subset \Jac_1(\cC). \label{eq:rotation.inclusion}
\end{align}

\subsection{Jacobi fields on regular minimal hypercones} \label{Subsec_reg Min Cone}
Fix a nonflat minimal hypercone
\[ \cC_\circ\subset\RR^{n_\circ+1}, \]
that is regular, i.e., one with $\sing \cC_\circ = \{0\}$, and denote its link by 
\[ \cL_\circ := \cC_\circ\cap \SSp^{n_\circ}. \]
We may parameterize $\cC_\circ$ by 
\[
  (0, +\infty)\times \cL_\circ \to \cC_\circ, \ \ \ (r, \omega) \mapsto x= r\omega\,.
\] 

\begin{Def} \label{def:gamma.c}
	Denote
	\begin{align*}
		\Gamma(\cC_\circ) 
			& := \{\deg(v): v \text{ is a nontrivial homogeneous Jacobi field on }\cC_\circ\} \\
			& = \{ \gamma \in \RR : \Jac_\gamma(\cC_\circ) \neq 0 \, \}  \subset \RR.
	\end{align*}
\end{Def}

Note that a homogeneous Jacobi field $w \in \Jac_\gamma(\cC_\circ)$ can be written as 
\begin{equation} \label{eq:jf.gamma}
w(r\omega) = r^\gamma \psi(\omega).
\end{equation}
Following the notation of Definition \ref{defi:kappa.lambda}, denote the Laplace--Beltrami operator on $\cL_\circ$ by $\Delta_{\cL_\circ}$, and the second fundamental form of $\cL_\circ$ in $\SSp^{n_\circ}$ by $A_{\cL_\circ}$. Then, by the simple computation
\[
	\Delta_{\cC_\circ} w + |A_{\cC_\circ}|^2 w = r^{-2} (\Delta_{\cL_\circ} \psi + |A_{\cL_\circ}|^2 \psi) + r^{-2} (\gamma^2 + (n_\circ-2) \gamma)  \psi.
\]
It easily follows that:

\begin{Lem} \label{lemm:gamma.c0.discrete}
	The subset $\Gamma(\cC_\circ) \subset \RR$ of Definition \ref{def:gamma.c} is discrete. Elements $\gamma \in \Gamma(\cC_\circ)$ and $w \in \Jac_\gamma(\cC_\circ) \setminus \{ 0 \}$ as in \eqref{eq:jf.gamma} are characterized by the fact that
	\begin{equation} \label{eq:lambda.gamma}
		\mu(\gamma):=\gamma^2 + (n_\circ-2)\gamma
	\end{equation}
	is an eigenvalue of $-(\Delta_{\cL_\circ} + |A_{\cL_\circ}|^2)$ on $\cL_\circ$ and $\psi$ is a corresponding eigenfunction.
\end{Lem}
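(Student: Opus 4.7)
The plan is essentially to read off the lemma from the separation-of-variables identity that precedes it, together with standard spectral theory on a closed manifold. Since $\cC_\circ$ is regular, its link $\cL_\circ = \cC_\circ \cap \SSp^{n_\circ}$ is a smooth closed $(n_\circ-1)$-dimensional Riemannian manifold, and all the coefficients below are well-defined.

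First I would verify the displayed identity just before the lemma. Using the cone metric $dr^2 + r^2 g_{\cL_\circ}$ on $\cC_\circ$ one has
\[
\Delta_{\cC_\circ} = \partial_r^2 + \tfrac{n_\circ - 1}{r}\partial_r + \tfrac{1}{r^2}\Delta_{\cL_\circ},
\]
and the $0$-homogeneity of $\nu_{\cC_\circ}$ gives $|A_{\cC_\circ}|^2(r\omega) = r^{-2}|A_{\cL_\circ}|^2(\omega)$. Plugging $w(r\omega)=r^\gamma\psi(\omega)$ into $\Delta_{\cC_\circ}w + |A_{\cC_\circ}|^2 w$ and collecting powers of $r$ yields
\[
\Delta_{\cC_\circ}w + |A_{\cC_\circ}|^2 w = r^{\gamma-2}\bigl[\Delta_{\cL_\circ}\psi + |A_{\cL_\circ}|^2\psi + (\gamma^2 + (n_\circ-2)\gamma)\psi\bigr].
\]
Dividing by $r^{\gamma-2}$, this vanishes on $\cC_\circ$ if and only if $-(\Delta_{\cL_\circ} + |A_{\cL_\circ}|^2)\psi = \mu(\gamma)\psi$ on $\cL_\circ$, with $\mu(\gamma) = \gamma^2 + (n_\circ-2)\gamma$. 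This is exactly the characterization claimed in the lemma: $\gamma\in\Gamma(\cC_\circ)$ with $0\neq w\in\Jac_\gamma(\cC_\circ)$ of the form $w(r\omega)=r^\gamma\psi(\omega)$ iff $\mu(\gamma)$ is an eigenvalue of $-(\Delta_{\cL_\circ}+|A_{\cL_\circ}|^2)$ and $\psi$ a corresponding eigenfunction. One should also note that any element of $\Jac_\gamma(\cC_\circ)$ is automatically of this separated form, since a function homogeneous of degree $\gamma$ is determined by its restriction $\psi := w|_{\cL_\circ}$ via $w(r\omega) = r^\gamma\psi(\omega)$; regularity of $\psi$ on $\cL_\circ$ follows from $w\in C^2_{\loc}(\cC_\circ)$.

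Next, for discreteness, I would invoke standard elliptic spectral theory: because $\cL_\circ$ is a smooth closed Riemannian manifold, the Schr\"odinger-type operator $-(\Delta_{\cL_\circ}+|A_{\cL_\circ}|^2)$ is self-adjoint with smooth, bounded potential, so its spectrum consists of a discrete sequence of real eigenvalues $\lambda_1\leq\lambda_2\leq\cdots$ with $\lambda_j\to+\infty$. Thus $\Gamma(\cC_\circ)$ is precisely the set of real solutions $\gamma$ of the quadratics $\gamma^2+(n_\circ-2)\gamma = \lambda_j$ across all $j$ with $(n_\circ-2)^2+4\lambda_j\geq 0$. For each such $j$ there are at most two real roots $\gamma^\pm_j = \tfrac{-(n_\circ-2)\pm\sqrt{(n_\circ-2)^2+4\lambda_j}}{2}$, and $\gamma^\pm_j\to\pm\infty$ as $j\to\infty$. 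Consequently $\Gamma(\cC_\circ)$ has no finite accumulation point in $\RR$, i.e., is discrete.

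I do not expect any real obstacle; the entire content is a bookkeeping-level consequence of the separation of variables computation already displayed plus the spectral theorem on a closed manifold. The only mild point worth mentioning is the need to observe that any locally $C^2$ function on $\cC_\circ$ homogeneous of degree $\gamma$ is automatically of the separated form $r^\gamma\psi(\omega)$ with $\psi$ smooth on the compact manifold $\cL_\circ$, so that elliptic regularity applies; this is immediate from the definition of homogeneity and from $\cL_\circ\subset\cC_\circ$ being contained in the smooth locus.
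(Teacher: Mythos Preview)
Your proof is correct and follows essentially the same approach as the paper, which simply states that the lemma ``easily follows'' from the displayed separation-of-variables computation immediately preceding it. You have fleshed out the details appropriately, including the observation that homogeneous Jacobi fields are automatically of the separated form and the appeal to standard spectral theory on the closed link $\cL_\circ$ for discreteness.
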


An easy computation shows:

\begin{Lem} \label{lemm:gamma.star.c}
	Homogeneous Jacobi fields on $\cC_\circ$ with finite Dirichlet energy on $\cC_\circ\cap B_1$ are those whose degrees lie in
	\begin{equation*}
		\Gamma^*(\cC_\circ) := \Gamma(\cC_\circ) \cap \RR_{> -(n_\circ-2)/2} \,. 
	\end{equation*}
\end{Lem}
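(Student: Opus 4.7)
The plan is a direct computation in the polar coordinates already set up in the excerpt. Let $w \in \Jac_\gamma(\cC_\circ) \setminus \{0\}$; by Lemma \ref{lemm:gamma.c0.discrete} we may write $w(r\omega) = r^\gamma \psi(\omega)$, where $\psi$ is a (nontrivial) eigenfunction of $-(\Delta_{\cL_\circ} + |A_{\cL_\circ}|^2)$ on $\cL_\circ$ with eigenvalue $\mu(\gamma)$. With respect to the cone metric $dr^2 + r^2 g_{\cL_\circ}$, the volume element is $r^{n_\circ-1}\, dr\, d\mathcal{H}^{n_\circ - 1}_\omega$, and
\[
|\nabla_{\cC_\circ} w|^2 = (\partial_r w)^2 + r^{-2}|\nabla_{\cL_\circ} w|^2 = r^{2\gamma - 2}\bigl(\gamma^2 \psi(\omega)^2 + |\nabla_{\cL_\circ} \psi(\omega)|^2\bigr).
\]

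First I would integrate over $\cC_\circ \cap B_1$ with Fubini to obtain
\[
\int_{\cC_\circ \cap B_1} |\nabla_{\cC_\circ} w|^2\, d\mathcal{H}^{n_\circ} = \Bigl(\int_0^1 r^{2\gamma + n_\circ - 3}\, dr\Bigr)\cdot \int_{\cL_\circ} \bigl(\gamma^2 \psi^2 + |\nabla_{\cL_\circ} \psi|^2\bigr)\, d\mathcal{H}^{n_\circ - 1}.
\]
The angular factor is always finite (since $\psi$ is smooth on the compact manifold $\cL_\circ$), and the radial factor is finite if and only if $2\gamma + n_\circ - 3 > -1$, i.e.\ $\gamma > -(n_\circ - 2)/2$. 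This gives the ``if'' direction of the lemma.

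For the converse, I need to verify that the angular factor is in fact \emph{strictly positive}, so that finite Dirichlet energy forces $\int_0^1 r^{2\gamma+n_\circ-3}\, dr < \infty$. The only way the angular integrand could vanish identically is if both $\gamma \psi \equiv 0$ and $\nabla_{\cL_\circ}\psi \equiv 0$, i.e.\ either $\psi \equiv 0$ (excluded) or $\gamma = 0$ and $\psi$ is a nonzero constant. In the latter case, being an eigenfunction of $-(\Delta_{\cL_\circ} + |A_{\cL_\circ}|^2)$ with eigenvalue $\mu(0) = 0$ forces $|A_{\cL_\circ}|^2 \equiv 0$ on $\cL_\circ$, i.e.\ $\cL_\circ$ totally geodesic in $\SSp^{n_\circ}$, which contradicts the standing assumption that $\cC_\circ$ is nonflat. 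Hence the angular factor is strictly positive, and the equivalence
\[
\int_{\cC_\circ \cap B_1} |\nabla_{\cC_\circ} w|^2 < \infty \iff \gamma > -(n_\circ - 2)/2
\]
is established.

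This proof is essentially routine once the spherical decomposition is in place; the only genuinely non-calculational point is the exclusion of the constant-$\psi$ exceptional case via nonflatness of $\cC_\circ$, which I expect to be the only step requiring attention.
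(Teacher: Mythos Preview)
Your computation is correct and is exactly the ``easy computation'' the paper alludes to (the paper gives no further proof). One minor remark: your nonflatness argument to exclude the case $\gamma = 0$, $\psi$ constant is correct but not strictly needed for the converse, since in that exceptional case $\gamma = 0 > -(n_\circ-2)/2$ holds anyway (recall $n_\circ \geq 7$ here), so the conclusion would still follow.
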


\begin{Rem} \label{rema:gamma.star.c0.contains.01}
	It is an easy consequence of Definition \ref{def:gamma.c}, Lemma \ref{lemm:gamma.star.c} \eqref{eq:translation.inclusion}, \eqref{eq:rotation.inclusion} that $0, 1 \in \Gamma^*(\cC_\circ)$.
\end{Rem}

When solving for homogeneity degrees $\gamma$ in terms of eigenvalues $\mu$ of $-(\Delta_{\cL_\circ}+|A_{\cL_\circ}|^2)$ via \eqref{eq:lambda.gamma}, it will be important for us that each $\mu$ gives rise to a $\gamma \in \Gamma^*(\cC_\circ)$. Equivalently, we want $-\tfrac{n_\circ-2}{2} \not \in \Gamma(\cC_\circ)$. It was shown in \cite[Section 1.6]{HardtSimon:isolated} that the following notion of the ``strict stability'' of $\cC_\circ$ guarantees this. Henceforth $\cC_\circ$ will be thus assumed to be strictly stable.

\begin{Def} \label{defi:strictly.stable}
	A regular minimal hypercone $\cC_\circ$ is said to be \textbf{strictly stable} if there exists a $\mu_{\cC_\circ}> 0$ such that for every $\varphi\in C_c^1(\cC_\circ)$, the second variation 
      \[ Q_{\cC_\circ}(\varphi, \varphi) := \int_{\cC_\circ} |\nabla_{\cC_\circ} \varphi|^2 - |A_{\cC_\circ}|^2\varphi^2 \]
      satisfies
      \[ Q_{\cC_\circ}(\varphi, \varphi) \geq \mu_{\cC_\circ}\int_{\cC_\circ} r^{-2}\varphi^2. \]
\end{Def}

We proceed and fix an $L^2(\cL_\circ)$-orthonormal basis consisting of eigenfunctions of $-(\Delta_{\cL_\circ} + |A_{\cL_\circ}|^2)$,
\[ \psi_1, \psi_2, \psi_3, \ldots \]
where $\psi_1>0$, with corresponding eigenvalues 
\[ \mu(\gamma_1) < \mu(\gamma_2) \leq \mu(\gamma_3) \leq \ldots \nearrow +\infty, \]
and where the homogeneity degrees
\[ -\tfrac{n_\circ-2}{2} < \gamma_1< \gamma_2\leq \gamma_3\leq \dots \nearrow +\infty, \]
are related to the eigenvalues via this relationship stemming from \eqref{eq:lambda.gamma}:
\[ \gamma_j = - \tfrac{n_\circ-2}{2} + \sqrt{\tfrac{(n_\circ-2)^2}{4} + \mu(\gamma_j)}. \]
Finally, choose corresponding homogeneous Jacobi fields on $\cC_\circ$ with finite Dirichlet energy on $\cC_\circ\cap B_1$
\[ w_j(x) := r^{\gamma_j}\psi_j(\omega). \]
After first choosing $\psi_j$ appropriately, we can assume that for each $j\geq 1$:
\begin{itemize}
	\item $w_j \in \Jac_\textrm{trl}(\cC_\circ)$, or
	\item $w_j \in \Jac_\textrm{rot}(\cC_\circ)$, or
	\item $w_j\perp \Jac_\textrm{trl}(\cC_\circ) \oplus \Jac_\textrm{rot}(\cC_\circ)$ in $L^2(\cC_\circ \cap B_1)$.
\end{itemize}

\begin{Rem} \label{rema:gamma1.negative}
	Since $\cC_\circ$ is nonflat, taking $\varphi =1$ in the variational characterization of eigenvalues implies that $\mu(\gamma_1) < 0$ and thus, in particular, that $\gamma_1 < 0$.
\end{Rem}

\begin{Rem}
    We chose not to use $\circ$'s in our notation for the spectral data $\psi_j, \gamma_j, w_j$ so as to minimize notational clutter. For cylindrical hypercones $\cC = \cC_\circ \times \RR^k$, we plan to express and understand all spectral quantities in terms of these ones for $\cC_\circ$.
\end{Rem}

We will eventually have to place another assumption on $\cC_\circ$, i.e., that it be strongly integrable in the sense of \cite{Simon:uniqueness.some}. We recall the definition below for this later use.
    
\begin{Def} \label{defi:strongly.integrable}
	We say that $\cC_\circ$ is \textbf{strongly integrable}, if
	\[ \Jac_0(\cC_\circ) = \Jac_\textnormal{trl}(\cC_\circ) \text{ and } \Jac_1(\cC_\circ) = \Jac_\textnormal{rot}(\cC_\circ), \]
	i.e., all homogeneous degree-$0$, resp.\ $1$, Jacobi fields are translations, resp.\ rotations.
\end{Def}

\subsection{Jacobi fields on cylindrical minimal hypercones} \label{Subsec_Cylind Min Cone} 
  
In this subsection we discuss the case of minimal hypercones of interest. We take
\[ \cC = \cC_\circ \times \RR^k, \]
where $\cC_\circ \subset \RR^{n_\circ+1}$, $n_\circ = n - k$, is a regular and strictly stable minimal hypercone. All our results also clearly apply to all rotations of such $\cC \subset \RR^{n+1}$.

With $\cL_\circ$ still denoting the link of $\cC_\circ$, we can parametrize $\cC$ by 
    \begin{align}
       (0, +\infty)\times \cL_\circ \times \RR^k \to \cC,\ \ \ (r, \omega, y)\mapsto (x = r\omega, y)\,. \label{Equ_Parametrize minimal hypercone}
    \end{align}
Following Section \ref{Subsec_reg Min Cone} above, we continue to denote:
\begin{itemize}
	\item $\gamma_j \in \Gamma^*(\cC_\circ)$, the homogeneity degrees of homogeneous Jacobi fields on $\cC_\circ$ with locally finite Dirichlet energy, and
	\item $w_j = r^{\gamma_j} \psi_j \in \Jac(\cC_\circ)$, the corresponding homogeneous Jacobi fields on $\cC_\circ$, normalized as discussed immediately prior to Remark \ref{rema:gamma1.negative}.
\end{itemize}
Note that, in this cylindrical setting, $r$ coincides with the distance to $\spine \cC = \{ 0 \} \times \RR^k$. We will be concerned with Jacobi fields on $\cC$ for which the following $r$-weighted Dirichlet energy is locally finite:

\begin{Def} \label{defi:cylindrical.energies}
For $u\in \Jac(\cC)$ and open $U\subset \RR^{n+1}$, define 
    \begin{align*}
      \|u\|_{H^1_*(\cC \cap U)}^2 := \int_{\cC \cap U} |\nabla_\cC u|^2 + r^{-2}u^2.
    \end{align*}
    We also define the following spaces of Jacobi fields for open $U \subset \RR^{n+1}$:
      \begin{align*}
      \Jac^*(\cC \cap U) & := \{u\in \Jac(\cC \cap U): \|u\|_{H^1_*(\cC \cap U')}< \infty \text{ for all } U' \Subset U \} \,,
      \end{align*}
      where of course $\Jac^*(\cC) = \Jac^*(\cC \cap \RR^{n+1})$. We also define
      \begin{align*}
      \Jac^*_\textnormal{pos}(\cC) & := \{ t u : t \in \RR, u \in \Jac^*(\cC), u > 0 \} \,, \\
      \Jac^*_d(\cC) & := \Jac^*(\cC) \cap \Jac_d(\cC), \; d \in \RR.
    \end{align*}
\end{Def}

For later use, we record here a Caccioppoli inequality that plays an important role in our subsequent construction of Jacobi fields. It only relies on our standing assumptions, i.e., on $\cC^\circ$ being regular and strictly stable.

    \begin{Lem} \label{Lem_Jac field_Caccioppoli-type Ineq}
       For every $u \in \Jac^*(\cC \cap B_\varrho)$ and $0 < {\tau_1} < {\tau_2} < \varrho$, we have
        \[ \int_{\cC \cap B_{\tau_1}} |\nabla_{\cC} u|^2 + r^{-2} u^2 \leq \frac{C(\cC)}{({\tau_2}-{\tau_1})^2} \int_{\cC \cap B_{\tau_2}} u^2. \]
    \end{Lem}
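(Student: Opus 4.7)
The proof is a Caccioppoli-type estimate whose key observation is that combining the second variation identity with the strict stability inequality for carefully chosen test functions produces an \emph{exact} cancellation of both the cross terms and the $|A_\cC|^2$ terms, leaving the $r^{-2}$-weighted control directly.

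I would first upgrade the strict stability from $\cC_\circ$ (Definition \ref{defi:strictly.stable}) to $\cC = \cC_\circ \times \RR^k$ via Fubini. Since $|A_\cC|^2 = |A_{\cC_\circ}|^2$ (the $\RR^k$ factor is totally geodesic) and $|\nabla_\cC \varphi|^2 \geq |\nabla_{\cC_\circ} \varphi|^2$, slicewise application gives
\[
\int_\cC |\nabla_\cC \varphi|^2 - |A_\cC|^2 \varphi^2 \geq \mu_{\cC_\circ} \int_\cC r^{-2} \varphi^2, \quad \varphi \in C_c^1(\cC).
\]
Next, pick a smooth cutoff $\eta$ with $\eta \equiv 1$ on $B_{\tau_1}$, $\spt \eta \subset B_{\tau_2}$, $|\nabla \eta| \leq C/(\tau_2-\tau_1)$, and carry out two parallel computations: (a) test the Jacobi equation $\Delta_\cC u + |A_\cC|^2 u = 0$ against $\eta^2 u$ and integrate by parts, producing
\[
\int_\cC \eta^2 |\nabla_\cC u|^2 - |A_\cC|^2 \eta^2 u^2 = -2 \int_\cC \eta u \, \nabla_\cC \eta \cdot \nabla_\cC u;
\]
(b) apply the cylindrical stability to $\eta u$ after expanding $|\nabla_\cC(\eta u)|^2$. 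Substituting (a) into (b) cancels both the cross terms and the $|A_\cC|^2 \eta^2 u^2$ terms, leaving the weighted bound
\[
\mu_{\cC_\circ} \int_\cC r^{-2} \eta^2 u^2 \leq \int_\cC |\nabla_\cC \eta|^2 u^2 \leq \frac{C}{(\tau_2-\tau_1)^2} \int_{\cC \cap B_{\tau_2}} u^2.
\]
The gradient bound then follows by returning to (a), using Young's inequality to absorb $\tfrac{1}{2}\int \eta^2 |\nabla_\cC u|^2$ into the left, and invoking the pointwise estimate $|A_\cC|^2 \leq C(\cC) r^{-2}$, which holds because $\cL_\circ$ is a closed smooth submanifold of $\SSp^{n_\circ}$ with bounded curvature.

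The main technical obstacle is justifying the integration by parts in (a) and the applicability of stability in (b) in spite of the singular spine $\{0\} \times \RR^k \subset \sing \cC$. I would insert an auxiliary cutoff $\chi_\epsilon$ on $\cC$ that vanishes in the $\epsilon$-neighborhood of the spine, equals $1$ outside the $2\epsilon$-neighborhood, and satisfies $|\nabla_\cC \chi_\epsilon| \leq C r^{-1} \mathbf{1}_{\{\epsilon \leq r \leq 2\epsilon\}}$, and run the entire argument with $\eta \chi_\epsilon$ in place of $\eta$. The extra errors introduced by $\chi_\epsilon$ are dominated by
\[
\int_\cC |\nabla_\cC \chi_\epsilon|^2 u^2 \leq C \int_{\cC \cap \{r \leq 2\epsilon\}} r^{-2} u^2,
\]
which vanishes as $\epsilon \to 0$ thanks precisely to the $L^2_\loc$-integrability of $r^{-1} u$ on $\cC \cap B_\varrho$ encoded in the definition of $\Jac^*$.
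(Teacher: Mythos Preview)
Your proof is correct and follows essentially the same approach as the paper: lift strict stability from $\cC_\circ$ to $\cC$ by Fubini, combine it with the second-variation identity $\int_\cC |\nabla_\cC(\eta u)|^2 - |A_\cC|^2 \eta^2 u^2 = \int_\cC |\nabla_\cC \eta|^2 u^2$ (which your steps (a)+(b) derive), and use $|A_\cC|^2 \leq C(\cC) r^{-2}$ to recover the gradient term. The paper orders things slightly differently---first absorbing a portion of the gradient into the stability inequality itself, then applying the identity once---but the content is the same; your explicit $\chi_\epsilon$ cutoff near the spine is a welcome justification of a step the paper leaves implicit.
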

    \begin{proof}
      The strict stability of $\cC_\circ$ gives $\mu=\mu(\cC_\circ)$ with
      \[
        \int_{\cC_\circ} |\nabla_{\cC_\circ} \varphi|^2 - |A_{\cC_\circ}|^2\varphi^2 \geq \mu \int_{\cC_\circ} r^{-2}\varphi^2, \text{ for all } \varphi\in C^1_c(\cC_\circ).
      \] 
      Fubini's theorem then implies that
      \[
       \int_{\cC} |\nabla_{\cC} \varphi|^2 - |A_{\cC}|^2\varphi^2 \geq \mu \int_{\cC} r^{-2}\varphi^2, \text{ for all } \varphi\in C^1_c(\cC_\circ).
      \]
       Using that $|A_{\cC}|^2 \leq C(\cC_\circ)r^{-2}$ we can take $\mu$ smaller so that 
        \[
       \int_{\cC} |\nabla_{\cC} \varphi|^2 - |A_{\cC}|^2\varphi^2 \geq \mu \int_{\cC} |\nabla_{\cC}\varphi|^2 + r^{-2}\varphi^2, \text{ for all } \varphi\in C^1_c(\cC_\circ).
      \]
      On the other hand, for $u\in \Jac^*(\cC; B_\varrho)$ and $\eta \in C^1_c(\cC \cap B_{\tau_2})$ we can integrate by parts to find that 
      \[
       \int_{\cC} |\nabla_{\cC} (\eta u)|^2 - |A_{\cC}|^2\eta^2u^2 = \int_{\cC} |\nabla \eta|^2 u^2. 
      \]
      Combining these expressions and choosing $\eta$ equal to $1$ on $B_{\tau_1}$, cutting off smoothly inside of $B_{\tau_2}$ completes the proof.
          \end{proof}

Jacobi fields in $\Jac^*(\cC)$ have been studied extensively by Simon \cite{Simon:uniqueness.some, Simon:liouville} and Edelen--Sz\'ekelyhidi \cite[Section 2.5]{EdelenSzekelyhidi:liouville}. It will be important for our geometric applications involving ``one-sided'' perturbations (which yield positive Jacobi fields) that $\Jac^*_\textnormal{pos}(\cC) = \Jac^*_{\gamma_1}(\cC)$ and this space is generated by the corresponding underlying positive eigenfunctions from $\cC_\circ$. This will be shown in Lemma \ref{Lem_Jac field_Jac_(trl, rot, pos)}, but first we need to introduce more notation.

\begin{Def} \label{defi:beta.harmonic}
	Let $\varrho > 0$ and
	\[ B^+_\varrho:=\{(r,y) \in \RR_{>0}\times \RR^k : r^2 + |y|^2 < \varrho^2\} \subset \RR^{k+1}. \]
	We say that $h \in C^2_\loc(B^+_\varrho)$ is $\beta$-harmonic on $B^+_\varrho$ if
	\begin{align}
       \begin{cases}
        \;\; r^{-1-\beta} \operatorname{div}_{\RR^{k+1}}(r^{1+\beta} \nabla_{\RR^{k+1}} h) = \displaystyle\left(\partial_r^2 + \tfrac{1+\beta}{r}\partial_r + \Delta_y\right)h = 0\,, & \\
        \;\; \displaystyle\int_{B^+_\tau} |\nabla_{\RR^{k+1}} h|^2 \, r^{1+\beta} \, dr \, dy < +\infty \text{ for all } 0 < \tau < \varrho \,.
       \end{cases} \label{Equ_Jac field_beta harmonic equ}
      \end{align}
\end{Def}

The following result regarding $\beta$-harmonic functions was proven by Simon \cite[Section 3 \& Appendix A.19]{Simon:liouville}.

    \begin{Prop} \label{Prop_Jac field_beta harmonic}
      Suppose that $h$ is $\beta$-harmonic on $B^+_1$. Then:
      \begin{enumerate}
      	\item $h$ can be smoothly extended to $\{r=0\}$, and
	\item the extension is real analytic in $r^2$ and $y$, i.e., there exists $\tau=\tau(k, \beta)\in (0, 1/2)$ and degree-$i$ homogeneous polynomials $p_{ij}(y)$ so that,
      \[ h(r, y) = \sum_{i, j\geq 0} r^{2j}p_{ij}(y) \]
      with
      \[ \sup_{(r, y)\in B_{\tau}^+} |D_r^jD_y^i h(r, y)| \leq C(\beta, k)^{i+j}i!j!\left(\int_{B_{3/4}^+} h^2 \, r^{1+\beta} \, dr \, dy\right) \]
      for all $i, j \geq 0$.
      \end{enumerate}
      Moreover, if $d\in \RR$ and $h$ is a non-vanishing homogeneous function of degree $d$, then $d \in \ZZ_{\geq 0}$ and $h$ is a polynomial in $r^2$ and $y$ and is uniquely determined by the non-vanishing degree-$d$ polynomial $h(0, y)$ in $y$.  
\end{Prop}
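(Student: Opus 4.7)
The plan is to first establish smoothness of $h$ up to $\{r = 0\}$, then expand in a Taylor series and extract a PDE recursion, derive analytic bounds on the coefficients, and finally apply the expansion to classify homogeneous solutions. The guiding viewpoint is that the $\beta$-harmonic equation is the weighted Laplace equation for the Muckenhoupt-type weight $r^{1+\beta}$, familiar from degenerate-elliptic regularity theory and, when $\beta = m-2$ with $m \in \ZZ_{\geq 2}$, also from axially-symmetric harmonic theory on $\RR^m \times \RR^k$.

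For the regularity up to $\{r=0\}$: on $\{r > 0\}$ the equation is uniformly elliptic with analytic coefficients, so $h$ is real analytic there. In the integer case $\beta = m-2$, the lift $H(x, y) := h(|x|, y)$ on a cylinder in $\RR^m \times \RR^k$ is axially symmetric and solves the classical Laplace equation, hence is real analytic; analyticity of $h$ in $r^2 = |x|^2$ and $y$ follows immediately. For general $\beta > -1$, invoke weighted elliptic regularity (Fabes--Kenig--Serapioni type) to get H\"older continuity of $h$ and of all tangential derivatives $\partial_y^\alpha h$ up to $\{r = 0\}$; the equation itself then recursively generates all even-order $r$-derivatives at $\{r=0\}$, while odd-order $r$-derivatives are absent by the weight's structure.

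Once smoothness is established, write $h(r, y) = \sum_{i, j \geq 0} r^{2j} p_{ij}(y)$ with each $p_{ij}$ homogeneous of degree $i$ in $y$. Substituting into $(\partial_r^2 + \tfrac{1+\beta}{r}\partial_r + \Delta_y)h = 0$ and matching like coefficients in $r^{2j}$ and in $y$-degree yields
\[
(2j+2)(2j+2+\beta)\, p_{i, j+1}(y) \;=\; -\Delta_y\, p_{i+2, j}(y), \qquad i, j \geq 0.
\]
Provided $2j+2+\beta \neq 0$ for all $j \geq 0$ (exceptional $\beta$'s treated separately), this determines every $p_{ij}$ with $j \geq 1$ from the boundary trace data $\{p_{i,0}\}_{i \geq 0}$, i.e., from $h(0, y) = \sum_i p_{i, 0}(y)$. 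For the analytic bounds, iterate a weighted Caccioppoli estimate (compare Lemma \ref{Lem_Jac field_Caccioppoli-type Ineq}) applied to $\partial_y^\alpha h$, which is still $\beta$-harmonic, to obtain $L^2_{r^{1+\beta}}$ bounds with factorial growth in $|\alpha|$; promote to pointwise bounds via weighted Moser iteration; and transfer to $r$-derivatives via the recursion, paying another factorial factor.

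For homogeneous $h$ of degree $d$: homogeneity forces $p_{ij} = 0$ unless $i + 2j = d$, so the expansion collapses to the finite sum $\sum_{2j+i=d} r^{2j} p_{ij}(y)$. This is nontrivial only when $d \in \ZZ_{\geq 0}$, in which case $h$ is a polynomial in $r^2$ and $y$; its trace $h(0,y) = p_{d, 0}(y)$ is then a degree-$d$ homogeneous polynomial in $y$, which determines $h$ uniquely by the recursion and must be nonzero (else the recursion collapses $h$ to zero). The principal obstacle is extracting the \emph{factorial} constants $C^{i+j} i!\, j!$ rather than mere exponential ones: standard weighted Schauder estimates would yield only exponential bounds, so one must exploit the structure of the recursion together with careful tracking of constants through the iterated Caccioppoli argument; the exceptional values $\beta \in -2\ZZ_{\geq 1}$, where the recursion degenerates and logarithmic solutions enter the associated ODEs, also require a direct, separate treatment.
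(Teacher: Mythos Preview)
The paper does not supply its own proof of this proposition; it states the result and cites Simon \cite[Section 3 \& Appendix A.19]{Simon:liouville}. So your sketch is being measured against Simon's argument rather than anything in the paper itself.

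Your outline is broadly in the spirit of Simon's treatment. The integer-$\beta$ lift to an axially symmetric harmonic function on $\RR^{2+\beta}\times\RR^k$ is precisely his starting point, the recursion $(2j+2)(2j+2+\beta)\,p_{i,j+1}=-\Delta_y p_{i+2,j}$ is correct, and the collapse of the expansion in the homogeneous case is handled properly.

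Two places remain genuine gaps. First, for non-integer $\beta$ you appeal to Fabes--Kenig--Serapioni for H\"older regularity of $h$ and its $y$-derivatives and then assert that the equation recursively produces the even $r$-derivatives. But H\"older continuity is far from a smooth (let alone analytic) extension: you still have to rule out the singular branch of the radial ODE (solutions behaving like $r^{-\beta}$ and its descendants), and this is exactly where the finite weighted Dirichlet energy hypothesis in Definition \ref{defi:beta.harmonic} enters. You never invoke it. Simon's route is the reverse of yours: he proves the analytic derivative estimates directly by an $L^2$ iteration, and smoothness across $\{r=0\}$ is then a \emph{consequence} of the convergent power series, not an input. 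Second, you correctly identify the factorial bound $C^{i+j}i!\,j!$ as the principal obstacle but do not resolve it; the suggestion of iterated Caccioppoli plus Moser is the right shape, yet tracking constants so as to land on $i!\,j!$ rather than $(i+j)!$ is the actual content of Simon's Appendix, and it is not short. Finally, your caveat about exceptional $\beta\in -2\ZZ_{\geq 1}$ is moot in this paper: by Lemma \ref{Lem_spect proj Jac are beta harmonic} one always has $\beta = 2\gamma_j + n_\circ - 2 > 0$.
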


Below, we will always implicitly extend $\beta$-harmonic functions to $\{r=0\}$. 

\begin{Def} \label{defi:jac.c.star.gamma.q}
	For $\gamma \in \Gamma^*(\cC_\circ)$ and an integer $q\geq 0$ we let
\[ \Jac^*_{\gamma,q}(\cC) = \Jac^*_{\gamma+q}(\cC) \cap \operatorname{span} \{ r^{\gamma} \psi(\omega) p_q(r,y) \} \]
where the span is taken over all:
\begin{itemize}
	\item $\psi(\omega)$ that are eigenfunctions of $-\Delta_{\cL_\circ}-|A_{\cL_\circ}|^2$ on $\cL_\circ$ with eigenvalue $\mu(\gamma)$ as in \eqref{eq:lambda.gamma}, and
	\item $p_q(r,y)$ that are homogeneous degree-$q$ polynomials in $(r, y)$ that are $(2\gamma+n-2)$-harmonic (see Definition \ref{defi:beta.harmonic}); by Proposition \ref{Prop_Jac field_beta harmonic}, $p_q(r,y)$ is a posteriori a polynomial in $r^2$ and $y$, i.e., it contains no odd powers of $r$. 
\end{itemize}
Clearly,
\[
\Jac^*_{\gamma,q}(\cC) \subset \Jac^*_{\gamma+q}(\cC).
\]
We denote by
    \begin{align}
      \Gamma^*(\cC) := \Gamma^*(\cC_\circ)+\ZZ_{\geq 0} = \{\gamma+q: \gamma\in \Gamma^*(\cC_\circ), q\in \ZZ_{\geq 0}\}\,. \label{Equ_Jac field_Asymp Spectrum of C=C_0*R^k}
    \end{align}
    the set of homogeneities obtained in this way. 
\end{Def}

\begin{Rem} \label{rema:gamma.star.c}
	It follows from Lemma \ref{lemm:gamma.c0.discrete} and Remark \ref{rema:gamma.star.c0.contains.01} that $\Gamma^*(\cC) \subset \RR$ is discrete and contains $\ZZ_{\geq 0}$. Thus, the following quantities are well-defined,
	\begin{align*}
		\Delta_\cC^{<1} & := 1 - \max (\Gamma^*(\cC)\cap \RR_{<1}), \\
		\Delta_\cC^{>1} & := \min (\Gamma^*(\cC)\cap \RR_{>1}) - 1,
	\end{align*}
	and take values in $(0, 1]$. Moreover,
	\[ (1-\Delta_\cC^{<1}, 1) \cap \Gamma^*(\cC) = (1, 1+\Delta_\cC^{>1}) \cap \Gamma^*(\cC) = \emptyset. \]
\end{Rem}

The following crucial result was essentially proven by Simon \cite[Section 3]{Simon:liouville}. 

\begin{Prop} \label{Prop_Jac field_Decomp}
    Fix $d \in \RR$.
    \begin{enumerate}[(i)]
    \item If $d \not \in \Gamma^*(\cC)$ then $\Jac^*_d(\cC) = \{0\}$. 
    \item If $d \in \Gamma^*(\cC)$ then we have an $L^2(\cC \cap B_1)$-orthogonal decomposition
    \[
          \Jac_d^*(\cC) = \bigoplus_{\substack{\gamma\in \Gamma^*(\cC_\circ)\\ q\in \ZZ_{\geq 0}\\ \gamma+q = d}} \Jac^*_{\gamma, q}(\cC)\,.     \]   
       \item For $d'\neq d \in \Gamma^*(\cC)$, we have with respect to $L^2(\cC \cap B_1)$ that 
       \[ \Jac^*_d(\cC) \perp \Jac_{d'}^*(\cC). \]
     \item Let $u\in \Jac^*(\cC \cap B_R)$. We may assume that $R > 1$ by rescaling the domain. Then, $u$ can be uniquely decomposed in $L^2(\cC \cap B_1)$ as
       \begin{equation}\label{Equ_Jac field_L^2 decomp of u}
         u(x, y) = \sum_{d \in \Gamma^*(\cC)} u_d(x, y),
       \end{equation}
       where $u_d \in \Jac^*_d(\cC)$ for all $d \in \Gamma^*(\cC)$. Moreover, for all $\varrho \in (0, R)$, 
       \[
       \| u \|_{L^2(\cC \cap B_\varrho)}^2 = \sum_{d \in\Gamma^*(\cC)} c_d^2 \varrho^{n+2d}, \text{ where } c_d^2 : = \frac{1}{n+2d} \int_{\cC \cap \SSp^n} u_d^2 \, .
       \]
    \end{enumerate}
 \end{Prop}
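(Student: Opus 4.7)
The plan is to follow Simon's separation-of-variables approach. Writing $\cC = \cC_\circ \times \RR^k$ in coordinates $(r,\omega,y)$ as in \eqref{Equ_Parametrize minimal hypercone}, the Jacobi operator on $\cC$ decomposes as
\[
\Delta_\cC + |A_\cC|^2 = \partial_r^2 + \tfrac{n_\circ-1}{r}\partial_r + \tfrac{1}{r^2}\bigl(\Delta_{\cL_\circ} + |A_{\cL_\circ}|^2\bigr) + \Delta_y.
\]
Given $u\in \Jac^*(\cC \cap B_R)$, I would first expand $u$ in the $L^2(\cL_\circ)$-orthonormal basis $\{\psi_j\}$ from Section \ref{Subsec_reg Min Cone}: $u(r,\omega,y) = \sum_j f_j(r,y)\psi_j(\omega)$, where the $f_j$ are defined as link-integrals and lie in $C^2_\mathrm{loc}$ by elliptic regularity. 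Each $f_j$ then satisfies $\partial_r^2 f_j + \tfrac{n_\circ-1}{r}\partial_r f_j - \tfrac{\mu(\gamma_j)}{r^2} f_j + \Delta_y f_j = 0$. The substitution $f_j(r,y) = r^{\gamma_j} h_j(r,y)$, together with the identity $\gamma_j^2 + (n_\circ-2)\gamma_j = \mu(\gamma_j)$ from \eqref{eq:lambda.gamma}, cancels the $r^{-2}$ term and leaves $h_j$ satisfying the $\beta_j$-harmonic equation \eqref{Equ_Jac field_beta harmonic equ} with $\beta_j = 2\gamma_j + n_\circ - 2$; Lemma \ref{Lem_Jac field_Caccioppoli-type Ineq} together with Fubini gives the weighted energy bound on $h_j$ that is required by Definition \ref{defi:beta.harmonic}.

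Next, I would apply Proposition \ref{Prop_Jac field_beta harmonic} to each $h_j$, which gives a real-analytic expansion $h_j = \sum_{q\geq 0} h_{j,q}$ into homogeneous degree-$q$ $\beta_j$-harmonic polynomials in $(r,y)$ containing only even powers of $r$. Setting $u_{j,q}(r,\omega,y) := r^{\gamma_j}\psi_j(\omega) h_{j,q}(r,y) \in \Jac^*_{\gamma_j,q}(\cC)$ and regrouping by total homogeneity $d=\gamma_j+q$ yields the decomposition \eqref{Equ_Jac field_L^2 decomp of u}, and also establishes (ii) by definition. For (i), if $d\notin\Gamma^*(\cC)$, then no pair $(j,q)$ realizes $\gamma_j+q=d$, so applying the expansion to any $u\in\Jac^*_d(\cC)$ forces every $u_{j,q}$ to vanish and hence $u\equiv 0$.

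For the orthogonality in (iii) and (ii), I would split into two sources. First, orthogonality across different $j$'s is immediate from $\int_{\cL_\circ}\psi_j\psi_{j'}=\delta_{jj'}$ after integrating over $\cL_\circ$. Second, for fixed $j$ but different $q\ne q'$, orthogonality reduces to the weighted identity $\int h_{j,q}h_{j,q'}\,r^{1+\beta_j}\,dr\,dy=0$ on any half-ball $B_\varrho^+$. This follows from the symmetry of the $\beta_j$-harmonic operator with respect to the weight $r^{1+\beta_j}dr\,dy$ together with Euler's homogeneity identity $\partial_r h_{j,q}=(q/|(r,y)|^2)\,r\,h_{j,q}+\cdots$: Green's formula on $B_\varrho^+$ yields $(q-q')\int_{\partial B_\varrho^+} h_{j,q}h_{j,q'}r^{1+\beta_j}\,d\sigma=0$, which after integration in $\varrho$ gives the desired orthogonality. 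Then (iii) follows by aggregating within each $\Jac^*_d$.

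Finally, for (iv), the uniqueness of the decomposition is the orthogonality of (iii), and the Parseval identity follows by integrating $|u|^2=\sum_{d,d'}u_d u_{d'}$ over $\cC\cap B_\varrho$ using orthogonality and the homogeneity $u_d(s\cdot)=s^d u_d(\cdot)$: polar integration gives $\int_{\cC\cap B_\varrho}u_d^2 = \tfrac{\varrho^{n+2d}}{n+2d}\int_{\cC\cap\SSp^n}u_d^2 = c_d^2\varrho^{n+2d}$. The main obstacle I anticipate is justifying the termwise convergence of the expansion on the entire $B_R$ (not merely near $r=0$) and handling the coupled bounds between the radial $L^2$-growth on $\cC$ and the weighted bounds of Proposition \ref{Prop_Jac field_beta harmonic}, which requires combining the interior estimates with Lemma \ref{Lem_Jac field_Caccioppoli-type Ineq} on nested subballs and then passing to the limit; much of this is already present in \cite[Section 3]{Simon:liouville}, so I would cite it where appropriate rather than re-derive it.
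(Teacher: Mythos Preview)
Your proposal is correct and follows exactly the separation-of-variables strategy of Simon \cite[Section 3]{Simon:liouville}, which is precisely what the paper does: it gives no self-contained proof of this proposition, only the citation. Your outline in fact goes further than the paper, since you sketch the orthogonality argument for $\beta$-harmonic homogeneous polynomials via Green's identity with weight $r^{1+\beta}$, whereas the paper simply takes the result as known; the one point you correctly flag as delicate---termwise convergence of the expansion on all of $B_R$ rather than only near $r=0$---is indeed handled in Simon's paper and is appropriately deferred to that reference.
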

 
We also have:

    \begin{Lem}\label{Lem_spect proj Jac are beta harmonic}
    Let $u \in \Jac^*(\cC \cap B_R)$. Then, for $r^2+|y|^2 < R^2$ we define
    \[
    h_j(r,y) : = r^{-\gamma_j} \int_{\cL_\circ} u(r\omega,y) \psi_j(\omega) \, d\omega, \; j \geq 1.
    \]
    Then $h_j$ is $\beta$-harmonic on $B^+_R$ with $\beta = 2\gamma_j + n_\circ-2>0$. 
    \end{Lem}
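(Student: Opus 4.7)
The plan is to decompose the Jacobi equation spectrally in the link variable $\omega \in \cL_\circ$, then rescale by $r^{-\gamma_j}$ to cancel the singular coefficient at $r=0$. First I would rewrite $\Delta_\cC + |A_\cC|^2$ in the parametrization \eqref{Equ_Parametrize minimal hypercone}. Since $\cC = \cC_\circ \times \RR^k$ is a Riemannian product, $\Delta_\cC u = \Delta_{\cC_\circ} u + \Delta_y u$ and $|A_\cC|^2 = |A_{\cC_\circ}|^2$, while the cone structure of $\cC_\circ$ gives
\[ \Delta_{\cC_\circ} u = \partial_r^2 u + \tfrac{n_\circ-1}{r}\partial_r u + \tfrac{1}{r^2}\Delta_{\cL_\circ} u, \qquad |A_{\cC_\circ}|^2 = r^{-2}|A_{\cL_\circ}|^2. \]
So the Jacobi equation reads $\partial_r^2 u + \tfrac{n_\circ-1}{r}\partial_r u + r^{-2}(\Delta_{\cL_\circ} + |A_{\cL_\circ}|^2) u + \Delta_y u = 0$.

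Next, multiplying by $\psi_j(\omega)$ and integrating over the closed link $\cL_\circ$, self-adjointness of $-(\Delta_{\cL_\circ} + |A_{\cL_\circ}|^2)$ together with the eigenvalue identity $\mu(\gamma_j) = \gamma_j^2 + (n_\circ-2)\gamma_j$ from \eqref{eq:lambda.gamma} produces, for $\tilde h_j(r,y) := \int_{\cL_\circ} u(r\omega,y)\psi_j(\omega)\,d\omega$,
\[ \partial_r^2 \tilde h_j + \tfrac{n_\circ-1}{r}\partial_r \tilde h_j - \tfrac{\mu(\gamma_j)}{r^2} \tilde h_j + \Delta_y \tilde h_j = 0. \]
Substituting $\tilde h_j = r^{\gamma_j} h_j$ and expanding, the coefficient of $r^{-2} h_j$ becomes $\gamma_j(\gamma_j-1) + (n_\circ-1)\gamma_j - \mu(\gamma_j) = 0$, and what remains is precisely the first line of \eqref{Equ_Jac field_beta harmonic equ},
\[ \partial_r^2 h_j + \tfrac{1+\beta}{r}\partial_r h_j + \Delta_y h_j = 0, \qquad \beta := 2\gamma_j + n_\circ-2. \]
Positivity $\beta > 0$ is then immediate from the characterization of $\Gamma^*(\cC_\circ)$ in Lemma \ref{lemm:gamma.star.c}.

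Finally, I would verify the weighted integrability condition in Definition \ref{defi:beta.harmonic}. The identity $h_j^2\, r^{1+\beta} = \tilde h_j^2\, r^{n_\circ-1}$, Cauchy--Schwarz on $\cL_\circ$, and Fubini's theorem (identifying $B^+_\tau$ with the quotient of $\cC \cap B_\tau$ by the $\cL_\circ$-action) give
\[ \int_{B^+_\tau} h_j^2\, r^{1+\beta}\, dr\, dy \leq \int_{\cC \cap B_\tau} u^2 < +\infty \text{ for all } \tau \in (0, R). \]
The gradient bound follows the same way after expanding $\partial_r h_j = -\gamma_j r^{-\gamma_j-1}\tilde h_j + r^{-\gamma_j}\partial_r\tilde h_j$ and $\nabla_y h_j = r^{-\gamma_j}\nabla_y\tilde h_j$; the extra $r^{-2}\tilde h_j^2$ term produced by differentiating the factor $r^{-\gamma_j}$ is exactly controlled by the $r^{-2}u^2$ piece of $\|u\|_{H^1_*(\cC \cap B_{\tau'})}$, which is finite for any $\tau < \tau' < R$ by the Caccioppoli-type estimate of Lemma \ref{Lem_Jac field_Caccioppoli-type Ineq}.

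The argument is essentially a direct computation; the only subtlety to address is the justification of differentiation under the integral sign and of integration by parts on $\cL_\circ$, both routine given that $u$ is smooth away from $\spine \cC$ and $\cL_\circ$ is a closed manifold. There is no real obstacle: the entire reason the substitution works is that the exponent $\gamma_j$ is, by \eqref{eq:lambda.gamma}, chosen so that $r^{-\gamma_j}$ exactly absorbs the singular potential term.
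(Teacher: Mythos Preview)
Your proof is correct and follows essentially the same route as the paper: both write the Jacobi operator in the cone parametrization, project against $\psi_j$ using self-adjointness on the link, and observe that the $r^{-\gamma_j}$ factor cancels the singular potential via $\mu(\gamma_j) = \gamma_j^2 + (n_\circ-2)\gamma_j$. The paper's version is terser---it applies the $\beta$-harmonic operator directly to $h_j$ rather than passing through your intermediate $\tilde h_j$, and dispatches the weighted energy bound in one line by citing the $H^1_*$ norm (your appeal to Caccioppoli is harmless but unnecessary, since $\|u\|_{H^1_*(\cC \cap B_\tau)} < \infty$ is already part of the definition of $\Jac^*$).
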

    \begin{proof}
This is a straightforward computation. Indeed, we have    \begin{align*}
     \left(\partial_r^2 + \tfrac{1+\beta}{r}\partial_r + \Delta_y\right)h   & = \gamma_j(\beta-\gamma_j) r^{-2} h + r^{-\gamma_j} \int_{\cL_\circ}  \left( \partial^2_r u + \tfrac{n_\circ-1}{r} \partial_r u + \Delta_y u\right)  \psi_j \\
    & = \mu(\gamma_j) r^{-2} h - r^{-\gamma_j-2} \int_{\cL_\circ}  \left( \Delta_{\cL_\circ} u +  |A_{\cL_\circ}|^2 u \right)  \psi_j \\
        & = \mu(\gamma_j) r^{-2} h - r^{-\gamma_j-2} \int_{\cL_\circ}  \left( \Delta_{\cL_\circ} \psi_j +  |A_{\cL_\circ}|^2 \psi_j \right) u \\
        & = 0. 
    \end{align*}
    The weighted Dirichlet energy bound follows from the definition of the $H^1_*$ norm. 
\end{proof}
The following lemma relates the translation, rotation, and positive Jacobi fields to the spectral decomposition obtained in Proposition \ref{Prop_Jac field_Decomp}. 

    \begin{Lem}\label{Lem_Jac field_Jac_(trl, rot, pos)}
    The translation and rotation Jacobi fields of $\cC$ (see Definitions \ref{defi:translation.jf}, \ref{defi:rotation.jf}) satisfy
         \begin{align*}
       \Jac_\textnormal{trl}(\cC)& \subset \Jac^*_{0, 0}(\cC)\,, \\
       \Jac_\textnormal{rot}(\cC) & \subset \Jac^*_{1, 0}(\cC)\oplus \Jac^*_{0, 1}(\cC) \,;
     \end{align*}
     cf.\  \eqref{eq:translation.inclusion}, \eqref{eq:rotation.inclusion}. Moreover
     \[
      \Jac_\textnormal{pos}^*(\cC) = \Jac^*_{\gamma_1}(\cC) = \Jac^*_{\gamma_1, 0}(\cC)   \]
      is $1$-dimensional and spanned by $\ r^{\gamma_1}\psi_1(\omega)$. 
    \end{Lem}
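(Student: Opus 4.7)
The plan is to treat the three inclusions separately. The translation and rotation claims will follow from direct computation once one exploits the product structure of $\cC = \cC_\circ \times \RR^k$; the positivity statement will combine the spectral decomposition of Proposition \ref{Prop_Jac field_Decomp} with the Liouville-type theorem of Edelen--Sz\'ekelyhidi \cite{EdelenSzekelyhidi:liouville}. The single structural input driving the calculations is that, under the orthogonal decomposition $\RR^{n+1} = \RR^{n_\circ+1} \oplus \RR^k$, the unit normal of $\cC$ is given by $\nu_\cC(x,y) = (\nu_{\cC_\circ}(x), 0)$.

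For the translation inclusion, write $\mbfv = (\mbfv_\circ, \mbfv_k) \in \RR^{n_\circ+1} \oplus \RR^k$; then $W_\mbfv(x,y) = \langle \mbfv_\circ, \nu_{\cC_\circ}(x)\rangle$ is $y$-independent and degree-$0$ homogeneous in $x$. In polar coordinates $x = r\omega$ this becomes $r^0 \psi(\omega)$ with $\psi(\omega) := \langle \mbfv_\circ, \nu_{\cC_\circ}(\omega)\rangle$ a classical translation Jacobi field on $\cC_\circ$, hence an eigenfunction of $-(\Delta_{\cL_\circ} + |A_{\cL_\circ}|^2)$ with eigenvalue $\mu(0) = 0$. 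Pairing with the constant polynomial $p_0 \equiv 1$ (a degree-$0$, trivially $(n-2)$-harmonic polynomial in $(r,y)$) places $W_\mbfv$ in $\Jac^*_{0,0}(\cC)$; the $H^1_*$-integrability is immediate in our range of dimensions.

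For the rotation inclusion, I decompose $\mbfA \in \mathfrak{so}(n+1)$ in block form with respect to $\RR^{n_\circ+1} \oplus \RR^k$, with diagonal blocks $\mbfA_\circ \in \mathfrak{so}(n_\circ+1)$, $\mbfA_k \in \mathfrak{so}(k)$, and off-diagonal block $B : \RR^k \to \RR^{n_\circ+1}$. Then $\mbfA(x,y) = (\mbfA_\circ x + By,\, -B^T x + \mbfA_k y)$, and pairing against $\nu_\cC$ eliminates the $\RR^k$-component to yield
\[ W_\mbfA(x,y) = \langle \mbfA_\circ x, \nu_{\cC_\circ}(x)\rangle + \sum_{j=1}^k y_j \, \langle B e_j, \nu_{\cC_\circ}(x)\rangle. \]
The first summand is a classical rotation Jacobi field on $\cC_\circ$, degree-$1$ homogeneous in $x$ and hence of the form $r\,\tilde\psi(\omega)$ for an eigenfunction $\tilde\psi$ with eigenvalue $\mu(1) = n_\circ - 1$; so it lies in $\Jac^*_{1,0}(\cC)$. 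Each term in the second sum factors as $\psi_j(\omega) \cdot y_j$, where $\psi_j(\omega) := \langle Be_j, \nu_{\cC_\circ}(\omega)\rangle$ is a translation Jacobi field on $\cC_\circ$ (eigenfunction with eigenvalue $\mu(0) = 0$) and $p_1(r,y) := y_j$ is a degree-$1$ polynomial in $(r,y)$ with no $r$-dependence, trivially $(n-2)$-harmonic; thus each such term lies in $\Jac^*_{0,1}(\cC)$.

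Finally, for the positive Jacobi fields, Proposition \ref{Prop_Jac field_Decomp}(ii) splits $\Jac^*_{\gamma_1}(\cC)$ as a direct sum of $\Jac^*_{\gamma,q}(\cC)$ over $(\gamma,q) \in \Gamma^*(\cC_\circ) \times \ZZ_{\geq 0}$ with $\gamma + q = \gamma_1$. Because $\gamma_1 = \min \Gamma^*(\cC_\circ)$, the only such pair is $(\gamma_1, 0)$, giving $\Jac^*_{\gamma_1}(\cC) = \Jac^*_{\gamma_1, 0}(\cC)$. By the definition of $\Jac^*_{\gamma_1,0}(\cC)$ and the simplicity of the lowest eigenvalue $\mu(\gamma_1) < \mu(\gamma_2)$ of the stability operator on $\cL_\circ$—whose unique positive eigenfunction is $\psi_1$—this space is one-dimensional and spanned by $r^{\gamma_1}\psi_1(\omega)$, so positivity at once gives $\Jac^*_{\gamma_1}(\cC) \subseteq \Jac^*_\textnormal{pos}(\cC)$. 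The reverse inclusion $\Jac^*_\textnormal{pos}(\cC) \subseteq \Jac^*_{\gamma_1}(\cC)$ is the main obstacle: one must show every positive $u \in \Jac^*(\cC)$ is a scalar multiple of $r^{\gamma_1}\psi_1$. I would obtain this by invoking the Liouville theorem for positive Jacobi fields on strictly stable cylindrical minimal cones of Edelen--Sz\'ekelyhidi \cite[Section 2.5]{EdelenSzekelyhidi:liouville}, the $H^1_*$-integrability encoded in $\Jac^*$ being precisely the growth condition needed to apply their result.
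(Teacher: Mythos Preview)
Your proof is correct and follows essentially the same route as the paper. The paper dismisses the translation and rotation inclusions in one line (``clear''), whereas you helpfully spell out the block decomposition; for the positive Jacobi field claim, both you and the paper rely on Edelen--Sz\'ekelyhidi. The only substantive difference is that you invoke their Liouville theorem for positive Jacobi fields as a black box, while the paper---noting that ``their hypotheses are stated slightly differently''---reproduces the argument: it uses the Liouville property for nonnegative $\beta$-harmonic functions to show $h_1$ is constant, then combines a Harnack-type pointwise bound $u \leq C\varrho^{\gamma_1}$ with the $L^2$ growth dichotomy of Proposition~\ref{Prop_Jac field_Decomp}(iv) to force $u \in \Jac^*_{\gamma_1}(\cC)$. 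Your black-box citation is fine in spirit, but be aware that matching the $H^1_*$ hypothesis here to the precise growth condition in \cite{EdelenSzekelyhidi:liouville} is exactly the step the paper felt the need to unpack.
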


    \begin{proof} 
    The first two statements are clear. 
    
    The claim about positive Jacobi fields actually follows from the work of Edelen--Sz\'ekelyhidi \cite[Lemma 2.7]{EdelenSzekelyhidi:liouville}. Since their hypotheses are stated slightly differently we recall their proof here. 
    
    First, note that by Proposition \ref{Prop_Jac field_Decomp} {(ii)} we have $\Jac^*_{\gamma_1}(\cC) = \Jac^*_{\gamma_1,0}(\cC)$. Since 
    \[
    \Jac^*_{\gamma_1,0}(\cC) \subset \Jac^*_\textnormal{pos}(\cC)
    \] is clear from the fact that $\psi_1 > 0$, it suffices to prove the reverse inclusion. To that end, let $u \in \Jac^*_\textrm{pos}(\cC)$ be arbitrary. For $j\geq 1$, recall that
    \begin{equation*} 
        h_j(r, y):= r^{-\gamma_j}\int_{\cL_\circ} u(r\omega, y)\psi_j(\omega)\ d\omega \,.
      \end{equation*}
      is $\beta_j$-harmonic with $\beta_j = 2\gamma_j + n-2>0$ by Lemma \ref{Lem_spect proj Jac are beta harmonic}. Moreover, $h_j$ extends to $\{r=0\}$ and the extension is real analytic in $r^2$ and $y$ by Proposition \ref{Prop_Jac field_beta harmonic}. 
      
      As in \cite[(19)]{EdelenSzekelyhidi:liouville}, the Liouville property for (nonnegative) $\beta$-harmonic functions implies that $h_1(r,y)$ is constant. Thus, we get
    \[
    \int_{\cC \cap B_\varrho} r^{\gamma_1} u \psi_1 = h_1(0,0) \int_0^\varrho r^{n-1+2\gamma_1} \, dr = \frac{h_1(0,0)}{n+2\gamma_1} \varrho^{n+2\gamma_1}. 
    \]
    Using elliptic estimates and positivity of $u$, there is $C=C(\cC) > 0$ with
    \[
u(\varrho \omega,y) \leq C \varrho^{-n} \int_{\cC \cap B_{\varrho/2}(\varrho\omega,y)} u \leq C  \varrho^{-n-\gamma_1} \int_{\cC \cap B_{2\varrho}} r^{\gamma_1} u \psi_1 \leq C \varrho^{\gamma_1} ,
\] 
so
   \[
   \int_{\cC \cap B_\varrho} u^2 \leq C \varrho^{n+2\gamma_1}. 
   \]
   On the other hand, if $u \in \Jac^*_\textrm{pos}(\cC) \setminus \Jac^*_{\gamma_1}(\cC)$ then {(iv)} in Proposition \ref{Prop_Jac field_Decomp} implies that
\[
\varrho^{-n - 2\gamma_1} \int_{\cC \cap B_\varrho} u^2 \to \infty
\]
as $\varrho\to\infty$, a contradiction. Thus, $u \in \Jac^*_{\gamma_1}(\cC)$. However, $\Jac^*_{\gamma_1}(\cC) = \Jac^*_{\gamma_1,0}(\cC)$ by {(ii)} in Proposition \ref{Prop_Jac field_Decomp}. This completes the proof. 
    \end{proof}

\begin{Rem} \label{rema:strongly.integrable}
It follows from Lemma \ref{Lem_Jac field_Jac_(trl, rot, pos)} that $\cC_\circ$ is strongly integrable (Definition \ref{defi:strongly.integrable}) if and only if
\[ \Jac_\textnormal{trl}(\cC) = \Jac^*_{0,0}(\cC) \text{ and } \Jac_\textnormal{rot}(\cC) = \Jac^*_{1,0}(\cC) \oplus \Jac^*_{0,1}(\cC). \] 
\end{Rem}

\subsection{The decay order}

We introduce one final definition, whose study we defer to later in the section. We retain all   conventions and assumptions on
\[ \cC = \cC_\circ \times \RR^k \]
from the previous section, namely, that $\cC_\circ \subset \RR^{n_\circ+1}$ is a regular, strictly stable minimal hypercone with link $\cL_\circ$ and associated spectral quantities $\gamma_j$, $w_j$, $\psi_j$.

\begin{Def} \label{Def_doubling constant} \label{defi:linear.doubling.constant}
    Consider a nonzero Jacobi field $u \in \Jac(\cC \cap B_{2\varrho}) \cap L^2(\cC \cap B_{2\varrho})$. For $\kappa \geq 0$, we define the $\kappa$-adjusted \textbf{decay order} of $u$ at scale $\varrho > 0$ to be
    \begin{align*}
      G^\kappa_\cC(u; \varrho) 
      	& := 1 + \frac{1}{2}\log_2 \left((2\varrho)^{-n-2}\int_{\cC \cap B_{2\varrho}} u^2 + \kappa \cdot (2\varrho)^2\right)  \\
      	& \qquad - \frac{1}{2}\log_2\left(\varrho^{-n-2}\int_{\cC \cap B_{\varrho}} u^2 + \kappa \cdot \varrho^2 \right).
    \end{align*}
    We'll write $G_\cC(u;\varrho)$ for $G^0_\cC(u;\varrho)$. 
  \end{Def}
  
\begin{Rem} \label{rema:kappa.linear}
	The constant $\kappa \geq 0$ is an auxiliary damping factor that will be necessary in the subsequent Riemannian setting. We encourage the first-time reader to read this section keeping in mind that $\kappa = 0$ suffices for all applications to $\RR^{n+1}$.
\end{Rem}

It follows from the definition above that $G_\cC(u; \varrho) = \gamma$ if $u\in \Jac_\gamma(\cC) \cap L^2(\cC \cap B_{2\varrho})$. See also Corollary \ref{Cor_Jac field_Freq Monoton} below.

For non-homogeneous $L^2$ Jacobi fields $u$, we will prove a crucial monotonicity property of the decay order. First note that:

\begin{Lem} \label{Lem_Jac field_Freq Monoton}
      Let $u\in \Jac^*(\cC \cap B_R) \setminus \{0\}$, $\kappa \geq 0$. The auxiliary function
      \[
        H^\kappa(t):= \log\left( \int_{\cC \cap B_{e^t}} u^2 + \kappa \cdot e^{(n+4)t} \right)
      \]
      is well-defined and smooth and satisfies:
      \begin{enumerate}
      	\item[(i)] $H^\kappa$ is a $C^\infty$ convex function in $(-\infty, \log R)$,
	\item[(ii)] $(H^\kappa)'$ equals the constant $2n+d \in \RR$ on some sub-interval of $(-\infty, \log R)$ if and only if $u$ is the restriction to $B_R$ of a $\Jac_d^*(\cC)$ element, with $d=2$ if $\kappa > 0$.
      \end{enumerate}
\end{Lem}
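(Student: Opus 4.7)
The plan is to use the spectral decomposition of $u$ given by Proposition \ref{Prop_Jac field_Decomp}(iv) to rewrite the argument of the logarithm as an absolutely convergent Dirichlet-type series of non-negative exponentials in $t$, and then extract smoothness, convexity, and the rigidity case from routine calculus together with a single application of the Cauchy--Schwarz inequality.

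First I would set $D(\varrho) := \int_{\cC \cap B_\varrho} u^2$ and invoke Proposition \ref{Prop_Jac field_Decomp}(iv) to expand
\[
D(\varrho) = \sum_{d \in \Gamma^*(\cC)} c_d^2 \, \varrho^{n+2d},
\]
where the series converges absolutely on $(0, R)$ since $u \in L^2(\cC \cap B_\varrho)$ for each $\varrho < R$, and every exponent $n+2d$ is strictly positive (because $d > -(n_\circ-2)/2$, which in fact gives $n+2d > k+2 > 0$). Setting
\[
F(t) := D(e^t) + \kappa \, e^{(n+4)t} = \sum_{d \in \Gamma^*(\cC)} c_d^2 \, e^{(n+2d)t} + \kappa \, e^{(n+4)t}
\]
and lumping the $\kappa$-term in as an extra ``mode'' at exponent $n+4$, I would write $F(t) = \sum_i a_i e^{b_i t}$ with $a_i \geq 0$. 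Absolute convergence on compact subsets of $(-\infty, \log R)$ survives termwise differentiation of any order, so $F$ is real-analytic; and $F > 0$ holds either because some $c_d \neq 0$ (as $u \not\equiv 0$) or directly because $\kappa > 0$, so $H^\kappa = \log F$ is well-defined and smooth on $(-\infty, \log R)$.

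For (i), convexity of $\log F$ is equivalent to $(F')^2 \leq F'' F$; the identities $F' = \sum_i a_i b_i e^{b_i t}$ and $F'' = \sum_i a_i b_i^2 e^{b_i t}$ reduce this to the Cauchy--Schwarz inequality
\[
\Bigl( \sum_i (a_i e^{b_i t})^{1/2} \cdot b_i \, (a_i e^{b_i t})^{1/2} \Bigr)^2 \leq \Bigl( \sum_i a_i e^{b_i t} \Bigr) \Bigl( \sum_i a_i b_i^2 e^{b_i t} \Bigr),
\]
applied with weights $a_i e^{b_i t} \geq 0$.

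For (ii), if $(H^\kappa)'$ is constant on some sub-interval, the real-analyticity of $H^\kappa$ propagates this to all of $(-\infty, \log R)$, and in particular equality holds throughout in the above Cauchy--Schwarz. The equality case forces all $b_i$ with $a_i \neq 0$ to share a single value, i.e., only one exponent is active. When $\kappa = 0$, this means exactly one $d_0 \in \Gamma^*(\cC)$ has $c_{d_0} \neq 0$, so $u \in \Jac_{d_0}^*(\cC)$ with slope $n + 2 d_0$. When $\kappa > 0$, the mode at exponent $n+4$ is always present, so every active $d$ must satisfy $n + 2d = n+4$, forcing $d = 2$ and $u \in \Jac_2^*(\cC)$ with slope $n+4$. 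There is no serious analytical obstacle here: the only point requiring care is the term-by-term differentiation of the Dirichlet series, which is justified by the $\Jac^*$-integrability hypothesis through Proposition \ref{Prop_Jac field_Decomp}(iv).
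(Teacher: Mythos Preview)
Your proof is correct and follows essentially the same approach as the paper: both use the spectral expansion from Proposition~\ref{Prop_Jac field_Decomp}(iv) to write $H^\kappa(t)$ as the logarithm of a positive Dirichlet-type series, then establish convexity via Cauchy--Schwarz and read off the rigidity case from its equality condition. The only cosmetic differences are that the paper folds the $\kappa$-term into the $d=2$ coefficient (using $2 \in \Gamma^*(\cC)$) rather than treating it as a separate mode, and it invokes the pointwise equality case of Cauchy--Schwarz directly rather than appealing to real-analyticity to propagate constancy; both variants are equivalent here.
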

\begin{proof}
      By Proposition \ref{Prop_Jac field_Decomp} {(iv)} we may write
      \[
      \int_{\cC \cap B_\varrho} u^2 = \sum_{d\in \Gamma^*(\cC)} c_d^2 \varrho^{n+2d}\,
      \]
      where this series converges for all $\varrho \in (0,R)$. Let\footnote{Recall that $2 \in \Gamma^*(\cC)$ by Remark \ref{rema:gamma.star.c}.}
      \[
      	\tilde c_d = \begin{cases} c_d & \text{ if } d \neq 2, \\ \sqrt{c_2^2+\kappa} & \text{ if } d = 2. \end{cases}
      \]
      We find that
       \[
        H^\kappa(t) = \log\left( \sum_{d\in \Gamma^*(\cC)} \tilde c_d^2 e^{(n+2d)t} \right)
      \]
      is $C^\infty$ in $t\in (-\infty, \log R)$ and 
      \begin{align*}
        (H^\kappa)'(t) & = \left( \sum_{d\in \Gamma^*(\cC)} \tilde c_d^2 e^{(n+2d)t} \right)^{-1}\left( \sum_{d\in \Gamma^*(\cC)} \tilde c_d^2 e^{(n+2d)t}(n+2d) \right)\,; \\
        (H^\kappa)''(t) & = \left( \sum_{d\in \Gamma^*(\cC)} \tilde c_d^2 e^{(n+2d)t} \right)^{-2}\left[\left( \sum_{d\in \Gamma^*(\cC)} \tilde c_d^2 e^{(n+2d)t} \right)\left( \sum_{d\in \Gamma^*(\cC)}\tilde  c_d^2 e^{(n+2d)t}(n+2d)^2 \right) \right. \\
        & \;\;\;\;\;\;\;\;\;\;\;\;\;\;\;\;\;\;\;\;\;\;\;\;\;\;\;\;\;\;\;\;\;\;\;\;\;\;\;\;\;\;\;\;\;\;\;\;\;\;\;\;\;\;\;\;\;\;\;\;\;\;\;\;\;\;\;\;\;\;\;\;\;\;\;\;\; - \left. \left( \sum_{d\in \Gamma^*(\cC)} \tilde c_d^2 e^{(n+2d)t}(n+2d) \right)^2 \right]\,.        
      \end{align*}
      By Cauchy-Schwarz, $(H^\kappa)''(t)\geq 0$ and equality holds for some $t$ if and only if all the $\tilde c_d$ but one vanish. This easily implies both {(i)} and {(ii)}.
\end{proof}

\begin{Cor}[Decay order monotonicity] \label{Cor_Jac field_Freq Monoton}
	Let $u \in \Jac^*(\cC \cap B_R) \setminus \{0\}$, $\kappa \geq 0$. Then:
	\begin{enumerate}
		\item[(i)] $\varrho \mapsto G^\kappa_\cC(u; \varrho)$ is non-decreasing in $(0, R/2)$. 
		\item[(ii)] $\varrho \mapsto G^\kappa_\cC(u; \varrho)$ equals the constant $d \in \RR$ on some sub-interval of $(0, R/2)$ if and only if $u$ is the restriction to $B_R$ of a $\Jac^*_d(\cC)$ element, with $d = 2$ if $\kappa > 0$.
		\item[(iii)] The following limit exists, 
		\[
			G^\kappa_\cC(u; 0):= \lim_{\varrho \to 0} G^\kappa_\cC(u; \varrho) = \begin{cases}
				d_0(u)  & \text{ if } \kappa = 0, \\
				\min\{2,d_0(u)\} & \text{ if } \kappa > 0 ,
        \end{cases}
      \] 
		with $d_0(u) \in \Gamma^*(\cC)$ as in Definition \ref{defi:d.0.u} below.
	\end{enumerate}
\end{Cor}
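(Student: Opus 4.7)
The plan is to reduce the corollary directly to Lemma \ref{Lem_Jac field_Freq Monoton} by recognizing $G^\kappa_\cC(u;\varrho)$ as a normalized discrete-scale difference of $H^\kappa$.

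First, I would set $t = \log \varrho$ and rewrite the two arguments of the $\log_2$ factors appearing in $G^\kappa_\cC(u;\varrho)$ as
\[ (2\varrho)^{-n-2}\int_{\cC \cap B_{2\varrho}} u^2 + \kappa(2\varrho)^2 = (2\varrho)^{-n-2}\, e^{H^\kappa(t+\log 2)}, \]
and similarly for the denominator with $e^{H^\kappa(t)}$. Taking the ratio and simplifying yields the key identity
\[ G^\kappa_\cC(u;\varrho) = \frac{H^\kappa(t+\log 2) - H^\kappa(t)}{2\log 2} - \frac{n}{2}. \]
Part (i) follows immediately: the $C^\infty$ convexity of $H^\kappa$ from Lemma \ref{Lem_Jac field_Freq Monoton}(i) implies that the discrete increment $t \mapsto H^\kappa(t+\log 2) - H^\kappa(t)$ is non-decreasing on $(-\infty, \log R - \log 2)$, and hence $\varrho \mapsto G^\kappa_\cC(u;\varrho)$ is non-decreasing on $(0, R/2)$.

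For (ii), if $G^\kappa_\cC(u;\cdot)$ is constant on some subinterval of $(0, R/2)$, the identity above forces $H^\kappa(t+\log 2) - H^\kappa(t)$ to be constant on a subinterval. Differentiating gives $(H^\kappa)'(t+\log 2) = (H^\kappa)'(t)$ there, which combined with the monotonicity of $(H^\kappa)'$ (convexity of $H^\kappa$) forces $(H^\kappa)'$ to be constant on an interval. Since the proof of Lemma \ref{Lem_Jac field_Freq Monoton} explicitly represents $H^\kappa$ as $\log\bigl(\sum \tilde c_d^2 e^{(n+2d)t}\bigr)$, $H^\kappa$ is real analytic and so local constancy of $(H^\kappa)'$ propagates to all of $(-\infty, \log R)$; then Lemma \ref{Lem_Jac field_Freq Monoton}(ii) identifies $u$ as the restriction to $B_R$ of a $\Jac^*_d(\cC)$ element, with the constraint $d = 2$ enforced if $\kappa > 0$. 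The labeling of $d$ is then verified by substituting a single homogeneous summand of degree $d$ into the $G^\kappa_\cC$ formula and checking that the value is $d$.

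For (iii), monotonicity from (i) guarantees that $G^\kappa_\cC(u;0):=\lim_{\varrho \to 0^+} G^\kappa_\cC(u;\varrho)$ exists in $[-\infty, +\infty]$. To evaluate it I would invoke Proposition \ref{Prop_Jac field_Decomp}(iv) to expand
\[ \int_{\cC \cap B_\varrho} u^2 = \sum_{d \in \Gamma^*(\cC)} c_d^2 \varrho^{n+2d}, \]
let $d_0(u)$ be the minimal $d \in \Gamma^*(\cC)$ with $c_d \neq 0$ (which is well-defined since $\Gamma^*(\cC)$ is discrete by Remark \ref{rema:gamma.star.c}), and observe that when $\kappa > 0$ the damping $\kappa \varrho^{n+4}$ acts as an extra summand at effective degree $2$. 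Factoring out the dominant scale $\varrho^{n+2\min\{d_0(u),2\}}$ (respectively $\varrho^{n+2 d_0(u)}$ when $\kappa = 0$) from both numerator and denominator of the ratio inside the $\log_2$, all subdominant contributions vanish as $\varrho \to 0^+$ and the ratio converges to $2^{2 d_0(u)}$ (resp. $2^{2\min\{d_0(u),2\}}$). Unwinding the $\tfrac{1}{2}\log_2$ and the additive constant yields the two claimed limiting values.

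The main delicate point is (iii): one must verify that the discrete $L^2$-expansion together with the $\kappa\varrho^{n+4}$ term indeed combine so that the leading exponent is exactly $n + 2\min\{d_0(u), 2\}$, with no cross-interference between the $\kappa$ term and a potential resonant summand at $d = 2$ in the expansion (the case $d_0(u) = 2$ with $\kappa > 0$ merely adjusts $c_2^2 \rightsquigarrow c_2^2 + \kappa$, which does not affect the limit). Parts (i) and (ii) are by contrast essentially formal corollaries of Lemma \ref{Lem_Jac field_Freq Monoton} via the scale-shift identity above.
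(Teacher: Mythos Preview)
Your proof is correct and follows essentially the same route as the paper: both derive the identity $G^\kappa_\cC(u;\varrho) = \tfrac{H^\kappa(t+\log 2)-H^\kappa(t)}{2\log 2} - \tfrac{n}{2}$ with $t=\log\varrho$, then read off (i) from convexity of $H^\kappa$, (ii) from the rigidity case of Lemma \ref{Lem_Jac field_Freq Monoton}(ii), and (iii) from the leading-order expansion of $\int_{\cC\cap B_\varrho} u^2$. Your invocation of real analyticity in (ii) is harmless but unnecessary, since once $(H^\kappa)'(t+\log 2)=(H^\kappa)'(t)$ on a subinterval, monotonicity of $(H^\kappa)'$ already forces it to be constant on an interval of length $\geq \log 2$, which suffices for Lemma \ref{Lem_Jac field_Freq Monoton}(ii) to apply directly.
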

     
 \begin{Def} \label{defi:d.0.u}
 	For all $u \in \Jac^*(\cC \cap B_R) \setminus \{0\}$, we define
	\[ d_0(u) = \min \{ d : u_d \neq 0 \text{ in \eqref{Equ_Jac field_L^2 decomp of u}} \}. \]
	This is well-defined by {(iii)} of Proposition \ref{Prop_Jac field_Decomp}.
 \end{Def}

    \begin{proof}[Proof of Corollary \ref{Cor_Jac field_Freq Monoton}]
      Recall that 
      \begin{align*}
        & G^\kappa_\cC(u; \varrho) \\
        & = \frac12 \log_2\left((2\varrho)^{-n-2}\int_{\cC \cap B_{2\varrho}} u^2 + \kappa \cdot (2\varrho)^2 \right) - \frac12 \log_2\left(\varrho^{-n-2}\int_{\cC \cap B_{\varrho}} u^2 + \kappa \cdot \varrho^2 \right) + 1 \\
        & = \frac12 \left( - n - 2 + \frac{H^\kappa(\log(2\varrho))-H^\kappa(\log(\varrho))}{\log 2}\right) + 1 \,,
      \end{align*}
      where $H^\kappa$ is as in Lemma \ref{Lem_Jac field_Freq Monoton}. By {(i)} in Lemma \ref{Lem_Jac field_Freq Monoton}, \[
        \frac{d}{d\varrho} G^\kappa_\cC(u; \varrho) = \frac{ (H^\kappa)'(\log(2\varrho))- (H^\kappa)'(\log(\varrho))}{2\varrho \log 2} \geq 0 \,,
      \]
      since $(H^\kappa)'$ is non-decreasing. This also gives {(ii)} thanks to {(ii)} in Lemma \ref{Lem_Jac field_Freq Monoton}.

      We turn to {(iii)}. It follows from our definition of $d_0 = d_0(u)$ and from \eqref{Equ_Jac field_L^2 decomp of u} that
      \[ H^0(\log(\varrho)) = \log(c_{d_0}^2 \varrho^{n+2d_0}(1+o(1))) \text{ as } \varrho \to 0. \]
      Therefore, 
      \[
        \lim_{\varrho \to 0} G^0_\cC(u; \varrho) = \frac12 \left( - n+ \frac{(n+2d_0)\log 2}{\log 2}\right) = d_0 \,.
      \]
      The argument when $\kappa>0$ is similar, so this completes the proof.
    \end{proof}
    
We recall that $G_\cC(u;\varrho) = G^0_\cC(u; \varrho)$. This is all we will need to discuss in the remainder of this section.\footnote{We will always know that $\kappa = 0$ whenever the subsequent results are invoked.} We introduce suitably translated and rescaled decay orders.

\begin{Def}
	Let $u \in \Jac^*(\cC \cap B_2) \setminus \{0\}$, $\mbfy \in \spine \cC \cap B_2$, and $\varrho \in (0, 2-|\mbfy|)$. We define
	\begin{align*}
		G_\cC(u; \mbfy, \varrho) & := G_\cC(u(\mbfy + \cdot); \varrho)\,; \\
		G_\cC(u; \mbfy) & := G_\cC(u; \mbfy, 0) := G_\cC(u(\mbfy + \cdot); 0)\,.
	\end{align*}
\end{Def}

\begin{Lem} \label{lemm:doubling.upper.semicontinuity}
$G_\cC(u; \mbfy)$ is upper-semi-continuous in convergence of $\mbfy\in \spine \cC\cap B_2$ and $u\in L^2(\cC \cap B_\tau(\mbfy))$, $\tau > 0$.
\end{Lem}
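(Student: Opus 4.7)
The plan is to exploit the monotonicity of $\varrho \mapsto G_\cC(u;\mbfy,\varrho)$ established in Corollary \ref{Cor_Jac field_Freq Monoton}. Since $\mbfy \in \spine \cC$, translation by $\mbfy$ preserves $\cC$ as a set and commutes with the Jacobi operator, so $u(\mbfy+\cdot) \in \Jac^*(\cC \cap B_{2-|\mbfy|})$ whenever $u \in \Jac^*(\cC \cap B_2)$. Monotonicity then gives
\[
G_\cC(u;\mbfy) = \inf_{\varrho > 0 \text{ small}} G_\cC(u;\mbfy,\varrho),
\]
and since an infimum of continuous functions is upper semi-continuous, the task reduces to showing that, for each fixed $\varrho > 0$, the map $(u,\mbfy) \mapsto G_\cC(u;\mbfy,\varrho)$ is continuous at every target point where $u_0 \not\equiv 0$.

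For the continuity step, note that $G_\cC(u;\mbfy,\varrho)$ is a fixed logarithmic combination of $\int_{\cC \cap B_\varrho(\mbfy)} u^2$ and $\int_{\cC \cap B_{2\varrho}(\mbfy)} u^2$. Under $\mbfy_i \to \mbfy_0$ together with $L^2$-convergence $u_i \to u_0$ on a common neighborhood of $\mbfy_0$, these integrals converge to their $(u_0, \mbfy_0)$ analogues by translation-continuity of the $L^2$ norm. The logarithm is then continuous, provided the limiting integrals are strictly positive. This non-vanishing is the main obstacle, and I would handle it by applying Proposition \ref{Prop_Jac field_Decomp}(iv) to the translated Jacobi field $\tilde u_0(x) := u_0(\mbfy_0+x) \in \Jac^*(\cC \cap B_{2-|\mbfy_0|})$: the identity $\int_{\cC \cap B_\varrho} \tilde u_0^2 = \sum_{d \in \Gamma^*(\cC)} c_d^2 \varrho^{n+2d}$ implies that vanishing at any positive $\varrho$ forces all $c_d = 0$, hence $u_0 \equiv 0$, contradicting $u_0 \not\equiv 0$. (Equivalently one could invoke unique continuation for the elliptic Jacobi equation on the connected regular part of $\cC$.)

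To conclude, given a convergent sequence $(\mbfy_i, u_i) \to (\mbfy_0, u_0)$ and any $\epsilon > 0$, I would pick $\varrho_0 > 0$ with $G_\cC(u_0;\mbfy_0,\varrho_0) \leq G_\cC(u_0;\mbfy_0) + \epsilon$; then combine the continuity at $\varrho_0$ just established with the monotonicity bound $G_\cC(u_i;\mbfy_i) \leq G_\cC(u_i;\mbfy_i,\varrho_0)$ to obtain
\[
\limsup_{i \to \infty} G_\cC(u_i;\mbfy_i) \leq G_\cC(u_0;\mbfy_0,\varrho_0) \leq G_\cC(u_0;\mbfy_0) + \epsilon,
\]
and finally let $\epsilon \to 0$.
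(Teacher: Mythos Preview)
Your proof is correct and follows exactly the same approach as the paper's one-line argument, which simply reads ``This follows from the fact that $\varrho \mapsto G_\cC(u; \varrho)$ is non-decreasing (Corollary \ref{Cor_Jac field_Freq Monoton}).'' You have spelled out the standard details underlying that sentence: monotonicity gives $G_\cC(u;\mbfy) = \inf_\varrho G_\cC(u;\mbfy,\varrho)$, each finite-scale quantity is continuous in $(u,\mbfy)$, and an infimum of continuous functions is upper semi-continuous.
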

\begin{proof}
This follows from the fact that  $\varrho \mapsto G_\cC(u; \varrho)$ is non-decreasing (Corollary \ref{Cor_Jac field_Freq Monoton}). 
\end{proof}

    Next we investigate the relationship between the spectral decomposition of Jacobi fields and the decay order. We agree on the following notation. If $f$ is an $\RR$-valued real-analytic function on $U \subset \RR^k$, and $y_0 \in U$, we'll write
    \[ \Ord_f(y_0) \]
    for the degree of the lowest non-zero term in the Taylor expansion of $f$ about $y_0$. We will apply this to $\beta$-harmonic $h : B_R^+ \to \RR$ as follows. Recall that  $h$ extends to $\{r=0\}$ by Proposition \ref{Prop_Jac field_beta harmonic}. We may then write
    \[ \Ord_{h(0, \cdot)}(y_0) \]
to mean what was defined above.  

    \begin{Lem} \label{Lem_Jac field_N_C(u)=min(gamma_j+ord_h)}
      Let $u\in \Jac^*(\cC \cap B_R) \setminus \{0\}$, and define $h_j$, $j \geq 1$, as in Lemma \ref{Lem_spect proj Jac are beta harmonic}. Then, for every $\mbfy_0=(0,y_0)\in \spine \cC \cap B_R$ we have 
      \begin{align}
        G_\cC(u; \mbfy_0) = \inf\{\gamma_j+\Ord_{h_j(0, \cdot)}(y_0): j\geq 1\} \,. \label{Equ_Jac field_N_C(u)=min(gamma+ord)}
      \end{align}
    \end{Lem}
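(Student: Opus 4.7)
The plan is to reduce the statement to an identity $d_0(\tilde u) = \inf_j\{\gamma_j + \Ord_{\tilde h_j(0,\cdot)}(0)\}$ for the translated Jacobi field $\tilde u(r\omega,y) := u(r\omega, y+y_0)$, and then to read off this identity from the explicit spectral decomposition of $\tilde u$ furnished by Proposition \ref{Prop_Jac field_Decomp}. Since translation in the spine direction is an isometry of $\cC$, the function $\tilde u$ lies in $\Jac^*(\cC \cap B_{R-|y_0|})$, and by Corollary \ref{Cor_Jac field_Freq Monoton} (iii) (applied with $\kappa = 0$) we have $G_\cC(u;\mbfy_0) = G_\cC(\tilde u;0) = d_0(\tilde u)$. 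The associated tangential averages $\tilde h_j(r,y) = r^{-\gamma_j}\int_{\cL_\circ} \tilde u(r\omega,y)\psi_j(\omega)\,d\omega$ satisfy $\tilde h_j(r,y) = h_j(r,y+y_0)$, and by Lemma \ref{Lem_spect proj Jac are beta harmonic} each $\tilde h_j$ is $\beta_j$-harmonic with $\beta_j = 2\gamma_j + n_\circ - 2 > 0$; in particular $\Ord_{\tilde h_j(0,\cdot)}(0) = \Ord_{h_j(0,\cdot)}(y_0)$.

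Next, I would expand $\tilde u$ spectrally in the $\omega$-variable and in total $(r,y)$-degree. For each $j$, Proposition \ref{Prop_Jac field_beta harmonic} lets me write $\tilde h_j = \sum_{q \geq 0} Q^{(j)}_q$, where $Q^{(j)}_q$ is the degree-$q$ homogeneous part (in the variables $r,y$ with both given weight one) and is itself $\beta_j$-harmonic, taking the form of a polynomial in $r^2$ and $y$. Plugging in, the $L^2$-convergent decomposition becomes
\begin{equation*}
    \tilde u(r\omega,y) = \sum_{j\geq 1}\sum_{q\geq 0} r^{\gamma_j}\psi_j(\omega)\, Q^{(j)}_q(r,y),
\end{equation*}
in which the $(j,q)$-term belongs to $\Jac^*_{\gamma_j,q}(\cC)$ and the sum of all $(j,q)$ with $\gamma_j + q = d$ is the component $\tilde u_d \in \Jac^*_d(\cC)$ appearing in Proposition \ref{Prop_Jac field_Decomp} (ii) and (iv).

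The final step is to identify $d_0(\tilde u)$ as $\min_{j,q} (\gamma_j + q)$ ranging over indices with $Q^{(j)}_q \not\equiv 0$, and then simplify the inner minimum over $q$. Orthonormality of the $\psi_j$'s in $L^2(\cL_\circ)$ gives that $\tilde u_d = 0$ if and only if $Q^{(j)}_{d-\gamma_j} \equiv 0$ for every $j$ with $d - \gamma_j \in \ZZ_{\geq 0}$, so $d_0(\tilde u) = \min\{\gamma_j + q : Q^{(j)}_q \not\equiv 0\}$. Now the crucial observation is the uniqueness clause in Proposition \ref{Prop_Jac field_beta harmonic}: a homogeneous $\beta_j$-harmonic function is uniquely determined by its trace on $\{r=0\}$, so $Q^{(j)}_q \equiv 0$ is equivalent to $Q^{(j)}_q(0,\cdot) \equiv 0$, and the latter is exactly the condition that the degree-$q$ term in the Taylor expansion of $y \mapsto \tilde h_j(0,y)$ at $0$ vanishes. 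Consequently $\min\{q : Q^{(j)}_q \not\equiv 0\} = \Ord_{\tilde h_j(0,\cdot)}(0)$ for each $j$, whence
\begin{equation*}
    d_0(\tilde u) = \inf_{j\geq 1}\bigl(\gamma_j + \Ord_{\tilde h_j(0,\cdot)}(0)\bigr) = \inf_{j\geq 1}\bigl(\gamma_j + \Ord_{h_j(0,\cdot)}(y_0)\bigr),
\end{equation*}
as desired. The only genuinely delicate point is this last uniqueness step for homogeneous $\beta$-harmonic functions, but it has already been supplied by Proposition \ref{Prop_Jac field_beta harmonic}, so the argument is otherwise routine bookkeeping of the spectral decomposition.
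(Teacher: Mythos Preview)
Your proof is correct and follows essentially the same approach as the paper. Both arguments reduce to $y_0=0$, identify $G_\cC(u;\mbfy_0)$ with $d_0$ via Corollary \ref{Cor_Jac field_Freq Monoton}, decompose into the double sum indexed by $(j,q)$ (the paper writes $p_{d,j}$ with $d=\gamma_j+q$ where you write $Q^{(j)}_q$), and conclude by orthonormality of the $\psi_j$ together with the uniqueness clause of Proposition \ref{Prop_Jac field_beta harmonic}; you are somewhat more explicit about invoking this last uniqueness step, which the paper leaves implicit.
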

    \begin{proof}
    It suffices to consider $\mbfy_0 =\orig = (0,0)$. By Proposition \ref{Prop_Jac field_Decomp} {(iii)}, \[
        u = \sum_{d\in \Gamma^*(\cC)} u_d
      \]
      in $L^2$. Observe that 
      \[
       p_{d, j}(r, y):= r^{-\gamma_j}\int_{\cL_\circ} u_d(r\omega, y)\psi_j(\omega)\ d\omega 
      \]
      either vanishes or is a degree-$(d-\gamma_j)$ homogeneous polynomial. Using elliptic estimates to exchange the sum and integral, we see that
      \[
      h_j(r,y) = \sum_{d \in \Gamma^*(\cC)} p_{d,j}(r,y)
      \]
      so
      \[
      \gamma_j + \Ord_{h_j(0,\cdot)}(0) = \inf\{d : p_{d,j}\neq 0, \; d\in\Gamma^*(\cC)\}. 
      \]
      
      On the other hand, there must be some $j$ so that $p_{d_0,j} \neq 0$ by completeness of the $\psi_j$. Thus we find
      \[
      \inf\{\gamma_j + \Ord_{h_j(0,\cdot)}(0) : j\geq 1\}=\inf\{d : p_{d,j}\neq 0, \; d\in\Gamma^*(\cC), \; j \geq 1\} = d_0.
      \]
      This finishes the proof.
    \end{proof}

We now arrive at our fundamental ``splitting'' result for homogeneous Jacobi fields, Proposition \ref{Prop_Jac field_Max deg mod transl}, according to high decay order points. This is akin to cone-splitting along high-density points.

    \begin{Prop} \label{Prop_Jac field_Max deg mod transl}
        Let $u\in \Jac^*_1(\cC) \cap (\Jac^*_{0,1}(\cC))^\perp \setminus \{0\}$, where $\perp$ is with respect to $L^2(\cC \cap B_1)$. Denote
        \[ \cJ_u:= \operatorname{span} \{ u \} + \Jac^*_{0,0}(\cC) +\Jac_\textnormal{pos}^*(\cC). \]
        Then:
        \begin{enumerate}
        	\item[(i)] The following is a linear subspace of  $\spine \cC$:
        		\[ V_u := \{\mbfy_0 \in \spine \cC : G_\cC(w; \mbfy_0) \geq 1 \text{ for some } w \in \cJ_u \setminus \{0\} \}. \]
        \item[(ii)] $u$ is invariant under translations in $V_u$, i.e., 
        		\[ u(\mbfy_0 + \cdot) = u \text{ for all } \mbfy_0 \in V_0. \]
        \end{enumerate}
    \end{Prop}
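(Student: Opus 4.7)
The plan is to prove (ii) first and then deduce (i). The key tool is the spectral framework for decay orders: for any witness $w\in\cJ_u\setminus\{0\}$ at $\mbfy_0=(0,y_0)\in V_u$, write $w=au+v+b\xi$ with $a\in\RR$, $v\in\Jac^*_{0,0}(\cC)$, and $\xi=cr^{\gamma_1}\psi_1(\omega)\in\Jac^*_\textnormal{pos}(\cC)$ (Lemma \ref{Lem_Jac field_Jac_(trl, rot, pos)}). Define the $\psi_j$-projections
\[ h_j^w(r,y):=r^{-\gamma_j}\int_{\cL_\circ}w(r\omega,y)\psi_j(\omega)\,d\omega, \]
which are $\beta_j$-harmonic by Lemma \ref{Lem_spect proj Jac are beta harmonic}. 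By Lemma \ref{Lem_Jac field_N_C(u)=min(gamma_j+ord_h)}, the hypothesis $G_\cC(w;\mbfy_0)\geq 1$ is equivalent to $\Ord_{h_j^w(0,\cdot)}(y_0)\geq 1-\gamma_j$ for every $j\geq 1$.

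For (ii), I would first show $v=0$ by inspecting the indices $j$ with $\gamma_j=0$: then $h_j^\xi=0$ (since $\psi_j\neq\psi_1$ as $\mu(\gamma_1)<0\neq\mu(0)$) and $h_j^u=0$ (any nonzero $\psi_j$-contribution to $u$ at $\gamma=0$ would lie in $\Jac^*_{0,1}(\cC)$, contradicting the orthogonality hypothesis), so $h_j^w=h_j^v$ is a constant which $\Ord\geq 1$ forces to vanish; since $\Jac^*_{0,0}(\cC)$ is spanned by these $\psi_j$, $v=0$. Next, $a\neq 0$: otherwise $w=b\xi\neq 0$ is homogeneous of degree $\gamma_1<1$ and $y$-invariant, violating $G_\cC(w;\mbfy_0)\geq 1$. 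Now decompose $u=\sum_{\gamma'+q'=1,\,(\gamma',q')\neq(0,1)}u_{\gamma',q'}$ via Proposition \ref{Prop_Jac field_Decomp}, with $u_{\gamma',q'}(r\omega,y)=r^{\gamma'}\sum_\psi\psi(\omega)p^{(\gamma',\psi)}(r,y)$, and Taylor-expand
\[ p^{(\gamma',\psi)}(r,y+y_0)=p^{(\gamma',\psi)}(r,y)+\sum_{s\geq 1}\frac{1}{s!}(y_0\cdot\nabla_y)^s p^{(\gamma',\psi)}(r,y), \]
so that $u(\mbfy_0+\cdot)=u+u_\textnormal{lower}$. The degree-$0$ homogeneous piece of $w(\mbfy_0+\cdot)=au+au_\textnormal{lower}+b\xi$ comes only from the $s=1$ contributions inside $au_\textnormal{lower}$ (since $u$ is degree $1$ and $\xi$ is degree $\gamma_1<0$). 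Under the orthogonal decomposition $\Jac^*_0(\cC)=\bigoplus_{\gamma+q=0}\Jac^*_{\gamma,q}(\cC)$ these contributions separate across distinct $\gamma'$, and within each $\Jac^*_{\gamma',-\gamma'}(\cC)$ they further separate across eigenfunctions $\psi$ by $L^2(\cL_\circ)$-orthogonality. Combined with $a\neq 0$, this forces $(y_0\cdot\nabla_y)p^{(\gamma',\psi)}\equiv 0$ for every $(\gamma',\psi)$ in the support of $u$; iterating gives $(y_0\cdot\nabla_y)^s p^{(\gamma',\psi)}=0$ for every $s\geq 1$, so $u_\textnormal{lower}=0$ and $u(\mbfy_0+\cdot)=u$, proving (ii).

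For (i), the trivial witness $w=u$ satisfies $G_\cC(u;\mbfy)=1$ whenever $u(\mbfy+\cdot)=u$, so together with (ii) we get $V_u=\{\mbfy\in\spine\cC:u(\mbfy+\cdot)=u\}$. This set is trivially closed under addition; for closure under $\RR$-scalar multiplication, if $u(\mbfy_0+\cdot)=u$ then each $h_j^u$---a polynomial in $(r^2,y)$ by Proposition \ref{Prop_Jac field_beta harmonic}---satisfies $h_j^u(r,y+ny_0)=h_j^u(r,y)$ for every $n\in\ZZ$; the one-variable polynomial $t\mapsto h_j^u(r,y+ty_0)$ then takes one value at infinitely many points and so is constant, giving $u(t\mbfy_0+\cdot)=u$ for all $t\in\RR$. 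The main obstacle I expect is engineering the clean separation at degree $0$: once $v=0$ and $a\neq 0$ are in hand, the subtle point is verifying that no other homogeneous pieces of $w(\mbfy_0+\cdot)$ at degrees in $[\gamma_1,0)$ obstruct extracting $(y_0\cdot\nabla_y)p^{(\gamma',\psi)}\equiv 0$ from the degree-$0$ layer alone, which is what ultimately makes $u_\textnormal{lower}$ vanish identically rather than merely modulo $\Jac^*_\textnormal{pos}(\cC)$.
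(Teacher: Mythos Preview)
Your argument is correct but takes a different route from the paper's. Both proofs pivot on the spectral projections $p_j(r,y)=r^{-\gamma_j}\int_{\cL_\circ}u\psi_j$, but they diverge in how they extract translation invariance from $G_\cC(w;\mbfy_0)\geq 1$.

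The paper introduces the auxiliary set $\hat V_u=\bigcap_{p_j\neq 0}\{\mbfy:\Ord_{p_j(0,\cdot)}(y)\geq 1-\gamma_j\}$ and proves $V_u=\hat V_u$. For $V_u\subset\hat V_u$ it applies Lemma~\ref{Lem_Jac field_N_C(u)=min(gamma_j+ord_h)} to $w=u+\varphi$: for $j\geq 2$ one has $\hat p_j=p_j$ directly (since $\varphi\in\Jac^*_{0,0}(\cC)+\Jac^*_{\gamma_1,0}(\cC)$ only touches $\gamma_j\in\{0,\gamma_1\}$, and $u$ already kills $\gamma_j=0$), while for $j=1$ one must absorb the constant shift $\hat p_1=p_1+c_1$ via Lemma~\ref{Lem_homog-poly-splitting} and the fact that $\deg p_1=1-\gamma_1\geq 2$. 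Linearity of $\hat V_u$ and the translation invariance (ii) then come for free from Lemma~\ref{Lem_homog-poly-splitting}.

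You instead isolate the homogeneous degree-$0$ layer of $w(\mbfy_0+\cdot)$ in the $\Jac^*_d$-decomposition. After reducing to $v=0$ and $a\neq 0$, the positive piece $b\xi$ sits at degree $\gamma_1<0$ and contributes nothing to degree $0$, so that layer is exactly $a\sum_{\gamma',\psi} r^{\gamma'}\psi\,(y_0\!\cdot\!\nabla_y)p^{(\gamma',\psi)}$; orthogonality across $(\gamma',\psi)$ then forces each directional derivative to vanish. This sidesteps the paper's $j=1$ casework entirely, at the cost of verifying the linear-subspace property of $V_u$ by hand (your periodicity-of-polynomials trick), which the paper gets from Lemma~\ref{Lem_homog-poly-splitting}.

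Your stated concern about degrees in $[\gamma_1,0)$ is unfounded: the degree-$0$ layer of $w(\mbfy_0+\cdot)$ receives contributions \emph{only} from the $s=1$ Taylor terms of $au(\mbfy_0+\cdot)$ (since $\xi$ is homogeneous of degree $\gamma_1\neq 0$ and $u$ of degree $1$), so its vanishing gives $(y_0\!\cdot\!\nabla_y)p^{(\gamma',\psi)}=0$ outright; higher derivatives then vanish trivially and $u_\textnormal{lower}=0$ identically, not merely modulo $\Jac^*_\textnormal{pos}(\cC)$.
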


\begin{Rem} \label{rema:splitting}
	In our geometric applications of Proposition \ref{Prop_Jac field_Max deg mod transl}, via Corollary \ref{Cor_Jac field_High degree pts concentrate}, we will take $\cC_\circ$ strongly integrable to have access to Remark \ref{rema:strongly.integrable}. We will study minimal hypersurfaces modeled on $\graph_\cC u$ modulo translations ($\Jac^*_{\textnormal{trl}}(\cC) = \Jac^*_{0,0}(\cC)$) and one-sided deformations ($\Jac^*_{\textnormal{pos}}(\cC)$). Our function $u$ will have linear decay and $\cC$ will have been tilted to begin with so that $u$ is $\perp$ to $\Jac^*_{\textnormal{rot}}(\cC)$ ($= \Jac^*_{1,0}(\cC) \oplus \Jac^*_{0,1}(\cC) \supset \Jac^*_{0,1}(\cC)$). 
\end{Rem}

    \begin{proof}[Proof of Proposition \ref{Prop_Jac field_Max deg mod transl}]
	 By Proposition \ref{Prop_Jac field_Decomp} {(ii)} we have the decomposition
	 	 
\begin{equation}
 \label{eq:decomp}
 u(r\omega, y) = \sum_{j\geq 1} r^{\gamma_j} \psi_j(\omega) p_j(r, y)
\end{equation}
where 
      \[
      p_j(r,y) = r^{-\gamma_j} \int_{\cL_\circ} u(r\omega,y) \psi_j(\omega)\, d\omega\,.
      \]
      Note that $u \in \Jac^*_1(\cC) \cap (\Jac^*_{0,1}(\cC))^\perp$ implies
      \begin{equation} \label{eq:decomp-pj-zero}
      p_j \neq 0 \implies \gamma_j \neq 0.
      \end{equation}
      Moreover, Proposition \ref{Prop_Jac field_beta harmonic} implies that the non-vanishing $p_j$ are homogeneous polynomials in $(r,y)$ with only even powers of $r$ and with
      \begin{equation} \label{eq:decomp-degrees}
      \deg p_j = \deg p_j(0,\cdot) = 1 - \gamma_j. 
      \end{equation}
      Together, \eqref{eq:decomp-pj-zero} and \eqref{eq:decomp-degrees} imply that
      \begin{equation} \label{eq:decomp-gamma}
      p_j \neq 0 \implies \gamma_j \in \{ \ldots, -3, -2, -1 \} \cup \{ 1 \}
      \end{equation}
      In particular, there are only finitely many $p_j \neq 0$ since $\gamma_j \to \infty$ as $j \to \infty$.

    We will show that $V_u = \hat V_u$ where
      \[
      \hat V_u := \bigcap_{\substack{j \geq 1 \\ p_j \neq 0}} \{\mbfy = (0, y) \in \spine \cC : \Ord_{p_j(0,\cdot)}(y) \geq 1 - \gamma_j = \Ord_{p_j(0,\cdot)}(0)\}. 
      \]
      Indeed, by Lemma \ref{Lem_homog-poly-splitting}, $\hat V_u$ is an intersection of linear subspaces of $\spine \cC$, so a linear subspace itself, and each  $p_j \neq 0$ is invariant under translations in the $\hat V_u$ directions (of course, so are $p_j = 0$). Thus, $V_u = \hat V_u$ will imply both {(i)} and {(ii)}.
      
      \textit{Proof that $V_u \subset \hat V_u$}. Let $\mbfy_0 \in V_u$. Note that 
      \[ G_\cC(\varphi; \mbfy_0) \leq 0 
      \]
      for every $\varphi \in \Jac^*_{0,0}(\cC) + \Jac^*_\textrm{pos}(\cC)$, so having $\mbfy_0 \in V$ means that 
      \begin{equation}\label{Eq_Jac field_Max deg mod transl_y0}
      G_\cC(u + \varphi; \mbfy_0) \geq 1
      \end{equation}
      for some $\varphi \in \Jac^*_{0,0}(\cC) + \Jac^*_\textrm{pos}(\cC)$.
      Define, for $j \geq 1$,
      \[
      \hat p_j(r,y) := r^{-\gamma_j} \int_{\cL_\circ}(u+\varphi)(r\omega,y)\psi_j(\omega)\, d\omega.
      \]
      Our choice of $\mbfy_0 = (0, y_0) \in V_u$, Lemma \ref{Lem_Jac field_N_C(u)=min(gamma_j+ord_h)}, \eqref{eq:decomp-degrees}, and \eqref{Eq_Jac field_Max deg mod transl_y0} give
      \begin{equation} \label{eq:splitting-hat}
      \Ord_{\hat p_j(0,\cdot)}(y_0) \geq 1 - \gamma_j  = \deg p_j(0, \cdot) = \Ord_{p_j(0,\cdot)}(0) 
      \end{equation}
      whenever $p_j \neq 0$. To conclude that $\mbfy_0 \in \hat V_u$, we need to replace $\hat p_j$ with $p_j$ on the left hand side of \eqref{eq:splitting-hat}. There are two cases to consider. 
      \begin{itemize}
      	\item If $j \geq 2$, then $\gamma_j \neq \gamma_1$. Moreover, $\gamma_j \neq 0$ by \eqref{eq:decomp-gamma}. Then, Lemma \ref{Lem_Jac field_Jac_(trl, rot, pos)} together with $\varphi \in \Jac^*_{0,0}(\cC) + \Jac^*_{\gamma_1,0}(\cC)$ imply $\hat p_j = p_j$, as desired.
      	\item If $j = 1$, then $\gamma_j=\gamma_1 <0$ by Remark \ref{rema:gamma1.negative}, so
       \[ \deg p_1(0, \cdot) = 1-\gamma_1 \geq 2 \]
       by \eqref{eq:decomp-degrees} whenever $p_j \neq 0$. Lemma \ref{Lem_Jac field_Jac_(trl, rot, pos)} and $\varphi \in \Jac^*_{0,0}(\cC) + \Jac^*_\textrm{pos}(\cC)$ guarantee that
       \[ \hat p_1 = p_1 + c_1 \]
       for some $c_1 \in \RR$. Therefore, \eqref{eq:splitting-hat} and Lemma \ref{lemm:polynomial.ord.characterization} imply that
       \[ D_y^\alpha \hat p_1(0, y_0) = 0 \text{ for all } |\alpha| \leq \deg p_1 - 1. \]
       Applying this with $|\alpha| = \deg p_1 - 1 \geq 1$ cancels the $c_1 \in \RR$ in $\hat p_1 = p_1 + c_1$, so
       \[ D_y^\alpha p_1(0, y_0) = 0 \text{ for all } |\alpha| = \deg p_1 - 1. \]
       Thus, Lemma \ref{Lem_homog-poly-splitting} and \eqref{eq:decomp-degrees} imply that \eqref{eq:splitting-hat} does hold with $p_1$ in place of $\hat p_1$ on the left hand side.
      \end{itemize}
      This completes the proof that $V_u \subset \hat V_u$.
      
      \textit{Proof that $\hat V_u \subset V_u$}. Let $\mbfy_0 = (0, y_0) \in \hat V_u$. Note that
      \[
      u(r \omega,y_0 + y) = u(r\omega,y),
      \]
      where we used the decomposition \eqref{eq:decomp} and that, by Lemma \ref{Lem_homog-poly-splitting}, each $p_j$ is invariant under translations in the $\hat V_u$ directions. Thus 
      $$u(\mbfy_0 + \cdot) = u \in \cJ_u. $$ 
      By assumption, $G_\cC(u; \mbfy_0) \geq 1$, which implies that $\mbfy_0 \in V_u$. Thus, $\hat V_u \subset V_u$ as claimed, which completes the proof.
      \end{proof}
      
      \begin{Rem} \label{rema:splitting-weaker}
      One may of course prove more general versions of Proposition \ref{Prop_Jac field_Max deg mod transl}. 
      
      We could have taken $u \in \Jac^*_1(\cC) \setminus \{0\}$ and (i) would remain true but (ii) would not, as one can see by taking $u \in \Jac^*_{0,1}(\cC)$ that rotates $\spine \cC$. One can instead show that (ii) holds in general for the projection of $u$ onto $(\Jac^*_{0,1}(\cC))^\perp$ with minor modifications. 
      
      Generalizing further, one could have taken $\Jac^*_d(\cC)$ in place of $\Jac^*_1(\cC)$ and suitable replacements of $\Jac^*_{0,0}(\cC)$, $\Jac^*_{\textnormal{pos}}(\cC)$ with $\Jac^*_{\hat \gamma, \hat q}(\cC)$ with $\hat \gamma + \hat q < d$. Each such $\Jac^*_{\hat \gamma, \hat q}(\cC)$ would introduce a constraint of working $\perp$ to $\Jac^*_{\hat \gamma,d-\hat \gamma}(\cC)$ provided $\hat \gamma + \hat q > d-2$.
      \end{Rem}    

    We arrive at a quantitative version of Proposition \ref{Prop_Jac field_Max deg mod transl} whose proof strongly requires an additional assumption on our underlying regular hypercone $\cC_\circ$, namely, that it be strongly integrable (see Definition \ref{defi:strongly.integrable}).

    \begin{Cor} \label{Cor_Jac field_High degree pts concentrate}
        Suppose that the factor $\cC_\circ$ of $\cC$ is also strongly integrable.\footnote{So far, $\cC_\circ$ had only been assumed to be a regular and strictly stable minimal hypercone.} Then, for each $\theta >0$, there exists $\varrho=\varrho(\cC, \theta)>0$ with the following significance.
        
        Consider any $u\in \Jac_1^*(\cC) \cap (\Jac_\textnormal{rot}(\cC))^\perp \setminus \{0\}$, where $\perp$ is with respect to $L^2(\cC \cap B_1)$. Let
 	\[ \cJ_u := \operatorname{span} \{ u \} + \Jac_\textnormal{trl}(\cC)+\Jac_\textnormal{pos}^*(\cC). \]
	Then, there exists a proper linear subspace $V_u \subsetneq \spine \cC$ so that
        \[ G_\cC(w; \mbfy, \varrho) < 1 - \tfrac12 \Delta_\cC^{<1} \]
        for every $w \in \cJ_u \setminus \{ 0 \}$, $\mbfy\in \spine \cC \cap \bar B_1 \setminus B_\theta(V)$; here $\Delta_\cC^{<1}$ is as in Remark \ref{rema:gamma.star.c}.
    \end{Cor}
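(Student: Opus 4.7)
The plan is to deduce the Corollary from Proposition \ref{Prop_Jac field_Max deg mod transl} through a compactness argument that exploits the spectral gap in Remark \ref{rema:gamma.star.c}. Strong integrability of $\cC_\circ$ combines with Remark \ref{rema:strongly.integrable} to identify $\Jac_\textnormal{trl}(\cC) = \Jac^*_{0,0}(\cC)$ and $\Jac_\textnormal{rot}(\cC) = \Jac^*_{1,0}(\cC) \oplus \Jac^*_{0,1}(\cC)$, so the hypothesis $u \in (\Jac_\textnormal{rot}(\cC))^\perp$ implies $u \in (\Jac^*_{0,1}(\cC))^\perp$ as required by Proposition \ref{Prop_Jac field_Max deg mod transl}, and the two definitions of $\cJ_u$ coincide. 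Taking $V_u$ initially from Proposition \ref{Prop_Jac field_Max deg mod transl}, \emph{properness} $V_u \subsetneq \spine \cC$ is immediate: were $V_u = \spine \cC$, Proposition \ref{Prop_Jac field_Max deg mod transl}(ii) would give $u(r\omega, y) \equiv u(r\omega, y + y_0)$ for every $y_0 \in \RR^k$, so $u$ would descend to an element of $\Jac_1(\cC_\circ) = \Jac_\textnormal{rot}(\cC_\circ) \subset \Jac_\textnormal{rot}(\cC)$ by strong integrability of $\cC_\circ$, contradicting $u \in (\Jac_\textnormal{rot}(\cC))^\perp \setminus \{0\}$.

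\textbf{Compactness and spectral gap.} Suppose, toward a contradiction, that no uniform $\varrho(\cC, \theta) > 0$ works. Then there exist sequences $\varrho_i \downarrow 0$, $u_i \in \Jac_1^*(\cC) \cap (\Jac_\textnormal{rot}(\cC))^\perp$, $w_i \in \cJ_{u_i}$ (both normalized by $\|u_i\|_{L^2(\cC \cap B_2)} = \|w_i\|_{L^2(\cC \cap B_2)} = 1$), and $\mbfy_i \in \spine \cC \cap \bar B_1$ with $\dist(\mbfy_i, V_{u_i}) \geq \theta$ and $G_\cC(w_i; \mbfy_i, \varrho_i) \geq 1 - \tfrac{1}{2}\Delta_\cC^{<1}$. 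Finite-dimensionality of $\Jac_1^*(\cC)$ and $\Jac_\textnormal{trl}(\cC) + \Jac^*_\textnormal{pos}(\cC)$ (Proposition \ref{Prop_Jac field_Decomp}(ii) and Lemma \ref{Lem_Jac field_Jac_(trl, rot, pos)}) furnishes a subsequence along which $u_i \to u_\infty \neq 0$ and $w_i \to w_\infty \in \cJ_{u_\infty} \setminus \{0\}$ in $L^2(\cC \cap B_2)$, and $\mbfy_i \to \mbfy_\infty$. For every fixed $\varrho > 0$, monotonicity of the decay order (Corollary \ref{Cor_Jac field_Freq Monoton}(i)) forces $G_\cC(w_i; \mbfy_i, \varrho) \geq G_\cC(w_i; \mbfy_i, \varrho_i) \geq 1 - \tfrac{1}{2}\Delta_\cC^{<1}$ once $\varrho \geq \varrho_i$, and $L^2$-convergence passes this to $G_\cC(w_\infty; \mbfy_\infty, \varrho) \geq 1 - \tfrac{1}{2}\Delta_\cC^{<1}$ for all $\varrho > 0$. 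Sending $\varrho \downarrow 0$ and invoking Corollary \ref{Cor_Jac field_Freq Monoton}(iii), $G_\cC(w_\infty; \mbfy_\infty) \in \Gamma^*(\cC)$. The gap $\Gamma^*(\cC) \cap (1 - \Delta_\cC^{<1}, 1) = \emptyset$ from Remark \ref{rema:gamma.star.c} then forces $G_\cC(w_\infty; \mbfy_\infty) \geq 1$, so $\mbfy_\infty \in V_{u_\infty}$ by the definition of $V_{u_\infty}$.

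\textbf{Main obstacle.} The hardest step is to convert $\mbfy_\infty \in V_{u_\infty}$, $\mbfy_i \to \mbfy_\infty$, and $\dist(\mbfy_i, V_{u_i}) \geq \theta$ into a contradiction. The map $u \mapsto V_u$, identified by the proof of Proposition \ref{Prop_Jac field_Max deg mod transl} with the translation-invariance subspace $\{v \in \spine \cC : u(v + \cdot) = u\}$, is upper-semi-continuous in $u$ (limits $v_i \in V_{u_i}$, $v_i \to v$ give $u_\infty(v + \cdot) = u_\infty$), but is not in general lower-semi-continuous: the translation-invariance subspace can jump \emph{up} at non-generic limits, so the naive Grassmannian limit $V_\infty := \lim V_{u_i}$ is only a subspace of $V_{u_\infty}$ and $\mbfy_\infty$ may escape $B_\theta(V_\infty)$. (For instance, for $u_\epsilon = r^{\gamma_1}\psi_1(y_1 y_2 y_3 + \epsilon y_4 y_5 y_6) \in \Jac^*_{\gamma_1, 3}(\cC)$ with $\gamma_1 < 0$ and $k \geq 6$, $\dim V_{u_\epsilon}$ strictly drops at $\epsilon \neq 0$ compared to $\epsilon = 0$.) To close the argument one must enlarge the $V_u$ chosen in the Corollary. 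A natural enlargement, afforded by the linearity and real-analyticity of the spectral projections $u \mapsto p_j^u(0, \cdot)$ (Lemma \ref{Lem_spect proj Jac are beta harmonic}, Proposition \ref{Prop_Jac field_beta harmonic}) into a fixed finite-dimensional polynomial space, is the common translation-invariance subspace of the $p_j^u(0, \cdot)$'s \emph{after discarding those whose $L^2$-mass falls below a threshold depending on $\theta$}. This modified $V_u$ remains proper (by the Step~1 argument applied to the retained dominant components) and is lower-semi-continuous in $u$ by construction, so $\mbfy_\infty \in V_{u_\infty}$ now forces $\dist(\mbfy_\infty, V_{u_i}) \to 0$, contradicting $\dist(\mbfy_i, V_{u_i}) \geq \theta$. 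A finite-cover argument on the compact unit sphere of $\Jac_1^*(\cC) \cap (\Jac_\textnormal{rot}(\cC))^\perp$ then yields the uniform $\varrho(\cC, \theta)$.
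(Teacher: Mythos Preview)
Your contradiction setup, the compactness argument, the use of monotonicity plus the spectral gap to force $G_\cC(w_\infty;\mbfy_\infty)\geq 1$, and the verification that $V_{u_\infty}\subsetneq\spine\cC$ are all correct and match the paper. The divergence is in how you deal with the ``main obstacle.''

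Your proposed fix---thresholding out the $p_j^u$ of small $L^2$-mass and taking $V_u$ to be the translation-invariance subspace of the survivors---does \emph{not} resolve the non-lower-semicontinuity, and your own example shows why. For $u_\epsilon=r^{\gamma_1}\psi_1\,(y_1y_2y_3+\epsilon\,y_4y_5y_6)$ there is only \emph{one} nonzero $p_j$, and its $L^2$-mass stays bounded away from zero as $\epsilon\to 0$; no thresholding removes it. Yet its translation-invariance subspace drops from $\{y_1=y_2=y_3=0\}$ at $\epsilon=0$ to $\{y_1=\cdots=y_6=0\}$ for $\epsilon\neq 0$. So your modified $V_{u_\epsilon}$ still jumps, and the claimed lower-semicontinuity ``by construction'' is false. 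The finite-cover remark at the end inherits the same gap.

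The paper bypasses the obstacle entirely by reordering the argument. The negation of the Corollary says: for every proper $V\subsetneq\spine\cC$ there exist $w_j\in\cJ_{u_j}\setminus\{0\}$ and $\mbfy_j\in\spine\cC\cap\bar B_1\setminus B_\theta(V)$ with $G_\cC(w_j;\mbfy_j,\varrho_j)\geq 1-\tfrac12\Delta_\cC^{<1}$. So you are free to choose $V$ \emph{after} passing to the limit $u_\infty$. Take $V=V_{u_\infty}$ uniformly for all $j$; then $\mbfy_j\notin B_\theta(V_{u_\infty})$ by construction, hence $\mbfy_\infty\notin B_\theta(V_{u_\infty})$ directly, contradicting $\mbfy_\infty\in V_{u_\infty}$. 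No semi-continuity of $u\mapsto V_u$ is needed at all. You committed prematurely to $V=V_{u_i}$, which is the one choice that manufactures the obstacle.
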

    
    Of course, the right hand side could have been any $1-\delta$ with $\delta \in (0, \Delta_\cC^{<1})$, in which case, $\varrho$ would also have depend on our choice of $\delta$.
    
    \begin{proof}
      Suppose for contradiction that the assertion fails for some $\theta>0$ and a sequence of $u_j\in \Jac^*_1(\cC)\cap (\Jac_\textnormal{rot}(\cC))^\perp \setminus \{0\}$, $\varrho_j = j^{-1}$. 
      
      Normalize so that $\|u_j\|_{L^2(\cC \cap B_1)}=1$. Using the Caccioppoli inequality in Lemma \ref{Lem_Jac field_Caccioppoli-type Ineq} and homogeneity we can pass to a subsequence (not relabled) so that $u_j$ converges to a Jacobi field $u_\infty\in \Jac_1^*(\cC)\cap (\Jac_\textrm{rot}(\cC))^\perp \setminus \{ 0 \}$ in $L^2(\cC \cap B_1)$. Note that $(\Jac_\textrm{rot}(\cC))^\perp\subset \Jac^*_{0,1}(\cC)^{\perp}$ and $\Jac_\textnormal{trl}(\cC) = \Jac^*_{0,0}(\cC)$ by the strong integrability assumption (Remark \ref{rema:strongly.integrable}), so Proposition \ref{Prop_Jac field_Max deg mod transl} applies and furnishes the linear subspace 
      \[ V_{u_\infty} = \{ \mbfy \in \spine \cC : G_\cC(w; \mbfy) \geq 1 \text{ for some } w \in \cJ_{u_\infty} \setminus \{0\} \} \]
      of $\spine \cC$ along which $u$ is translation invariant.
      
      We claim that $V_{u_\infty} \subsetneq \spine \cC$. Otherwise, $u_\infty$ would be the trivial extension of a $u_\infty^\circ \in \Jac^*_1(\cC_\circ)$ along $\spine \cC$. The strong integrability of $\cC_\circ$ implies that $u_\infty^\circ \in \Jac^*_\textnormal{rot}(\cC_\circ)$ and thus $u_\infty \in \Jac^*_\textnormal{rot}(\cC)$, a contradiction to $u_\infty \in (\Jac_\textrm{rot}(\cC))^\perp \setminus \{ 0 \}$. Thus,
      \[ V_{u_\infty} \subsetneq \spine \cC, \]
      as claimed.
      
      We may thus invoke $V_{u_\infty}$ in our construction of $u_j$ as counterexamples to the theorem. For each $j$ there exist $w_j \in \cJ_{u_j} \setminus \{ 0 \}$ and $\mbfy_j \in \spine \cC \cap \bar B_1 \setminus B_\theta(V_{u_\infty})$ such that
      \[ G_\cC(w_j; \mbfy_j) \geq 1 - \tfrac12 \Delta_\cC^{<1}. \]
	By renormalizing the $w_j$ as we did the $u_j$, the scaling invariance of decay order, upper-semicontinuity of the decay order under $L^2$-convergence (see Lemma \ref{lemm:doubling.upper.semicontinuity}), and after passing to a subsequence, we obtain $w_\infty \in \cJ_{u_\infty} \setminus \{ 0 \}$ and $\mbfy_\infty \in \spine \cC \cap \bar B_1 \setminus B_\theta(V_{u_\infty})$ so that
	\[ \cN_\cC(w_\infty; \mbfy_\infty) \geq 1 - \tfrac12 \Delta_\cC^{<1}. \]
      Recalling Corollary \ref{Cor_Jac field_Freq Monoton} (iii) and the definition of $\Delta_\cC^{<1}$, it follows that
      \[ \cN_\cC(w_\infty; \mbfy_\infty) \geq 1. \]
      This implies $\mbfy_\infty \in V_{u_\infty}$, a contradiction. This completes the proof.
\end{proof}

\section{Minimal hypersurfaces modeled by Jacobi fields on cylindrical hypercones} \label{sec:nonlinear}

We build on the previous section's linear theory to study a (geometric) $L^2$ decay order over certain cylindrical hypercones $\cC \subset \RR^{n+1}$ for minimal hypersurfaces in $(n+1)$-dimensional Riemannian manifolds.\footnote{In our subsequent geometric applications, we will take $\cC = \cC_\circ \times \RR^k$, with $\cC_\circ$ a minimizing quadratic hypercone (see Appendix \ref{app:quadratic}).}

We fix once and for all a Riemannian manifold $(M^{n+1},g)$ satisfying
\[
|\Rm| + |\nabla \Rm| + |\nabla^2\Rm| \leq C^{-1} \text{ on } M
\]
where $C \geq C(n+1)$, with $C(n+1)$ as in Appendix \ref{Lem:norm-coord-derivatives}. We also take $C$ large enough (depending only on $C(n+1)$) for the monotonicity inequality to hold for the weighted area-ratios 
\[ \Theta^{1}_\Sigma(p, R) := e^{R} \Theta_\Sigma(p, R) = e^R \frac{\Vert \Sigma \Vert(B_R(p))}{\omega_n R^n}, \; R \in (0, 2]; \]
see \cite[\S 17]{Simon:GMT}. We will often work with points in the subset 
\[
	\breve M = \{ p \in M : \injrad(M, g, p) \geq 2 \}.
\]
We also fix the notation
\begin{align*}
	\eta_{p,\tau}(\mbfv) & := \exp_{p}(\tau \mbfv),\\
	g_{p,\tau} & := \tau^{-2} \eta_{p,\tau}^* \; g.
\end{align*}

\subsection{The decay order}

Fix $p \in \breve M$ and endow $T_p M$ with the flat metric $\bar g = g_p$. Let $C \subset T_p M$ be a nonempty closed set that divides $T_p M$ into two components, $\Omega_+$, $\Omega_-$. Denote the signed distance function to $C$ in $\subset T_p M$ as:
\begin{equation}\label{eq:Sdist-signed}
\Sdist_C(\mbfx) : = \begin{cases} d_{\bar g}(\mbfx, C) &  \text{ if } \mbfx \in \Omega_+ \\ - d_{\bar g}(\mbfx, C) & \text{ if } \mbfx \in \Omega_-, \\ 0 & \text { if } \mbfx \in C.  \end{cases}
\end{equation}
In practice, $C$ will be $\spt \cC_p$ for some copy $\cC_p := \iota_p(\cC)$ of $\cC$ into $T_p M$ under a \textbf{linear} isometry $\iota_p : \RR^{n+1} \to (T_p M, g_p)$. We will always abuse notation and denote such $\Sdist_C$ by $\Sdist_{\cC_p}$.

Let $\Sigma \subset M$ be a hypersurface with finite area in compact subsets of $M$. For $\tau \in (0, 2]$, we define
\begin{equation} \label{eq:dist.to.cone}
\mbfd_C(\Sigma; p, \tau) : = \|\Sdist_{\tau^{-1} C}\|_{L^2(\Sigma_{p,\tau}\cap B_1)},
\end{equation}
where $\tau^{-1} C$ denotes a standard rescaling in the vector space $T_p M$ (note that it equals $C$ whenever $C$ is a hypercone in $T_p M$) and 
\[
	\Sigma_{p,\tau} := \eta_{p,\tau}^{-1}(\Sigma \cap B_2(p)) \subset T_p M, \\
\]
It is easy to check that for $\varrho \in (0,1)$, $\tau \in (0, 2]$,
\[
\mbfd_C(\Sigma; p, \varrho\tau) = \varrho^{-\frac n 2- 1}\|\Sdist_{\tau^{-1} C}\|_{L^2(\Sigma_{p,\tau}\cap B_\varrho)}. 
\]

\begin{Def}[cf.\ Definition {\ref{defi:linear.doubling.constant}}]  \label{defi:doubling.constant.nonlinear}
	Let $p$, $S$, $\Sigma$ be as above, and $\hat \kappa > 0$. We define the $\hat \kappa$-adjusted \textbf{decay order} of $\Sigma$ with respect to $S \subset T_p M$ at scale $\tau \in (0, 1]$  as:
\begin{align*}
	\cN^{\hat \kappa}_C(\Sigma; p, \tau) 
    & : = 1 + \frac 12 \log_2(\mbfd_C(\Sigma; p, 2\tau)^2+ \hat \kappa \cdot (2\tau)^2) \\
    & \qquad - \frac 12 \log_2(\mbfd_C(\Sigma; p, \tau)^2 + \hat \kappa \cdot \tau^2).
\end{align*}
\end{Def}

We fix $\hat \kappa := 1$ once and for all. 

\begin{Rem}[cf.\ Remark {\ref{rema:kappa.linear}}] \label{rema:kappa.nonlinear}
	We encourage the first-time reader to read this section keeping in mind that for all applications to $\RR^{n+1}$ we can make these simplifications:
	\begin{itemize}
		\item $\cN^{{1}}_S$ can be replaced with $\cN^0_S$ after suitably extending to $\hat \kappa = 0$ in Definition \ref{defi:doubling.constant.nonlinear}.\footnote{Note, however, that $\cN^0_S(\Sigma; p, \tau)$ suffers from the drawback of being ill-defined if $\Sigma = S$ in $B_\tau(p)$.}
		\item $\Theta^{1}_\Sigma$ can be replaced by the usual monotone density $\Theta_\Sigma$.
		\item Decay bounds $\cN^{\hat \kappa}_C(\Sigma; p, \tau) \leq \Lambda \in \RR_{<2}$ can be relaxed to $\cN^{0}_C(\Sigma; p, \tau) \leq \Lambda \in \RR$.\footnote{The requirement $\Lambda<2$ allows us to disregard the influence of a non-flat background metric $g$; cf.\ Proposition \ref{Prop:nonconc-Jac}. If we were working in $\RR^{n+1}$ this would not be necessary. }
	\end{itemize}
\end{Rem}

\begin{Lem}\label{Lem:freq-bd-below}
It always holds that $\cN^{1}_{C}(\Sigma;p,\tau) \geq - \frac n 2$. 
\end{Lem}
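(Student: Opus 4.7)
The plan is to read Definition \ref{defi:doubling.constant.nonlinear} as a logarithmic comparison of the quantity $\mbfd_C(\Sigma;p,\cdot)^2 + (\cdot)^2$ at the two dyadic scales $\tau$ and $2\tau$, and then bound the resulting ratio by a purely dimensional factor using the scaling identity for $\mbfd_C$ displayed just after \eqref{eq:dist.to.cone}.

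First I would apply that scaling identity with base scale $2\tau$ and ratio $\varrho = 1/2$, which identifies
\[
\mbfd_C(\Sigma;p,\tau) = 2^{n/2+1}\,\|\Sdist_{(2\tau)^{-1}C}\|_{L^2(\Sigma_{p,2\tau}\cap B_{1/2})},
\]
whereas by definition
\[
\mbfd_C(\Sigma;p,2\tau) = \|\Sdist_{(2\tau)^{-1}C}\|_{L^2(\Sigma_{p,2\tau}\cap B_1)}.
\]
Since $B_{1/2}\subset B_1$, monotonicity of $L^2$-norms in the domain gives $\mbfd_C(\Sigma;p,\tau)^2\leq 2^{n+2}\,\mbfd_C(\Sigma;p,2\tau)^2$. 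The damping term obeys the trivial bound $\tau^2 = (2\tau)^2/4 \leq 2^{n+2}(2\tau)^2$, so adding these two inequalities yields
\[
\mbfd_C(\Sigma;p,\tau)^2 + \tau^2 \leq 2^{n+2}\bigl(\mbfd_C(\Sigma;p,2\tau)^2 + (2\tau)^2\bigr).
\]

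Taking $\tfrac12\log_2$ of this inequality and substituting into Definition \ref{defi:doubling.constant.nonlinear} (with $\hat\kappa=1$) immediately produces $\cN^{1}_C(\Sigma;p,\tau)\geq 1-\tfrac{n+2}{2} = -\tfrac{n}{2}$. There is essentially no obstacle here: the statement just expresses the fact that halving the radius can inflate the normalized $L^2$-average of $\Sdist$ by at most the Euclidean volume ratio $2^{n+2}$ coming from the $\varrho^{-n/2-1}$ scaling weight, and this is exactly compensated by the $+1$ offset in the definition of $\cN^{1}_C$.
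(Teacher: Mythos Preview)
Your proof is correct and follows exactly the same approach as the paper's: both establish $\mbfd_C(\Sigma;p,\tau)^2 \leq 2^{n+2}\mbfd_C(\Sigma;p,2\tau)^2$ via the scaling identity and domain monotonicity, together with the trivial bound $\tau^2 \leq 2^{n+2}(2\tau)^2$, and then substitute into the definition. You have simply supplied more detail than the paper's terse two-line version.
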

\begin{proof}
Note that
\begin{align*}
	\mbfd_C(\Sigma;p,\tau)^2 & \leq 2^{n+2} \mbfd_C(\Sigma;p,2\tau)^2, \\
	\tau^2 & \leq 2^{n+2}(2\tau)^2.
\end{align*}
This proves the assertion. 
\end{proof} 

\begin{Lem}\label{Lem:freq-lss-2-metric-dies}
If $\cN^{1}_C(\Sigma;p,\tau) \leq \Lambda < 2$ then:\footnote{If we were working in $\RR^{n+1}$ with $\hat \kappa = 0$ (cf. Remark \ref{rema:kappa.nonlinear}), then (i) would need to be omitted.}
\begin{enumerate}
	\item[(i)] $\tau^2 \leq C(\Lambda) \mbfd_C(\Sigma;p,\tau)^2$, and
	\item[(ii)] $\mbfd_C(\Sigma;p,2\tau) \leq 2^{\Lambda-1} \mbfd_C(\Sigma;p,\tau)$.
\end{enumerate}
\end{Lem}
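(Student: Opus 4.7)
The plan is to simply unpack the definition of $\cN^{1}_C(\Sigma;p,\tau)$ and rearrange; the hypothesis $\Lambda < 2$ is exactly what makes the damping term $\tau^2$ harmless. Concretely, I would start from
\[
\cN^{1}_C(\Sigma;p,\tau) = 1 + \tfrac12 \log_2\!\left(\frac{\mbfd_C(\Sigma;p,2\tau)^2 + (2\tau)^2}{\mbfd_C(\Sigma;p,\tau)^2 + \tau^2}\right) \leq \Lambda,
\]
which is equivalent, after exponentiating, to the single master inequality
\begin{equation}\label{eq:master-lemm}
\mbfd_C(\Sigma;p,2\tau)^2 + 4\tau^2 \leq 2^{2(\Lambda-1)}\bigl(\mbfd_C(\Sigma;p,\tau)^2 + \tau^2\bigr).
\end{equation}

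For (ii), I would rearrange \eqref{eq:master-lemm} as
\[
\mbfd_C(\Sigma;p,2\tau)^2 \leq 2^{2(\Lambda-1)}\mbfd_C(\Sigma;p,\tau)^2 + \bigl(2^{2(\Lambda-1)} - 4\bigr)\tau^2,
\]
and then observe that $\Lambda < 2$ forces $2^{2(\Lambda-1)} - 4 < 0$, so the $\tau^2$ term can be discarded, yielding $\mbfd_C(\Sigma;p,2\tau)^2 \leq 2^{2(\Lambda-1)}\mbfd_C(\Sigma;p,\tau)^2$ as desired. For (i), I would drop the nonnegative $\mbfd_C(\Sigma;p,2\tau)^2$ term on the left of \eqref{eq:master-lemm} and rearrange to
\[
\bigl(4 - 2^{2(\Lambda-1)}\bigr)\tau^2 \leq 2^{2(\Lambda-1)} \mbfd_C(\Sigma;p,\tau)^2,
\]
at which point $4 - 2^{2(\Lambda-1)} > 0$ (again because $\Lambda < 2$) lets us solve for $\tau^2$ and take $C(\Lambda) := 2^{2(\Lambda-1)}/(4 - 2^{2(\Lambda-1)})$.

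There is no real obstacle here: the statement is a direct algebraic consequence of unwinding Definition \ref{defi:doubling.constant.nonlinear}, and both conclusions flow from the same inequality \eqref{eq:master-lemm} by exploiting the strict sign $4 - 2^{2(\Lambda-1)} > 0$. The only thing worth emphasizing (as the paper itself does in Remark \ref{rema:kappa.nonlinear}) is why the restriction $\Lambda < 2$ is genuinely needed in the Riemannian setting: the damping term $\hat\kappa \cdot \tau^2 = \tau^2$ in $\cN^{1}_C$ contributes the $4\tau^2$ on the left of \eqref{eq:master-lemm}, and it is precisely when $2^{2(\Lambda-1)} < 4$ that this contribution is strong enough to both dominate the damping on the right (giving (ii)) and provide a positive coefficient on $\tau^2$ after transposition (giving (i)).
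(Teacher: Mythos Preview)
Your proof is correct and is essentially identical to the paper's own argument: the paper simply rearranges $\cN^{1}_C(\Sigma;p,\tau) \leq \Lambda$ into
\[
\mbfd_C(\Sigma;p,2\tau)^2 + 2^2\bigl(1-2^{2(\Lambda-2)}\bigr)\tau^2 \leq 2^{2\Lambda-2}\mbfd_C(\Sigma;p,\tau)^2,
\]
which is your master inequality \eqref{eq:master-lemm} with the $\tau^2$ terms gathered on the left, and then reads off both conclusions from the positivity of $1-2^{2(\Lambda-2)}$. You have merely spelled out the two consequences more explicitly.
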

\begin{proof}
We may rearrange $\cN^{1}_C(\Sigma;p,\tau) \leq \Lambda$ into
\[
\mbfd_C(\Sigma;p,2\tau)^2 + 2^2(1-2^{2(\Lambda-2)}) \tau^2 \leq 2^{2\Lambda-2} \mbfd_C(\Sigma;p,\tau)^2. 
\]
This proves the assertion. 
\end{proof}

\subsection{Jacobi fields on cylindrical hypercones} 

We construct Jacobi fields whenever mass-minimizing boundaries in $(M, g)$ suitably converge to suitable cylindrical hypercones, and track properties about their decay rates. Throughout the section we fix
\[ \cC = \cC_\circ \times \RR^k \subset \RR^{n+1}, \]
where $\cC_\circ$ is a regular, strictly stable (see Definition \ref{defi:strictly.stable}), and strictly minimizing (see Definition \ref{def:strictly-minimizing}) hypercone. Note that $\cC \subset \RR^{n+1}$ does \textbf{not} live in $T_p M$, so we will need to study its copies in $T_p M$, i.e. $\iota_p(\cC)$ for \textbf{linear} isometries $\iota : \RR^{n+1} \to (T_p M, g_p)$. All our results also clearly apply to all rotations of such $\cC \subset \RR^{n+1}$.

For notational brevity, we will say $\Sigma$ is minimizing if it is an minimizing boundary in $M$, and will continue to identify $\Sigma$ with its regular part in interpreting $\mbfd_{\iota_p(\cC)}(\Sigma; p, \tau)$ and $\cN^{1}_{\iota_p(\cC)}(\Sigma; p, \tau)$.
 
\begin{Prop}[Jacobi fields via non-concentration]\label{Prop:nonconc-Jac}
Fix $\Lambda \in \RR_{< 2}$.

Consider minimizing $\Sigma_i$, $p_i \in \breve M \cap \spt \Sigma_i$, copies $\cC_i := \iota_{p_i}(\cC) \subset T_{p_i} M$, and $\tau_i > 0$ such that
\begin{enumerate}
\item $\tau_i \to 0$,
\item $\mbfd_{\cC_i}(\Sigma_i;p_i,\tau_i) \to 0$, and
\item $\cN^{1}_{\cC_i}(\Sigma_i;p_i,\tau_i) \leq \Lambda$.
\end{enumerate}
Then:
\begin{enumerate}
	\item[(i)] For each fixed $\varrho \in (0,2)$ we have the following non-concentration inequality
		\[
		\limsup_{i\to\infty} \mbfd_{\cC_i}(\Sigma_i;p_i,\tau_i)^{-2} \int_{(\Sigma_i)_{p_i,\tau_i} \cap B_\varrho}  r^{-2}\Sdist_{\cC_i}^2 <\infty \, ,
		\]
		where $r$ denotes the distance to $\spine \cC_i \subset T_{p_i} M$ with respect to $g_{p_i}$.
	\item[(ii)] For each $i$, there is an open $U_i \subset \cC \cap B_2$ such that the closest-point projection $\Pi$ to $\spt \cC$ is a diffeomorphism from some $V_i \subset \iota_{p_i}^{-1} ((\Sigma_i)_{p_i, \tau_i}) \cap B_2$ onto $U_i$. We may take $U_i$ to be an increasing exhaustion of $\cC \cap B_2$. If $h_i : U_i \to \RR$ is 
		\[ h_i \circ \Pi := \Sdist_{\cC_i} \circ \iota_{p_i} \; (= \Sdist_{\cC}) \text{ on } V_i \]
		(the ``height'' function) then, after passing to a subsequence, we have
		\[
		\mbfd_{\cC_i}(\Sigma_i;p_i,\tau_i)^{-1} h_i \to u \text{ in } C^2_\textnormal{loc}(\cC \cap B_2),
		\]
		for some $u \in \Jac^*(\cC \cap B_2)$ with $\|u\|_{L^2(\cC \cap B_1)} = 1$. 
	\item[(iii)] After passing to a further subsequence so that (recall Lemma \ref{Lem:freq-lss-2-metric-dies})
		\[ \mbfd_{\cC_i}(\Sigma_i;p_i,\tau_i)^{-2} \tau_i^2 \to \kappa \in [0,C(\Lambda)), \]
		it holds that $\cN^{1}_{\cC_i}(\Sigma_i; p_i, \varrho \tau_i) \to G^\kappa _\cC(u;\varrho)$ for any $\varrho \in (0, 1)$ and $G_{\cC}^\kappa (u;1) \leq \Lambda$.
\end{enumerate}
\end{Prop}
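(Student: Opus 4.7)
The plan is to rescale to unit scale via the maps $\eta_{p_i, \tau_i}$, extract a normalized height function over the regular part of $\cC$, and identify the limit as a Jacobi field with finite weighted Dirichlet energy. Write $\epsilon_i := \mbfd_{\cC_i}(\Sigma_i; p_i, \tau_i) \to 0$, and consider $\tilde\Sigma_i := (\Sigma_i)_{p_i,\tau_i} \subset T_{p_i} M$, which is $g_{p_i,\tau_i}$-minimizing on $B_2$. Since $\tau_i \to 0$ and the ambient curvature is bounded, $g_{p_i,\tau_i} \to g_{\textnormal{euc}}$ in $C^3$ on $B_2$. After passing to a subsequence we may assume the linear isometries $\iota_{p_i}$ converge, and thus identify all $\cC_i$ with a single $\cC \subset \RR^{n+1}$. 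Hypothesis (2), the weighted monotonicity (with constant $e^R$), and Allard's regularity theorem then yield $\tilde\Sigma_i \to \cC$ in the varifold sense on $B_2$ and smoothly on every compactum $K \Subset \cC \cap B_2 \setminus \spine \cC$.

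\textbf{Part (i): non-concentration.} This is the key analytic input and follows the strategy of Simon \cite{Simon:cylindrical-sing, Simon:uniqueness.some}. Using the strict stability of $\cC_\circ$ propagated to $\cC$ as in Lemma \ref{Lem_Jac field_Caccioppoli-type Ineq}, one tests the stability/minimizing inequality against cutoffs involving an $r^{-1}$ weight and $\Sdist_{\cC_i}$ to obtain a weighted Caccioppoli-type estimate of the form
\[
\int_{\tilde\Sigma_i \cap B_\varrho} r^{-2} \, \Sdist_{\cC_i}^2 \leq C(\cC,\varrho) \, \epsilon_i^2 + C(\cC,\varrho) \, \tau_i^2,
\]
where the $\tau_i^2$-error captures the deviation of $g_{p_i,\tau_i}$ from Euclidean and the fact that $\cC_i$ is only approximately $g_{p_i,\tau_i}$-minimal. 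Lemma \ref{Lem:freq-lss-2-metric-dies}(i), applied using hypothesis (3), gives $\tau_i^2 \leq C(\Lambda)\,\epsilon_i^2$, which absorbs the metric-deviation error. The strict bound $\Lambda < 2$ enters here essentially: degree $2$ is exactly the rate of the constant-mode error introduced by the curved metric, and at $\Lambda = 2$ the dyadic iteration used to propagate the bound from scale $\tau_i$ to scale $1$ would fail to close.

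\textbf{Part (ii): the Jacobi-field limit.} On each compactum $K \Subset \cC \cap B_2 \setminus \spine \cC$, smooth convergence lets us write $\tilde\Sigma_i$ eventually as a normal graph of $h_i$ over $\cC \cap U_i$ via closest-point projection, with $h_i = \Sdist_{\cC_i}$ under $\Pi$ and with $U_i \uparrow \cC \cap B_2$ by a diagonal argument. The minimal surface equation in $g_{p_i,\tau_i}$ linearizes to give
\[
\Delta_\cC \tilde h_i + |A_\cC|^2 \tilde h_i = \epsilon_i \, Q_i(\tilde h_i, \nabla \tilde h_i, \nabla^2 \tilde h_i) + O(\tau_i^2)\, \tilde h_i,
\]
for $\tilde h_i := \epsilon_i^{-1} h_i$, with $Q_i$ a uniformly controlled quadratic nonlinearity. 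Schauder estimates together with Lemma \ref{Lem_Jac field_Caccioppoli-type Ineq} yield uniform $C^2$ bounds on each $K$, so a subsequence converges in $C^2_{\textnormal{loc}}(\cC \cap B_2)$ to some $u \in \Jac(\cC \cap B_2)$. Part (i) plus Fatou promote $u$ to $\Jac^*(\cC \cap B_2)$. The normalization $\|u\|_{L^2(\cC \cap B_1)} = 1$ follows from $\epsilon_i^{-2} \|\Sdist_{\cC_i}\|_{L^2(\tilde\Sigma_i \cap B_1)}^2 = 1$ by splitting the integral into a graphical part, which converges to $\int_{\cC \cap B_1} u^2$ (the Jacobian of $\Pi$ tends to $1$), and a tubular neighborhood of $\spine \cC$ whose contribution vanishes uniformly by (i).

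\textbf{Part (iii) and main obstacle.} By Lemma \ref{Lem:freq-lss-2-metric-dies}(i), $\tau_i^2/\epsilon_i^2 \leq C(\Lambda)$, so along a further subsequence it converges to some $\kappa \in [0, C(\Lambda))$. For each $\varrho \in (0,1)$, the same splitting as in (ii) yields
\[
\epsilon_i^{-2}\, \mbfd_{\cC_i}(\Sigma_i;p_i,\varrho\tau_i)^2 = \varrho^{-n-2}\,\epsilon_i^{-2} \int_{\tilde\Sigma_i \cap B_\varrho} \Sdist_{\cC_i}^2 \to \varrho^{-n-2} \int_{\cC \cap B_\varrho} u^2,
\]
and similarly at scale $2\varrho$, while $(\varrho\tau_i)^2/\epsilon_i^2 \to \kappa\varrho^2$. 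The factor $\epsilon_i^{-2}$ cancels between the two logarithms defining $\cN^{1}_{\cC_i}(\Sigma_i;p_i,\varrho\tau_i)$, so passing to the limit produces $G^\kappa_\cC(u;\varrho)$. Taking $\varrho = 1$ and passing to the limit in hypothesis (3) gives $G^\kappa_\cC(u;1) \leq \Lambda$. The principal obstacle throughout is part (i): propagating Simon's non-concentration machinery to the variable-metric setting requires careful tracking of how the $\tau_i^2$-curvature error interacts with the decay-order hypothesis, and this is precisely why the sharp threshold $\Lambda < 2$ is imposed.
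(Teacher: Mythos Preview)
Your outline is essentially the paper's own proof: invoke Simon's Riemannian non-concentration estimate (Theorem~\ref{Thm_Simon's L^2 Noncon}) for (i), use Allard plus the $O(\tau_i^2)$ mean-curvature error and elliptic estimates for (ii), and pass to the limit in the decay order using the non-concentration to handle the singular-set contribution for (iii).

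Two small points. First, your sketch of how (i) is proved is not quite the mechanism: Simon's estimate does not come from plugging $\Sdist_{\cC}$ with an $r^{-1}$ weight into a stability/Caccioppoli inequality; it uses the first variation formula with the Hardt--Simon foliation function $\frT_{S}$ as test function plus a Moser-type iteration, and it needs $\cC_\circ$ to be strictly \emph{minimizing} (not just strictly stable) for the foliation asymptotics to work. Since you treat it as a black box citing Simon, this does not affect correctness, but the description is misleading. Second, in (iii) you cannot simply ``take $\varrho = 1$'': the convergence $\epsilon_i^{-2}\mbfd_{\cC_i}(\Sigma_i;p_i,2\tau_i)^2 \to 2^{-n-2}\int_{\cC\cap B_2}u^2$ is at the boundary of the domain, where your splitting argument and the non-concentration of (i) are not directly available. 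The paper handles this by the monotonicity-in-$\varrho$ inequality $\mbfd_{\cC_i}(\Sigma_i;p_i,2\tau_i)^2 \geq \varrho^{n+2}\mbfd_{\cC_i}(\Sigma_i;p_i,2\varrho\tau_i)^2$, passing to the limit for $\varrho<1$, and then sending $\varrho\nearrow 1$ to get $G^\kappa_\cC(u;1)\leq\Lambda$ as a \emph{liminf} bound.
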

\begin{proof}
By combining Simon's non-concentration estimates (see Theorem \ref{Thm_Simon's L^2 Noncon}) with Lemma \ref{Lem:norm-coord-derivatives} we have that for $\varrho \in (0,2)$ fixed, 
\[
 \int_{(\Sigma_i)_{p_i,\tau_i} \cap B_\varrho}  r^{-2}\Sdist_{\cC_i}^2 \leq C(\varrho) \left( \mbfd_{\cC_i}(\Sigma_i;p_i,2\tau_i)^2 + \tau_i^2 \right). 
\]
Using Lemma \ref{Lem:freq-lss-2-metric-dies}, we find 
\[
\int_{(\Sigma_i)_{p_i,\tau_i} \cap B_\varrho}  r^{-2}\Sdist_{\cC_i}^2 \leq C(\varrho,\Lambda) \, \mbfd_{\cC_i}(\Sigma_i;p_i,\tau_i)^2.
\]
This proves {(i)}, the non-concentration inequality.

For {(ii)}, the well-definedness of $h_i$ is standard; namely, it follows from how (2) implies that $\iota_{p_i}^{-1} ((\Sigma_i)_{p_i, \tau_i}) \cap B_2 \rightharpoonup \cC \cap B_2$ together with Allard's theorem \cite{Allard:first-variation}. Then, again using Lemma \ref{Lem:freq-lss-2-metric-dies} and standard elliptic estimates we find that the rescaling
\[ \mbfd_{\cC_i}(\Sigma_i;p_i,\tau_i)^{-1} h_i \]
converges in $C^2_\textrm{loc}(\cC\cap B_2)$ to a Jacobi field $u \in \Jac(\cC \cap B_2)$. We used Lemmas \ref{Lem:freq-lss-2-metric-dies}, \ref{Lem:norm-coord-derivatives} to see that the difference between the Riemannian metrics $g_{p_i,\tau_i}$ (with respect to which the graph of $h_i$ is minimal) and $g_{p_i}$ implies that its mean curvature satisfies the following estimates 
$$H_{\Sigma_i,g_{p_i}} = H_{\Sigma_i,g_{p_i,\tau_i}}+ O(\tau^2) =  O(\tau^2)\, ,$$
uniformly on compact subsets of $B_2 \setminus \sing \cC$. Thus, $H_{\Sigma_i,g_{p_i}}$ is negligible compared to the normalizing factor $\mbfd_{\cC_i}(\Sigma_i;p_i,\tau_i)$ in the $C^2_\textrm{loc}(\cC\cap B_2)$ convergence described above. 

By covering $\spine \cC \cap B_2$ by cubes whose doubling is contained in $B_2$ the estimates of the graphical function Simon's non-concentration estimates (see Theorem \ref{Thm_Simon's L^2 Noncon}) imply that $u \in \Jac^*(\cC \cap B_2)$. Note that the non-concentration estimate gives
\begin{equation} \label{eq:nonconc-Jac-L2-limit}
\lim_{i\to\infty} \mbfd_{\cC_i}(\Sigma_i;p_i,\tau_i)^{-2}\mbfd_{\cC_i}(\Sigma_i;p_i,\varrho\tau_i)^2 = \varrho^{-n-2} \int_{\cC \cap B_\varrho} u^2
\end{equation}
for all $\varrho \in (0,2)$. This proves that $\|u\|_{L^2(B_1)} = 1$, completing the proof of {(ii)}.

Finally, we relate the non-linear and linear decay orders. Lemma \ref{Lem:freq-lss-2-metric-dies} implies that after passing to a subsequence, we have
\[
\kappa : = \lim_{i\to\infty} \frac{\tau_i^2}{\mbfd_{\cC_i}(\Sigma_i;p_i,\tau_i)^2}  \in [0,C(\Lambda)). 
\]
Combined with \eqref{eq:nonconc-Jac-L2-limit} this gives 
\begin{align*}
& \lim_{i\to\infty} (\mbfd_{\cC_i}(\Sigma_i;p_i,\tau_i)^{2} + \tau_i^2)^{-1}(\mbfd_{\cC_i}(\Sigma_i;p_i,\varrho \tau_i)^2 + (\varrho \tau_i)^2) \\
& = (1+\kappa)^{-1} \varrho^{-n-2} \int_{\cC \cap B_\varrho} u^2 + (1+\kappa)^{-1} \kappa \varrho^2
\end{align*}
for all $\varrho \in (0,2)$. In particular,
\[
\cN^{1}_{\cC_i}(\Sigma_i; p_i, \varrho\tau_i) \to G_\cC^\kappa(u;\varrho) \text{ for all } \varrho \in (0, 1). 
\]
Finally, we have that 
\[
\mbfd_{\cC_i}(\Sigma_i;p_i,2\tau_i)^2 \geq \varrho^{n+2} \mbfd_{\cC_i}(\Sigma_i;p_i,2\varrho \tau_i)^2 
\]
for any $\varrho \in (0,1)$ so 
\begin{align*}
& \liminf_{i\to\infty} (\mbfd_{\cC_i}(\Sigma_i;p_i,\tau_i)^{2} + \tau_i^2)^{-1}(\mbfd_{\cC_i}(\Sigma_i;p_i,2\tau_i)^2 + (2\tau_i)^2) \\
& \geq (1+\kappa)^{-1}  2^{-n-2}  \int_{\cC \cap B_{2\varrho}} u^2 + (1+\kappa)^{-1} \kappa 2^2.
\end{align*}
This gives $G_\cC^\kappa(u;1) \leq \Lambda$ after sending $\varrho \nearrow 1$. 
\end{proof}

\subsection{Decay order monotonicity} 

We continue to denote
\[ \cC = \cC_\circ \times \RR^k \subset \RR^{n+1}, \]
where $\cC_\circ$ is a regular, strictly stable, strictly minimizing hypercone, and to study its isometric copies in various $T_p M$'s.

Recall that, for $u \in \Jac^*(\cC \cap B_R) \setminus \{ 0 \}$, the decay order $\varrho \mapsto G^\kappa_\cC(u;\varrho)$ is non-decreasing in $(0, R/2)$ by Corollary \ref{Cor_Jac field_Freq Monoton}, and if it is constant on an interval then $u$ is homogeneous of degree $G^\kappa_\cC(u;\cdot) \in \Gamma^*(\cC)$.

\begin{Lem}[Decay order monotonicity]\label{Lem_Refined_Freq Almst Monon}
Fix $\delta > 0$. There exist $\eps$, $\lambda > 0$, $L \in \ZZ_{\geq 0}$ depending on $\cC$, $\delta$ with the following property.

Consider minimizing $\Sigma$, $p \in \breve M \cap \spt \Sigma$, a copy $\cC_p := \iota_p(\cC) \subset T_p M$, $\ell \in \ZZ_{\geq 0}$, and $\Lambda \in \RR_{< 2} \setminus B_\delta(\Gamma^*(\cC))$ such that:
\begin{enumerate}
\item $\ell \geq L$,
\item $\mbfd_{\cC_p}(\Sigma;p,2^{-\ell}) \leq \eps$, and
\item $\cN^{1}_{\cC_p}(\Sigma;p,2^{-\ell}) \geq \Lambda$.
\end{enumerate} 
Then,  $\cN^{1}_{\cC_p}(\Sigma;p,2^{-\ell+1}) \geq \Lambda + \lambda$.
\end{Lem}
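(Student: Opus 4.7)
The plan is a blow-up contradiction: extract a nonzero Jacobi field on $\cC$ whose decay order is squeezed to equal a constant $\Lambda_\infty$ on a nontrivial interval by combining the hypothesis $\cN^1 \geq \Lambda$ at scale $2^{-\ell}$ with the anti-hypothesis $\cN^1 < \Lambda + \lambda$ at scale $2^{-\ell+1}$, and then invoke the discreteness of $\Gamma^*(\cC)$ to contradict $\Lambda \notin B_\delta(\Gamma^*(\cC))$. Suppose the statement fails and extract minimizing $\Sigma_i$, points $p_i \in \breve M \cap \spt \Sigma_i$, cone copies $\cC_i = \iota_{p_i}(\cC) \subset T_{p_i} M$, integers $\ell_i \to \infty$, scalars $\Lambda_i \in \RR_{< 2} \setminus B_\delta(\Gamma^*(\cC))$, and quantities $\eps_i, \lambda_i \to 0$ such that $\mbfd_{\cC_i}(\Sigma_i; p_i, 2^{-\ell_i}) \leq \eps_i$, $\cN^1_{\cC_i}(\Sigma_i; p_i, 2^{-\ell_i}) \geq \Lambda_i$, and $\cN^1_{\cC_i}(\Sigma_i; p_i, 2^{-\ell_i + 1}) < \Lambda_i + \lambda_i$. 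Since $2 \in \ZZ_{\geq 0} \subset \Gamma^*(\cC)$ by Remark \ref{rema:gamma.star.c}, the hypothesis $\Lambda_i \notin B_\delta(\Gamma^*(\cC))$ forces $\Lambda_i \leq 2 - \delta$; passing to a subsequence, $\Lambda_i \to \Lambda_\infty \in (-\infty, 2 - \delta]$ with $\Lambda_\infty \notin B_\delta(\Gamma^*(\cC))$.

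The main technical preliminary is the claim $\mbfd_{\cC_i}(\Sigma_i; p_i, 2 \cdot 2^{-\ell_i}) \to 0$; this is the step I expect to be the main obstacle, since $\mbfd$ at the \emph{larger} scale $2\tau$ is a priori not controlled by $\mbfd$ at scale $\tau$. Using Allard's theorem and interior elliptic estimates for the graph function of $\Sigma_i$ over $\cC_i$ on the regular part of $\cC$ (valid once $\eps_i$ is small), together with Simon's non-concentration estimate near the spine (which leverages the strict stability of the factor $\cC_\circ$), the rescaled currents $\iota_{p_i}^{-1}((\Sigma_i)_{p_i, 2^{-\ell_i}})$ become graphs over $\cC$ with $L^2$-small height on $B_1$; a scale-doubling estimate (controlling the height on $B_2$ in terms of its restriction to $B_1$ via the elliptic nature of the linearized minimal surface equation and the strict minimization of $\cC$ for rigidity) then gives smallness on $B_2$, and the elementary identity $\mbfd_\cC(\Sigma_i; p_i, 2\tau)^2 = 2^{-n-2} \int_{(\Sigma_i)_{p_i, \tau} \cap B_2} \Sdist_\cC^2$ completes the preliminary step.

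Granting this, apply Proposition \ref{Prop:nonconc-Jac} at scale $\tau_i := 2^{-\ell_i + 1}$, whose three hypotheses $\tau_i \to 0$, $\mbfd_{\cC_i}(\Sigma_i; p_i, \tau_i) \to 0$, and $\cN^1_{\cC_i}(\Sigma_i; p_i, \tau_i) < 2 - \delta/2$ all hold for large $i$. After passing to a further subsequence we obtain $\kappa \in [0, C(2-\delta/2))$ and a nonzero Jacobi field $u \in \Jac^*(\cC \cap B_2)$ with $\|u\|_{L^2(\cC \cap B_1)} = 1$ such that $\cN^1_{\cC_i}(\Sigma_i; p_i, \varrho \tau_i) \to G^\kappa_\cC(u; \varrho)$ for every $\varrho \in (0, 1)$. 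Setting $\varrho = 1/2$ and using $\cN^1_{\cC_i}(\Sigma_i; p_i, 2^{-\ell_i}) \geq \Lambda_i \to \Lambda_\infty$ gives $G^\kappa_\cC(u; 1/2) \geq \Lambda_\infty$; re-running the liminf-as-$\varrho \nearrow 1$ computation from the proof of Proposition \ref{Prop:nonconc-Jac}(iii) with $\Lambda$ replaced by $\Lambda_i + \lambda_i$ yields the matching upper bound $G^\kappa_\cC(u; 1) \leq \Lambda_\infty$. By monotonicity of the linear decay order (Corollary \ref{Cor_Jac field_Freq Monoton}(i)), these sandwich to give $G^\kappa_\cC(u; \cdot) \equiv \Lambda_\infty$ on $[1/2, 1]$. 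Corollary \ref{Cor_Jac field_Freq Monoton}(ii) then forces $u$ to be the restriction of an element of $\Jac^*_d(\cC)$ with $d = \Lambda_\infty$ (the alternative $d = 2$ available when $\kappa > 0$ is excluded by $\Lambda_\infty \leq 2 - \delta < 2$). Since $u \neq 0$, Proposition \ref{Prop_Jac field_Decomp}(i) gives $\Lambda_\infty \in \Gamma^*(\cC)$, contradicting $\Lambda_\infty \notin B_\delta(\Gamma^*(\cC))$.
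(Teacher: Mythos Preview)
Your argument is correct and follows the same contradiction/blow-up strategy as the paper: apply Proposition~\ref{Prop:nonconc-Jac} at $\tau_i = 2^{-\ell_i+1}$, extract a Jacobi field $u$, squeeze $G^\kappa_\cC(u;\cdot)$ between $\Lambda_\infty$ at $\varrho=1/2$ and $\varrho=1$, then invoke the rigidity case of Corollary~\ref{Cor_Jac field_Freq Monoton} together with $\Lambda_\infty\notin\Gamma^*(\cC)$. Your handling of the $\kappa>0$ branch is in fact more explicit than the paper's.

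The one place where your write-up is unconvincing is the ``main technical preliminary'' $\mbfd_{\cC_i}(\Sigma_i;p_i,2^{-\ell_i+1})\to 0$. You are right that this is needed to verify hypothesis~(2) of Proposition~\ref{Prop:nonconc-Jac} at the chosen scale (the paper tacitly uses it as well). But your proposed justification --- a ``scale-doubling estimate controlling the height on $B_2$ in terms of its restriction to $B_1$'' via the linearized equation and strict minimization --- does not exist as a direct inequality: interior elliptic estimates go the other way, and you cannot assert graphicality on $B_2\setminus B_1$ without already knowing closeness there. The clean argument is soft: by mass bounds, any subsequence of $\iota_{p_i}^{-1}((\Sigma_i)_{p_i,2^{-\ell_i}})$ has a weak limit that is a Euclidean minimizing boundary on $B_2$; this limit agrees with $\cC$ on $B_1$, hence on $B_2$ by unique continuation along the (connected) regular part of $\cC$. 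Since $\Sdist_\cC$ is bounded and Lipschitz, varifold convergence gives $\int_{(\Sigma_i)_{p_i,2^{-\ell_i}}\cap B_2}\Sdist_\cC^2\to 0$, which is exactly the desired claim. With this substitution, your proof is complete and matches the paper's.
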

\begin{proof}
Suppose not. Then, there are $\Sigma_i$, $p_i \in \breve M \cap \spt \Sigma_i$, $\cC_i := \iota_{p_i}(\cC) \subset T_{p_i}M$, $\ell_i \in \ZZ_{\geq 0}$, and $\Lambda_i \in \RR_{< 2} \setminus B_\delta(\Gamma^*(\cC))$ as above so that
\begin{enumerate}
\item[(1')] $\ell_i \to \infty$,
\item[(2')] $\mbfd_{\cC_i}(\Sigma_i;p_i,2^{-\ell_i}) \to 0 $, 
\item[(3')] $\cN^{1}_{\cC_i}(\Sigma_i;p_i,2^{-\ell_i}) \geq \Lambda_i$,
\end{enumerate}
but
\begin{equation} \label{eq:refined-freq-almost-mon-contr}
	\cN^{1}_{\cC_i}(\Sigma_i;p_i,2^{-\ell_i+1}) \leq \Lambda_i + \lambda_i.
\end{equation}
with $\lambda_i \to 0$. Passing to a subsequence, we may assume that
\begin{equation} \label{eq:refined-freq-almost-mon-limit}
	\Lambda := \lim_i \Lambda_i \in \RR_{\leq 2} \setminus B_\delta(\Gamma^*(\cC)).
\end{equation}
Since $\lambda_i \to 0$, \eqref{eq:refined-freq-almost-mon-limit} and Remark \ref{rema:gamma.star.c} imply that for sufficiently large $i$,
\begin{equation} \label{eq:refined-freq-almost-mon-limit-bound}
	\Lambda_i + \lambda_i \leq \Lambda + \tfrac12 \delta \in \RR_{< 2}
\end{equation}

Note that (1'), (2'), \eqref{eq:refined-freq-almost-mon-contr}, and \eqref{eq:refined-freq-almost-mon-limit-bound} imply that Proposition \ref{Prop:nonconc-Jac} is applicable for all sufficiently large $i$ with $\tau_i := 2^{-\ell_i+1}$ and $\Lambda + \tfrac12 \delta$ in place of $\Lambda$. After perhaps passing to a subsequence, we obtain a Jacobi field $u \in \Jac^*(\cC \cap B_2)$ satisfying, for some $\kappa \geq 0$,
\[
\Lambda \leq \lim_{i\to\infty}\cN^{1}_{\cC_i}(\Sigma_i;p_i,2^{-\ell_i}) = G^\kappa_{\cC}(u;2^{-1}) \leq G^\kappa_\cC(u;1) \leq \Lambda,
\]
where we used (3') in the first step and the monotonicity of frequency (Corollary \ref{Cor_Jac field_Freq Monoton}) in the penultimate step. Thus, equality holds in the monotonicity of frequency so $u$ is homogeneous with degree $\Lambda$. This contradicts the inclusion statement in \eqref{eq:refined-freq-almost-mon-limit}.
\end{proof}

\begin{Lem}[Implication of far-from-linear decay] \label{Lem:close-to-other-cone-freq-close-1}
Fix $\delta$, $\eps > 0$. There exist $\eta > 0$, $L \in \ZZ_{\geq 0}$  depending on $\cC$, $\delta$, $\eps$ with the following property. 

Consider minimizing $\Sigma$, $p \in \breve M \cap \spt \Sigma$, a copy $\cC_p := \iota_p(\cC) \subset T_p M$, and $\ell \in \ZZ_{\geq 0}$ such that:
\begin{enumerate}
\item $\ell \geq L$,
\item $\Theta^{1}_{\Sigma}(p, 2^{-\ell+1}) - \Theta^{1}_\Sigma(p, 2^{-\ell}) \leq \eta$, and
\item $|\cN^{1}_{\cC_p}(\Sigma;p,2^{-\ell}) - 1| \geq \delta$.
\end{enumerate}
Then, $\mbfd_{\cC_p}(\Sigma;p,2^{-\ell+k}) < \eps$ for each $k \in \{-1,0,1\}$. 
\end{Lem}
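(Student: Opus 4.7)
The plan is to argue by contradiction and rescaling. Suppose the conclusion fails. Then there exist sequences of minimizing $\Sigma_i$, points $p_i \in \breve M \cap \spt \Sigma_i$, copies $\cC_i := \iota_{p_i}(\cC) \subset T_{p_i}M$, and $\ell_i \to \infty$, along with $\eta_i \to 0$, verifying (1)--(3) (with $\eta_i$ in place of $\eta$) yet satisfying $\mbfd_{\cC_i}(\Sigma_i;p_i,2^{-\ell_i + k_i}) \geq \eps$ for some $k_i \in \{-1,0,1\}$. After identifying each $T_{p_i}M$ with a fixed $\RR^{n+1}$ via exponential coordinates and parallel transport (justified by the bounded geometry of $(M,g)$), and after passing to subsequences, I may assume $k_i = k$ is constant and $\iota_{p_i} \to \iota_\infty$ in the orthogonal group, so that $\cC_i \to \cC_\infty := \iota_\infty(\cC)$. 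Set $\tau_i := 2^{-\ell_i} \to 0$.

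By standard compactness for minimizing integral currents together with uniform mass bounds from the weighted monotonicity formula and the bounded ambient geometry, the rescaled surfaces $(\Sigma_i)_{p_i,\tau_i}$ subconverge to a minimizing boundary $\Sigma_\infty$ in the flat limit $(\RR^{n+1}, g_\eucl)$. Hypothesis (2), via the rescaled weighted monotonicity formula, forces $\Sigma_\infty$ to be a minimizing cone with vertex $0$. Since both $\Sigma_\infty$ and $\cC_\infty$ are then cones with vertex $0$, unraveling the rescaling in the definition of $\mbfd$ and using $\Sdist_{\cC_\infty}(\lambda\,\cdot) = \lambda\,\Sdist_{\cC_\infty}$ together with $\lambda \Sigma_\infty = \Sigma_\infty$ yields, for each $k \in \{-1,0,1\}$,
\[ \mbfd_{\cC_i}(\Sigma_i;p_i,2^{-\ell_i + k}) \longrightarrow c := \|\Sdist_{\cC_\infty}\|_{L^2(\Sigma_\infty \cap B_1)}, \]
a value that is independent of $k$ by the cone scale invariance.

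The contradiction now arises from a dichotomy. If $\Sigma_\infty = \cC_\infty$, then $\Sdist_{\cC_\infty} \equiv 0$ on $\Sigma_\infty$, so $c = 0$ and $\mbfd_{\cC_i}(\Sigma_i;p_i,2^{-\ell_i + k}) \to 0$ for each $k \in \{-1,0,1\}$, contradicting the chosen $k = k_i$. Otherwise $c > 0$, and substituting the above into
\[ \cN^1_{\cC_i}(\Sigma_i;p_i,2^{-\ell_i}) = 1 + \tfrac12 \log_2 \left( \frac{\mbfd_{\cC_i}(\Sigma_i;p_i,2\tau_i)^2 + (2\tau_i)^2}{\mbfd_{\cC_i}(\Sigma_i;p_i,\tau_i)^2 + \tau_i^2} \right), \]
the $\tau_i^2 \to 0$ corrections become negligible against $c^2 > 0$, forcing the right-hand side to tend to $1$. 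This contradicts $|\cN^1_{\cC_i}(\Sigma_i;p_i,2^{-\ell_i}) - 1| \geq \delta$.

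The main obstacle I anticipate is bookkeeping the compactness carefully across varying tangent spaces: in exponential coordinates at each $p_i$, the rescaled metrics $g_{p_i,\tau_i}$ converge to the flat $g_\eucl$, which legitimizes Euclidean varifold/current compactness and the conical conclusion from monotonicity. Uniform mass bounds on the rescaled currents (from monotonicity plus bounded ambient geometry) and the absence of mass concentration of $\Sigma_\infty$ on the spheres $\partial B_{2^k}$, $k \in \{-1,0,1\}$ (automatic since $\Sigma_\infty$ is a smooth minimal cone away from $0$), are what ensure the $L^2$-distance $\mbfd$ passes continuously to the limit.
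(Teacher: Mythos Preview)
Your proof is correct and follows essentially the same contradiction-and-blowup argument as the paper: rescale, pass to a limiting minimizing hypercone using the density-drop hypothesis (2), and then exploit the scale invariance of $\mbfd$ on cones to force $\cN^{1}\to 1$. The only organizational difference is that the paper uses the assumed $\mbfd\geq\eps$ directly to conclude the limit cone $\tilde\cC\neq\cC$ (hence $c>0$), rather than phrasing this as one branch of a dichotomy; this is cosmetic.
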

\begin{proof}
If not, there are $\Sigma_i$, $p_i \in \breve M \cap \spt \Sigma_i$, $\cC_i := \iota_{p_i}(\cC) \subset T_{p_i}M$, and $\ell_i \in \ZZ_{\geq 0}$ so that 
\begin{enumerate}
\item[(1')] $\ell_i \to \infty$,
\item[(2')] $\Theta^{1}_{\Sigma_i}(p_i,2^{-\ell_i+1}) - \Theta^{1}_{\Sigma_i}(p_i,2^{-\ell_i}) \to 0$, 
\item[(3')] $|\cN^{1}_{\cC_i}(\Sigma_i;p_i,2^{-\ell_i}) - 1| \geq \delta$, 
\end{enumerate}
but, for some fixed $k \in \{ -1, 0, 1 \}$,
\begin{equation} \label{eq:close-to-other-close-freq-close-1-contr}
	\mbfd_{\cC_i}(\Sigma_i;p_i,2^{-\ell_i+k})\geq \eps.
\end{equation} 

Let
\[ \tilde\Sigma_i := (\Sigma_i)_{p_i,2^{-\ell_i+k}} \subset T_{p_i}M. \]
By (1') and (2') we can pass to a subsequence so that $\iota_{p_i}^{-1} (\tilde\Sigma_i) \rightharpoonup \tilde \cC \subset \RR^{n+1}$, a nonempty minimizing hypercone. Since $\Sdist_\cC$ is $1$-Lipschitz, we have
\begin{equation} \label{eq:close-to-other-close-freq-close-1-limit}
	\mbfd_{\cC_i}(\Sigma_i;p_i, \varrho 2^{-\ell_i}) \to \mbfd_\cC(\tilde\cC; \orig, \varrho) =  \mbfd_\cC(\tilde\cC;\orig,1) \neq 0
\end{equation}
for any fixed $\varrho > 0$; the penultimate step is a consequence of the dilation invariance of $\cC$ and $\tilde\cC$, and the last inequality follows since $\tilde\cC \neq \cC$ in view of \eqref{eq:close-to-other-close-freq-close-1-contr}.

Finally, \eqref{eq:close-to-other-close-freq-close-1-limit} and the definition of decay order imply
\[
\cN^{1}_{\cC_i}(\Sigma_i;p_i,2^{-\ell_i}) \to 1 + \tfrac12 \log_2 \frac{\mbfd_{\cC}(\tilde \cC; \orig, 2)}{\mbfd_{\cC}(\tilde \cC; \orig, 1)} = 1,
\]
contradicting (3').
\end{proof}

\begin{Lem}[Decay is at least almost linear]\label{Lem:freq-not-too-low}  
Fix $\delta > 0$. There exist $\eta > 0$, $L \in \ZZ_{\geq 0}$ depending on $\cC$, $\delta$ with the following property.

Consider minimizing $\Sigma$, $p \in \breve M \cap \spt \Sigma$, and $\ell \in \ZZ_{\geq 0}$ such that:
\begin{enumerate}
\item $\ell \geq L$, and
\item  $\Theta^{1}_{\Sigma}(p,2^{-\ell+1}) - \Theta_\Sigma(p) \leq \eta$.
\end{enumerate}
Then, $\cN^{1}_{\iota_p(\cC)}(\Sigma;p,2^{-\ell}) > 1-\delta$ for all linear isometries $\iota_p : \RR^{n+1} \to (T_p M, g_p)$.
\end{Lem}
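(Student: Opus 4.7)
The plan is to argue by contradiction. Extracting counterexamples as $\eta \to 0$ and $L \to \infty$ yields sequences of minimizing $\Sigma_i$, $p_i \in \breve M \cap \spt \Sigma_i$, $\ell_i \in \ZZ_{\geq 0}$, and linear isometries $\iota_{p_i} : \RR^{n+1} \to (T_{p_i}M, g_{p_i})$ with $\ell_i \to \infty$, $\eta_i := \Theta^{1}_{\Sigma_i}(p_i, 2^{-\ell_i+1}) - \Theta_{\Sigma_i}(p_i) \to 0$, yet $\cN^{1}_{\iota_{p_i}(\cC)}(\Sigma_i; p_i, 2^{-\ell_i}) \leq 1 - \delta$. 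I will write $\cC_i := \iota_{p_i}(\cC)$, $\tau_i := 2^{-\ell_i}$, $\mbfd_i := \mbfd_{\cC_i}(\Sigma_i; p_i, \tau_i)$.

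The first step is to observe that $|\cN^{1} - 1| \geq \delta$ for these sequences, so Lemma \ref{Lem:close-to-other-cone-freq-close-1} (applied with this $\delta$ and any small $\epsilon > 0$) gives $\mbfd_{\cC_i}(\Sigma_i; p_i, 2^{-\ell_i + k}) < \epsilon$ for $k \in \{-1, 0, 1\}$ and $i$ large. In particular $\mbfd_i \to 0$. Then Proposition \ref{Prop:nonconc-Jac} (with $\Lambda = 1 - \delta < 2$) applies after passing to a subsequence: it produces a nontrivial $u \in \Jac^*(\cC \cap B_2)$ with $\|u\|_{L^2(\cC \cap B_1)} = 1$ and, for some $\kappa \in [0, C(\delta))$, the identification $\cN^{1}_{\cC_i}(\Sigma_i; p_i, \tau_i) \to G^\kappa_\cC(u; 1/2) \leq 1 - \delta < 1$.

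The next, crucial step is to use $\eta_i \to 0$ to force $u$ to be homogeneous of degree $1$; for then $G^\kappa_\cC(u; 1/2) = 1$ by Corollary \ref{Cor_Jac field_Freq Monoton}(ii), contradicting $G^\kappa_\cC(u; 1/2) \leq 1 - \delta$. The almost-monotonicity formula for $\Theta^{1}$ gives
\[
  c \int_{(\Sigma_i)_{p_i, \tau_i} \cap (B_2 \setminus B_1)} \frac{|x^\perp|^2}{|x|^{n+2}} \, dA \leq \eta_i + O(\tau_i^2) \to 0.
\]
For the graph $h_i$ of $(\Sigma_i)_{p_i, \tau_i}$ over $\cC_i$, a computation (using that $p \cdot \nu \equiv 0$ on a cone) gives to leading order $x^\perp = h_i - p \cdot \nabla h_i$, which vanishes precisely for $h_i$ homogeneous of degree $1$ with respect to the Euler vector field on $\cC = \cC_\circ \times \RR^k$. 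Combining the above display with the $C^2_\loc$-convergence $\mbfd_i^{-1} h_i \to u$, with Lemma \ref{Lem:freq-lss-2-metric-dies} (giving $\tau_i^2 \leq C(\delta)\,\mbfd_i^2$), and with the weighted $L^2$-orthogonality of distinct spectral components $u_d \in \Jac^*_d(\cC)$ from Proposition \ref{Prop_Jac field_Decomp}, I aim to deduce that each $u_d$ with $d \neq 1$ vanishes, so $u$ is homogeneous of degree $1$.

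The hard part will be this final limit argument: the natural estimate $\int_K (u - p \cdot \nabla u)^2 |x|^{-n-2}\, dA \leq C \eta_i / \mbfd_i^2$ (on compact $K \Subset \cC \cap (B_2 \setminus B_1) \setminus \spine \cC$) involves the ratio $\eta_i / \mbfd_i^2$, which is not controlled a priori. Handling this likely requires renormalizing the height functions $h_i$ (for instance by $\sqrt{\mbfd_i^2 + \eta_i}$ rather than by $\mbfd_i$), so that both the weighted-gradient-type quantity and the $L^2$-mass of the limit remain under control; one then invokes the spectral decomposition together with the homogeneity-$1$ rigidity $(1 - d)\, u_d \equiv 0$ forced by the limit integral identity to finish.
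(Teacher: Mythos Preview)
Your approach is genuinely different from the paper's, and the gap you flag is real and not closed by your suggested fix. The paper does not try to show the limiting Jacobi field is homogeneous of degree~$1$. It proceeds by two elementary steps. First, a pure iteration of the definition of $\cN^1$ (no compactness, no density-drop input): if $\cN^{1}_{\cC_p}(\Sigma; p, 2^{-\ell'}) \leq 1-\delta$ held for \emph{every} $\ell' \geq \ell$, then unwinding gives
\[
  \mbfd_{\cC_p}(\Sigma; p, 2^{-\ell})^2 + 2^{-2\ell} \;\leq\; 2^{-2\delta(\ell'-\ell)}\bigl(\mbfd_{\cC_p}(\Sigma; p, 2^{-\ell'})^2 + 2^{-2\ell'}\bigr) \;\leq\; C(n)\,2^{-2\delta(\ell'-\ell)},
\]
absurd as $\ell' \to \infty$ since the left side is $\geq 2^{-2\ell} > 0$; this is exactly where the damping term $\hat\kappa = 1$ in $\cN^1$ earns its keep. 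So there is some deep scale $\ell' \geq \ell$ at which $\cN^1 > 1-\delta$. Second, one propagates this upward from $\ell'$ to $\ell$ by alternating Lemma~\ref{Lem:close-to-other-cone-freq-close-1} (to get $\mbfd$ small) and Lemma~\ref{Lem_Refined_Freq Almst Monon} (decay-order monotonicity); the density-drop hypothesis enters only in this second step.

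Your single-scale compactness route cannot force $u \in \Jac^*_1(\cC)$. The monotonicity-formula defect $\int |x^\perp|^2/|x|^{n+2}$ is \emph{second order} in the height $h_i$, so after dividing by $\mbfd_i$ the bound becomes $\leq C\,\eta_i/\mbfd_i^2$, uncontrolled as you note. Renormalizing by $\lambda_i := \sqrt{\mbfd_i^2 + \eta_i}$ does not help: along any subsequence with $\eta_i/\mbfd_i^2 \to \infty$ one has $h_i/\lambda_i \to 0$ in $C^2_{\loc}$, so that limit carries no information about $u = \lim h_i/\mbfd_i$, and your argument provides no mechanism to rule out $d_0(u) < 1$. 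The missing idea is precisely the paper's first step: one must exploit that assumption~(2) (via monotonicity of $\Theta^1$) holds at \emph{all} scales $\ell' \geq \ell$, not only at~$\ell$.
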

\begin{proof}
Take $\eps$, $\lambda$, $L$ so that decay order monotonicity (Lemma \ref{Lem_Refined_Freq Almst Monon}) holds with $\delta$ as given. Then take $\eta > 0$ and possibly increase $L$ so that Lemma \ref{Lem:close-to-other-cone-freq-close-1} holds with $\delta$ and $\eps$. Let $\iota_p$ be any linear isometry.

Let us first show that 
\begin{equation} \label{eq:freq-not-too-low-some-ell}
	\cN^{1}_{\cC_p}(\Sigma; p, 2^{-\ell'}) > 1-\delta \text{ for some } \ell' \geq \ell.
\end{equation}
If \eqref{eq:freq-not-too-low-some-ell} failed, then iterating it would yield
\[
\mbfd_{\cC_p}(\Sigma;p,2^{-\ell})^2 + 2^{-2\ell} \leq 2^{-\delta(\ell'-\ell)}(\mbfd_{\cC_p}(\Sigma;p,2^{-\ell'})^2 + 2^{-2\ell'}) \leq C(n) 2^{-\delta(\ell'-\ell)},
\]
and sending $\ell'\to\infty$ would yield a contradiction. This proves \eqref{eq:freq-not-too-low-some-ell}.

Let $\ell'$ be such that \eqref{eq:freq-not-too-low-some-ell} holds. If $\ell' = \ell$, we are done. Otherwise, we apply Lemma \ref{Lem:close-to-other-cone-freq-close-1} at scale $2^{-\ell'}$ to get $\mbfd_{\cC_p}(\Sigma;p,2^{-\ell'}) \leq \eps$. Lemma \ref{Lem_Refined_Freq Almst Monon} applied at scale $2^{-\ell'}$ gives
\[ \cN^{1}_{\cC_p}(\Sigma;p,2^{-\ell'+1}) > 1- \delta + \lambda > 1-\delta, \]
so \eqref{eq:freq-not-too-low-some-ell} holds with $\ell'+1$, too. Iterating upwards, we can get to $\ell' = \ell$. This completes the proof.
\end{proof}

\subsection{Fast decay analysis} \label{sec:nonlinear.hf}

We continue to denote
\[ \cC = \cC_\circ \times \RR^k \subset \RR^{n+1}, \]
where $\cC_\circ$ is a regular, strictly stable, strictly minimizing hypercone. 
We now define the isometry-independent quantities
\begin{align*}
\overline{\cN}^{1}_{\cC}(\Sigma;p,\tau) & := \sup \{ \cN^{1}_{\iota_p(\cC)}(\Sigma;p,\tau) : \text{ all isometries } \iota : \RR^{n+1} \to (T_p M, g_p) \}, \\
\overline{\cN}^{1}_{\cC}(\Sigma;p) & := \limsup_{\ell\to\infty} \overline{\cN}^{1}_{\cC}(\Sigma; p, 2^{-\ell}).
\end{align*}
It follows from Lemma \ref{Lem:freq-not-too-low} that $\overline{\cN}^{1}_{\cC}(\Sigma; p) \geq 1$ when $\Sigma$ is minimizing and $p \in \spt \Sigma$.\footnote{In fact, Lemma \ref{Lem:freq-not-too-low} implies $\liminf_{\ell \to \infty} \cN^{1}_{\iota_p(\cC)}(\Sigma; p, 2^{-\ell}) \geq 1$ for each fixed isometry $\iota$.}  We call $\overline{\cN}^{1}_{\cC}(\Sigma;p)>1$ the \textbf{fast} decay case, and $\overline{\cN}^{1}_{\cC}(\Sigma;p)=1$ the \textbf{slow} decay case.

The following lemma is an important observation in our study of fast decay. Recall the definition of $\Delta_{\cC}^{>1}$ from Remark \ref{rema:gamma.star.c}.

\begin{Lem}[Propagating fast decay to bigger scales]\label{Lem_Refined_Uniq tang cone if limsup N(Sigma, tau)>1}
Fix $\delta \in (0, \Delta_\cC^{>1})$. There exist $\eta>0$, $L\in\ZZ_{\geq 0}$ depending on $\cC$, $\delta$ with the following property. 

Consider minimizing $\Sigma$, $p \in \breve M \cap \spt \Sigma$, a copy $\cC_p := \iota_p(\cC) \subset T_p M$, and $L_0, L_1 \in \ZZ_{\geq 0}$ such that: 
\begin{enumerate}
\item $L_1 > L_0 \geq L$,
\item $\Theta^{1}_{\Sigma}(p,2^{-\ell}) - \Theta_\Sigma(p) \leq \eta$ at $\ell = L_0$, and
\item $\cN^{1}_{\cC_p}(\Sigma;p,2^{-\ell})\geq 1+\delta$ at $\ell = L_1$. 
\end{enumerate}
Then, $\cN^{1}_{\cC_p}(\Sigma;p,2^{-\ell}) \geq 1+ \delta$ for all $\ell \in \ZZ_{\geq 0}$, $L_0 \leq \ell \leq L_1$.
\end{Lem}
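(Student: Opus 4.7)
The approach is downward induction on $\ell$, from $\ell = L_1$ (base case, hypothesis (3)) down to $\ell = L_0$. For the inductive step, assuming $\cN^{1}_{\cC_p}(\Sigma;p,2^{-\ell}) \geq 1+\delta$ for some $L_0 < \ell \leq L_1$, I aim to upgrade this to $\cN^{1}_{\cC_p}(\Sigma;p,2^{-\ell+1}) \geq 1+\delta$ via a single dyadic application of Lemma \ref{Lem_Refined_Freq Almst Monon}, using Lemma \ref{Lem:close-to-other-cone-freq-close-1} to supply the required $\mbfd_{\cC_p}$-smallness hypothesis.

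The key observation enabling this is that the assumption $\delta \in (0,\Delta_\cC^{>1})$ places $1+\delta$ strictly inside the spectral gap of $\Gamma^*(\cC)$ around the degree-one eigenvalue, in view of Remark \ref{rema:gamma.star.c}. Concretely, set
\[
\delta' := \tfrac12 \min\{\delta,\ \Delta_\cC^{>1}-\delta\} > 0,
\]
so that $1+\delta \in \RR_{<2} \setminus B_{\delta'}(\Gamma^*(\cC))$. Choose $\eps, \lambda, L_{\textnormal{mon}}$ from Lemma \ref{Lem_Refined_Freq Almst Monon} applied with this $\delta'$, and then $\eta$, $L_{\textnormal{close}}$ from Lemma \ref{Lem:close-to-other-cone-freq-close-1} applied with parameters $\delta$ and $\eps$. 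I will take these as the $\eta$ of the present lemma, together with $L := \max\{L_{\textnormal{mon}}, L_{\textnormal{close}}\}$.

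The inductive step proceeds as follows. From hypothesis (2) and monotonicity of the weighted density $\Theta^{1}_\Sigma$, every dyadic increment at scale $2^{-\ell}$ with $\ell > L_0$ satisfies
\[
\Theta^{1}_\Sigma(p, 2^{-\ell+1}) - \Theta^{1}_\Sigma(p, 2^{-\ell}) \leq \Theta^{1}_\Sigma(p,2^{-L_0}) - \Theta_\Sigma(p) \leq \eta.
\]
The induction hypothesis $\cN^{1}_{\cC_p}(\Sigma;p,2^{-\ell}) \geq 1+\delta$ gives in particular $|\cN^{1}_{\cC_p}(\Sigma;p,2^{-\ell})-1| \geq \delta$. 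Since $\ell > L_0 \geq L \geq L_{\textnormal{close}}$, Lemma \ref{Lem:close-to-other-cone-freq-close-1} (applied at scale $2^{-\ell}$ with $k = 0$) yields
\[
\mbfd_{\cC_p}(\Sigma;p,2^{-\ell}) \leq \eps.
\]
Since moreover $\ell \geq L_{\textnormal{mon}}$ and $1+\delta \in \RR_{<2}\setminus B_{\delta'}(\Gamma^*(\cC))$, Lemma \ref{Lem_Refined_Freq Almst Monon} applies at scale $2^{-\ell}$ with $\Lambda := 1+\delta$, producing
\[
\cN^{1}_{\cC_p}(\Sigma;p,2^{-\ell+1}) \geq 1+\delta+\lambda \geq 1+\delta,
\]
which closes the induction.

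\textbf{Main obstacle.} There is no deep obstruction beyond correctly identifying that the strict inequality $\delta < \Delta_\cC^{>1}$ gives a positive distance of $1+\delta$ from $\Gamma^*(\cC)$; this is precisely what enables Lemma \ref{Lem_Refined_Freq Almst Monon} to force a definite dyadic gain $\lambda > 0$ rather than merely non-decrease. The remainder is bookkeeping to ensure $\eta, L$ are chosen uniformly so that the two auxiliary lemmas apply at each scale traversed by the induction, which works since their constants depend only on $\cC, \delta, \eps$.
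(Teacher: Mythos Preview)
Your proof is correct and follows essentially the same approach as the paper: both use Lemma \ref{Lem:close-to-other-cone-freq-close-1} to supply the $\mbfd_{\cC_p}$-smallness and then Lemma \ref{Lem_Refined_Freq Almst Monon} to push the decay order up by one dyadic scale, with the paper framing this as a contradiction via the maximal counterexample rather than your direct downward induction. Your explicit choice of $\delta' = \tfrac12 \min\{\delta,\Delta_\cC^{>1}-\delta\}$ for the spectral-gap parameter in Lemma \ref{Lem_Refined_Freq Almst Monon} is in fact a bit more careful than the paper's ``with $\delta$ as given.''
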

\begin{proof}
Take $\eps$, $\lambda$, $L$ so that decay order monotonicity (Lemma \ref{Lem_Refined_Freq Almst Monon}) holds with $\delta$ as given. Then take $\eta > 0$ and possibly increase $L$ so that Lemma \ref{Lem:close-to-other-cone-freq-close-1} holds with $\delta$ and $\eps$.

Assuming the assertion fails, we can choose $\ell \in \{L_0, \dots,L_1-1\}$ to be the maximal (so the scale $2^{-\ell}$ is minimal) counterexample. This gives
\begin{equation} \label{eq:refined-uniq-tang-cone-fast-decay-contr}
	\cN^{1}_{\cC_p}(\Sigma;p,2^{-\ell}) < 1 + \delta \leq \cN^{1}_{\cC_p}(\Sigma;p,2^{-\ell-1}).
\end{equation}
By Lemma \ref{Lem:close-to-other-cone-freq-close-1}, applied at scale $2^{-\ell-1}$ by virtue of the right hand side of \eqref{eq:refined-uniq-tang-cone-fast-decay-contr}, we find that $\mbfd_{\cC_p}(\Sigma;p,2^{-\ell-1})\leq \eps$. Lemma \ref{Lem_Refined_Freq Almst Monon} applied at scale $2^{-\ell-1}$ yields
\[
\cN^{1}_{\cC_p}(\Sigma;p,2^{-\ell}) \geq 1 + \delta + \lambda > 1 + \delta,
\]
which contradicts the left hand side of \eqref{eq:refined-uniq-tang-cone-fast-decay-contr}.
\end{proof}

\begin{Cor}[Fast decay, qualitatively]\label{Cor:unif-high-freq-qualitative}
Consider minimizing $\Sigma$, $p \in \spt \Sigma$, and assume that $\overline{\cN}^{1}_{\cC}(\Sigma; p) > 1$. 

Then, there exists a fixed linear isometry $\iota_p : \RR^{n+1} \to (T_p M, g_p)$ such that $\cC_p := \iota_p(\cC)$ satisfies:
\begin{enumerate}
	\item[(i)] $\liminf_{\ell \to \infty} \cN^{1}_{\cC_p}(\Sigma; p, 2^{-\ell}) \geq 1 + \Delta_\cC^{>1}$, and
	\item[(ii)] $\cC_p$ is the unique tangent cone to $\Sigma$ at $p$.
\end{enumerate}
\end{Cor}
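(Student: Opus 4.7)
My plan is to first single out a fixed linear isometry $\iota_p$ at which a uniform lower bound on the decay order holds at all small dyadic scales, then upgrade that bound to $\liminf \geq 1 + \Delta_\cC^{>1}$ by passing to a Jacobi field blow-up and exploiting the spectral gap above $1$ in $\Gamma^*(\cC)$, and finally derive tangent cone uniqueness from the resulting geometric decay of $\mbfd_{\cC_p}(\Sigma; p, \cdot)$. First, choose $\delta_0 \in (0, \Delta_\cC^{>1})$ with $\overline{\cN}^{1}_\cC(\Sigma; p) > 1 + \delta_0$ and apply Lemma \ref{Lem_Refined_Uniq tang cone if limsup N(Sigma, tau)>1} with this $\delta_0$ to obtain $\eta, L$. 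Use monotonicity of $R \mapsto \Theta^{1}_\Sigma(p, R)$ to fix $L_0 \geq L$ with $\Theta^{1}_\Sigma(p, 2^{-L_0}) - \Theta_\Sigma(p) \leq \eta$. By the definition of the limsup there exist $\ell_i \to \infty$, $\ell_i \geq L_0$, and linear isometries $\iota_i : \RR^{n+1} \to (T_p M, g_p)$ with $\cN^{1}_{\iota_i(\cC)}(\Sigma; p, 2^{-\ell_i}) \geq 1 + \delta_0$; Lemma \ref{Lem_Refined_Uniq tang cone if limsup N(Sigma, tau)>1} propagates this lower bound to every intermediate scale $2^{-\ell}$, $\ell \in [L_0, \ell_i]$. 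Compactness of the isometry group lets us extract $\iota_i \to \iota_p$; the continuity of $\cN^{1}$ in the cone argument (from uniform convergence $\Sdist_{\iota_i(\cC)} \to \Sdist_{\iota_p(\cC)}$ on bounded sets) then yields, with $\cC_p := \iota_p(\cC)$,
\[ \cN^{1}_{\cC_p}(\Sigma; p, 2^{-\ell}) \geq 1 + \delta_0 \text{ for every } \ell \geq L_0. \]

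For the second step, unwinding Definition \ref{defi:doubling.constant.nonlinear} gives the recursion $F(2^{-\ell}) \leq 2^{-2\delta_0} F(2^{-\ell+1})$ with $F(\tau) := \mbfd_{\cC_p}(\Sigma; p, \tau)^2 + \tau^2$, so $\mbfd_{\cC_p}(\Sigma; p, 2^{-\ell}) \to 0$ geometrically. Assume (i) fails, so there exist $\eps' > 0$ and $\ell_j \to \infty$ with $\cN^{1}_{\cC_p}(\Sigma; p, 2^{-\ell_j}) \leq \Lambda := 1 + \Delta_\cC^{>1} - \eps' < 2$. Proposition \ref{Prop:nonconc-Jac} applied with $\tau_j := 2^{-\ell_j}$ produces (along a further subsequence) a nonzero $u \in \Jac^*(\cC \cap B_2)$ with $\|u\|_{L^2(\cC \cap B_1)} = 1$, a constant $\kappa \in [0, C(\Lambda))$, the bound $G^\kappa_\cC(u; 1) \leq \Lambda$, and
\[ G^\kappa_\cC(u; 2^{-k}) = \lim_{j \to \infty} \cN^{1}_{\cC_p}(\Sigma; p, 2^{-\ell_j - k}) \geq 1 + \delta_0 \text{ for every } k \geq 1, \]
the last inequality from Step 1 applied at scale $2^{-\ell_j - k}$ (for $j$ large). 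Sending $k \to \infty$ and using monotonicity of $G^\kappa_\cC(u; \cdot)$ (Corollary \ref{Cor_Jac field_Freq Monoton}(i)) gives $G^\kappa_\cC(u; 0) \geq 1 + \delta_0 > 1$. Corollary \ref{Cor_Jac field_Freq Monoton}(iii) identifies $G^\kappa_\cC(u; 0)$ with $d_0(u) \in \Gamma^*(\cC)$ (or $\min\{2, d_0(u)\}$ if $\kappa > 0$); combined with the spectral gap $(1, 1 + \Delta_\cC^{>1}) \cap \Gamma^*(\cC) = \emptyset$ and $\Delta_\cC^{>1} \leq 1$, this forces $G^\kappa_\cC(u; 0) \geq 1 + \Delta_\cC^{>1}$. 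Monotonicity once more gives $G^\kappa_\cC(u; 1) \geq 1 + \Delta_\cC^{>1} > \Lambda$, contradicting $G^\kappa_\cC(u; 1) \leq \Lambda$. This proves (i).

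For (ii), part (i) strengthens the recursion to give $\mbfd_{\cC_p}(\Sigma; p, \tau) \to 0$ at all small scales (simple scaling comparisons extend the dyadic case). Any tangent cone $\cC'$ at $p$, realized as a varifold limit $\Sigma_{p, \tau_i} \rightharpoonup \cC'$, then satisfies $\int_{\cC' \cap B_1} \Sdist_{\cC_p}^2 = 0$ by continuity and boundedness of $\Sdist_{\cC_p}$ on $\bar B_1$; hence $\spt \cC' \cap B_1 \subset \spt \cC_p$, and since both $\cC_p$ and $\cC'$ are minimizing hypercones through $\orig \in T_p M$ with density $\Theta_\Sigma(p)$, standard arguments force $\cC' = \cC_p$. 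The main obstacle is Step 2: the nonlinear bound $\cN \geq 1 + \delta_0$ may hold only with a tiny $\delta_0 > 0$, so one must cross the whole gap $(1, 1 + \Delta_\cC^{>1})$ to reach the claimed bound. The resolution is to descend via Proposition \ref{Prop:nonconc-Jac} to a linear Jacobi-field model, at which level the decay order is quantized in $\Gamma^*(\cC)$ and the gap becomes visible.
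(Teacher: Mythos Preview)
Your proof is correct. The first step (extracting a fixed isometry $\iota_p$ via Lemma~\ref{Lem_Refined_Uniq tang cone if limsup N(Sigma, tau)>1} and compactness of $O(n+1)$) matches what the paper leaves implicit, and your argument for (ii) is equivalent to the paper's use of Lemma~\ref{Lem:close-to-other-cone-freq-close-1}. The genuine difference is in the upgrade from $\cN^1 \geq 1+\delta_0$ to $\liminf \cN^1 \geq 1+\Delta_\cC^{>1}$. The paper iterates Lemma~\ref{Lem_Refined_Freq Almst Monon} at the nonlinear level: each application of that lemma (itself proved via a blow-up contradiction) gains a fixed increment $\lambda$, and after finitely many steps one climbs across the gap $(1,1+\Delta_\cC^{>1})$. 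You instead perform a single blow-up via Proposition~\ref{Prop:nonconc-Jac} along a hypothetical sequence of ``bad'' scales and invoke the exact quantization $G^\kappa_\cC(u;0)\in\Gamma^*(\cC)$ (or $\min\{2,d_0(u)\}$) from Corollary~\ref{Cor_Jac field_Freq Monoton}(iii) to jump across the gap in one stroke. Your route is arguably more direct here---it avoids the auxiliary $\lambda$ and the bookkeeping of repeated applications---while the paper's route has the advantage of staying packaged in the nonlinear lemmas already developed, which are reused elsewhere (e.g.\ Corollary~\ref{Cor:unif-high-freq-quantitative} and Section~\ref{sec:hf.qd.points}).
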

\begin{proof}
	Claim (i) follows from iterating Lemma \ref{Lem_Refined_Uniq tang cone if limsup N(Sigma, tau)>1} and Lemma \ref{Lem_Refined_Freq Almst Monon}. Claim (ii) follows from (i) and Lemma \ref{Lem:close-to-other-cone-freq-close-1}.
\end{proof}

A straightforward quantitative consequence is:

\begin{Cor}[Fast decay, quantitative] \label{Cor:unif-high-freq-quantitative}
Fix $\delta \in (0, \Delta_{\cC}^{>1})$. There exist $\eta>0$, $L \in \ZZ_{\geq 0}$ depending on $\cC$, $\delta$ with the following property. 

Consider minimizing $\Sigma$, $p \in \breve M \cap \spt \Sigma$, and $L_0 \in \ZZ_{\geq 0}$ such that:
\begin{enumerate}
	\item $L_0 \geq L$,
	\item $\Theta^{1}_\Sigma(p,2^{-\ell}) - \Theta_\Sigma(p)\leq \eta$ at $\ell = L_0$, and
	\item $\overline{\cN}^{1}_{\cC}(\Sigma; p) > 1$.
\end{enumerate}
Let $\iota_p : \RR^{n+1} \to (T_p M, g_p)$ be a linear isometry so that $\cC_p := \iota_p(\cC)$ is the unique tangent cone to $\Sigma$ at $p$ (per Corollary \ref{Cor:unif-high-freq-qualitative}). Then, for all $\ell \in \ZZ_{\geq 0}$, $\ell \geq L_0$,
\begin{enumerate}
	\item[(i)] $\cN^{1}_{\cC_p}(\Sigma;p,2^{-\ell}) \geq 1+ \Delta_\cC^{>1} - \delta$, and
	\item[(ii)] $\mbfd_{\cC_p}(\Sigma;p,2^{-\ell}) \leq C(n) \cdot 2^{-(\Delta_\cC^{>1}- \delta)(\ell-L_0)}$.
\end{enumerate}
\end{Cor}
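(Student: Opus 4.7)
The plan is to derive (i) by a standard propagation argument that combines Corollary \ref{Cor:unif-high-freq-qualitative}(i) with Lemma \ref{Lem_Refined_Uniq tang cone if limsup N(Sigma, tau)>1}, and then to deduce (ii) by iterating the very definition of the decay order. The linear isometry $\iota_p$ is pinned down by Corollary \ref{Cor:unif-high-freq-qualitative}(ii), so throughout we work with the fixed cone $\cC_p = \iota_p(\cC)$.

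For (i): I would introduce an auxiliary $\delta_0 := \Delta_\cC^{>1} - \delta/2$, which lies strictly in $(0, \Delta_\cC^{>1})$ by the standing hypothesis $\delta < \Delta_\cC^{>1}$. Let $\eta_0, L_0'$ be the constants produced by applying Lemma \ref{Lem_Refined_Uniq tang cone if limsup N(Sigma, tau)>1} with this $\delta_0$, and declare $\eta := \eta_0$, $L := L_0'$ to be the constants of the present corollary. Now fix any $\ell \geq L_0$. By Corollary \ref{Cor:unif-high-freq-qualitative}(i),
\[ \liminf_{j \to \infty} \cN^{1}_{\cC_p}(\Sigma; p, 2^{-j}) \geq 1 + \Delta_\cC^{>1}, \]
so we may pick some $L_1 \geq \max\{\ell, L_0 + 1\}$ with $\cN^{1}_{\cC_p}(\Sigma; p, 2^{-L_1}) \geq 1 + \delta_0$. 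The propagation lemma applied on the scale range $[L_0, L_1]$ then yields $\cN^{1}_{\cC_p}(\Sigma; p, 2^{-\ell}) \geq 1 + \delta_0 \geq 1 + \Delta_\cC^{>1} - \delta$, which is (i).

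For (ii): the bound in (i), unpacked via Definition \ref{defi:doubling.constant.nonlinear}, is equivalent to the one-step geometric decay
\[ \mbfd_{\cC_p}(\Sigma; p, 2^{-\ell})^2 + 2^{-2\ell} \leq 2^{-2(\Delta_\cC^{>1} - \delta)} \bigl( \mbfd_{\cC_p}(\Sigma; p, 2^{-\ell+1})^2 + 2^{-2(\ell-1)} \bigr) \]
valid for every $\ell > L_0$. Iterating this $\ell - L_0$ times down to scale $2^{-L_0}$ yields geometric decay with the asserted exponent. The initial datum $\mbfd_{\cC_p}(\Sigma; p, 2^{-L_0})$ is controlled by a dimensional constant thanks to the $1$-Lipschitz property of $\Sdist_{\cC_p}$ on $B_1 \subset T_p M$ and the uniform mass bound $\|\Sigma_{p,2^{-L_0}}\|(B_1) \leq C(n)$ coming from monotonicity together with the smallness of the density drop, which delivers the bound in (ii).

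I do not anticipate a serious obstacle: the key subtlety is ensuring that the auxiliary $\delta_0$ lies strictly below $\Delta_\cC^{>1}$ so that Lemma \ref{Lem_Refined_Uniq tang cone if limsup N(Sigma, tau)>1} applies with the right parameter, and the hypothesis $\delta \in (0, \Delta_\cC^{>1})$ is precisely what provides the room needed for this margin.
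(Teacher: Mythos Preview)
Your argument is correct and matches the paper's intended approach: the paper states this as a ``straightforward quantitative consequence'' of Corollary \ref{Cor:unif-high-freq-qualitative} and Lemma \ref{Lem_Refined_Uniq tang cone if limsup N(Sigma, tau)>1}, which is exactly the combination you invoke for (i), and your derivation of (ii) by iterating the definition of the decay order is the natural route. One cosmetic remark: you could equally well take $\delta_0 = \Delta_\cC^{>1} - \delta$ directly (the strict inequality $\delta_0 < \Delta_\cC^{>1}$ still holds), so the extra margin $\delta/2$ is harmless but unnecessary.
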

\begin{comment}
\begin{proof}
Uniqueness of the tangent hypercone and
\[
\liminf_{\ell\to\infty} \cN^{1}_\cC(\Sigma; p,2^{-\ell}) \geq 1 + \Delta_\cC. 
\]
(after appropriate rotation) follows from Proposition \ref{Pro:unif-high-freq}. Thus, the assertion follows from Lemma \ref{Lem_Refined_Uniq tang cone if limsup N(Sigma, tau)>1}
\end{proof}
\end{comment}

\subsection{Slow decay analysis} \label{sec:nonlinear.lf}

\begin{Lem}[Jacobi fields in $\Jac^*_1 \cap \Jac_{\textnormal{rot}}^\perp$] \label{Lem:ortho-rot}
Consider minimizing $\Sigma_i$, $p_i \in \breve M \cap \spt \Sigma_i$, isometric copies $\cC_i := \iota_{p_i}(\cC) \subset T_{p_i}M$, $L_{0,i}, L_{1,i}, L_{2,i} \in \ZZ_{\geq 0}$,\footnote{By monotonicity, one can always take $L_{0,i} := L_{1,i}$. However, these two variables will be decoupled in our applications (see Remark \ref{rema:L0.L1}) so we have chosen to decouple them here too. \label{footnote:L0.L1}} and $\delta_i \to 0$ such that:
\begin{enumerate}
\item $L_{1,i} \geq L_{0,i} \to \infty$,
\item $L_{2,i} - L_{1,i} \to\infty$,
\item $\mbfd_{\cC_i}(\Sigma_i;p_i,2^{-L_{2,i}}) \to 0$, 
\item $\Theta^{1}_{\Sigma_i}(p_i,2^{-L_{0,i}}) - \Theta_{\Sigma_i}(p_i) \to 0$, and
\item $\overline{\cN}^{1}_{\cC}(\Sigma_i;p_i,2^{-\ell})\leq 1+\delta_i$ for all $\ell \in \ZZ_{\geq 0}$, $\ell \geq L_{1,i}$.
\end{enumerate}
Then, there exist rotations $\cC_i' = \iota_{p_i}'(\cC)$ of $\cC_i$ with $\mbfd_{\cC_i'}(\Sigma_i;p,2^{-L_{2,i}}) \to 0$ so that Proposition \ref{Prop:nonconc-Jac} applies with $\cC_i'$ in place of $\cC_i$ at all scales and yields a Jacobi field $u \in \Jac^*_1(\cC)$ which is $L^2(\cC \cap B_1)$-orthogonal to $\Jac_{\textnormal{rot}}(\cC)^\perp$.
\end{Lem}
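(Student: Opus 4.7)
The plan is to select $\iota_{p_i}'$ as a minimizer of the map
\[ \iota \mapsto \mbfd_{\iota(\cC)}(\Sigma_i; p_i, 2^{-L_{2,i}})^2 \]
within the compact orbit $\{\iota_{p_i} \circ R : R \in \mathrm{SO}(n+1)\}$. A minimizer exists, and denoting $\cC_i' := \iota_{p_i}'(\cC)$, we obtain $\mbfd_{\cC_i'}(\Sigma_i; p_i, 2^{-L_{2,i}}) \leq \mbfd_{\cC_i}(\Sigma_i; p_i, 2^{-L_{2,i}}) \to 0$ by hypothesis (3). Hypothesis (5) further gives $\cN^{1}_{\cC_i'}(\Sigma_i; p_i, 2^{-L_{2,i}}) \leq \overline{\cN}^{1}_{\cC}(\Sigma_i; p_i, 2^{-L_{2,i}}) \leq 1+\delta_i < 2$ for all large $i$, so Proposition \ref{Prop:nonconc-Jac} applies at scale $\tau_i := 2^{-L_{2,i}}$. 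After passing to a subsequence, this yields a non-trivial Jacobi field $u \in \Jac^*(\cC \cap B_2)$ with $\|u\|_{L^2(\cC \cap B_1)} = 1$ together with the convergence $\cN^{1}_{\cC_i'}(\Sigma_i; p_i, \varrho \tau_i) \to G^\kappa_\cC(u;\varrho)$ for every $\varrho \in (0,1)$, where $\kappa \in [0, C(1))$.

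Next, I would show $u \in \Jac^*_1(\cC)$ with $\kappa=0$. For $\varrho = 2^{-m}$, $m \geq 0$, the scale $\varrho \tau_i = 2^{-L_{2,i}-m}$ satisfies $L_{2,i}+m \geq L_{1,i}$, so hypothesis (5) yields $G^\kappa_\cC(u;\varrho) \leq 1$; by the monotonicity of $G^\kappa_\cC(u;\cdot)$ from Corollary \ref{Cor_Jac field_Freq Monoton}(i), this upgrades to all $\varrho \in (0,1)$. For the reverse inequality, monotonicity of $\Theta^{1}_{\Sigma_i}$ combined with hypothesis (4) gives $\Theta^{1}_{\Sigma_i}(p_i, 2^{-\ell+1}) - \Theta_{\Sigma_i}(p_i) \to 0$ for all $\ell \geq L_{0,i}+1$, so Lemma \ref{Lem:freq-not-too-low} yields $\cN^{1}_{\cC_i'}(\Sigma_i; p_i, 2^{-\ell}) > 1-\delta$ at each fixed $\delta > 0$ and sufficiently large $\ell, i$. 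Passing to the limit and then letting $\delta \to 0$ gives $G^\kappa_\cC(u; \varrho) \geq 1$. Hence $G^\kappa_\cC(u; \cdot) \equiv 1$ on $(0,1)$, and Corollary \ref{Cor_Jac field_Freq Monoton}(ii) forces $\kappa = 0$ and $u$ to be (the restriction to $B_2$ of) an element of $\Jac^*_1(\cC)$.

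To establish $u \perp \Jac_{\textnormal{rot}}(\cC)$ in $L^2(\cC \cap B_1)$, I would differentiate the first-variation identity at the minimizer $\iota_{p_i}'$. For $\mbfA \in \mathfrak{so}(n+1)$ and $R_t := \exp(t\mbfA)$, minimality of $\iota_{p_i}'$ forces
\[ 0 = \frac{d}{dt}\bigg|_{t=0}\mbfd_{\iota_{p_i}'(R_t\cC)}(\Sigma_i; p_i, \tau_i)^2. \]
Working in $\RR^{n+1}$ via $\iota_{p_i}'$ and using $\Sdist_{R_t\cC}(\mbfx) = \Sdist_\cC(R_{-t}\mbfx)$ together with $\nabla \Sdist_\cC = \nu_\cC$ on the smooth part, a direct computation identifies this first variation with
\[ -2 \int_{\iota_{p_i}'^{-1}((\Sigma_i)_{p_i,\tau_i}) \cap B_1} \Sdist_\cC(\mbfx) \, W_\mbfA(\pi_\cC(\mbfx)) \, d\cH^n(\mbfx) + O\bigl(\mbfd_{\cC_i'}(\Sigma_i; p_i, \tau_i)^2\bigr), \]
where $\pi_\cC$ is nearest-point projection and the error comes from $\mbfA\mbfx \neq \mbfA\pi_\cC(\mbfx)$ off $\cC$. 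Changing variables along the diffeomorphism $\Pi$ from Proposition \ref{Prop:nonconc-Jac}(ii), dividing by $\mbfd_{\cC_i'}(\Sigma_i; p_i, \tau_i)$, and taking $i \to \infty$ using the $C^2_\loc$-convergence $\mbfd_{\cC_i'}(\Sigma_i; p_i, \tau_i)^{-1} h_i \to u$ on compacta of $\cC \cap B_2 \setminus \sing \cC$, combined with the boundedness of $W_\mbfA$ on $B_1$ and the non-concentration estimate in Proposition \ref{Prop:nonconc-Jac}(i), yields $\int_{\cC \cap B_1} u \cdot W_\mbfA = 0$ for every $\mbfA$, which is the desired orthogonality.

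The main obstacle I anticipate is executing this limit rigorously: the projection $\pi_\cC$ degenerates near $\sing \cC$, and the $O(\mbfd_{\cC_i'}^2)$ remainder in the first variation must be controlled after normalization by only a single power of $\mbfd_{\cC_i'}$. Both issues should be handled by combining the non-concentration estimate (Proposition \ref{Prop:nonconc-Jac}(i)) with the boundedness of $|W_\mbfA|$ on $B_1$ and the uniform $L^2$-control from the Caccioppoli inequality (Lemma \ref{Lem_Jac field_Caccioppoli-type Ineq}).
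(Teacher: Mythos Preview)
Your overall strategy matches the paper's: minimize over rotations at scale $\tau_i = 2^{-L_{2,i}}$, extract the Jacobi field via Proposition~\ref{Prop:nonconc-Jac}, pin down homogeneity via Lemma~\ref{Lem:freq-not-too-low} and Corollary~\ref{Cor_Jac field_Freq Monoton}, and read off $u \perp \Jac_{\textnormal{rot}}(\cC)$ from the first-variation identity at the minimum. Your argument that $G^\kappa_\cC(u;\cdot)\equiv 1$ and hence $\kappa=0$, $u\in\Jac^*_1(\cC)$ is correct.

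There is, however, a genuine technical gap in the orthogonality step. You minimize $\int_{\Sigma_i}\Sdist_\cC^2$ and then differentiate, invoking $\nabla\Sdist_\cC=\nu_\cC\circ\pi_\cC$ ``on the smooth part.'' But $\Sdist_\cC$ is only Lipschitz: it fails to be $C^1$ along the cut locus of $\cC_\circ$ in each $\RR^{n_\circ+1}$-slice, and since $\Sdist_\cC\neq 0$ there, $\Sdist_\cC^2$ is not differentiable there either. Away from $\spine\cC$ the graphical control keeps $\Sigma_i$ in the tubular neighborhood where $\Sdist_\cC$ is smooth, but in $\{r<r_0\}$ you have no such control, and $\Sigma_i$ may meet the cut locus on a set of positive $\cH^n$-measure. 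In that case $t\mapsto\int\Sdist_\cC(R_{-t}\cdot)^2\,d\|\Sigma_i\|$ is only Lipschitz, and the Euler--Lagrange identity you need at the minimizer is not available. Non-concentration bounds $\int r^{-2}\Sdist_\cC^2$ but does not repair the differentiability; nor does your observation that the $O(\mbfd^2)$ remainder is small (in fact, by skew-symmetry of $\mbfA$ it vanishes identically where $\Sdist_\cC$ is smooth, but that is beside the point).

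The paper closes this gap by introducing a regularized signed distance
\[
\Rdist_{\cC'}(\mbfx):=r_{\cC'}(\mbfx)\,\chi\!\left(\Sdist_{\cC'}(\mbfx)/r_{\cC'}(\mbfx)\right),
\]
where $\chi$ is a smooth odd cutoff, so that $\Rdist_{\cC'}=\Sdist_{\cC'}$ on $\{|\Sdist_{\cC'}|\leq\kappa\,r_{\cC'}\}$, $\Rdist_{\cC'}$ is uniformly comparable to $\Sdist_{\cC'}$, and crucially $(\Rdist_{\cC'})^2$ is a $2$-homogeneous $C^1$ function on all of $T_pM$. One then minimizes $\hat\mbfd_{\cC'}^2:=\int(\Rdist_{\cC'})^2$ over $\cC'\in\Rot(\cC_i)$. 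Comparability gives $\mbfd_{\cC_i'}\to 0$, hypotheses (4), (5) transfer unchanged, and the $C^1$ regularity makes the exact identity
\[
\int_{(\Sigma_i)_{p_i,\tau_i}\cap B_1}\langle\nabla\Rdist_{\cC_i'}(\mbfx),\mbfA\mbfx\rangle\,\Rdist_{\cC_i'}(\mbfx)=0
\]
rigorous for every $\mbfA\in\mathfrak{so}(n+1)$. Since $\Rdist=\Sdist$ near $\cC$ and $\langle\nabla\Rdist_{\cC_i'},\mbfA\mbfx\rangle\to\langle\mbfA\mbfx,\nu_\cC\rangle$ in $C^2_{\textnormal{loc}}(\cC)$, the limit passes exactly as you outline, using non-concentration to handle the region near $\spine\cC$.
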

\begin{proof}
We first slightly regularize $\Sdist$. 
Let $\param>0$ be such that $\Sdist_{\cC_\circ}$ is a smooth function in $\{x\in \RR^{n_\circ+1}: |\Sdist_{\cC_\circ}(x)|< 2\param |x|\}$; And let $\chi: \RR\to \RR$ be an odd nondecreasing function such that $\chi(s) = s$ for $|s|\leq \param$ and $\chi(s) = 2\param$ for $s\geq 2\param$ and $\chi(s) = -2\param$ for $s\leq -2\param$. 

For a fixed copy $\cC'=\iota(\cC)\subset T_pM$, we denote by $r_{\cC'}(\mbfx)$ the distance from $\mbfx\in T_pM$ to $\spine(\cC')$. 
We define the regularized signed distance function $\Rdist_\cC: T_pM \to \RR$ by \[
   \Rdist_{\cC'}(\mbfx) := r_{\cC'}(\mbfx)\cdot\chi\left(\Sdist_{\cC'}(\mbfx)/r_{\cC'}(\mbfx)\right)\,.
\]
Note that $(\Rdist_{\cC'})^2$ is a 2-homogeneous $C^1$ function on $T_pM$ and there exists some constant $\hat C(n, \param)>1$ such that 
\begin{align}
   & \Rdist_{\cC'} = \Sdist_{\cC'} & & \text{on } \{\mbfx: |\Sdist_{\cC'}(\mbfx)|\leq \param\, r_{\cC'}(\mbfx)\}\,, \label{Equ_Rdist = Sdist near cC'} \\ 
   \hat C^{-1} \leq\; &\Sdist_{\cC'}(\mbfx)^{-1}\cdot\Rdist_{\cC'}(\mbfx) \leq \hat C ,  & & \forall\, \mbfx\in T_pM\setminus \cC'\,. \label{Equ_Rdist approx Sdist}
\end{align}

Choose $\cC_i' \in \Rot(\cC_i)$ minimizing 
\begin{equation} \label{eq:ortho-rot-min}
	\Rot(\cC_i) \ni \cC' \mapsto 
    \hat\mbfd_{\cC'}(\Sigma_i; p_i, 2^{-L_{2,i}})^2
    := \int_{ (\Sigma_i)_{p_i,L_{2,i}} \cap B_1} (\Rdist_{\cC'})^2.
\end{equation}
Clearly, (3) and \eqref{Equ_Rdist approx Sdist} implies
\begin{equation} \label{eq:ortho-rot-0}
	\hat\mbfd_{\cC_i'}(\Sigma_i;p_i,2^{-L_{2,i}}),\ \mbfd_{\cC_i'}(\Sigma_i;p_i,2^{-L_{2,i}}) \leq \hat C^2 \mbfd_{\cC_i}(\Sigma_i;p_i,2^{-L_{2,i}}) \to 0.
\end{equation}
Enlarging $\delta_i$ if necessary while still preserving $\delta_i\to0$, we can use (1), (4), and (5) to invoke Lemma \ref{Lem:freq-not-too-low}  to conclude:
\begin{equation} \label{eq:ortho-rot-1}
	|\cN^{1}_{\cC'_i}(\Sigma_i;p_i,2^{-\ell})- 1| \leq \delta_i \text{ for all } \ell > L_{1,i}.
\end{equation}
In view of (2), (3), \eqref{eq:ortho-rot-0}, and \eqref{eq:ortho-rot-1}, we can apply Lemma \ref{Lem:freq-lss-2-metric-dies} and Proposition \ref{Prop:nonconc-Jac} at all scales and obtain $u \in \Jac^*(\cC)$ and $\kappa \geq 0$ satisfying 
\[
G_{\cC}^\kappa(u;2^k) = \lim_{i\to\infty} \cN^{1}_{\cC_i'}(\Sigma_i;p_i,2^{-L_{2,i}+k}) = 1
\]
for all $k$. Thus, $u \in \Jac^*_1(\cC)$ and $\kappa = 0$ by Corollary \ref{Cor_Jac field_Freq Monoton}.

It remains to show that $u$ is also $L^2(\cC \cap B_1)$-orthogonal to $\Jac_{\textnormal{rot}}(\cC)$. We will use the minimizing nature of $\cC_i'$ for \eqref{eq:ortho-rot-min}. To that end note that, for all $\mbfA \in \mathfrak{so}({n+1})$,
\[
\frac{d}{ds}\Big|_{s=0} \Rdist_{e^{s\mbfA}\cC_i'}(\mbfx) =  \frac{d}{ds}\Big|_{s=0} \Rdist_{\cC_i'}(e^{-s\mbfA}\mbfx) = - \langle \nabla \Rdist_{\cC_i'}(\mbfx),\mbfA \mbfx\rangle
\]
at all points $\mbfx\notin\spine(\cC_i')$, where $\Rdist_{\cC_i'}$ is differentiable; above, $\langle \cdot, \cdot \rangle$ and $\nabla$ are taken in $T_{p_i} M$ with respect to $g_{p_i}$. Since $(\Rdist_{\cC_i})^2$ is $C^1$, the minimizing nature of $\cC_i'$ in \eqref{eq:ortho-rot-min} implies
\begin{equation}\label{eq:ortho-rotation-fixing-cone}
\int_{ (\Sigma_i)_{p_i,L_{2,i}} \cap B_1} \langle \nabla \Rdist_{\cC_i'}(\mbfx), \mbfA \mbfx  \rangle \, \Rdist_{\cC_i'}(\mbfx)\ = 0
\end{equation}
for any $A \in \mathfrak{so}(n+1)$. We also recall that Proposition \ref{Prop:nonconc-Jac} and \eqref{Equ_Rdist approx Sdist} give the non-concentration estimate
\[
\limsup_{i\to\infty} \hat\mbfd_{\cC_i'}(\Sigma_i;p_i,L_{2,i})^{-2} \int_{(\Sigma_j)_{p_i,L_{2,i}} \cap B_1}  r^{-2}\Rdist_{\cC_i'}^2 <\infty \, .
\]
Since $ \langle \nabla \Rdist_{\cC_i'}(\mbfx) , \mbfA \mbfx  \rangle$ limits to $\langle \mbfA \mbfx,\nu_\cC\rangle$ in $C^2_\textrm{loc}(\cC)$ we can combine this with \eqref{eq:ortho-rotation-fixing-cone} and \eqref{Equ_Rdist = Sdist near cC'} to conclude that 
\[
\int_{\cC \cap B_1} \langle \mbfA \mbfx,\nu_\cC\rangle \, u = 0. 
\]
for all $\mbfA \in \mathfrak{so}(n+1)$, completing the proof. 
\end{proof}

We now impose the additional assumption that $\cC_\circ$ is strongly integrable (cf.\ Definition \ref{defi:strongly.integrable}).

\begin{Thm} \label{theo:freq-1-subspace}
        Suppose that the factor $\cC_\circ$ of $\cC$ is also strongly integrable.\footnote{So far, $\cC_\circ$ had only been assumed to be a regular, strictly stable, and strictly minimizing hypercone.} Fix $\theta>0$. There exist $\eps$, $\delta$, $\eta>0$, $L \in \ZZ_{\geq 0}$, depending on $\cC$, $\theta$, with the following properties. 

Consider minimizing $\Sigma$, $p \in \breve M \cap \spt \Sigma$, a copy $\cC_p := \iota_p(\cC) \subset T_p M$, and $L_0, L_1, L_2 \in \ZZ_{\geq 0}$\footnote{Again, one could have taken $L_0 := L_1$, but see footnote \ref{footnote:L0.L1}.} such that:
\begin{enumerate}
\item $L_1 \geq L_0 \geq L$,
\item $L_2 - L_1 \geq L$,
\item $\mbfd_{\cC_p}(\Sigma; p, 2^{-\ell}) \leq \eps$ at $\ell = L_2$, 
\item $\Theta^{1}_\Sigma(p,2^{-\ell}) - \Theta_\Sigma(p) \leq \eta$ at $\ell = L_0$, and
\item $\overline{\cN}^{1}_{\cC}(\Sigma;p,2^{-\ell}) \leq 1+\delta$ for all $\ell \in \ZZ_{\geq 0}$, $\ell \geq L_1$.
\end{enumerate}
There exists a proper linear subspace $V\subsetneq \spine \cC$, so that if (4), (5) hold with $\Sigma^+$, $p^+$ in place of $\Sigma$, $p$, and
\begin{enumerate}
\item[(6)] $p^+ \in B_{2^{-L_2}}(p)$, 
\item[(7)] $\Sigma^+$ does not cross $\Sigma$ smoothly,
\end{enumerate}
then $p^+ \in (\eta_{p,2^{-L_2}} \circ \iota_p)(B_\theta(V))$. 
\end{Thm}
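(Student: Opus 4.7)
The plan is to argue by contradiction, combining the Jacobi-field blowup from Lemma~\ref{Lem:ortho-rot} with the linear cone-splitting in Corollary~\ref{Cor_Jac field_High degree pts concentrate}. Suppose that for some $\theta>0$ the conclusion fails, producing sequences of $\Sigma_i$, $p_i$, $\cC_i = \iota_{p_i}(\cC)$ and $L_{0,i}\leq L_{1,i} \leq L_{2,i}$ satisfying (1)--(5) with $\eps_i, \delta_i, \eta_i \to 0$ and $L_i \to \infty$, together with $\Sigma_i^+, p_i^+$ satisfying (4)--(7) at $p_i^+$, such that writing $\hat y_i := \iota_{p_i}^{-1}(\eta_{p_i, 2^{-L_{2,i}}}^{-1}(p_i^+))\in\bar B_1$ we have $\hat y_i \notin B_\theta(V)$ for every candidate proper subspace $V\subsetneq \spine\cC$. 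Lemma~\ref{Lem:ortho-rot} applied to $\{(\Sigma_i, p_i)\}$ yields, after a subsequence, rotations $\cC_i' = \iota_{p_i}'(\cC)$ and a limit Jacobi field $u \in \Jac_1^*(\cC) \cap \Jac_\textnormal{rot}(\cC)^\perp$ with unit $L^2(\cC\cap B_1)$-norm; concretely $\mbfd_i^{-1} h_i \to u$ in $C^2_\textnormal{loc}(\cC\cap B_2)$, where $h_i$ is the height of $\Sigma_i$ over $\cC_i'$ in the rescaling about $p_i$ at scale $2^{-L_{2,i}}$ and $\mbfd_i := \mbfd_{\cC_i'}(\Sigma_i; p_i, 2^{-L_{2,i}})$. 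Since $\cC_\circ$ is strongly integrable, Corollary~\ref{Cor_Jac field_High degree pts concentrate} applied to $u$ with $\theta$ furnishes a proper subspace $V_u \subsetneq \spine\cC$ and a scale $\varrho = \varrho(\cC,\theta) > 0$ with $G_\cC(w;\mbfy,\varrho) < 1 - \tfrac12\Delta_\cC^{<1}$ for all $w \in \cJ_u \setminus\{0\}$ and $\mbfy \in \spine\cC \cap \bar B_1 \setminus B_\theta(V_u)$, where $\cJ_u := \operatorname{span}\{u\} + \Jac_\textnormal{trl}(\cC) + \Jac_\textnormal{pos}^*(\cC)$. Taking $V := V_u$, the contradiction hypothesis and a further subsequence give $\hat y_i \to \hat y_\infty \in \bar B_1\setminus B_\theta(V_u)$; and since $p_i^+$ has density approaching $\Theta_\cC(\orig)$ by (4) at $p_i^+$ together with monotonicity, upper-semicontinuity of density places $\hat y_\infty$ on $\spine\cC$ in the blowup limit (a top-density point of the limit cone $\cC$).

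Next we introduce $\Sigma_i^+$ into the same rescaled chart about $p_i$. Using (4)--(5) at $p_i^+$, (6), and Allard's theorem, $\Sigma_i^+$ is graphical over $\cC_i'$ on the complement of a shrinking neighborhood of $\hat y_\infty$ in $\cC\cap B_2$; denote the height by $h_i^+$. A simultaneous application of Simon's non-concentration estimate (Theorem~\ref{Thm_Simon's L^2 Noncon}) at $p_i$ and $p_i^+$, together with control on the drift between the optimal tangent-cone rotations $\cC_i'$ and $\cC_i^{+\prime}$ furnished by the slow-decay hypothesis (5) at both base points, lets us pass (after a suitable renormalization and a further subsequence) $h_i^+$ to a Jacobi field $w \in \Jac^*(\cC\cap B_2)$ in $C^2_\textnormal{loc}(\cC\cap B_2 \setminus\{\hat y_\infty\})$, with extension across $\hat y_\infty$ from the weighted-energy bound. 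The non-crossing hypothesis (7) forces $h_i - h_i^+$ to be sign-definite; WLOG $h_i \geq h_i^+$, so matching normalizations and passing to the limit yields $u - w \geq 0$. Lemma~\ref{Lem_Jac field_Jac_(trl, rot, pos)} then identifies $u - w \in \Jac_\textnormal{pos}^*(\cC)$, whence
\[
w = u - (u - w) \in \operatorname{span}\{u\} + \Jac_\textnormal{pos}^*(\cC) \subseteq \cJ_u.
\]
Moreover $w \neq 0$: else $u \in \Jac_\textnormal{pos}^*(\cC) = \Jac^*_{\gamma_1}(\cC)$ would contradict $0 \neq u \in \Jac_1^*(\cC)$ since $\gamma_1 \neq 1$.

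Finally, we compute the decay order of $w$ at $\hat y_\infty$. The slow-decay hypothesis (5) at $p_i^+$ combined with Proposition~\ref{Prop:nonconc-Jac} applied to $\Sigma_i^+$ at $p_i^+$, viewed in the rescaling about $p_i$, give $G_\cC(w;\hat y_\infty,\varrho) \geq 1-\tfrac12\Delta_\cC^{<1}$ after $\eps, \delta, \eta$ are chosen small and $L$ large. On the other hand, since $w \in \cJ_u\setminus\{0\}$ and $\hat y_\infty \in \spine\cC\cap \bar B_1 \setminus B_\theta(V_u)$, Corollary~\ref{Cor_Jac field_High degree pts concentrate} forces the reverse strict inequality $G_\cC(w;\hat y_\infty,\varrho) < 1-\tfrac12\Delta_\cC^{<1}$, the desired contradiction. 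I expect the main obstacle to be the Jacobi-field extraction in the second paragraph: showing that the renormalized $h_i^+$ admit a $C^2_\textnormal{loc}$ limit $w \in \Jac^*(\cC\cap B_2)$ modelling $\Sigma_i^+$ in the $\Sigma_i$-adapted chart. This demands simultaneous non-concentration estimates at the two base points $p_i, p_i^+$, a careful choice of normalization (addressing both the case $\|h_i^+\|_{L^2} \lesssim \mbfd_i$ and its converse, the latter being ruled out by non-crossing sign-definiteness combined with the observation that rotation Jacobi fields change sign), and absorption of any residual rotational drift between $\cC_i'$ and $\cC_i^{+\prime}$ into the decomposition of $w$ within $\cJ_u$.
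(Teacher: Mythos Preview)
Your overall architecture matches the paper's: contradiction, Lemma~\ref{Lem:ortho-rot} to obtain $u$, Corollary~\ref{Cor_Jac field_High degree pts concentrate} to obtain $V_u$, extract a Jacobi field from $\Sigma_i^+$, use non-crossing to land in $\cJ_u$, and contradict the decay-order bound. The gap is in the second and third paragraphs, and it is not merely technical.

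The element of $\cJ_u$ you produce, $w$, is the limit of $\Sigma_i^+$'s height over $\cC_i'$ in the \emph{$p_i$-centred} chart. But $p_i^+$ sits at $\hat y_i = \mbfv_i = \mbfv_i^\parallel + \mbfv_i^\perp$ in that chart, with $\mbfv_i^\perp \perp \spine\cC$ in general nonzero. The height of $\Sigma_i^+$ over $\cC$ near $\hat y_i$ therefore carries a translation contribution of size $|\mbfv_i^\perp|$; after normalisation this survives in the limit as a translation Jacobi field $\langle \mbfz, \nu_\cC\rangle \in \Jac_\textnormal{trl}(\cC)$, which is degree-$0$ homogeneous at $\hat y_\infty$. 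Hence $G_\cC(w;\hat y_\infty) = 0$ whenever $\mbfz\neq 0$, and your claimed inequality $G_\cC(w;\hat y_\infty,\varrho) \geq 1-\tfrac12\Delta_\cC^{<1}$ is false in general. Note that your decomposition $w \in \operatorname{span}\{u\} + \Jac_\textnormal{pos}^*(\cC)$ never touches the $\Jac_\textnormal{trl}(\cC)$ summand of $\cJ_u$; that summand is there precisely to absorb this translation. Relatedly, your proposed remedy for the regime $\|h_i^+\| \gg \mbfd_i$ (``rotation Jacobi fields change sign'') does not apply: the renormalised limit there is a \emph{positive} Jacobi field by non-crossing, not a rotation, so no sign contradiction arises.

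The paper fixes both issues at once. It extracts a separate field $u^+\in\Jac_1^*(\cC)$ for $\Sigma_i^+$ at basepoint $p_i^+$, using the parallel-transported cone $\cC_i^+ := \iota_{p_i\to p_i^+}(\cC_i')$ --- \emph{not} a fresh rotation; introducing a $\cC_i^{+\prime}$ via Lemma~\ref{Lem:ortho-rot} at $p_i^+$, as you suggest, would produce a rotation Jacobi field lying outside $\cJ_u$. It then introduces the combined scale $\lambda_i := \mbfd_{\cC_i'}(\Sigma_i;p_i,\tau_i) + \mbfd_{\cC_i^+}(\Sigma_i^+;p_i^+,\tau_i) + |\mbfv_i^\perp|$ and carries out an explicit change-of-basepoint computation (Lemmas~\ref{Lem:change-bspt}, \ref{Lem:affine-acting-graph}) to prove
\[
\lambda_i^{-1}h^{\Sigma_i^+}_{p_i,\tau_i} \to c^+ u^+(\cdot - \mbfv) + \langle \mbfz,\nu_\cC\rangle \quad\text{in } C^0_\textnormal{loc}(\cC).
\]
Non-crossing now reads $c^+ u^+(\cdot - \mbfv) + \langle \mbfz,\nu_\cC\rangle \geq cu$, whence (after ruling out $c^+=0$) one gets $u^+(\cdot - \mbfv) \in \cJ_u\setminus\{0\}$; and since $u^+\in\Jac_1^*(\cC)$ one has $G_\cC(u^+(\cdot-\mbfv);\mbfv)=1$ directly, contradicting Corollary~\ref{Cor_Jac field_High degree pts concentrate}. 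The object with the correct decay is $u^+(\cdot-\mbfv)$, not your $w$.
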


\begin{Rem} \label{rema:freq-1-subspace}
    Since $\Sigma$ doesn't cross itself, we are allowed to take $\Sigma^+ = \Sigma$ in Theorem \ref{theo:freq-1-subspace} above. The theorem's conclusion gives novel information in this case as well. 
\end{Rem}

\begin{proof}[Proof of Theorem \ref{theo:freq-1-subspace}]
Consider a sequence of counterexamples. That is, $\Sigma_i$, $p_i \in \breve M \cap \spt \Sigma_i$, $p_i \in M$, $\cC_i := \iota_{p_i}(\cC) \subset T_{p_i}M$, $\delta_i \to 0$, $L_{0,i}, L_{1,i}, L_{2,i}, L_i \to \infty$ contradicting the statement of the theorem. In particular, we are assuming that:
\begin{enumerate}
	\item[(1')] $L_{1,i} \geq L_{0,i} \geq L_i$,
	\item[(2')] $L_{2,i} - L_{1,i} \geq L_i$, 
	\item[(3')] $\mbfd_{\cC_i}(\Sigma_i; p_i, 2^{-L_{2,i}}) \to 0$, 
	\item[(4')] $\Theta^{1}_{\Sigma_i}(p_i, 2^{-L_{0,i}}) - \Theta_{\Sigma_i}(p_i) \to 0$, and
	\item[(5')] $\overline{\cN}^{1}_\cC(\Sigma_i; p_i, 2^{-\ell}) \leq 1 + \delta_i$ for all $\ell \geq L_{1,i}$.
\end{enumerate}
By $L_i \to \infty$ and (1'), (2'), (3'), (4'), and (5') we can apply Lemma \ref{Lem:ortho-rot} to replace $\cC_i$ with a nearby element of $\Rot(\cC_i)$, not relabeled, so that Proposition \ref{Prop:nonconc-Jac} and Corollary \ref{Cor_Jac field_Freq Monoton} applied at all scales produce a Jacobi field $u \in  \Jac_1^*(\cC) \cap (\Jac_\textnormal{rot}(\cC))^\perp \setminus \{ 0 \}$. For later use, we introduce the notation
\[ \tau_i := 2^{-L_{2,i}}, \]
and, per Proposition \ref{Prop:nonconc-Jac} (ii) applied at scale $\tau_i$, we denote
\[ h_{p_i,\tau_i}^{\Sigma_i} \circ \Pi := \Sdist_{\cC_i} \circ \iota_{p_i} = \Sdist_{\cC} \]
on a subset of $\iota_{p_i}^{-1}((\Sigma_i)_{p_i, \tau_i})$ on which the closest-point projection $\Pi$ to $\spt \cC$ in $\RR^{n+1}$ is a diffeomorphism onto some open $U_{p_i, \tau_i}^{\Sigma_i} \subset \cC$. Note that $U_{p_i, \tau_i}^{\Sigma_i} \to \cC$ as $i \to \infty$ by (2'). By construction,
\begin{equation} \label{eq:freq-1-subspace-u0}
	\mbfd_{\cC_i}(\Sigma_i; p_i, \tau_i)^{-1} h^{\Sigma_i}_{p_i, \tau_i} \to u \text{ in } C^2_{\textnormal{loc}}(\cC),
\end{equation}
by passing to subsequences (not relabeled) as appropriate.

We fix
\[ \cJ_u := \Span\{u \} + \Jac_\textrm{trl}(\cC) + \Jac^*_\textrm{pos}(\cC), \]
and then
\[ V_u \subsetneq \spine \cC \subset \RR^{n+1} \]
as in Corollary \ref{Cor_Jac field_High degree pts concentrate}. We proceed to define
\[ V_i := \iota_{p_i}(V_u) \subset T_{p_i} M, \]
and take $\Sigma_i^+$, $p_i^+$ to be such that:
\begin{enumerate}
	\item[(4'')] $\Theta^{1}_{\Sigma_i^+}(p_i^+, 2^{-L_{0,i}}) - \Theta^{1}_{\Sigma_i^+}(p_i^+) \to 0$, 
	\item[(5'')] $\overline{\cN}^{1}_\cC(\Sigma_i^+; p_i^+, 2^{-\ell}) \leq 1 + \delta_i$ for all $\ell \geq L_{1,i}$,
	\item[(6'')] $p_i^+ \in B_{\tau_i}(p_i)$,
	\item[(7'')] $\Sigma^+_i$ does not cross $\Sigma_i$ smoothly,
\end{enumerate}
and violating the desired conclusion, i.e.,
\begin{equation} \label{eq:freq-1-subspace-contr-1}
	p_i^+ \in B_{\tau_i}(p_i) \setminus \eta_{p_i,\tau_i}(B_\theta(V_i)).
\end{equation}
Note that (1'), (4''), (6''), (7''), and Lemma \ref{lemm:one.sided.cone} imply that
\begin{equation} \label{eq:freq-1-subspace-conv}
	\iota_{p_i}^{-1}((\Sigma_i^+)_{p_i,\tau_i}) \rightharpoonup \cC.
\end{equation}
After choosing $\mbfv_i \in B_1 \subset \RR^{n+1}$ so that
\[ p_i^+ = \eta_{p_i,\tau_i}(\iota_{p_i}(\mbfv_i)), \]
it follows from \eqref{eq:freq-1-subspace-contr-1}, \eqref{eq:freq-1-subspace-conv}, and Lemma \ref{lemm:one.sided.cone} that, after passing to a subsequence,
\begin{equation} \label{eq:freq-1-subspace-limit-spine}
	\mbfv_i \to \mbfv \in \spine \cC \cap \bar B_1 \setminus B_\theta(V_u).
\end{equation}
We may then invoke Corollaries \ref{Cor_Jac field_Freq Monoton} and \ref{Cor_Jac field_High degree pts concentrate} to find that
\begin{equation} \label{eq:freq-splitting-contr-at-Q}
	G_\cC(w; \mbfv) < 1 \text{ for all } w \in \cJ_u \setminus \{ 0 \}.
\end{equation}

Let $\iota_{p_i \to p_i^+}$ the isometry from $(T_{p_i} M, g_{p_i})$ to $(T_{p_i^+} M, g_{p_i^+})$ obtained by parallel transport along the minimizing geodesic from $p_i$ to $p_i^+$, and $ \iota_{p_i^+} := \iota_{p_i \to p_i^+} \circ \iota_{p_i}$. Define
\[ \cC_i^+ := \iota_{p_i \to p_i^+}(\cC_i) = \iota_{p_i^+}(\cC) \subset T_{p_i^+} M. \]
Note that $\cC_i^+$ is a hypercone centered at $\orig \in T_{p_i^+}M$ and \eqref{eq:freq-1-subspace-conv} plus Lemma \ref{Lem:change-bspt} imply
\begin{enumerate}
	\item[(3'')] $\mbfd_{\cC_i^+}(\Sigma_i^+; p_i^+, 2^{-L_{2,i}}) \to 0$.
\end{enumerate} 
Proceeding exactly as above, with $\Sigma_i^+$, $p_i^+$ in place of $\Sigma_i$, $p_i$, and (3''), (4''), (5'') in place of (3'), (4'), (5'), we obtain a Jacobi field $u^+ \in \Jac^*_1(\cC) \setminus \{ 0 \}$ by Proposition \ref{Prop:nonconc-Jac} and Corollary \ref{Cor_Jac field_Freq Monoton}. We introduce similar notation to before,
\[ h_{p_i^+, \tau_i}^{\Sigma_i^+} \circ \Pi := \Sdist_{\cC^+_i} \circ \iota_{p_i^+} = \Sdist_{\cC}, \]
where $\Pi$ is unchanged, and the corresponding domains are $U^{\Sigma_i^+}_{p_i^+, \tau_i} \to \cC$ as $i \to \infty$. By construction,
\begin{equation} \label{eq:freq-1-subspace-up}
	\mbfd_{\cC^+_i}(\Sigma^+_i; p^+_i, \tau_i)^{-1} h_{p_i^+, \tau_i}^{\Sigma_i^+} \to u^+ \text{ in } C^2_{\textnormal{loc}}(\cC),
\end{equation}
by passing to subsequences (not relabeled) as appropriate. 

Next, orthogonally decompose
\[ \mbfv_i =: \mbfv_i^\parallel + \mbfv_i^\perp \in \spine \cC \oplus (\spine \cC)^\perp \]
and set
\begin{equation} \label{eq:freq-1-subspace-di}
\lambda_i : = \mbfd_{\cC_i}(\Sigma_i;p_i,\tau_i) + \mbfd_{\cC_i^+}(\Sigma_i^+;p_i^+,\tau_i) + |\mbfv_i^\perp| > 0.
\end{equation}
Note that $\lambda_i \to 0$ by (3'), (3''), and \eqref{eq:freq-1-subspace-limit-spine}. After passing to a subsequence, we have
\begin{align*}
	\lambda_i^{-1} \mbfd_{\cC_i}(\Sigma_i;p_i,\tau_i) & \to c \in [0, 1], \\
	\lambda_i^{-1} \mbfd_{\cC_i^+}(\Sigma_i^+;p_i^+,\tau_i) & \to c^+ \in [0, 1], \\
	\lambda_i^{-1} \mbfv_i^\perp & \to \mbfz \in (\spine \cC)^\perp \cap \bar B_1,
\end{align*}
with
\[ c + c^+ + |\mbfz| = 1 \]
by construction. The definitions above together with \eqref{eq:freq-1-subspace-u0} and \eqref{eq:freq-1-subspace-up} imply
\begin{equation} \label{eq:freq-1-subspace-u0-lambda}
	\lambda_i^{-1} h^{\Sigma_i}_{p_i,\tau_i} \to c u \text{ in } C^2_\textnormal{loc}(\cC),
\end{equation}
\begin{equation} \label{eq:freq-1-subspace-up-lambdap}
	\lambda_i^{-1} h^{\Sigma^+_i}_{p^+_i, \tau_i} \to c^+ u^+ \text{ in } C^2_\textnormal{loc}(\cC),
\end{equation}
Note that the left hand sides are centered at different points, so the fact that $\Sigma_i$, $\Sigma^+_i$ do not cross does \emph{not} imply that $c^+ u^+ - c u$ does not change sign. We in fact have to consider the new height map
\[ h^{\Sigma_i^+}_{p_i, \tau_i} \]
for $\iota_{p_i}^{-1}((\Sigma_i^+)_{p_i, \tau_i})$ over subsets $U^{\Sigma_i^+}_{p_i, \tau_i} \to \cC$; this is well-defined by \eqref{eq:freq-1-subspace-conv}. We do then, in fact, after possibly swapping orientations get a quantity that doesn't change sign,
\begin{equation} \label{eq:freq-1-subspace-sign-0}
	h^{\Sigma^+_i}_{p_i, \tau} - h^{\Sigma_i}_{p_i, \tau_i} \geq 0
\end{equation}
on the subset $U_i = U^{\Sigma_i}_{p_i, \tau_i} \cap U^{\Sigma_i^+}_{p_i, \tau_i} \to \cC$ as $i \to \infty$. Note that \eqref{eq:freq-1-subspace-u0-lambda} tells us how to take limits on $\lambda_i^{-1}$ times the second term of \eqref{eq:freq-1-subspace-sign-0}, but \eqref{eq:freq-1-subspace-up-lambdap} falls short of telling us the same for the first term due to the change in basepoint. We claim that:
\begin{equation} \label{eq:freq-1-subspace-sign-claim}
	\lambda_i^{-1} h^{\Sigma^+_i}_{p_i, \tau} \to c^+ u^+(\cdot - \mbfv) + \langle \mbfz, \nu_\cC \rangle \text{ in } C^0_\textnormal{loc}(\cC) \, .
\end{equation}
We briefly postpone the proof and show how \eqref{eq:freq-1-subspace-sign-claim} implies the result. Together with \eqref{eq:freq-1-subspace-u0-lambda} and \eqref{eq:freq-1-subspace-sign-0} it follows that
\[ c^+ u^+(\cdot - \mbfv) + \langle \mbfz, \nu_\cC \rangle \geq c u \, . \]
Note that both sides are elements of $\Jac^*(\cC)$. Thus, there's $w \in \Jac^*_\textnormal{pos}(\cC)$ so that
\[
c^+ u^{+}(\cdot - \mbfv) + \langle \mbfz, \nu_\cC \rangle = c u + w \, .
\]
If $c^+ = 0$, then $c + |\mbfz| = 1$, so $w$ is the sum of a $0$- and $1$-homogeneous Jacobi field, at least one of which is non-zero, a contradiction to Proposition \ref{Prop_Jac field_Decomp} and Lemma \ref{Lem_Jac field_Jac_(trl, rot, pos)}. Thus, $c^+ \neq 0$, so we can rearrange our expression to
\[
u^{+}(\cdot - \mbfv) = (c^+)^{-1}(c u - \langle \mbfz ,\nu_\cC \rangle + w) \in \cJ_u \setminus \{0\}. 
\]
On the other hand, $u^{+} \in \Jac^*_1(\cC)$ by construction, which contradicts \eqref{eq:freq-splitting-contr-at-Q} and completes the proof of the theorem.

It remains to prove \eqref{eq:freq-1-subspace-sign-claim}. We will do so by a sequence of coordinate change transformations. To that end, let
\[ \cT_{p_i \to p_i^+, \tau_i} = (\eta_{p_i^+, \tau_i} \circ \iota_{p_i^+})^{-1} \circ (\eta_{p_i, \tau_i} \circ \iota_{p_i}) \]
be the change of basepoint map from $p_i$ to $p_i^+$ in scale $\tau_i$ from \eqref{eq:cob-map} in the appendix, pulled back to the corresponding $\RR^{n+1}$'s. It follows from Lemmas \ref{Lem:norm-coord-derivatives} and \ref{Lem:change-bspt} that
\[
	R^1_i(\cdot) := \cT_{p_i \to p_i^+, \tau_i}(\cdot) - (\cdot - \mbfv_i)
\]
satisfies, for all compact $K \subset \RR^{n+1}$,
\begin{equation} \label{eq:freq-1-subspace-claim-1}
	\Vert R^1_i \Vert_{C^0(K)} \leq C_K \tau_i^2.
\end{equation}
On the other hand, Lemma \ref{Lem:freq-lss-2-metric-dies} gives $C>0$ so that
\[
\tau_i^2 \leq C \mbfd_{\cC_i}(\Sigma_i;p_i,\tau_i)^2,
\]
(or the same with $\cC_i^+$, $\Sigma_i^+$, $p_i^+$) so in particular \eqref{eq:freq-1-subspace-claim-1} implies
\begin{equation} \label{eq:freq-1-subspace-claim-1-final}
	\Vert R^1_i \Vert_{C^0(K)} \leq C_K \lambda_i^2,
\end{equation}
where $\lambda_i$ is as in \eqref{eq:freq-1-subspace-di}.

Next, set:
\[
	R^2_i(\cdot) := \Sdist_{\cC}(\cdot) - \Sdist_{\cC} \left[ \cT_{p_i \to p_i^+, \tau_i}(\cdot) + \mbfv_i \right].
\]
Since $\Sdist_\cC$ is $1$-Lipschitz, \eqref{eq:freq-1-subspace-claim-1-final} implies, for all compact $K \subset \RR^{n+1}$,
\begin{equation} \label{eq:freq-1-subspace-claim-2}
	\Vert R^2_i \Vert_{C^0(K)} \leq C_K \lambda_i^2.
\end{equation}

It will be convenient to introduce some further notation. Namely, let
\[ \Phi^{\Sigma_i}_{p_i, \tau_i} : U^{\Sigma_i}_{p_i, \tau_i} \to \iota_{p_i}^{-1}((\Sigma_i)_{p_i,\tau_i}), \]
\[ \Phi^{\Sigma_i^+}_{p_i^+, \tau_i} : U^{\Sigma_i^+}_{p_i^+, \tau_i} \to \iota_{p_i^+}^{-1}((\Sigma^+_i)_{p_i^+,\tau_i}), \]
\[ \Phi^{\Sigma_i^+}_{p_i, \tau_i} : U^{\Sigma_i^+}_{p_i, \tau_i} \to \iota_{p_i}^{-1}((\Sigma^+_i)_{p_i,\tau_i}), \]
be the graph maps corresponding to the height functions we have (which are inverted by $\Pi$, on each corresponding domain). Note that
\begin{align}
	h^{\Sigma_i^+}_{p_i,\tau_i}(\cdot)
		& = \Sdist_{\cC} \left[ \Phi^{\Sigma_i^+}_{p_i, \tau_i}(\cdot) \right] \nonumber \\
		& = \Sdist_{\cC} \left[ \left(\cT_{p_i \to p_i^+, \tau_i} \circ \Phi^{\Sigma_i^+}_{p_i, \tau_i}\right)(\cdot) + \mbfv_i \right] + \left( R^2_i \circ \Phi^{\Sigma_i^+}_{p_i, \tau_i}\right)(\cdot) \nonumber \\
		& = \Sdist_{\cC} \left[ \left(\Phi^{\Sigma_i^+}_{p_i^+, \tau_i} \circ \Pi \circ \cT_{p_i\to p_i^+, \tau_i} \circ \Phi^{\Sigma_i^+}_{p_i, \tau_i} \right)(\cdot) + \mbfv_i \right] + \left( R^2_i \circ \Phi^{\Sigma_i^+}_{p_i, \tau_i} \right)(\cdot), \label{eq:freq-1-subspace-h-redefn-1}
\end{align}
as long as
\begin{equation} \label{eq:freq-1-subspace-claim-u}
	\left( \Pi \circ \cT_{p_i \to p_i^+, \tau_i} \circ \Phi^{\Sigma_i^+}_{p_i, \tau_i}\right)(\cdot) \in U^{\Sigma_i^+}_{p_i^+, \tau_i}.
\end{equation}
Choose a new slower exhaustion $U_{p_i, \tau_i} \to \cC$ such that $U_{p_i, \tau_i} \subset U^{\Sigma_i}_{p_i, \tau_i}$ and \eqref{eq:freq-1-subspace-claim-u} holds over $U_{p_i, \tau_i}$. Moreover, denote
\[ \Psi_i = \left( \Pi \circ \cT_{p_i \to p_i^+, \tau_i} \circ \Phi^{\Sigma_i^+}_{p_i, \tau_i}\right) : U_{p_i, \tau_i} \to U^{\Sigma_i^+}_{p_i^+, \tau_i}, \]
and
\[ \tilde R^2_i := \left( R^2_i \circ \Phi^{\Sigma_i^+}_{p_i, \tau_i} \right) : U_{p_i, \tau_i} \to \RR. \]
Note that \eqref{eq:freq-1-subspace-limit-spine} implies that
\begin{equation} \label{eq:freq-1-subspace-psi}
	\Psi_i(\cdot) \to (\cdot) - \mbfv \text{ in } C^0_{\textnormal{loc}}(\cC),
\end{equation}
while \eqref{eq:freq-1-subspace-claim-2} implies that, for all fixed compact $L \subset \cC$, and $i \geq i_1(L)$,
\begin{equation} \label{eq:freq-1-subspace-claim-2-final}
	\Vert \tilde R^2_i \Vert_{C^0(L)} \leq C_L \lambda_i^2.
\end{equation}
Using our definitions for $\Psi_i$ and $\tilde R^2_i$, we can rewrite \eqref{eq:freq-1-subspace-h-redefn-1} as follows:
\begin{align} \label{eq:freq-1-subspace-h-redefn-2}
	h^{\Sigma_i^+}_{p_i,\tau_i}(\cdot) 
		& = \Sdist_{\cC} \left[ \Phi^{\Sigma_i^+}_{p_i^+, \tau_i} (\Psi_i(\cdot)) + \mbfv_i \right] + \tilde R^2_i(\cdot) \nonumber \\
		& = \Sdist_{\cC} \left[ \Phi^{\Sigma_i^+}_{p_i^+, \tau_i} (\Psi_i(\cdot)) + \mbfv_i^\perp \right] + \tilde R^2_i(\cdot),
\end{align}
where, in the second step, we used the translation invariance of $\cC$ along $\mbfv_i^\parallel \in \spine \cC$.

Finally, denote over $U^{\Sigma_i^+}_{p_i^+, \tau_i}$:
\[ R^3_i(\cdot) := \Sdist_{\cC} \left[ \Phi^{\Sigma_i^+}_{p_i^+, \tau_i} (\cdot) + \mbfv_i^\perp \right] - \Sdist_{\cC} \left[ \Phi^{\Sigma_i^+}_{p_i^+, \tau_i}(\cdot) \right] - \langle \mbfv_i, \nu_\cC(\cdot) \rangle. \]
It follows from \eqref{eq:freq-1-subspace-conv}, Allard's theorem \cite{Allard:first-variation}, and Lemma \ref{Lem:affine-acting-graph} that, for all compact $L \subset \cC$ and all $i \geq i_2(L)$, 
\begin{equation} \label{eq:freq-1-subspace-claim-3}
	\Vert R^3_i \Vert_{C^0(L)} \leq \epsilon_{L,i} |\mbfv_i^\perp| \leq \epsilon_{L,i} \lambda_i,
\end{equation}
where $\lambda_i$ is as in \eqref{eq:freq-1-subspace-di} again and, for $L$ held fixed, one has
\begin{equation} \label{eq:freq-1-subspace-claim-3-eps}
	\epsilon_{L,i} \to 0 \text{ as } i \to \infty.
\end{equation}
Writing
\[ \tilde R^3_i := R^3_i \circ \Psi_i : U_{p_i, \tau_i} \to \RR, \]
note that \eqref{eq:freq-1-subspace-psi} and \eqref{eq:freq-1-subspace-claim-3} imply that
\begin{equation} \label{eq:freq-1-subspace-claim-3-final}
	\Vert R^3_i \Vert_{C^0(L)} \leq \epsilon'_{L,i} \lambda_i,
\end{equation}
for a possibly different
\begin{equation} \label{eq:freq-1-subspace-claim-3-final-eps}
	\epsilon'_{L,i} \to 0 \text{ as } i \to \infty.
\end{equation}

In summary, \eqref{eq:freq-1-subspace-h-redefn-2} can be once more rewritten as:
\begin{align}
	h^{\Sigma_i^+}_{p_i,\tau_i}(\cdot) 
		& = \Sdist_{\cC} \left[ \Phi^{\Sigma_i^+}_{p_i^+, \tau_i}(\Psi_i(\cdot)) \right] + \langle \mbfv_i, \nu_\cC(\Psi_i(\cdot)) \rangle + \tilde R^2_i(\cdot) + \tilde R^3_i(\cdot) \nonumber \\
		& = h^{\Sigma_i^+}_{p_i^+,\tau_i}(\Psi_i(\cdot)) + \langle \mbfv_i, \nu_\cC(\Psi_i(\cdot)) \rangle + \tilde R^2_i(\cdot) + \tilde R^3_i(\cdot). \label{eq:freq-1-subspace-h-redefn-3}
\end{align}
Now, the desired \eqref{eq:freq-1-subspace-sign-claim} follows from \eqref{eq:freq-1-subspace-h-redefn-3} together with \eqref{eq:freq-1-subspace-up-lambdap}, \eqref{eq:freq-1-subspace-psi}, \eqref{eq:freq-1-subspace-claim-2-final}, \eqref{eq:freq-1-subspace-claim-3-final}, and \eqref{eq:freq-1-subspace-claim-3-final-eps}. This completes the proof of the theorem.
\end{proof}

\section{Spectral quantity $\alpha(\cC)$ for general minimizing hypercones} \label{sec:kappa.lambda}

\subsection{Spectral quantity $\alpha(\cC)$} 

Our sheet separation analysis relies on Simon's partial Harnack theory for positive Jacobi fields (cf. \cite[Appendix A]{Wang:smoothing}). For minimizing hypercones $\cC \subset \RR^{n+1}$, a spectral quantity that we will call $\alpha(\cC)$ controls the separation of disjoint minimizers that are sufficiently well modeled on $\cC$. 

\begin{Def} \label{defi:kappa.lambda}
	Let $\cC \subset \RR^{n+1}$ be any minimizing hypercone. Denote by $\cL := \cC \cap \SSp^n$ the link of the regular part of $\cC$. We will be interested in the preliminary spectral quantity
	\begin{equation} \label{eq:lambda}
		\mu(\cC) := \inf \left\{ \int_\cL |\nabla_\cL \varphi|^2 - |A_\cL|^2 \varphi^2 : \varphi \in C^1_c(\cL), \; \Vert \varphi \Vert_{L^2(\cL)} = 1 \right\},
	\end{equation}
	where $\nabla_\cL$ denotes the covariant derivative on $\cL$ and $A_\cL$ denotes the second fundamental form of $\cL$ as a submanifold of $\SSp^n$.
	
	The roots of the equation $x^2 - (n-2) x - \mu(\cC) = 0$ are necessarily real when $\cC \in \mathscr{C}_n$ (the argument in \cite[Section 4]{CaffarelliHardtSimon} applies), and we denote the smaller as:
	\begin{equation} \label{eq:kappa}
		\alpha(\cC) := \frac{n-2}{2} - \sqrt{\frac{(n-2)^2}{4} + \mu(\cC)}.
	\end{equation}
\end{Def}

\begin{Rem} \label{rema:lambda.gamma.compared}
	It is worth comparing this spectral quantity with those discussed in Section \ref{Subsec_reg Min Cone} in case $\cC = \cC_\circ$ is a regular hypercone. In the language of that section:
	\[ \alpha(\cC_\circ) = -\gamma_1. \]
\end{Rem}

A key property of $\alpha(\cdot)$ is that it, among nonflat minimizing hypercones, it is precisely minimized by quadratic hypercones. Indeed:

\begin{Lem} \label{lemm:kappa.zhu}
	Define:
	\begin{equation} \label{eq:kappa.n}
		\alpha_n := \frac{n-2}{2} - \sqrt{\frac{(n-2)^2}{4} - (n-1)}.
	\end{equation}
	Then:
	\begin{enumerate}
		\item[(i)] The $\alpha_n$ form a decreasing sequence with $2 = \alpha_7 > \alpha_8 > \ldots > 1$.\footnote{See also \cite[Remark 3.9]{CMS:generic.9.10} for approximations of the first few values. The reference uses the notation $\kappa_n$ in place of $\alpha_n$.}
		\item[(ii)] For every $\cC \in \mathscr{C}_n$, $\alpha(\cC) \geq \alpha_n$ with equality if and only if $\cC \in \mathscr{C}_n^{\textnormal{qd}}$. 
	\end{enumerate}
\end{Lem}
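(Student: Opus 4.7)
For (i), I would argue purely algebraically. Rewriting the definition of $\alpha_n$ shows it is the smaller root of the monic quadratic
\[ q_n(x) := x^2 - (n-2)x + (n-1); \]
denote the larger root by $\widetilde{\alpha}_n$. Direct computation $q_7(x) = (x-2)(x-3)$ gives $\alpha_7 = 2$. Since $q_n(1) = 2 > 0$ while $q_n((n-2)/2) = (n-1) - (n-2)^2/4 < 0$ for all $n \geq 7$, we obtain $1 < \alpha_n < (n-2)/2 < \widetilde{\alpha}_n$. Next, $q_{n+1}(x) - q_n(x) = 1 - x$, so $q_{n+1}(\alpha_n) = 1 - \alpha_n < 0$, placing $\alpha_n$ strictly inside the root interval of $q_{n+1}$; thus $\alpha_{n+1} < \alpha_n$. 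Finally, Vi\`ete's relations $\alpha_n + \widetilde{\alpha}_n = n-2$ and $\alpha_n \widetilde{\alpha}_n = n-1$ give $\widetilde{\alpha}_n \geq n-3 \to \infty$, hence $\alpha_n = (n-1)/\widetilde{\alpha}_n \to 1$.

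For (ii), I would equivalently prove $\mu(\cC) \leq -(n-1)$ with equality iff $\cC \in \mathscr{C}_n^{\textnormal{qd}}$, since $\alpha$ is a strictly decreasing function of $\mu$. My plan is to test the Rayleigh quotient in \eqref{eq:lambda} against $\varphi = |A_\cL|$ on the link $\cL := \cC \cap \SSp^n$. Because $\sing \cL$ has Hausdorff codimension $\geq 7$ in $\cL$ (a consequence of regularity for minimizing hypercones), it carries zero $H^1$-capacity, and smooth cutoffs near $\sing \cL$ make $|A_\cL|$ an admissible test function in the limit. Simons' identity on the minimal hypersurface $\cL \subset \SSp^n$,
\[ \tfrac12 \Delta_\cL |A_\cL|^2 = |\nabla_\cL A_\cL|^2 + |A_\cL|^2 \bigl((n-1) - |A_\cL|^2\bigr), \]
integrated over the closed $\cL$, gives
\[ \int_\cL |\nabla_\cL A_\cL|^2 - |A_\cL|^4 = -(n-1)\int_\cL |A_\cL|^2, \]
and combining with Kato's pointwise inequality $\bigl|\nabla_\cL |A_\cL|\bigr|^2 \leq |\nabla_\cL A_\cL|^2$,
\[ \mu(\cC) \int_\cL |A_\cL|^2 \leq \int_\cL \Bigl( \bigl|\nabla_\cL |A_\cL|\bigr|^2 - |A_\cL|^4 \Bigr) \leq -(n-1) \int_\cL |A_\cL|^2, \]
which is the desired $\mu(\cC) \leq -(n-1)$.

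The implication $\cC \in \mathscr{C}_n^{\textnormal{qd}} \Rightarrow \alpha(\cC) = \alpha_n$ is immediate from Appendix \ref{app:quadratic}: the link of a quadratic hypercone is an isoparametric minimal hypersurface of $\SSp^n$ with $|A_\cL|^2 \equiv n-1$, so the constant function is a first eigenfunction with eigenvalue $-(n-1)$. The converse is where I expect the main technical work. If $\mu(\cC) = -(n-1)$, equality throughout the chain above forces $|A_\cL|$ to coincide (up to positive scalar) with a first eigenfunction of $-\Delta_\cL - |A_\cL|^2$, hence to be smooth and strictly positive on $\reg \cL$ with $\Delta_\cL |A_\cL| = \bigl((n-1) - |A_\cL|^2\bigr)|A_\cL|$. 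Expanding $\Delta_\cL |A_\cL|^2$ via this equation and matching against Simons' identity shows equality in Kato holds pointwise, which strongly constrains $\nabla_\cL A_\cL$. The crux is to upgrade these differential identities to the conclusion $|A_\cL|^2 \equiv n-1$. Once that is established, $\cL$ is a closed minimal isoparametric hypersurface of $\SSp^n$ with $|A_\cL|^2 = n-1$, and the classification reviewed in Appendix \ref{app:quadratic} identifies $\cL$ as a Clifford product $\SSp^p(\sqrt{p/(n-1)}) \times \SSp^q(\sqrt{q/(n-1)})$ with $p+q = n-1$, placing $\cC \in \mathscr{C}_n^{\textnormal{qd}}$.
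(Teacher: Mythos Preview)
Your argument for (i) is correct and considerably more explicit than the paper, which simply declares it ``an elementary computation.'' (One tiny slip: from $\alpha_n \leq 2$ you get $\widetilde{\alpha}_n \geq n-4$, not $n-3$, but the conclusion is unaffected.)

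For (ii) the paper gives no argument at all: it cites J.~Zhu \cite{Zhu:stability} together with the earlier smooth-link works \cite{Simons:minvar,Lawson:rigid,CherndoCarmoKobayashi,Perdomo,Wu}. Your sketch is the correct strategy behind those references, so in that sense you are not taking a different route---you are attempting to reproduce the cited result. Two points deserve comment.

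First, the cutoff argument near $\sing\cL$ is subtler than you indicate. Zero $H^1$-capacity of $\sing\cL$ lets you approximate $W^{1,2}$ functions by compactly supported ones, but you must first know $|A_\cL|\in W^{1,2}(\cL)$. Since $|A_\cL|$ blows up toward $\sing\cL$, the finiteness of $\int_\cL |A_\cL|^2$ and $\int_\cL |\nabla A_\cL|^2$ is not automatic; likewise, integrating Simons' identity over a singular $\cL$ requires controlling the boundary terms coming from the cutoffs. Handling this in the possibly-singular case is precisely the content added by Zhu's paper over the earlier smooth-link results.

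Second, in the rigidity direction you have correctly located the crux but not resolved it. Equality in the ordinary Kato inequality alone does not force $|A_\cL|^2\equiv n-1$; the arguments in \cite{CherndoCarmoKobayashi,Lawson:rigid} proceed under the pointwise hypothesis $|A_\cL|^2\leq n-1$, which you do not have, and the sharper route in \cite{Perdomo,Wu} goes through the \emph{refined} Kato inequality $|\nabla A_\cL|^2 \geq \tfrac{n+1}{n-1}\,|\nabla|A_\cL||^2$ together with a careful analysis of its equality case via Codazzi. Once $|A_\cL|^2\equiv n-1$ is established, your final appeal to the isoparametric classification is correct.
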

\begin{proof}
	(i) This is an elementary computation.
	
	(ii) This was shown by J.\ Zhu \cite{Zhu:stability}; cf.\ \cite{Simons:minvar,Lawson:rigid,CherndoCarmoKobayashi,Perdomo,Wu}.
\end{proof}

The non-rigid portion of Lemma \ref{lemm:kappa.zhu} improves to:

\begin{Lem} \label{lemm:kappa.zhu.nonqd}
	Denote\footnote{It is an open question whether $\mathscr{C}_7 = \mathscr{C}_7^{\textnormal{qd}}$; if it is, then we may interpret $\Delta^{\operatorname{non-qd}}_7 = \infty$ using the convention $\inf \emptyset = \infty$.}
	\begin{equation} \label{eq:delta.nonqd.gap}
		\Delta^{\operatorname{non-qd}}_n = \inf \{ \alpha(\cC) : \cC \in \mathscr{C}_n \setminus \mathscr{C}_n^{\textnormal{qd}} \} - \alpha_n.
	\end{equation}
	Then:
	\begin{enumerate}
		\item[(i)] $\Delta^{\operatorname{non-qd}}_n>0$.
		\item[(ii)] For every $\cC \in \mathscr{C}_n \setminus \mathscr{C}_n^{\textnormal{qd}}$, $\alpha(\cC) \geq \alpha_n + \Delta^{\operatorname{non-qd}}_n$. 
	\end{enumerate}
\end{Lem}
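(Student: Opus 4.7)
Part (ii) is an immediate consequence of the definition of $\Delta^{\textnormal{non-qd}}_n$ as an infimum, so the task is to prove (i). I would argue by compactness and contradiction. Suppose $\alpha(\cC_i) \to \alpha_n$ for some sequence $\cC_i \in \mathscr{C}_n \setminus \mathscr{C}_n^{\textnormal{qd}}$. The links $\cL_i := \cC_i \cap \SSp^n$ are stationary integral varifolds of uniformly bounded mass in $\SSp^n$, so a subsequence converges to a stationary integral varifold whose cone $\cC_\infty$ is a minimizing hypercone. The flat case is ruled out by standard $\varepsilon$-regularity (otherwise Allard's theorem would force $\cC_i$ smooth and hence itself flat for large $i$, contradicting $\cC_i \in \mathscr{C}_n$); thus $\cC_\infty \in \mathscr{C}_n$.

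Next I would show that $\alpha$ is lower semicontinuous under such convergence, equivalently that $\mu$ is upper semicontinuous. Given $\varepsilon > 0$, choose a near-optimal test function $\varphi \in C^1_c(\cL_\infty)$ with unit $L^2$-norm and $\int_{\cL_\infty} |\nabla \varphi|^2 - |A_{\cL_\infty}|^2 \varphi^2 \leq \mu(\cC_\infty) + \varepsilon$. Since $\spt \varphi$ is compactly contained in the regular part of $\cL_\infty$, Allard's theorem gives $C^\infty$ convergence $\cL_i \to \cL_\infty$ in a neighborhood of $\spt \varphi$; pulling $\varphi$ back via nearest-point projection and renormalizing yields $\varphi_i \in C^1_c(\cL_i)$ with Rayleigh quotients converging to that of $\varphi$. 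Thus $\limsup_i \mu(\cC_i) \leq \mu(\cC_\infty) + \varepsilon$, and sending $\varepsilon \to 0$ completes the claim. Combined with $\alpha(\cC_i) \to \alpha_n$ and Lemma \ref{lemm:kappa.zhu}\,(ii), this forces $\alpha(\cC_\infty) = \alpha_n$, and so $\cC_\infty \in \mathscr{C}_n^{\textnormal{qd}}$.

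The main obstacle is to derive the contradiction in this last case, since the hypothesis $\cC_i \notin \mathscr{C}_n^{\textnormal{qd}}$ has yet to be used. By Appendix \ref{app:quadratic}, $\cC_\infty$ is regular and strongly integrable. Regularity plus Allard's theorem upgrades the varifold convergence to $C^\infty$ convergence $\cL_i \to \cL_\infty$ on all of $\cL_\infty$, so for large $i$ each $\cL_i$ is a smooth minimal hypersurface of $\SSp^n$ arbitrarily $C^\infty$-close to $\cL_\infty$. Strong integrability of $\cC_\infty$ (in particular $\Jac_1(\cC_\infty) = \Jac_\textnormal{rot}(\cC_\infty)$), combined with the correspondence between degree-$1$ homogeneous Jacobi fields on the cone and Jacobi fields on the link in $\SSp^n$, says that every Jacobi field on $\cL_\infty$ in $\SSp^n$ arises as the normal component of an ambient Killing field in $\mathfrak{so}(n+1)$. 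The standard implicit function theorem argument in the $SO(n+1)$-gauge-fixed moduli space near $\cL_\infty$ then shows that any minimal hypersurface of $\SSp^n$ sufficiently $C^\infty$-close to $\cL_\infty$ must be $SO(n+1)$-congruent to $\cL_\infty$. Hence $\cC_i \in \Rot(\cC_\infty) \subset \mathscr{C}_n^{\textnormal{qd}}$ for large $i$, the desired contradiction.
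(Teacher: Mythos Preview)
The paper does not give its own proof; it cites \cite[Appendix A]{Wang:smoothing}. Your argument is correct and supplies essentially what that citation contains: compactness of $\mathscr{C}_n$ (which the paper itself records at \eqref{eq:mathscr.k.cyl.closed} with $k=0$), lower semicontinuity of $\alpha$ via transplanting compactly supported test functions across Allard's graphical convergence, Zhu's rigidity (Lemma~\ref{lemm:kappa.zhu}) to force $\cC_\infty\in\mathscr{C}_n^{\textnormal{qd}}$, and finally the isolatedness of quadratic hypercones to contradict $\cC_i\notin\mathscr{C}_n^{\textnormal{qd}}$. The last step is precisely Corollary~\ref{coro:quadratic.isolated}; your sketch of why strong integrability yields it---identifying link Jacobi fields in $\SSp^n$ with the eigenspace for $\mu=n-1$, hence with $\Jac_1(\cC_\infty)=\Jac_{\textnormal{rot}}(\cC_\infty)$, and then running the $SO(n+1)$-equivariant Lyapunov--Schmidt / implicit function argument---is the standard mechanism behind that corollary. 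So your proof is self-contained where the paper defers, but follows the same line of reasoning.
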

\begin{proof}
	This was shown in \cite[Appendix A]{Wang:smoothing}.
\end{proof}

It also follows that $\alpha(\cdot)$ behaves well under cone-splitting:

\begin{Prop} \label{prop:kappa.spine}
	If $\cC \in \mathscr{C}_n \cap \Rot(\cC_\circ \times \RR^k)$, $\cC_\circ \in \mathscr{C}_{n-k}$, then
	\[ \alpha(\cC) = \alpha(\cC_\circ). \]
\end{Prop}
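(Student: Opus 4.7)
After rotating, we may assume $\cC = \cC_\circ \times \RR^k$. Setting $n_\circ := n-k$, $\alpha_\circ := \alpha(\cC_\circ)$, and $\mu_\circ := \mu(\cC_\circ) = \alpha_\circ^2 - (n_\circ-2)\alpha_\circ$, the quadratic formula \eqref{eq:kappa} reduces the claim $\alpha(\cC) = \alpha_\circ$ to the spectral identity
\[
\mu(\cC) = \mu_\circ - k\alpha_\circ;
\]
the smaller-root selection is automatic because $\alpha_\circ \leq (n_\circ-2)/2 < (n-2)/2$. I will parameterize $\cL := \cC\cap\SSp^n$ by $(\omega,\phi,\xi) \in \cL_\circ\times(0,\pi/2)\times\SSp^{k-1}$ via $\theta = (\cos\phi\cdot\omega,\sin\phi\cdot\xi)$, so that $g_\cL = d\phi^2 + \cos^2\phi\,g_{\cL_\circ} + \sin^2\phi\,g_{\SSp^{k-1}}$, the volume density with respect to $d\phi\,dV_{\cL_\circ}\,dV_{\SSp^{k-1}}$ is $m(\phi) := \cos^{n_\circ-1}\phi\sin^{k-1}\phi$, and $|A_\cL|^2 = \cos^{-2}\phi\,|A_{\cL_\circ}|^2$.

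For the lower bound $\mu(\cC) \geq \mu_\circ - k\alpha_\circ$, I will use a \emph{separated} argument. For $\varphi \in C_c^1(\cL)$, applying the variational definition of $\mu_\circ$ to the slice $\varphi(\cdot,\phi,\xi) \in C_c^1(\cL_\circ)$ at each fixed $(\phi,\xi)$ and integrating against the weight $\cos^{-2}\phi$ yields
\[
\int_\cL |\nabla\varphi|^2 - |A_\cL|^2\varphi^2 \geq \int_\cL\bigl(|\partial_\phi\varphi|^2 + \sin^{-2}\phi|\nabla_\xi\varphi|^2\bigr) + \mu_\circ \int_\cL \cos^{-2}\phi\,\varphi^2.
\]
The claim then reduces, by slicing at fixed $\omega$ and integrating in $\omega$, to the weighted 1D Hardy inequality
\[
\int_{(0,\pi/2)\times\SSp^{k-1}}\bigl(|\partial_\phi f|^2 + \sin^{-2}\phi|\nabla_\xi f|^2\bigr)m + \mu_\circ \int \cos^{-2}\phi f^2 m \geq (\mu_\circ - k\alpha_\circ)\int f^2 m
\]
for $f \in C^1((0,\pi/2)\times\SSp^{k-1})$ with compact support away from $\phi = \pi/2$. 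A direct computation confirms that $W(\phi) := \cos^{-\alpha_\circ}\phi$ is positive and smooth on $[0,\pi/2)$ and solves the 1D eigenvalue equation $-\tfrac{1}{m}(mW')' + \mu_\circ\cos^{-2}\phi\,W = (\mu_\circ - k\alpha_\circ)W$, so the standard ground-state transform $f = Wg$ converts the inequality to the trivial $\int W^2\bigl(|\partial_\phi g|^2 + \sin^{-2}\phi|\nabla_\xi g|^2\bigr)m \geq 0$.

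For the upper bound $\mu(\cC) \leq \mu_\circ - k\alpha_\circ$, I use the test functions $\varphi_{\eps,\delta}(\omega,\phi,\xi) := \chi_\eps(\phi)\cos^{-\alpha_\circ}\phi\cdot\varphi_\circ(\omega) \in C_c^1(\cL)$, where $\varphi_\circ \in C_c^1(\cL_\circ)$ has Rayleigh quotient $\leq \mu_\circ + \delta$ and $\chi_\eps$ is a smooth cutoff equal to $1$ on $[0,\pi/2-2\eps]$ and vanishing near $\phi=\pi/2$ with $|\chi_\eps'|\leq C/\eps$. Separating the Rayleigh numerator into $\phi$-derivative and $\cL_\circ$-Rayleigh parts and computing the resulting 1D integrals $J_s := \int_0^{\pi/2}\cos^s\phi\sin^{k-1}\phi\,d\phi$ via the integration-by-parts recursion $J_s/J_{s+2} = (s+1+k)/(s+1)$ at $s := n_\circ - 3 - 2\alpha_\circ$ (valid because $\alpha_\circ < (n_\circ-2)/2$, which in turn follows from the Caffarelli--Hardt--Simon argument cited just after \eqref{eq:kappa}), the Rayleigh quotient of $\varphi_{\eps,\delta}$ simplifies algebraically to $\mu_\circ - k\alpha_\circ + O(\delta) + o_\eps(1)$, where the cutoff error tends to $0$ as $\eps\to 0$ because the same condition $\alpha_\circ < (n_\circ-2)/2$ ensures that $\int_0^{\pi/2}\cos^{-2\alpha_\circ}\phi\,m\,d\phi$ and $\int_0^{\pi/2}\cos^{-2\alpha_\circ-2}\phi\sin^2\phi\,m\,d\phi$ converge. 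Sending $\eps\to 0$ then $\delta\to 0$ completes the argument. The main technical obstacle throughout is the singular spine $\{\phi=\pi/2\}\subset\cL$ where the candidate ground state $\cos^{-\alpha_\circ}\phi$ diverges: both halves of the proof are borderline weighted Hardy estimates whose sharp constants coincide at the common rate $\alpha_\circ < (n_\circ-2)/2$, which is precisely what yields $\alpha(\cC) = \alpha(\cC_\circ)$. A pleasant feature of this separated approach is that no positive eigenfunction on $\cL_\circ$ is ever needed, so no regularity assumption on $\cC_\circ$ enters beyond what is built into Definition \ref{defi:kappa.lambda}.
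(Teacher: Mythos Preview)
Your approach is correct in spirit and genuinely different from the paper's. The paper localizes to subdomains $\Omega_\circ \Subset \cL_\circ$, builds the positive Jacobi field $r^{-\alpha(\cC_\circ;\Omega_\circ)}\varphi_\circ$ on the full cone $\cC$, applies a ground-state transform there, and then lets $\Omega_\circ \nearrow \cL_\circ$. You instead work intrinsically on the link $\cL$ in warped-product coordinates, separate variables, and reduce both inequalities to the sharp 1D weighted Hardy inequality governed by $W(\phi) = \cos^{-\alpha_\circ}\phi$. Your route is shorter and, as you note, avoids ever needing an actual first eigenfunction on $\cL_\circ$.

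There is, however, one gap. You assert $\alpha_\circ < (n_\circ-2)/2$ strictly, citing the Caffarelli--Hardt--Simon argument; but the paper only invokes that reference for the \emph{non-strict} bound $\mu(\cC_\circ) \geq -(n_\circ-2)^2/4$, i.e.\ $\alpha_\circ \leq (n_\circ-2)/2$, and its own proof carefully writes ``$\leq$'' here. In the borderline case $\alpha_\circ = (n_\circ-2)/2$ your upper-bound cutoff error $\int(\chi_\eps')^2 W^2 m \big/ \int \chi_\eps^2 W^2 m$ is $O(1)$ rather than $o(1)$ (since $W^2 m \sim (\pi/2-\phi)^1$ near the spine), and the $\delta$-coefficient $J_s/J_{s+2}$ diverges since $s = -1$. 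The paper sidesteps this by working with $\alpha(\cC_\circ;\Omega_\circ) < \alpha(\cC_\circ)$, which is always strictly below $(n_\circ-2)/2$. You can repair your argument in the same spirit: run the upper bound with $W_\beta = \cos^{-\beta}\phi$ for $\beta < \alpha_\circ$, obtaining Rayleigh quotient $\leq (\mu_\beta - k\beta) + (\mu_\circ - \mu_\beta + \delta)\,J_s/J_{s+2} + o_\eps(1)$ with $s = n_\circ - 3 - 2\beta > -1$; then send $\eps\to 0$, $\delta\to 0$, and finally $\beta \to \alpha_\circ$, noting that when $\alpha_\circ = (n_\circ-2)/2$ one has $\mu_\circ - \mu_\beta = -(\alpha_\circ-\beta)^2$ while $J_s/J_{s+2} \sim k/(2(\alpha_\circ-\beta))$, so the product still vanishes.
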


A proof is given below using a localized version $\alpha(\cC; \sigma)$ of $\alpha(\cC)$. For hypercones that are not regular, such as our intended cylindrical hypercones, it will help to work with a localization of $\alpha(\cC)$ to the controlled-regularity subsets $\reg_{>\sigma} \cC$, $\sigma > 0$.

\begin{Def} \label{defi:kappa.lambda.sigma}
Let $\cC \subset \RR^{n+1}$ be any minimizing hypercone, with $\cL := \cC \cap \SSp^n$ the link of the regular part. Consider, for $\sigma > 0$, the open subsets
\[ \cL_{>\sigma} := \reg_{>\sigma} \cC  \cap \SSp^n \subset \cL. \]
Then, one defines, in analogy with \eqref{eq:lambda}, \eqref{eq:kappa},
\begin{equation} \label{eq:lambda.rho}
	\mu(\cC; \sigma) := \inf \left\{ \int_\cL |\nabla_\cL \varphi|^2 - |A_\cL|^2 \varphi^2 : \varphi \in C^1_c(\cL_{>\sigma}), \; \Vert \varphi \Vert_{L^2(\cL)} = 1 \right\},
\end{equation}
\begin{equation} \label{eq:kappa.rho}
	\alpha(\cC; \sigma) := \frac{n-2}{2} - \sqrt{\frac{(n-2)^2}{4} + \mu(\cC; \sigma)}.
\end{equation}
  More generally, for every open subset $\Omega \subset \cL$, define 
  \begin{equation} \label{equ_mu(Omega)}
	\mu(\cC; \Omega) := \inf \left\{ \int_\Omega |\nabla_\cL \varphi|^2 - |A_\cL|^2 \varphi^2 : \varphi \in C^1_c(\Omega), \; \Vert \varphi \Vert_{L^2(\cL)} = 1 \right\},
  \end{equation}
  \begin{equation} \label{equ_alpha(Omega)}
	\alpha(\cC; \Omega) := \frac{n-2}{2} - \sqrt{\frac{(n-2)^2}{4} + \mu(\cC; \Omega)}.
  \end{equation}
\end{Def}

  Clearly, $\mu(\cC; \Omega)$ is monotone non-increasing in $\Omega$, $\alpha(\cC; \Omega)$ is monotone non-decreasing in $\Omega$, and if $Z\subset \cL$ is a submanifold of codimension $>2$, $\Omega_1\subset \Omega_2\subset \dots$ is an exhaustion of $\cL\setminus Z$, then by a standard cut-off argument we have 
  \begin{equation} \label{equ_alpha_exhaust limit}
    \lim_{j\to \infty}\alpha(\cC; \Omega_j) = \alpha(\cC; \cC\setminus Z) = \alpha(\cC)\,.       
  \end{equation}
  In particular,
\begin{equation} \label{eq:kappa.convergence.rho}
	\lim_{\sigma \to 0} \alpha(\cC; \sigma) = \alpha(\cC).
\end{equation}
Moreover, if $\cC_i \to \cC$ within $\mathscr{C}_n$, $\sigma_i \to \sigma$ within $(0, \infty)$, and $t \in (0, 1)$, then
\begin{equation} \label{eq:kappa.convergence.both}
	\alpha(\cC; \sigma) \leq \liminf_i \alpha(\cC_i; \sigma_i) \leq \limsup_i \alpha(\cC_i; \sigma_i) \leq \alpha(\cC; t \sigma)
\end{equation}

One can verify, using $\alpha(\cC; \sigma)$, that Proposition \ref{prop:kappa.spine} from the introduction holds: 

\begin{proof}[Proof of Proposition \ref{prop:kappa.spine}]
	Without loss of generality, we rotate $\RR^{n+1}$ so that
	\[ \cC = \cC_\circ \times \RR^k. \]
	Denote $n_\circ = n - k$, so that $\cC_\circ \subset \RR^{n_\circ + 1}$, and write $\cL$, $\cL_\circ$ for the links of $\cC$, $\cC_\circ$. 
	
    Fix an arbitrary domain $\Omega_\circ \Subset  \cL_\circ$ with smooth boundary. Denote
    \[ \Omega:= \SSp^n\cap (\operatorname{Cone}_\orig(\Omega_\circ)\times\RR^k)\subset \cL. \]
    The primary goal is to show that 
    \begin{equation} \label{equ_alpha(Omega)=alpha(Omega_0)}
      \alpha(\cC; \Omega) = \alpha(\cC_\circ; \Omega_\circ)\,. 
    \end{equation}
    It suffices to show this for $\Omega_\circ \neq \cL_\circ$.
    Note that combining \eqref{equ_alpha(Omega)=alpha(Omega_0)} and \eqref{equ_alpha_exhaust limit} proves Proposition \ref{prop:kappa.spine}. 

    To prove \eqref{equ_alpha(Omega)=alpha(Omega_0)} for $\Omega
    _\circ \neq \cL_\circ$, let $\mu_\circ:= \mu(\cC_\circ; \Omega_\circ)$ be the principal Dirichlet eigenvalue for $-\Delta_{\cL_\circ} - |A_{\cL_\circ}|^2$ on $\Omega_\circ$, $\varphi_\circ : \overline{\Omega_\circ} \to [0, \infty)$ be a corresponding principal Dirichlet eigenfunction, i.e.,
	\begin{equation} \label{eq:kappa.spine.phi}
		(\Delta_{\cL_\circ} + |A_{\cL_\circ}|^2) \varphi_\circ = -\mu_{\circ} \varphi_\circ \text{ in } \Omega_\circ, \quad \varphi_\circ = 0 \text{ on } \partial \Omega_\circ.
	\end{equation}
    Note that by strong maximum principle, $\varphi_\circ>0$ on $\Omega_\circ$. Also denote for simplicity $\alpha_\circ:= \alpha(\cC_\circ; \Omega_\circ)$. At this point, the assumption $\Omega_\circ\neq \cL_\circ$ enters to give $\alpha_\circ< \alpha(\cC_\circ)\leq (n_\circ-2)/2$. Let \[
      \psi: \operatorname{Cone}_\orig(\Omega_\circ)\times \RR^k \to [0, +\infty)\,, \quad \psi := r^{-\alpha_\circ}\varphi_\circ
    \] 
    be the $\RR^k$-invariant $(-\alpha_\circ)$-homogeneous extension of $\varphi_\circ$. Then 
	\begin{align*}
		 (\Delta_\cC + |A_\cC|^2)\psi 
        & = (\Delta_{\cC_\circ} + |A_{\cC_\circ}|^2) (r^{-\alpha_\circ} \varphi_\circ) \\
		& = (r^{-2} \Delta_{\cL_\circ} + (n_\circ-1) r^{-1} \partial_r + \partial_r^2 + r^{-2} |A_{\cL_\circ}|^2)(r^{-\alpha_\circ} \varphi_\circ) \\
		& = r^{-2} (\Delta_{\cL_\circ} + |A_{\cL_\circ}|^2) (r^{-\alpha_\circ} \varphi_\circ) + r^{-2} (\alpha_\circ^2 - (n_\circ-2) \alpha_\circ) (r^{-\alpha_\circ} \varphi_\circ) \\
		& = r^{-2} (-\mu_\circ) (r^{-\alpha_\circ} \varphi_\circ) + r^{-2} (\alpha_\circ^2 - (n_\circ-2) \alpha_\circ) (r^{-\alpha_{\circ}} \varphi_\circ) \\
		& = 0,
	\end{align*}
	where we used \eqref{eq:kappa.spine.phi}, \eqref{equ_alpha(Omega)} in the last two steps. Thus, $\psi$ is a positive Jacobi field with Dirichlet boundary conditions on $\operatorname{Cone}_\orig(\Omega_\circ) \times \RR^k \subset \cC$. Note, also, that $\psi$ being $(-\alpha_{\circ})$-homogeneous implies,
	\begin{equation} \label{eq:kappa.spine.homogeneous}
		\partial_\rho \psi = - \alpha_{\circ}\rho^{-1} \psi,
	\end{equation}
	where $\rho$ denotes the radial distance function from the origin in $\RR^{n+1}$. 
	
	Following the same computation as before but backwards, and in $\RR^{n+1}$ rather than $\RR^{n_\circ+1}$, we deduce from the Jacobi field nature of $\psi$ that:
	\begin{align*}
		0   & = (\Delta_{\cC} + |A_{\cC}|^2) \psi \\
			& = \left(\rho^{-2} \Delta_{\cL} + (n-1) \rho^{-1} \partial_\rho + \partial_\rho^2 + \rho^{-2} |A_{\cL}|^2\right) \psi \\
			& = \rho^{-2} (\Delta_{\cL} + |A_{\cL}|^2 + (\alpha_{\circ}^2 - (n-2) \alpha_{\circ})) \psi,
	\end{align*}
	where we used \eqref{eq:kappa.spine.homogeneous} in the last step. This implies that $\varphi:= \psi|_{\Omega}$ is a positive solution to 
    \begin{equation} \label{equ_varphi J_L eigenfunc}
      -(\Delta_{\cL} + |A_{\cL}|^2)\varphi = (\alpha_{\circ}^2 - (n-2) \alpha_{\circ})) \varphi \quad \text{ on }\Omega\,.         
    \end{equation}
    Hence, for every $\xi\in C^\infty_c(\Omega)$, let $\zeta:= \varphi^{-1}\xi$, we have 
    \begin{align*}
      \int_\Omega |\nabla_\cL\xi|^2 - |A_\cL|^2\xi^2 
      & = \int_\Omega |\nabla_\cL(\varphi\zeta)|^2 - |A_\cL|^2(\varphi\zeta)^2 \\
      & = \int_\Omega |\nabla_\cL\zeta|^2\varphi^2 + |\nabla_\cL\varphi|^2\zeta^2 + \varphi\nabla_\cL\varphi\cdot \nabla_\cL(\zeta^2) - |A_\cL|^2(\varphi\zeta)^2 \\
      & = \int_\Omega |\nabla_\cL\zeta|^2\varphi^2 - \zeta^2\varphi\cdot (\Delta_\cL + |A_\cL|^2)\varphi \\
      & \geq (\alpha_\circ^2 - (n-2)\alpha_\circ)\|\xi\|_{L^2(\Omega)}^2 \,.
    \end{align*}
    where we use \eqref{equ_varphi J_L eigenfunc} in the last inequality. This means \[
      \mu(\cC; \Omega) \geq \alpha_\circ^2 - (n-2)\alpha_\circ\,.
    \]
    Since by \eqref{equ_alpha(Omega)}, $\mu(\cC; \Omega) = \alpha(\cC; \Omega)^2 - (n-2)\alpha(\cC; \Omega)$ and $\alpha(\cC;\Omega)\leq (n-2)/2$, $\alpha_\circ< (n_\circ-2)/2<(n-2)/2$, this proves ``$\leq$" of \eqref{equ_alpha(Omega)=alpha(Omega_0)}.

    To prove the reverse inequality, we again use \eqref{equ_varphi J_L eigenfunc} to choose $\varphi$ as a test function after some cut-off near $\sing \cL = \bar \cL \setminus \cL$. Recall that by the definition of $\Omega$, we have \[
      \overline{\Omega}\cap \sing \cL = \SSp^n \cap (\RR^k\times \{\orig\}) \,.
    \]
    For every $\eps\in (0, 1)$, let $\eta_\eps:= \eta(r/\eps)$, viewed as a smooth function on $\cL$, where $\eta\in C^\infty(\RR; [0, 1])$ such that $\eta = 0$ on $(-\infty, 1/2]$, $\eta = 1$ on $[1, +\infty)$ and $|\eta'|\leq 10$.  
    Then $\eta_\eps\varphi$ is a Lipschitz function on $\cL$ supported away from $\sing \cL$ and vanishes outside $\Omega$, therefore by the same calculation as above, 
    \begin{align*}
      \mu(\cC; \Omega) \int_{\Omega}  (\eta_\eps\varphi)^2 
      & \leq \int_\Omega |\nabla_\cL(\eta_\eps\varphi)|^2 - |A_\cL|^2(\eta_\eps\varphi)^2 \\
      & = \int_\Omega |\nabla_\cL\eta_\eps|^2\varphi^2 - \eta_\eps^2\varphi\cdot (\Delta_\cL + |A_\cL|^2)\varphi \\
      & = \int_\Omega |\nabla_\cL\eta_\eps|^2\varphi^2 + (\alpha_\circ- (n-2)\alpha_\circ)\eta_\eps^2\varphi^2 \,.
    \end{align*}
    Recall that $\varphi = r^{-\alpha_\circ}\varphi_\circ$, where $\alpha_\circ<(n_\circ-2)/2$, we then have 
    \begin{align*}
      \int_{\Omega}(1-\eta_\eps^2)\varphi^2  
      & \leq \sum_{j=1}^\infty\int_{\Omega\cap \{2^{-j}\eps\leq r\leq 2^{-j+1}\eps\}} \|\varphi_\circ\|_{L^\infty}^2\cdot r^{-2\alpha_\circ} \\
      & \leq C(n, \varphi_\circ)\sum_{j=1}^\infty(2^{-j}\eps)^{-2\alpha_\circ} \Area(\cL \cap \{r<2^{-j+1}\eps\}).         
    \end{align*}
    Also,
    \begin{align*}
      \int_{\Omega} |\nabla_\cL\eta_\eps|^2\varphi^2 
        & \leq \int_{\Omega\cap \{\eps/2\leq r\leq\eps\}} \eps^{-2}|\eta'|^2\|\varphi_\circ\|_{L^\infty}^2\cdot r^{-2\alpha_\circ} \\
        & \leq C(n, \varphi_\circ)\eps^{-2-2\alpha_\circ} \Area(\cL \cap \{r<\eps\}).        
    \end{align*}
    By the monotonicity formula and a Vitali covering argument, \[
      \|\cL\|(\{r<\eps\})\leq C(\cL)\eps^{n_\circ} \,.  \]
    Combining these estimates and sending $\eps\to 0$, we derive, \[
      \mu(\cC; \Omega) \leq \alpha_\circ^2 - (n-2)\alpha_\circ \,.
    \]
    This proves ``$\geq$" of \eqref{equ_alpha(Omega)=alpha(Omega_0)}.
\end{proof}

\subsection{Choosing good $\alpha(\cC; \sigma)$ localizations}

Let $d_{\mathscr{C}_n}$ be a metric on $\mathscr{C}_n$ (see \eqref{eq:mathscr.n}) compatible with its standard topology, e.g., constructed from the flat norm. In analogy with the notation $B_\epsilon(\cC) \subset \mathscr{C}_n$ for open $\epsilon$-balls centered at $\cC \in \mathscr{C}_n$, we will also denote for any subset $S \subset \mathscr{C}_n$
\[ B_\epsilon(S) := \bigcup_{\cC \in S} B_\epsilon(\cC) \]
to be the standard $\epsilon$-neighborhood in $\mathscr{C}_n$ with respect to $d_{\mathscr{C}_n}$. 

It will also be convenient to decompose $\mathscr{C}_n$ in terms of spine dimensions. Specifically: 
\[
	\mathscr{C}_{n}^{k\textnormal{-cyl}} := \{ \cC \in \mathscr{C}_n : \dim \spine \cC \geq k \} 
\]
for $k = 0, 1, \ldots, n-7$. A standard consequence of the monotonicity formula is that:
\begin{equation} \label{eq:mathscr.k.cyl.closed}
	\mathscr{C}_{n}^{k\textnormal{-cyl}} \subset \mathscr{C}_{n} \text{ is compact}.
\end{equation}
Recall also the definition of $\mathscr{C}_n^{\textnormal{qd-cyl}}$ from \eqref{eq:mathscr.n.qd-cyl}.

\begin{Prop} \label{prop:alpha.sigma.good}
	The following are all true:
	\begin{enumerate}
		\item[(i)] Fix $\xi > 0$. For all sufficiently small $\epsilon, \sigma > 0$ depending on $n, \xi$, 
			\[ \cC \in B_\epsilon(\mathscr{C}_{n}^{k\textnormal{-cyl}}) \implies \alpha(\cC; \sigma) > \alpha_{n - k} - \xi. \]
		\item[(ii)] Fix $\xi > 0$. For all $\epsilon > 0$, and all sufficiently small $\sigma > 0$ depending on $n, \xi, \epsilon$,
			\[ \cC \in \mathscr{C}_{n}^{k\textnormal{-cyl}} \setminus B_\epsilon(\mathscr{C}_n^{\textnormal{qd-cyl}}) \implies \alpha(\cC; \sigma) > \alpha_{n-k} + \Delta_{n-k}^{\textnormal{non-qd}} - \xi. \]
		\item[(iii)] For all sufficiently small $\epsilon > 0$ depending on $n$,
			\[ \cC \in B_\epsilon(\mathscr{C}_n^{\textnormal{qd-cyl}}) \setminus \mathscr{C}_n^{\textnormal{qd-cyl}} \implies \cC \in B_\epsilon(\mathscr{C}_n^{k'\textnormal{-cyl}}) \text{ for } k' = \dim \spine \cC + 1. \]
	\end{enumerate}
\end{Prop}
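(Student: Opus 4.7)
The plan is to handle all three parts by compactness and contradiction, combining (a) the semi-continuity of $\alpha(\cC;\sigma)$ in \eqref{eq:kappa.convergence.rho}--\eqref{eq:kappa.convergence.both}, (b) the compactness of $\mathscr{C}_n^{k\textnormal{-cyl}}$ from \eqref{eq:mathscr.k.cyl.closed}, (c) the cylindrical splitting of $\alpha$ in Proposition \ref{prop:kappa.spine}, and (d) the base bounds from Lemmas \ref{lemm:kappa.zhu} and \ref{lemm:kappa.zhu.nonqd}. Recall that $\alpha(\cC;\sigma)$ is monotone non-increasing in $\sigma$ (larger $\sigma$ shrinks the admissible test set in \eqref{eq:lambda.rho}), so $\alpha(\cC;\sigma) \nearrow \alpha(\cC)$ as $\sigma \to 0^+$.

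For part (i), I would suppose for contradiction that there are $\epsilon_i, \sigma_i \to 0$ and $\cC_i \in B_{\epsilon_i}(\mathscr{C}_n^{k\textnormal{-cyl}})$ with $\alpha(\cC_i;\sigma_i) \leq \alpha_{n-k}-\xi$. By compactness, $\cC_i \to \cC \in \mathscr{C}_n^{k\textnormal{-cyl}}$ along a subsequence. For any fixed $\sigma>0$, the $\sigma$-monotonicity yields $\alpha(\cC_i;\sigma) \leq \alpha(\cC_i;\sigma_i) \leq \alpha_{n-k}-\xi$ eventually; then \eqref{eq:kappa.convergence.both} gives $\alpha(\cC;\sigma) \leq \alpha_{n-k}-\xi$, and sending $\sigma \to 0^+$ yields $\alpha(\cC) \leq \alpha_{n-k}-\xi$ via \eqref{eq:kappa.convergence.rho}. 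Writing $\cC = \cC_\circ \times \RR^{k^*}$ with $k^* = \dim\spine\cC \geq k$ and $\cC_\circ \in \mathscr{C}_{n-k^*}$ regular, Proposition \ref{prop:kappa.spine} gives $\alpha(\cC) = \alpha(\cC_\circ)$, while Lemma \ref{lemm:kappa.zhu} produces $\alpha(\cC_\circ) \geq \alpha_{n-k^*} \geq \alpha_{n-k}$ by monotonicity of $\{\alpha_m\}$, a contradiction.

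For part (ii), the same contradiction setup with $\cC_i \in \mathscr{C}_n^{k\textnormal{-cyl}} \setminus B_\epsilon(\mathscr{C}_n^{\textnormal{qd-cyl}})$ and $\alpha(\cC_i;\sigma_i) \leq \alpha_{n-k} + \Delta^{\textnormal{non-qd}}_{n-k} - \xi$ extracts a limit $\cC$ in the same closed set, satisfying $\alpha(\cC) \leq \alpha_{n-k} + \Delta^{\textnormal{non-qd}}_{n-k} - \xi$. Using only the $k$-cylindrical structure, write $\cC = \tilde\cC_\circ \times \RR^k$ with $\tilde\cC_\circ \in \mathscr{C}_{n-k}$, so that $\alpha(\cC) = \alpha(\tilde\cC_\circ)$ by Proposition \ref{prop:kappa.spine}. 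The key observation is that $\tilde\cC_\circ$ cannot lie in $\mathscr{C}_{n-k}^{\textnormal{qd}}$: if $\dim\spine\cC > k$, then $\dim\spine\tilde\cC_\circ \geq 1$, so $\tilde\cC_\circ$ fails regularity and hence is non-quadratic (quadratic cones are regular; cf.\ Appendix \ref{app:quadratic}); if $\dim\spine\cC = k$, then $\tilde\cC_\circ$ being quadratic would place $\cC \in \mathscr{C}_n^{\textnormal{qd-cyl}}$ via \eqref{eq:mathscr.n.qd-cyl}, contradicting $\cC \notin B_\epsilon(\mathscr{C}_n^{\textnormal{qd-cyl}})$. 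Hence Lemma \ref{lemm:kappa.zhu.nonqd} applies to $\tilde\cC_\circ$ and gives $\alpha(\cC) \geq \alpha_{n-k} + \Delta^{\textnormal{non-qd}}_{n-k}$, a contradiction.

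For part (iii), the contradiction furnishes $\cC_i \in B_{1/i}(\mathscr{C}_n^{\textnormal{qd-cyl}}) \setminus \mathscr{C}_n^{\textnormal{qd-cyl}}$ with witnesses $\cC^{(0)}_i \in \mathscr{C}_n^{\textnormal{qd-cyl}}$, violating the conclusion with $k'_i = \dim\spine\cC_i + 1$. By compactness of $\mathscr{C}_n^{\textnormal{qd-cyl}}$ (a finite union of compact $SO(n+1)$-orbits of cones $\cC_\circ \times \RR^k$ with $\cC_\circ$ quadratic), pass to a subsequence so $\cC^{(0)}_i \to \cC^{(0)} \in \mathscr{C}_n^{\textnormal{qd-cyl}}$ with stabilized spine dimension $k_0$, and necessarily $\cC_i \to \cC^{(0)}$. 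Lower semicontinuity of the spine under flat convergence of cones (subspaces of $\RR^{n+1}$ realizing translation-invariance pass to the limit in the Grassmannian) gives $\dim\spine\cC_i \leq k_0$ eventually. If strict inequality holds along a subsequence, then $\cC^{(0)}_i \in \mathscr{C}_n^{k_0\textnormal{-cyl}} \subset \mathscr{C}_n^{k'_i\textnormal{-cyl}}$, contradicting $\cC_i \notin B_{1/i}(\mathscr{C}_n^{k'_i\textnormal{-cyl}})$. Therefore $\dim\spine\cC_i = k_0$ infinitely often; aligning spines by rotations converging to $\Id$ and splitting gives $\cC_i = \cC_{i,\circ} \times \RR^{k_0}$, $\cC^{(0)} = \cC^{(0)}_\circ \times \RR^{k_0}$, with $\cC_{i,\circ} \in \mathscr{C}_{n-k_0}$ regular and $\cC_{i,\circ} \to \cC^{(0)}_\circ$ quadratic. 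The main obstacle is then a rigidity claim: regular elements of $\mathscr{C}_{n-k_0}$ sufficiently close to a quadratic cone are themselves quadratic. This follows from the strong integrability of quadratic cones (Definition \ref{defi:strongly.integrable}, recalled in Appendix \ref{app:quadratic}): by Allard's theorem the links $\cL_{i,\circ} \subset \SSp^{n-k_0}$ are smooth $C^{2,\alpha}$ graphs over $\cL^{(0)}_\circ$ for $i$ large, and strong integrability combined with an implicit-function argument on the moduli of minimal links forces these graphs to lie in the $SO(n-k_0+1)$-orbit of $\cL^{(0)}_\circ$, whence $\cC_{i,\circ} \in \mathscr{C}_{n-k_0}^{\textnormal{qd}}$ and $\cC_i \in \mathscr{C}_n^{\textnormal{qd-cyl}}$, a contradiction.
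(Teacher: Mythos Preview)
Your proof is correct and follows essentially the same approach as the paper for parts (i) and (ii): contradiction via compactness, the semicontinuity \eqref{eq:kappa.convergence.rho}--\eqref{eq:kappa.convergence.both}, and the base estimates from Proposition \ref{prop:kappa.spine} and Lemmas \ref{lemm:kappa.zhu}--\ref{lemm:kappa.zhu.nonqd}. Your case analysis in (ii) (splitting off only $k$ factors and arguing $\tilde\cC_\circ \notin \mathscr{C}_{n-k}^{\textnormal{qd}}$) is slightly more explicit than the paper's one-line invocation but logically identical.

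For part (iii), both arguments hinge on the isolatedness of quadratic hypercones (Corollary \ref{coro:quadratic.isolated}), but the logical flow differs. The paper fixes $k = \dim\spine\cC_i$, shows the witnesses $\cC'_i$ must have $\dim\spine\cC'_i \leq k-1$ (using isolatedness to rule out equality), and derives a contradiction from $\cC'_i \to \cC$ with $\dim\spine\cC \geq k$. You instead fix $k_0 = \dim\spine\cC^{(0)}$, show $\dim\spine\cC_i \leq k_0$ by semicontinuity, dispose of the strict case via the witness, and in the equality case invoke isolatedness directly to force $\cC_i \in \mathscr{C}_n^{\textnormal{qd-cyl}}$. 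Both are valid rearrangements of the same ingredients. Two minor remarks: your assertion that $\cC_{i,\circ}$ is regular should be justified via Allard's theorem (nearness to the regular $\cC^{(0)}_\circ$) rather than via $\dim\spine = 0$, which does not by itself imply regularity; and your derivation of isolatedness from strong integrability is exactly what the paper packages into Corollary \ref{coro:quadratic.isolated}, so you could simply cite it.
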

\begin{proof}
	(i) If not, there would exist $k \in \{ 0, 1, \ldots, n-7 \}$, $\sigma_i \to 0$, $\epsilon_i \to 0$, $\cC_i \in B_{\epsilon_i}(\mathscr{C}_{n}^{k\textnormal{-cyl}})$ such that
	\begin{equation} \label{eq:alpha.sigma.good.i.1}
		\alpha(\cC_i; \sigma_i) \leq \alpha_{n-k} - \xi. 
	\end{equation}
	Pass to a subsequence so that $\cC_i \to \cC$. Then $\cC \in \mathscr{C}_{n}^{k\textnormal{-cyl}}$ by $\epsilon_i \to 0$ and \eqref{eq:mathscr.k.cyl.closed}, so
	\begin{equation} \label{eq:alpha.sigma.good.i.2}
		\alpha(\cC) \geq \alpha_{n-k}
	\end{equation}
	by Proposition \ref{prop:kappa.spine}. On the other hand, for each fixed $\sigma \in (0, 1)$, we have from \eqref{eq:alpha.sigma.good.i.1} together with \eqref{eq:kappa.convergence.both} and $\sigma_i \to 0$ that
	\[ \alpha(\cC; \sigma) \leq \lim_i \alpha(\cC_i; \sigma_i) \leq \alpha_{n-k} - \xi. \]
	Now using \eqref{eq:kappa.convergence.rho} to send $\sigma \to 0$ in the inequality above, we contradict \eqref{eq:alpha.sigma.good.i.2}.
	
	(ii) If not, there would exist $k \in \{ 0, 1, \ldots, n-7 \}$, $\sigma_i \to 0$, $\cC_i \in \mathscr{C}_{n}^{k\textnormal{-cyl}} \setminus B_{\epsilon}(\mathscr{C}_n^{\textnormal{qd-cyl}})$ such that
	\begin{equation} \label{eq:alpha.sigma.good.ii.1}
		\alpha(\cC_i; \sigma_i) \leq \alpha_{n-k} + \Delta_{n-k}^{\textnormal{non-qd}} - \xi. 
	\end{equation}
	Pass to a subsequence so that $\cC_i \to \cC$. Then $\cC \in \mathscr{C}_{n}^{k\textnormal{-cyl}} \setminus \mathscr{C}_n^{\textnormal{qd-cyl}}$ by \eqref{eq:mathscr.k.cyl.closed}. It follows from Proposition \ref{prop:kappa.spine}, and Lemma \ref{lemm:kappa.zhu.nonqd} that
	\begin{equation} \label{eq:alpha.sigma.good.ii.2}
		\alpha(\cC) \geq \alpha_{n-k} + \Delta_{n-k}^{\textnormal{non-qd}}.
	\end{equation}
	Working as in (i), one derives a contradiction from using \eqref{eq:alpha.sigma.good.ii.1} against \eqref{eq:alpha.sigma.good.ii.2}.
	
	(iii) If not, there would exist $k \in \{ 0, 1, \ldots, n-7 \}$, $\epsilon_i \to 0$, $\cC_i \in \mathscr{C}_n \setminus \mathscr{C}_n^{\textnormal{qd-cyl}}$ with $\dim \spine \cC_i = k$, and $\cC'_i \in \mathscr{C}_n^{\textnormal{qd-cyl}}$ with $d_{\mathscr{C}_n}(\cC_i, \cC_i') < \epsilon_i$, so that
	\begin{equation} \label{eq:alpha.sigma.good.iii.1}
		\cC_i \not \in B_{\epsilon_i}(\mathscr{C}_n^{(k+1)\textnormal{-cyl}}).
	\end{equation}
	As before, $\dim \spine \cC_i = k$ plus \eqref{eq:mathscr.k.cyl.closed} and passing to a subsequence implies that
	\begin{equation} \label{eq:alpha.sigma.good.iii.limit}
		\cC_i \to \cC \in \mathscr{C}_n^{k\textnormal{-cyl}}.
	\end{equation}
	In particular, \eqref{eq:alpha.sigma.good.iii.1} plus $d_{\mathscr{C}_n}(\cC_i, \cC_i') < \epsilon_i$ guarantee that
	\begin{equation} \label{eq:alpha.sigma.good.iii.2}
		\dim \spine \cC_i' \leq k.
	\end{equation}
	Let us upgrade \eqref{eq:alpha.sigma.good.iii.2} to a strict inequality for large $i$. Indeed, otherwise,
	\begin{equation} \label{eq:alpha.sigma.good.iii.3}
		\cC_i' \in \Rot(\cC'_{\circ,i} \times \RR^k), \; \cC'_{\circ,i} \in \mathscr{C}_{n-k}^{\textnormal{qd}}.
	\end{equation}
	Likewise, $\cC_i \in \mathscr{C}_n \setminus \mathscr{C}_n^{\textnormal{qd-cyl}}$ plus $\dim \spine \cC_i = k$ also imply
	\begin{equation} \label{eq:alpha.sigma.good.iii.4}
		\cC_i \in \Rot(\cC_{\circ,i} \times \RR^k), \; \cC_{\circ,i} \in \mathscr{C}_{n-k} \setminus \mathscr{C}_{n-k}^{\textnormal{qd}}.
	\end{equation}
	Now \eqref{eq:alpha.sigma.good.iii.3} and \eqref{eq:alpha.sigma.good.iii.4} plus $d_{\mathscr{C}_n}(\cC_i, \cC_i') < \epsilon_i \to 0$ imply that after possibly rotating $\cC_{\circ,i}'$ to $\tilde \cC_{\circ,i}' \in \mathscr{C}_{n-k}^{\textnormal{qd}}$, we have
	\[ d_{\mathscr{C}_n}(\cC_{\circ,i}, \tilde \cC'_{\circ,i}) \to 0 \]
	even though $\tilde \cC'_{\circ,i} \in \mathscr{C}_n^{\textnormal{qd}} \not \ni \cC_{\circ,i}$. This contradicts the isolatedness of quadratic hypercones (Corollary \ref{coro:quadratic.isolated}) as $i \to \infty$. This completes the proof that \eqref{eq:alpha.sigma.good.iii.2} is a strict inequality, as desired:
	\begin{equation} \label{eq:alpha.sigma.good.iii.5}
		\dim \spine \cC_i' \leq k-1.
	\end{equation}
	Now note that the space $\mathscr{C}_{n}^{\textnormal{qd-cyl}}$ is discrete modulo rotations. By passing to a subsequence we have that spine dimensions get preserved in \eqref{eq:alpha.sigma.good.iii.5}, so
	\begin{equation} \label{eq:alpha.sigma.good.iii.limit.prime}
		\cC_i' \to \cC' \in \mathscr{C}_n^{\textnormal{qd-cyl}}, \; \dim \spine \cC' \leq k-1.
	\end{equation}
	On the other hand, $d_{\mathscr{C}_n}(\cC_i, \cC_i') < \epsilon_i \to 0$ implies that $\cC = \cC'$, so \eqref{eq:alpha.sigma.good.iii.limit} and \eqref{eq:alpha.sigma.good.iii.limit.prime} are in contradiction. This completes the proof.
\end{proof}

\section{Covering tree structure}

Standard measure theory manipulations can be used to understand the implications of H\"older continuity of a map to the dimension of its image and the dimensions of its level sets. See, e.g., \cite[Proposition 7.7]{FROS:ihes} for a general statement. In our case, the non-uniqueness of tangent cones requires a slightly coarser approach, where dimensions and H\"older continuities are scale-dependent. We will use the following result. In our applications of it to the proof of Theorem \ref{theo:small.sing.dim} and Corollary \ref{coro:small.sing.dim}, we will take $\cS \subset \sing \mathscr{F}$, $\frT$ to be the restriction to $\cS$ of our timestamp map $\frT$, and $\mathfrak{C}_j$ to be covers of $\cS$ with subsets of balls of radius $a \cdot \theta^j$.

\begin{Prop} \label{prop:covering}
	Let $(M, g)$ be a Riemannian manifold. Take some subset $\cS \subset (M, g)$, a map $\frT : \cS \to \RR$, and some constants $\theta \in (0, 1)$, $a > 0$, and $C_1, C_2 > 0$. 
	
	Assume we can construct a countable family of coverings $\mathfrak{C}_j$, $j \in \ZZ_{\geq 0}$, of $\cS$ (i.e., for all $j \in \ZZ_{\geq 0}$, we have $\cup_{B \in \mathfrak{C}_j} B \supset \cS$) with the following structural properties:
	\begin{enumerate}
		\item[(1)] For each $j \in \ZZ_{\geq 0}$, $\mathfrak{C}_j$ is a finite collection of sets $B \subset M$ with
			\[ \operatorname{diam}_g B \leq 2 a \cdot \theta^j. \]
		\item[(2)] For each $B_{j+1} \in \mathfrak{C}_{j+1}$, $j \in \ZZ_{\geq 0}$, there exists a ``parent'' element
			\[ \mathfrak{p}(B_{j+1}) \in \mathfrak{C}_j. \]
		\item[(3)] For each $B_j \in \mathfrak{C}_j$, $j \in \ZZ_{\geq 0}$, there exists a ``coarse dimension''
			\[ \mathfrak{D}(B_j) \geq 0 \]
			such that
			\[ \# \mathfrak{p}^{-1}(B_j) \leq C_1 \cdot \theta^{-\mathfrak{D}(B_j)}. \]
		\item[(4)] For each $B_j \in \mathfrak{C}_j$, $j \in \ZZ_{\geq 0}$, there exists a ``coarse H\"older exponent''
			\[ \mathfrak{h}(B_j) \geq 0 \]
			such that
			\[ \operatorname{diam} \frT(B_j \cap \cS) \leq C_2 \cdot C_1^j \cdot \theta^{\sum_{i=1}^j \mathfrak{h}(\mathfrak{p}^i(B_j))}, \]
			where $\mathfrak{p}^i = \mathfrak{p} \circ \cdots \circ \mathfrak{p}$ taken $i$ times.
	\end{enumerate}
	Let $d > 0$, $\xi > 0$. There exists $\theta_0 > 0$ depending on $\xi$ and $C_1$ (but not on $a$, $d$, or $C_2$) such that being able to carry out the setup above with $\theta \leq \theta_0$ guarantees:
	\begin{enumerate}
		\item[(i)] If $d \geq (\xi + \mathfrak{D}(B))/\mathfrak{h}(B)$ for all $B \in \cup_j \mathfrak{C}_j$, then $\dim \frT(\cS) \leq d$.
		\item[(ii)] If $d \geq \xi + \mathfrak{D}(B) - \mathfrak{h}(B)$ for all $B \in \cup_j \mathfrak{C}_j$, then $\dim \cS \cap \frT^{-1}(t) \leq d$ for a.e. $t$.
	\end{enumerate}
\end{Prop}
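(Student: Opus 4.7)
The plan is to combine a straightforward tree-counting estimate with a Tonelli-style (co-area) trick for (ii). Both conclusions reduce to controlling the weighted ancestor sums
$$T_j^{(\lambda)} := \sum_{B_j \in \mathfrak{C}_j} \prod_{i=1}^{j} \theta^{\lambda\, \mathfrak{h}(\mathfrak{p}^i(B_j))},$$
so the first thing I would do is derive their geometric recursions. Grouping over parents at level $j$ and using (3) gives
$$T_{j+1}^{(\lambda)} \le C_1 \sum_{B_j \in \mathfrak{C}_j} \theta^{\lambda\, \mathfrak{h}(B_j) - \mathfrak{D}(B_j)} \prod_{i=1}^{j} \theta^{\lambda\, \mathfrak{h}(\mathfrak{p}^i(B_j))}.$$
The hypothesis of (i) yields $d\, \mathfrak{h}(B) - \mathfrak{D}(B) \ge \xi$ and hence $T_{j+1}^{(d)} \le C_1 \theta^\xi\, T_j^{(d)}$; the hypothesis of (ii) yields $\mathfrak{h}(B) - \mathfrak{D}(B) \ge \xi - d$ and hence $T_{j+1}^{(1)} \le C_1 \theta^{\xi-d}\, T_j^{(1)}$. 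Iterating from $T_0^{(\lambda)} = \#\mathfrak{C}_0$ then gives $T_j^{(d)} \le \#\mathfrak{C}_0\, (C_1 \theta^\xi)^j$ and $T_j^{(1)} \le \#\mathfrak{C}_0\, (C_1 \theta^{\xi-d})^j$.

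For (i), I would cover $\frT(\cS)$ by $\{\frT(B_j \cap \cS)\}_{B_j \in \mathfrak{C}_j}$ and apply (4) to bound
$$\sum_{B_j \in \mathfrak{C}_j} (\operatorname{diam} \frT(B_j \cap \cS))^d \le C_2^d\, C_1^{jd}\, T_j^{(d)} \le C_2^d\, \#\mathfrak{C}_0\, (C_1^{d+1} \theta^\xi)^j.$$
Because $\frT(\cS) \subset \RR$ forces $\dim \frT(\cS) \le 1$, only $d \le 1$ is interesting, and then any $\theta_0 < C_1^{-2/\xi}$ makes $C_1^{d+1} \theta^\xi < 1$. The hypothesis also forces $\mathfrak{h}(B) \ge \xi / d > 0$, so the individual diameters satisfy $\operatorname{diam} \frT(B_j \cap \cS) \le C_2 (C_1 \theta^{\xi/d})^j \to 0$---a step essential to concluding $\mathcal{H}^d(\frT(\cS)) = 0$ rather than merely a Hausdorff content bound.

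For (ii), each level set $\cS \cap \frT^{-1}(t)$ is covered by those $B_j \in \mathfrak{C}_j$ for which $t \in \frT(B_j \cap \cS)$, each of $g$-diameter at most $2a\theta^j$, so
$$\mathcal{H}^d_{2a\theta^j}(\cS \cap \frT^{-1}(t)) \le (2a\theta^j)^d \cdot \#\{B_j \in \mathfrak{C}_j : t \in \frT(B_j \cap \cS)\}.$$
Integrating in $t$ by Tonelli and using (4) yields
$$\int_\RR \mathcal{H}^d_{2a\theta^j}(\cS \cap \frT^{-1}(t))\, dt \le (2a\theta^j)^d \sum_{B_j \in \mathfrak{C}_j} \operatorname{diam} \frT(B_j \cap \cS) \le (2a)^d C_2\, \#\mathfrak{C}_0\, (C_1^2 \theta^\xi)^j,$$
which tends to zero for the same $\theta_0 < C_1^{-2/\xi}$. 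Monotone convergence as $2a\theta^j \searrow 0$ then gives $\mathcal{H}^d(\cS \cap \frT^{-1}(t)) = 0$, and hence $\dim (\cS \cap \frT^{-1}(t)) \le d$, for a.e.\ $t \in \RR$.

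No deep obstacle is foreseen: the argument is a discrete analog of the classical area/co-area inequalities for Hausdorff measure. The only points that genuinely merit care are (a) the diameter check in (i), handled by the $\mathfrak{h}(B) \ge \xi/d$ bound extracted from the hypothesis; (b) the measurability in $t$ of $\mathcal{H}^d_{2a\theta^j}(\cS \cap \frT^{-1}(t))$ required for Tonelli, which is standard since each $\mathfrak{C}_j$ is a finite family of essentially Borel sets and $\frT$ is Lipschitz in the intended applications; and (c) observing that the $d$-independence of $\theta_0$ asserted by the proposition is compatible with the apparent $d$ in the base $C_1^{d+1}\theta^\xi$ precisely because the relevant values of $d$ are bounded by the ambient dimension ($d \le 1$ in (i) and $d \le \dim M$ in (ii)), so the $d$-dependence is absorbed once and for all.
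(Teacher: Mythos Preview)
Your argument is correct and is essentially the paper's own proof with different bookkeeping: the paper packages the same tree sum as an expectation over a uniform random walk down the parent tree, which is exactly your recursion for $T_j^{(\lambda)}$ unwound, and for (ii) it likewise integrates the level-set count over $t$ to reach the same bound $(C_1^2\theta^\xi)^j$. One technical point worth noting: the paper works with the Hausdorff content $\mathcal{H}^d_\infty$ and the \emph{upper} Lebesgue integral throughout, which simultaneously removes the need for your diameter check in (a) and sidesteps the measurability issue you flag in (b) --- since the proposition as stated imposes no regularity on $\frT$ or on the sets $B$, your appeal to Tonelli is not justified in that generality, whereas the upper-integral route (subadditive, no measurability needed) closes the argument unconditionally.
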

\begin{proof}
	Recall from \cite[\S 2]{Simon:GMT} the definition of $\cH^d_\infty$, the $d$-dimensional Hausdorff outer measure computed using countable covers without diameter constraints. 
	
	(i) Without loss of generality, we may assume $d \leq 1$ and $C_1 > 1$. Fix $j \geq 1$. Then:
	\begin{align} \label{eq:covering.a}
		\cH^d_\infty(\frT(\cS))
			& \leq \sum_{\substack{B_j \in \mathfrak{C}_j}} \cH^d_\infty(\frT(B_j \cap \cS)) \nonumber \\
			& = \sum_{\substack{B_j \in \mathfrak{C}_j}} \frac{\prod_{i=1}^j (\# \mathfrak{p}^{-1}(\mathfrak{p}^i(B_j)))}{\prod_{i=1}^j (\# \mathfrak{p}^{-1}(\mathfrak{p}^i(B_j)))} \cdot \cH^d_\infty(\frT(B_j \cap \cS)) \nonumber \\
			& = \sum_{\substack{B_j \in \mathfrak{C}_j}} \frac{1}{\prod_{i=1}^j (\# \mathfrak{p}^{-1}(\mathfrak{p}^i(B_j)))} \cdot \prod_{i=1}^j (\# \mathfrak{p}^{-1}(\mathfrak{p}^i(B_j))) \cdot \cH^d_\infty(\frT(B_j \cap \cS)) \nonumber \\
			& = \mathbb{E}_{B_j \in \mathfrak{C}_j} \left[ \prod_{i=1}^j (\# \mathfrak{p}^{-1}(\mathfrak{p}^i(B_j))) \cdot \cH^{d}_\infty(\frT(B_j \cap \cS)) \right]
	\end{align}
	where $\mathbb{E}_{B_j \in \mathfrak{C}_j}[\cdot]$ is the expected value among all $B_j \in \mathfrak{C}_j$ visited by a uniformly random walk starting from $\mathfrak{C}_0$, descending to $\mathfrak{C}_1$, then $\mathfrak{C}_2$, etc. Note that, by assumption (3),
	\[ \prod_{i=1}^j (\# \mathfrak{p}^{-1}(\mathfrak{p}^i(B_j))) \leq C_1^j \cdot \theta^{-\sum_{i=1}^j \mathfrak{D}(\mathfrak{p}^i(B_j))}, \]
	and by (4),
	\[ \cH^d_{\infty}(\frT(B_j \cap \cS)) \leq \omega_d \cdot (\operatorname{diam} \frT(B_j \cap \cS))^d \leq \omega_d \cdot C_2^d \cdot C_1^{dj} \cdot \theta^{d \sum_{i=1}^j \mathfrak{h}(\mathfrak{p}^i(B_j))}. \]
	Plugging back into \eqref{eq:covering.a} and using $d \geq (\xi + \mathfrak{D}(B_j))/\mathfrak{h}(B_j)$, $C_1 > 1$, $d \leq 1$,
	\begin{align*}
		\cH^d_\infty(\frT(\cS)) 
			& \leq \omega_d \cdot C_2^d \cdot \mathbb{E}_{B_j \in \mathfrak{C}_j} \left[ \left( C_1^{d+1} \theta^{j^{-1} \sum_{i=1}^j (-\mathfrak{D}(\mathfrak{p}^i(B_j)) + d \cdot \mathfrak{h}(\mathfrak{p}^i(B_j)))} \right)^j \right] \\
			& \leq \omega_d \cdot C_2^d \cdot \mathbb{E}_{B_j \in \mathfrak{C}_j} \left[ \left( C_1^2 \theta^{\xi} \right)^j \right].
	\end{align*}
	Taking $\theta \leq \theta_0$ with $C_1^{2} \theta_0^\xi < 1$ and sending $j \to \infty$ implies $\cH^d_\infty(\frT(\cS)) = 0$. It follows that $\cH^d(\frT(\cS)) = 0$ by \cite[Theorem 3.6]{Simon:GMT}, so $\dim \frT(\cS) \leq d$.
	
	(ii) Fix $j \geq 1$. For each $t \in \RR$, consider the sub-covering
	\[ \mathfrak{C}_j[t] := \{ B_j \in \mathfrak{C}_j : B_j \cap \cS \cap \frT^{-1}(t) \neq \emptyset \}. \]
	Then, since $\mathfrak{C}_j[t]$ is a covering of $\cS \cap \frT^{-1}(t)$ with sets of diameter $\leq 2a \theta^j$, we can estimate
	\[ \cH^d_\infty(\cS \cap \frT^{-1}(t)) \leq \omega_d \cdot (a\theta^j)^d \cdot (\# \mathfrak{C}_j[t]). \]
	As a result, we have the following elementary upper Lebesgue integral estimate (recall that these are  subadditive, not additive):
	\begin{align*}
		\overline{\int_\RR} \cH^d_\infty(\cS \cap \frT^{-1}(t)) \, dt
			& \leq \omega_d \cdot (a\theta^j)^d \cdot \overline{\int_\RR} (\# \mathfrak{C}_j[t]) \, dt \\
			& = \omega_d \cdot (a\theta^j)^d \cdot \overline{\int_\RR} \sum_{B_j \in \mathfrak{C}_j} \mathbf{1}_{t \in \frT(B_j \cap \cS)} \, dt \\
			& \leq \omega_d \cdot (a\theta^j)^d \cdot \sum_{B_j \in \mathfrak{C}_j} \overline{\int_\RR} \mathbf{1}_{t \in \frT(B_j \cap \cS)} \, dt \\
			& \leq \omega_d \cdot (a\theta^j)^d \cdot \sum_{B_j \in \mathfrak{C}_j} \operatorname{diam} \frT(B_j \cap \cS).
	\end{align*}
	At this point the proof is quite similar to (i), and we can again rewrite:
	\begin{equation} \label{eq:covering.b}
		\overline{\int_\RR} \cH^d_\infty(\cS \cap \frT^{-1}(t)) \, dt \leq \omega_d \cdot (a\theta^j)^d \cdot \mathbb{E}_{B_j \in \mathfrak{C}_j} \left[ \prod_{i=1}^j (\# \mathfrak{p}^{-1}(\mathfrak{p}^i(B_j))) \cdot \operatorname{diam} (\frT(B_j \cap \cS)) \right]
	\end{equation}
	As before, by assumption (3),
	\[ \prod_{i=1}^j (\# \mathfrak{p}^{-1}(\mathfrak{p}^i(B_j))) \leq C_1^j \cdot \theta^{-\sum_{i=1}^j \mathfrak{D}(\mathfrak{p}^i(B_j))}, \]
	and by (4),
	\[ \operatorname{diam} \frT(B_j \cap \cS) \leq C_2 \cdot C_1^{j} \cdot \theta^{\sum_{i=1}^j \mathfrak{h}(\mathfrak{p}^i(B_j))}. \]
	Plugging back into \eqref{eq:covering.b} and using $\mathfrak{D}(B_j) \leq \mathfrak{h}(B_j) + d - \xi$:
	\begin{align*}
		\overline{\int_\RR} \cH^d_\infty(\cS \cap \frT^{-1}(t)) \, dt
			& \leq \omega_d \cdot a^d \cdot C_2 \cdot \mathbb{E}_{B_j \in \mathfrak{C}_j} \left[ \left( C_1^2 \theta^{j^{-1} \sum_{i=1}^j(d -\mathfrak{D}(\mathfrak{p}^i(B_j)) + \mathfrak{h}(\mathfrak{p}^i(B_j)))} \right)^j \right] \\
			& \leq \omega_d \cdot a^d \cdot C_2 \cdot \mathbb{E}_{B_j \in \mathfrak{C}_j} \left[ \left( C_1^2 \theta^{\xi} \right)^j \right].
	\end{align*}
	Taking $\theta \leq \theta_0$ with $C_1^2 \theta_0^\xi < 1$ and sending $j \to \infty$ implies $\cH^d_\infty(\cS \cap \frT^{-1}(t)) = 0$ for a.e. $t$. Proceeding as in (i), $\dim (\cS \cap \frT^{-1}(t)) \leq d$ for a.e. $t$.
\end{proof}

\section{Proof of Theorem \ref{theo:small.sing.dim} and Corollary \ref{coro:small.sing.dim}} \label{sec:proof}

\subsection{Setup} \label{sec:proof.setup}

Fix $(M, g)$, $\mathscr{F}$, and $\frT$ as in the statement of Theorem \ref{theo:small.sing.dim}. We note three facts:
\begin{enumerate}
	\item By rescaling $M$ and possibly excising closer to the compact subset $\sing \mathscr{F} \subset M$, we may assume that $\breve M := B_2(\sing \mathscr{F}) \subset M$ and $(M, g)$ satisfy the injectivity radius and curvature constraints laid out in the beginning of Section \ref{sec:nonlinear}.
	\item By our assumption on $\mathscr{F}$, for each $p \in \supp \mathscr{F}$ there is a unique $\Sigma \in \mathscr{F}$ such that $p \in \supp \Sigma$. This $\Sigma \in \mathscr{F}$ will just be referred to as ``corresponding'' to $p$. 
	\item By Allard's theorem \cite{Allard:first-variation}, if $p \in \sing \mathscr{F}$, then $p \in \sing \Sigma$ for its corresponding $\Sigma \in \mathscr{F}$.
\end{enumerate}
We will invoke these facts repeatedly without further comment. 

All our constants may a priori depend on $M$, $g$, and $\mathscr{F}$. One may actually lift this dependence entirely by incorporating $M$, $g$, $\mathscr{F}$ into the various contradiction arguments provided we hold the injectivity radius and curvature constraints 
from Section \ref{sec:nonlinear} fixed, as well as the Lipschitz constant of $\mathfrak{T}$. We will not need to do this, so we avoid it.

\subsection{Conical scale structure}

We will seek to understand points of $\sing \mathscr{F}$ that look sufficiently conical with respect to $\mbfd$ at some uniform dyadic scale. 

\begin{Prop} \label{prop:conical.structure}
	Fix $\sigma \in (0, \sigma_1(n)]$, $H = H(n, \sigma)$ as in Lemma \ref{lemm:jacobi.decay.cone}, and $\theta \in (0, 1)$. There exist $\eps > 0$, $L \in \ZZ_{\geq 0}$ depending on $n, \sigma, \theta$, with the following significance. 
	
	Suppose that $\ell \in \ZZ_{\geq 0}$, $\ell \geq L$, and $\cS \subset \sing \mathscr{F}$ satisfies:
	\begin{enumerate}
		\item[(1)] For all $p \in \cS$, with $p \in \sing \Sigma$ for $\Sigma \in \mathscr{F}$, we have
			\[ \mbfd_{\cC_{p,\ell}}(\Sigma; p, 2^{-\ell}) < \eps \]
			for some nonflat minimizing hypercone $\cC_{p,\ell} \subset T_p M$.
	\end{enumerate}
	Then, for all $p \in \cS$, $\ell \in \ZZ_{\geq 0}$, $\ell \geq L$, there exists some nonflat minimizing hypercone $\check \cC_{p,\ell} \subset T_p M$ so that the following all hold:
	\begin{enumerate}
		\item[(i)] If $V_{p,\ell} := \spine \check \cC_{p,\ell}$, then
			\[ \cS \cap B_{2^{-\ell}}(p) \subset \eta_{p,2^{-\ell}}(B_\theta(V_{p,\ell})). \]
		\item[(ii)] For all $p', p^+ \in \cS \cap B_{2^{-\ell}}(p)$, if $p^+ \in \sing \Sigma^+$ for $\Sigma^+ \in \mathscr{F}$, then
			\[ \reg_{>2^{-\ell}\theta \sigma} \Sigma^+ \cap \partial B_{2^{-\ell} \theta}(p') \neq \emptyset. \]
		\item[(iii)] For all $p', p^+, p^- \in \cS \cap B_{2^{-\ell}}(p)$, if $p^\pm \in \sing \Sigma^\pm$ with $\Sigma^\pm \in \mathscr{F}$, then\footnote{We emphasize that the balls in the left and right hand side are centered at different points.}
		\begin{align*}
			& \frac{d_g(\supp \Sigma^-, \reg_{> 2^{-\ell} \sigma} \Sigma^+ \cap \partial B_{2^{-\ell}}(p))}{2^{-\ell}} \\
			& \qquad \leq H \cdot \theta^{1+\alpha(\check \cC_{p,\ell}; \sigma)} \cdot \frac{d_g(\supp \Sigma^-, \reg_{> 2^{-\ell} \theta \sigma} \Sigma^+ \cap \partial B_{2^{-\ell} \theta}(p'))}{2^{-\ell} \theta}.
		\end{align*}
            \item[(iv)] For $p$ and $\ell$ whose  $\cC_{p,\ell}$ is a copy of a hypercone in $\mathscr{C}_n^{\textnormal{qd-cyl}}$, we may also take their $\check \cC_{p,\ell}$ to be a copy of the same hypercone, possibly rotated.
	\end{enumerate}
\end{Prop}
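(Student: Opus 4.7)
The plan is to construct $\check\cC_{p,\ell}$ by selecting a maximal-spine nonflat minimizing hypercone that still $O(\varepsilon)$-approximates $\Sigma$ at scale $2^{-\ell}$, then verify (i)--(iv) by standard rescaling/compactness arguments, with (iii) invoking Simon's decay theorem for positive Jacobi fields as quantified by $\alpha(\check\cC_{p,\ell};\sigma)$. The overall strategy mirrors that of \cite[Prop.~3.8]{CMS:generic.9.10}, \cite{CMS:generic.codim.plus.2}, modulo the spine-aware refinements needed here.

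\textbf{Step 1 (Selection of $\check\cC_{p,\ell}$ and proof of (iv)).} For each $p\in\cS$ and $\ell\geq L$, using (1), I choose $\check\cC_{p,\ell}\subset T_pM$ among all nonflat minimizing hypercones with $\mbfd_{\check\cC_{p,\ell}}(\Sigma;p,2^{-\ell})\leq 2\varepsilon$ to be one with maximal spine dimension; such a choice exists by compactness of minimizing hypercones of uniformly bounded density. For (iv), when $\cC_{p,\ell}$ is a copy of some $\cC\in \mathscr{C}_n^{\textnormal{qd-cyl}}$, Corollary~\ref{coro:quadratic.isolated} (isolatedness of quadratic cones modulo rotation) together with small enough $\varepsilon$ forces any selected $\check\cC_{p,\ell}$ to be a rotated copy of $\cC$ itself, preserving the spine dimension.

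\textbf{Step 2 (Proof of (i) via cone-splitting).} Argue by contradiction: assume $p_i\in\cS$, $\ell_i\geq L$ and $p_i'\in\cS\cap B_{2^{-\ell_i}}(p_i)$ such that $q_i:=\eta_{p_i,2^{-\ell_i}}^{-1}(p_i')\notin B_\theta(\spine\check\cC_{p_i,\ell_i})$. Rescale $\Sigma_i$ (the element of $\mathscr F$ through $p_i$) to unit scale in $T_{p_i}M$, and pass to subsequences so that $\check\cC_{p_i,\ell_i}\to\check\cC_\infty$, $(\Sigma_i)_{p_i,2^{-\ell_i}}\to\check\cC_\infty$ (by Allard and minimality), and $q_i\to q_\infty\in\bar B_1\setminus B_\theta(\spine\check\cC_\infty)$. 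Upper-semicontinuity of density plus the fact that $p_i'$ is singular forces $q_\infty\in\sing\check\cC_\infty$, so by Almgren-type cone-splitting the tangent cone to $\check\cC_\infty$ at $q_\infty$ generates a new invariant direction outside $\spine\check\cC_\infty$, producing (for large $i$) a nonflat minimizing hypercone $2\varepsilon$-close to $\Sigma_i$ with strictly larger spine dimension, contradicting the maximality in Step 1.

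\textbf{Step 3 (Proof of (ii) via Allard).} The regular part $\reg\check\cC_{p,\ell}\cap\partial B_1$ is dense on the link (since $\sing\check\cC_{p,\ell}\cap\partial B_1$ has Hausdorff dimension $\leq n-8$), so for all $\sigma\leq\sigma_1(n)$ we have $\reg_{>\sigma}\check\cC_{p,\ell}\cap\partial B_1\neq\emptyset$ in a robust (open) manner. Given $p',p^+\in\cS\cap B_{2^{-\ell}}(p)$, Lemma~\ref{Lem:change-bspt} rewrites $\mbfd_{\check\cC_{p,\ell}}(\Sigma^+;p',2^{-\ell}\theta)$ in terms of $\mbfd_{\check\cC_{p,\ell}}(\Sigma^+;p,2^{-\ell})$ up to controlled error, and so (after possibly shrinking $\varepsilon$ depending on $\sigma,\theta$) Allard's regularity theorem \cite{Allard:first-variation} together with the continuity of $r_{\Sigma^+}$ under weak convergence (cf.\ \cite[Lemma~2.4]{CMS:generic.9.10}) gives a regular point of $\Sigma^+$ on $\partial B_{2^{-\ell}\theta}(p')$ of regularity scale $\geq 2^{-\ell}\theta\sigma$.

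\textbf{Step 4 (Proof of (iii): the main obstacle).} This is the hardest step, and encapsulates Simon's partial Harnack theory. The one-sided distance from $\Sigma^-$ to the sheet of $\Sigma^+$ graphing over $\reg_{>\sigma}\check\cC_{p,\ell}\cap B_1\setminus B_\theta$ is, after rescaling, a nonnegative function on (most of) $\check\cC_{p,\ell}\cap(B_1\setminus B_\theta)$ solving the Jacobi equation to leading order, since both $\Sigma^\pm$ are approximately graphical minimal sheets. A two-sided (or one-sided, when $\Sigma^-=\Sigma^+$; cf.\ Remark~\ref{rema:freq-1-subspace}) version of Simon's decay estimate for positive Jacobi fields on $\reg_{>\sigma}\check\cC_{p,\ell}$---which is precisely Lemma~\ref{lemm:jacobi.decay.cone}, with decay exponent governed by $\alpha(\check\cC_{p,\ell};\sigma)$ as defined in Definition~\ref{defi:kappa.lambda.sigma}---yields
\[
\frac{d_g(\supp\Sigma^-,\reg_{>2^{-\ell}\sigma}\Sigma^+\cap\partial B_{2^{-\ell}}(p))}{2^{-\ell}}\leq H\,\theta^{1+\alpha(\check\cC_{p,\ell};\sigma)}\cdot\frac{d_g(\supp\Sigma^-,\reg_{>2^{-\ell}\theta\sigma}\Sigma^+\cap\partial B_{2^{-\ell}\theta}(p'))}{2^{-\ell}\theta},
\]
after changing basepoint using Lemma~\ref{Lem:change-bspt}. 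The subtle points are (a) the localization to $\reg_{>\sigma}\check\cC_{p,\ell}$ needed to apply the spectral bound \eqref{equ_mu(Omega)}--\eqref{equ_alpha(Omega)}, and (b) propagating from the linearized Jacobi field back to the nonlinear separation; both are handled by contradiction/rescaling, using upper-semicontinuity \eqref{eq:kappa.convergence.both} of $\alpha(\cdot;\sigma)$ to ensure the rate passes to limits. This is where the bulk of the technical effort lies.
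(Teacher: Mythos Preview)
There is a genuine gap in Step~2, and it propagates to Steps~3 and~4. You claim that ``upper-semicontinuity of density plus the fact that $p_i'$ is singular forces $q_\infty\in\sing\check\cC_\infty$.'' But $p_i'\in\cS$ means $p_i'\in\sing\Sigma_i'$ for \emph{its own} leaf $\Sigma_i'\in\mathscr{F}$, which is generically distinct from $\Sigma_i$ (the leaf through $p_i$); by assumption~(2) of Theorem~\ref{theo:small.sing.dim} their supports are disjoint, so $p_i'$ need not lie on $\supp\Sigma_i$ at all, and no density argument applied to $\Sigma_i$ says anything about $q_\infty$. The missing idea is precisely the Frankel-type Lemma~\ref{lemm:one.sided.cone}: you must also rescale $\Sigma_i'$ about $p_i$, use hypothesis~(1) at $p_i'$ to see that its limit is a translated cone $\cC'+\mbfv'$, and then invoke the non-crossing property of $\mathscr{F}$ together with Lemma~\ref{lemm:one.sided.cone} to conclude $\cC'=\check\cC_\infty$ and $\mbfv'\in\spine\check\cC_\infty$. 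This already gives $q_\infty=\mbfv'\in\spine\check\cC_\infty$, contradicting $q_\infty\notin B_\theta(\spine\check\cC_\infty)$---so the maximal-spine selection and the cone-splitting detour are unnecessary.

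The same omission undermines Steps~3 and~4: you never justify why $\Sigma^+$ (resp.\ $\Sigma^\pm$) is close to $\check\cC_{p,\ell}$ at the basepoint $p$, which is what is needed to locate points of $\reg_{>2^{-\ell}\theta\sigma}\Sigma^+$ via Allard and to linearize the separation as a positive Jacobi field on $\check\cC_{p,\ell}$. In the paper's proof, this is again handled by the Frankel argument: in the contradiction limit, \emph{all} of the rescaled $\Sigma^\pm_i$ converge to the same cone $\cC$, after which (ii) follows from Lemma~\ref{lemm:jacobi.decay.cone}~(i) and (iii) from Lemma~\ref{lemm:jacobi.decay.cone}~(ii). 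Note also that the paper does not pre-select $\check\cC_{p,\ell}$ by any maximal-spine criterion; it runs the entire proof by contradiction with $\eps_i\to0$, $\ell_i\to\infty$ and takes $\check\cC$ to be the limit cone, which sidesteps the question of whether ``maximal spine among $2\eps$-close cones'' is achieved or stable.
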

\begin{proof}
	Consider any $p_i \in \sing \mathscr{F}$, $\mbfd_{\cC_i}(\Sigma_i; p_i, 2^{-\ell_i}) < \eps_i \to 0$ and $\ell_i \to \infty$ as in the setup.
	
	Choose any linear isometry $\iota_{p_i} : \RR^{n+1} \to (T_{p_i} M, g_{p_i})$. Passing to a subsequence, and using $\ell_i \to \infty$, we may assume that
	\[ (\eta_{p_i, 2^{-\ell_i}} \circ \iota_{p_i})^{-1} \; \Sigma_i \to \Sigma, \]
	a minimizer in $\RR^{n+1}$, and
	\[ \iota_{p_i}^{-1} \; \cC_i \to \cC, \]
	a nonflat minimizing hypercone in $\RR^{n+1}$. Also, $\mbfd_{\cC_i}(\Sigma_i; p_i, 2^{-\ell_i}) < \eps_i \to 0$ implies that
	\begin{equation} \label{eq:conical.structure.limit.cone}
		(\eta_{p_i, 2^{-\ell_i}} \circ \iota_{p_i})^{-1} \; \Sigma_i \to \cC.
	\end{equation}
	
	Let us rule out (i), (ii), (iii), or (iv) failing as $i \to \infty$ with $\check \cC_i := \cC$. By passing to a further subsequence, we may assume that one of (i), (ii), (iii), (iv) fails consistently as $i \to \infty$. We immediately observe that (iv) could not have failed as $i \to \infty$ with $\check \cC_i = \cC$ since  $\mathscr{C}_n^{\textnormal{qd-cyl}}$ is discrete modulo rotations (Corollary \ref{coro:quadratic.isolated}). So we focus on (i), (ii), (iii).
	
	To set things up, consider any $p_i' \in \sing \mathscr{F}$ with $\mbfd_{\cC_i'}(\Sigma_i'; p_i', 2^{-\ell_i}) < \eps_i$ for its corresponding $\Sigma_i'$, $\cC_i'$. Then,
	\begin{equation} \label{eq:conical.structure.shift.i}
		(\eta_{p_i, 2^{-\ell_i}} \circ \iota_{p_i})^{-1} p_i' =: \mbfv_i' \in B_1.
	\end{equation}
	Passing to a subsequence again, we may arrange for 
	\begin{equation} \label{eq:conical.structure.shift}
		\mbfv_i' \to \mbfv' \in \bar B_1.
	\end{equation}
	and
	\[ (\eta_{p_i, 2^{-\ell_i}} \circ \iota_{p_i})^{-1} \; \Sigma_i' \to \Sigma', \]
	a minimizer, and
	\[ (\iota_{p_i} \circ \iota_{p_i \to p_i'})^{-1} \; \cC_i' \to \cC', \] 
	a nonflat minimizing hypercone in $\RR^{n+1}$. It follows from $\mbfd_{\cC_i'}(\Sigma_i'; p_i', 2^{-\ell_i}) < \eps_i \to 0$ that $\Sigma' - \mbfv' = \cC'$, i.e., 
	\begin{equation} \label{eq:conical.structure.limit.cone.shift}
		(\eta_{p_i, 2^{-\ell_i}} \circ \iota_{p_i})^{-1} \; \Sigma_i' \to \cC' + \mbfv'.
	\end{equation}
	Then, \eqref{eq:conical.structure.limit.cone}, \eqref{eq:conical.structure.shift.i}, \eqref{eq:conical.structure.shift}, \eqref{eq:conical.structure.limit.cone.shift}, the fact that elements of $\mathscr{F}$ do not cross smoothly, and Lemma \ref{lemm:one.sided.cone} guarantee that $\cC' = \cC$ after perhaps swapping orientations, and
	\begin{equation} \label{eq:conical.structure.shift.spine}
		\mbfv' \in \spine \cC.
	\end{equation}
	
	Let us first see why (i) cannot fail as $i \to \infty$ with $\check \cC_i := \cC$. If it did, then \eqref{eq:conical.structure.shift.i} turns into
	\[ \mbfv_i' \in B_1 \setminus B_\theta(V), \]
	with $V = \spine \cC$, and taking limits yields
	\[ \mbfv_i' \to \mbfv' \in \bar B_1 \setminus B_\theta(V). \]
	Clearly, this contradicts \eqref{eq:conical.structure.shift.spine}, so (i) could not have failed as $i \to \infty$.
	
	Suppose now that (ii) fails as $i \to \infty$. 
	The argument above with $(p_i^+, \Sigma_i^+)$ in place of $(p_i', \Sigma_i')$ shows that 
	\[ (\eta_{p_i, 2^{-\ell_i}} \circ \iota_{p_i})^{-1} \; \reg_{> \theta \sigma} \Sigma^+_i \]
	converge locally to a superset of $\reg_{>2\theta \sigma} \cC$, and the result follows by Lemma \ref{lemm:jacobi.decay.cone} (i), applied with $\cC = \theta (\cC + \mbfv')$ in view of \eqref{eq:conical.structure.limit.cone}, \eqref{eq:conical.structure.shift.i}, \eqref{eq:conical.structure.shift}, \eqref{eq:conical.structure.limit.cone.shift}, \eqref{eq:conical.structure.shift.spine}. Thus, (ii) could also not have failed as $i \to \infty$.

	Finally suppose that (iii) fails with $p_i'$, $p_i^\pm$, $\Sigma_i^\pm$ and $\check \cC_i := \cC$. The argument above with $(p_i^\pm, \Sigma_i^\pm)$ in place of $(p_i', \Sigma_i')$ implies
	\[ (\eta_{p_i, 2^{-\ell_i}} \circ \iota_{p_i})^{-1} \; \Sigma^\pm_i \to \cC. \]
	We can take the difference of the corresponding graphical functions over an exhausting sequence of subsets of $\cC$ and use the standard Harnack inequality and the connectedness of $\cC$ to define a positive Jacobi field $u$ on $\cC$. Then the failure of the distance estimate rescales to give, in the limit as $i \to \infty$, a left-hand side that is
	\[ \leq \frac{\inf_{\reg_{> 2 \sigma} \cC \cap \partial B_{1}(\mbfo)} u}{1}, \]
	and a right-hand side that is, by \eqref{eq:conical.structure.shift.i} and \eqref{eq:conical.structure.shift},
	\[ \geq H \cdot \theta^{1+\alpha(\cC; \sigma/2)} \cdot \frac{\inf_{\reg_{>\frac12 \theta \sigma} \cC \cap \partial B_\theta(\mbfv')} u}{\theta}. \]
	This contradicts Lemma \ref{lemm:jacobi.decay.cone} (ii) and thus completes the proof.
\end{proof}

\subsection{Singular set stratification via density drop}

We will use the following stratification of $\sing \mathscr{F}$ that exploits monotonicity:

\begin{Def} \label{defi:conical.stratification}
	For $\eta > 0$, $L \in \ZZ_{\geq 0}$, define
	\[ \cS(\eta, L) := \cup_{\Sigma \in \mathscr{F}} \{ p \in \sing \Sigma : \Theta^1_\Sigma(p, 2^{-L}) - \Theta_\Sigma(p) \leq \eta \}. \]
\end{Def}

\begin{Rem} \label{rema:conical.stratification}
The monotonicity property of $\Theta^1_\Sigma(p, \cdot)$ from Section \ref{sec:nonlinear} implies:
\begin{enumerate}
	\item $\cS(\eta, L_0) \subset \cS(\eta, L_1)$ whenever $L_1 \geq L_0$, and
	\item For each $\eta > 0$, $L \mapsto \cS(\eta, L)$ is an exhaustion of $\sing \mathscr{F}$.
\end{enumerate}
\end{Rem}

The advantage of this definition is that it combines well with Proposition \ref{prop:conical.structure} and allows us to invoke it in arbitrarily deep scales, and in particular iterate Proposition \ref{prop:conical.structure} (iii) to obtain our coarse H\"older estimates. This is done in the two subsequent lemmas.

\begin{Lem} \label{lemm:conical.structure}
	Fix $\sigma,  \theta, \eps$ as in Proposition \ref{prop:conical.structure}. There exist $\eta > 0$, $L \in \ZZ_{\geq 0}$ depending on $n, \sigma, \theta, \eps$ with the following significance.
	
	Let $L_0 \in \ZZ_{\geq 0}$, $L_0 \geq L$. Then, Proposition \ref{prop:conical.structure} applies with $\cS = \cS(\eta, L_0)$ and any $\ell \in \ZZ_{\geq 0}$ with $\ell \geq L_0$; i.e., for every $p \in \cS(\eta, L_0)$ with corresponding $\Sigma \in \mathscr{F}$ and $\ell \in \ZZ_{\geq 0}$, $\ell \geq L_0$, we have 
	\[ \mbfd_{\cC_{p,\ell}}(\Sigma; p, 2^{-\ell}) < \eps \]
	to some nonflat minimizing hypercone $\cC_{p,\ell} \subset T_{p} M$.
\end{Lem}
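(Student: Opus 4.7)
The plan is a standard compactness-plus-monotonicity argument. Assume the claim fails: there exist sequences $\eta_i \to 0$, $L_i \to \infty$, integers $L_{0,i} \geq L_i$ and $\ell_i \geq L_{0,i}$, and points $p_i \in \cS(\eta_i, L_{0,i})$ with corresponding $\Sigma_i \in \mathscr{F}$, such that $\mbfd_{\cC}(\Sigma_i; p_i, 2^{-\ell_i}) \geq \eps$ for every nonflat minimizing hypercone $\cC \subset T_{p_i}M$. The goal is to extract a nonflat minimizing hypercone in the limit that violates this inequality.

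Fix linear isometries $\iota_{p_i} : \RR^{n+1} \to (T_{p_i}M, g_{p_i})$ and set $\tilde\Sigma_i := (\eta_{p_i, 2^{-\ell_i}} \circ \iota_{p_i})^{-1}(\Sigma_i)$. Since $\ell_i \to \infty$, the pulled-back metrics $\iota_{p_i}^* g_{p_i, 2^{-\ell_i}}$ converge smoothly to the Euclidean metric on $B_2$ by Lemma \ref{Lem:norm-coord-derivatives}, so compactness of minimizing boundaries yields, after passing to a subsequence, $\tilde\Sigma_i \rightharpoonup \tilde\Sigma$, a Euclidean minimizing boundary in $B_2$. The core step is to show that $\tilde\Sigma$ is a nonflat minimizing hypercone on $B_1$. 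For any $R \in (0, 1]$, we have $R \cdot 2^{-\ell_i} \leq 2^{-L_{0,i}}$, so monotonicity of $\Theta^1_{\Sigma_i}(p_i, \cdot)$ together with the defining property of $\cS(\eta_i, L_{0,i})$ sandwich
\[
    \Theta_{\Sigma_i}(p_i) \leq \Theta^1_{\Sigma_i}(p_i, R \cdot 2^{-\ell_i}) \leq \Theta^1_{\Sigma_i}(p_i, 2^{-L_{0,i}}) \leq \Theta_{\Sigma_i}(p_i) + \eta_i.
\]
Passing to a further subsequence so that $\Theta_{\Sigma_i}(p_i) \to \Theta_\infty$, the display implies $\Theta_{\tilde\Sigma}(\orig, R) \equiv \Theta_\infty$ on $(0, 1]$. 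The equality case of the Euclidean monotonicity formula then identifies $\tilde\Sigma \cap B_1$ with the restriction of a uniquely determined minimizing hypercone $\cC_\infty \subset \RR^{n+1}$. Nonflatness of $\cC_\infty$ follows from Allard's theorem \cite{Allard:first-variation}: the assumption $p_i \in \sing\Sigma_i$ gives a uniform density lower bound $\Theta_{\Sigma_i}(p_i) \geq 1 + \delta_0$, so $\Theta_\infty \geq 1 + \delta_0 > 1$, ruling out a hyperplane.

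To close the contradiction, note that $\Sdist_{\cC_\infty}$ is $1$-Lipschitz and hence bounded on $B_1$, so varifold convergence $\tilde\Sigma_i \rightharpoonup \cC_\infty$ on $B_1$ yields
\[
    \mbfd_{\iota_{p_i}(\cC_\infty)}(\Sigma_i; p_i, 2^{-\ell_i}) = \|\Sdist_{\cC_\infty}\|_{L^2(\tilde\Sigma_i \cap B_1)} \to \|\Sdist_{\cC_\infty}\|_{L^2(\cC_\infty \cap B_1)} = 0,
\]
contradicting our standing counterexample for all large $i$. I do not anticipate a genuine obstacle here: the only thing to check carefully is that the density-drop estimate propagates to all scales $R \cdot 2^{-\ell_i} \leq 2^{-L_{0,i}}$, which is automatic since $R \leq 1$ and $\ell_i \geq L_{0,i}$. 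The argument is a mechanical bootstrapping of the density drop hypothesis into a conical $L^2$-approximation at every deeper dyadic scale, assembling ingredients (smooth metric compactness, Allard's theorem, and the equality case of monotonicity) already used throughout the paper.
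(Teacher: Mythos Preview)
Your proposal is correct and is exactly the elaboration the paper has in mind: the paper's own proof is the single sentence ``It is a straightforward argument by contradiction,'' and your compactness-plus-monotonicity argument is the standard way to unpack that sentence. The only technical wrinkle worth noting explicitly is that passing the limit in $\|\Sdist_{\cC_\infty}\|_{L^2(\tilde\Sigma_i \cap B_1)}$ requires $\|\tilde\Sigma\|(\partial B_1)=0$ (which holds since $\tilde\Sigma$ is an $n$-rectifiable varifold), but this is routine.
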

\begin{proof}
	It is a straightforward argument by contradiction.
\end{proof}

To establish the coarse H\"older continuity necessary for Proposition \ref{prop:covering}, we will iterate Proposition \ref{prop:conical.structure} along sequences of shrinking balls, specifically, with radii shrinking like $2^{-\ell} \theta^i$, $\ell$ large but fixed, and $i = 0, 1, 2, \ldots$

\begin{Lem}[Coarse H\"older continuity via iteration] \label{lemm:conical.structure.iter}
	Fix $\sigma, \theta, \eps$ as in Proposition \ref{prop:conical.structure}, with $\theta = 2^{-m}$, $m \in \ZZ_{\geq 1}$, and $\eta, L$ as in Lemma \ref{lemm:conical.structure}.
	
	Let $\ell \in \ZZ_{\geq 0}$, $\ell \geq L$. Assume that $p^\pm, p_0, \ldots, p_j \in \cS(\eta, \ell)$ satisfy:
	\begin{enumerate}
		\item[(1)] $p_{i+1} \in B_{2^{-\ell} \theta^{i}}(p_i)$ for all $i = 0, 1, \ldots, j-1$, and
		\item[(2)] $p^\pm \in B_{2^{-\ell} \theta^{i}}(p_i)$ for all $i = 0, 1, \ldots, j$.
	\end{enumerate}
	Consider $p^\pm \in \sing \Sigma^\pm$ with $\Sigma^\pm \in \mathscr{F}$. Then:
	\begin{enumerate}
		\item[(i)] We have the scale invariant distance estimate
			\[ d_j := \frac{d_g(\supp \Sigma^-, \reg_{> 2^{-\ell} \theta^j \sigma} \Sigma^+ \cap \partial B_{2^{-\ell} \theta^j}(p_j))}{2^{-\ell} \theta^j} \leq 2. \]
		\item[(ii)] We have\footnote{We have not directly used (i) to estimate $d_j \leq 2$ in (ii), as we will want to use a stronger estimate on $d_j$ when $p^\pm$ are both modeled by cylinders over quadratic hypercones in $B_{2^{-\ell} \theta^j}(p_j)$.}
			\[ |\frT(p^+) - \frT(p^-)| \leq (\operatorname{Lip} \frT) \cdot 2^{-\ell} \cdot d_j \cdot H^j \cdot \theta^{\sum_{i=0}^{j-1} (1 + \alpha(\check \cC_i; \sigma))}, \]
			where $\check \cC_i \subset T_{p_i} M$ is the hypercone at $p_i \in \cS(\eta, \ell)$, scale $2^{-\ell} \theta^i$, given by Lemma \ref{lemm:conical.structure} and then Proposition \ref{prop:conical.structure}, and $H = H(n,\sigma)$ is as in Proposition \ref{prop:conical.structure}.
	\end{enumerate}
\end{Lem}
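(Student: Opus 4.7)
The plan is to handle the two parts in succession: part (i) is an immediate consequence of Proposition \ref{prop:conical.structure}(ii) at the deepest scale, while part (ii) comes from iterating Proposition \ref{prop:conical.structure}(iii) through the nested sequence of balls and converting the resulting scale-invariant distance bound into a $\frT$-bound via the Lipschitz hypothesis.

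For part (i), I would apply Proposition \ref{prop:conical.structure}(ii) at the point $p_j$ at scale $2^{-\ell}\theta^j$; this is legitimate because Lemma \ref{lemm:conical.structure} (with $L_0 = \ell$) makes the hypothesis (1) of Proposition \ref{prop:conical.structure} hold for $\cS(\eta,\ell)$ at every scale $2^{-\ell'}$ with $\ell' \geq \ell$, and here $\ell' = \ell + mj \geq \ell \geq L$. This gives nonemptiness of $\reg_{>2^{-\ell}\theta^j\sigma}\Sigma^+ \cap \partial B_{2^{-\ell}\theta^j}(p_j)$, and picking any point $q$ in it yields, via hypothesis (2) at $i = j$ (which places $p^- \in B_{2^{-\ell}\theta^j}(p_j) \cap \supp\Sigma^-$) and the triangle inequality, $d_g(\supp\Sigma^-, q) \leq 2 \cdot 2^{-\ell}\theta^j$, whence $d_j \leq 2$.

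For part (ii), I would define the rescaled distances
\[ D_i := \frac{d_g(\supp\Sigma^-, \reg_{>2^{-\ell}\theta^i\sigma}\Sigma^+ \cap \partial B_{2^{-\ell}\theta^i}(p_i))}{2^{-\ell}\theta^i}, \quad i = 0, 1, \ldots, j, \]
so that $D_j = d_j$. For each $i \in \{0,\ldots,j-1\}$, I would apply Proposition \ref{prop:conical.structure}(iii) at $p_i$ at scale $2^{-\ell}\theta^i$, taking $p' = p_{i+1}$ and $p^\pm = p^\pm$; the required inclusions $p_{i+1}, p^\pm \in \cS(\eta,\ell) \cap B_{2^{-\ell}\theta^i}(p_i)$ come from hypotheses (1), (2) of the lemma, and the hypercone $\check\cC_i = \check\cC_{p_i,\ell+mi}$ is the one produced by the proposition. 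This yields the one-step recursion $D_i \leq H\cdot\theta^{1+\alpha(\check\cC_i;\sigma)}\cdot D_{i+1}$, and iterating from $i = 0$ down through $i = j-1$ produces the bound
\[ D_0 \leq H^j \cdot \theta^{\sum_{i=0}^{j-1}(1+\alpha(\check\cC_i;\sigma))} \cdot d_j. \]

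To conclude, I would translate the $D_0$ bound into a bound on $|\frT(p^+)-\frT(p^-)|$. The case $\Sigma^+ = \Sigma^-$ is trivial by property (3) of $\frT$, so assume $\Sigma^+ \neq \Sigma^-$. I pick any $q^+$ in the (nonempty) set defining $D_0$; since $q^+ \in \reg\Sigma^+ \subset \supp\Sigma^+$, property (3) gives $\frT(q^+) = \frT(p^+)$. Taking $q^- \in \supp\Sigma^-$ realizing (or within an arbitrarily small overshoot of) the infimum distance, one has $\frT(q^-) = \frT(p^-)$ and $d_g(q^+,q^-) \leq 2^{-\ell} D_0$, so property (4) yields
\[ |\frT(p^+) - \frT(p^-)| = |\frT(q^+) - \frT(q^-)| \leq (\operatorname{Lip}\frT)\cdot 2^{-\ell} D_0, \]
which combined with the iterated estimate above is exactly (ii). The main obstacle is only careful bookkeeping of scales and indices when invoking Proposition \ref{prop:conical.structure} at each level --- there is no new analytic content beyond what is already packaged in that proposition and in Lemma \ref{lemm:conical.structure}.
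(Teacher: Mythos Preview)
Your approach matches the paper's: part (i) via Proposition~\ref{prop:conical.structure}(ii) plus the triangle inequality, and part (ii) via iterating Proposition~\ref{prop:conical.structure}(iii) to bound $D_0$ and then invoking the Lipschitz property of $\frT$. There is one indexing slip in (i): applying Proposition~\ref{prop:conical.structure}(ii) with base point $p_j$ at scale $2^{-\ell}\theta^j$ yields nonemptiness on $\partial B_{2^{-\ell}\theta^{j+1}}(p')$, one scale below what you claim; the paper instead applies it at base point $p_{j-1}$, scale $2^{-\ell}\theta^{j-1}$, with $p' = p_j$ (legitimate by hypotheses (1), (2) at $i = j-1$), which lands on $\partial B_{2^{-\ell}\theta^j}(p_j)$ as desired. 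A second minor point: in the last step of (ii), $q^+$ cannot be ``any'' point of the set --- you need $q^+,q^-$ to jointly (approximately) realize the double infimum defining $D_0$, as the paper does by choosing $\hat p^\pm$ attaining the distance.
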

\begin{proof}
	(i) Apply Proposition \ref{prop:conical.structure} (ii) with
		\[ p \mapsto p_{j-1}, \; p' \mapsto p_j, \; \text{our given } p^\pm, \; \text{and } \ell \mapsto \ell + \log_2 (1/\theta)^{j-1}. \]
		This is possible by Lemma \ref{lemm:conical.structure}, (1) and (2) with $i \mapsto j-1$, and Remark \ref{rema:conical.stratification}. This yields a point
		\[
			p^+_j \in \reg_{> 2^{-\ell} \theta^j \sigma} \Sigma^+ \cap \partial B_{2^{-\ell} \theta^j}(p_j).
		\]
		Then,
		\[ d_j \leq \frac{d_g(p^-, p^+_j)}{2^{-\ell} \theta^j} \leq \frac{d_g(p^-, p_j) + d_g(p_j, p^+_j)}{2^{-\ell} \theta^j} \leq 2 \]
		where we used $p^+_j \in \partial B_{2^{-\ell} \theta^j}(p_j)$, $p^- \in B_{2^{-\ell} \theta^j}(p_j)$ by (2) with $i \mapsto j$, and the triangle inequality.
	
	(ii) An iteration of Proposition \ref{prop:conical.structure} (iii) outward from smaller balls to larger balls, i.e., iteratively with $i'=1, \ldots, j$ and
	\[ p \mapsto p_{j-i'}, \; p' \mapsto p_{j-i'+1}, \; \ell \mapsto \ell + \log_2 (1/\theta)^{j-i'} = \ell + (j-i') m, \]
	which is possible by (1) and (2) with $i \mapsto j-i'$ and Remark \ref{rema:conical.stratification}, yields:
	\begin{align*} 
		\frac{d_g(\supp \Sigma^-, \reg_{> 2^{-\ell} \sigma} \Sigma^+ \cap \partial B_{2^{-\ell}}(p_0))}{2^{-\ell}} 
			& \leq d_j \cdot H^j \cdot \theta^{\sum_{i'=1}^j (1 + \alpha(\check \cC_{j-i'}; \sigma))} \nonumber \\
			& = d_j \cdot H^j \cdot \theta^{\sum_{i=0}^{j-1} (1 + \alpha(\check \cC_{i}; \sigma))}.
	\end{align*}
	
	Choose $\hat p^\pm \in \supp \Sigma^\pm$ attaining $d_g$ in the left hand side above. Rearranging,
	\[
		d_g(\hat p^+, \hat p^-) \leq 2^{-\ell} \cdot d_j \cdot H^j \cdot \theta^{\sum_{i=0}^{j-1} (1 + \alpha(\check \cC_i; \sigma))}.
	\]
	Using in this order the facts that $\frT$ is constant on each $\supp \Sigma^\pm$, as well as Lipschitz, and then the bound on $d_g(\hat p^+, \hat p^-)$, we get the chain of inequalities:
	\begin{align*}
		|\frT(p^+) - \frT(p^-)| 
			& = |\frT(\hat p^+) - \frT(\hat p^-)| \\
			& \leq (\operatorname{Lip} \frT) \cdot d_g(\hat p^+, \hat p^-) \\
			& \leq (\operatorname{Lip} \frT) \cdot 2^{-\ell} \cdot d_j \cdot H^j \cdot \theta^{\sum_{i=0}^{j-1} (1 + \alpha(\check \cC_i; \sigma))}.
\	\end{align*}
	This completes the proof.
\end{proof}

\subsection{Special case: singular points modeled by the same hypercone}

When we invoke Lemma \ref{lemm:conical.structure.iter} on $p^\pm$ modeled by a copy of the same nonflat minimizing hypercone $\cC \subset \RR^{n+1}$, we can replace the estimate $d_j \leq 2$ from Lemma \ref{lemm:conical.structure.iter} (i) to one where $d_j$ is controlled by how close each $\Sigma^\pm$ is to its corresponding copy $\cC^\pm \subset T_{p^\pm}$ of $\cC$ at scale $2^{-\ell} \theta^j$. This can improve the estimate on $|\frT(p^+) - \frT(p^-)|$ in Lemma \ref{lemm:conical.structure.iter} (ii).

Throughout this section we fix some nonflat minimizing hypercone $\cC \subset \RR^{n+1}$. 

\begin{Prop}\label{Prop:frank-transl-rot}
There exists $\sigma=\sigma(\cC) \in (0,\tfrac 1 8)$ with the following property.

If $\cC^\pm \in \Rot(\cC)$ and $\mbfx^\pm \in B_{4\sigma}$, then 
\[
(\reg_{>3\sigma}\cC^- +\mbfx^-) \cap (\reg_{>3\sigma}\cC^+ + \mbfx^+) \cap (B_1 \setminus B_{2\sigma}) \not = \emptyset. 
\]
\end{Prop}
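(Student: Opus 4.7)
The plan is to find a common regular transversal intersection point $q$ of $\cC^+$ and $\cC^-$ on $\partial B_{1/2}$ via a Frankel-type argument, to make this uniform in $(\cC^+,\cC^-) \in \Rot(\cC)^2$ by compactness, and then to transfer to the translated hypersurfaces $\cC^\pm + \mbfx^\pm$ via the implicit function theorem.

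\emph{Step 1 (Frankel).} For $\cC^+ \neq \cC^-$, consider the links $\cL^\pm = \cC^\pm \cap \SSp^n$, compact stationary integral varifolds in the positive-Ricci manifold $\SSp^n$ with singular sets of codimension $\geq 7$. By Frankel's theorem for such varifolds (obtainable from the strong maximum principle across the singular sets; cf.~\cite{Wickramasekera:stable}), we have $\cL^+ \cap \cL^- \neq \emptyset$, and a further application of the maximum principle rules out the case in which every intersection point is tangential (else $\cL^+ = \cL^-$). Thus there is a regular transversal intersection point of $\cC^\pm$ on $\partial B_{1/2}$.

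\emph{Step 2 (uniformity).} Since $\Rot(\cC)$ is compact, a routine contradiction argument yields $\rho_0 = \rho_0(\cC) > 0$ and $\alpha_0 = \alpha_0(\cC) > 0$ such that for any $\cC^+ \neq \cC^-$ in $\Rot(\cC)$ we may pick $q = q(\cC^+,\cC^-) \in \reg_{>\rho_0}\cC^+ \cap \reg_{>\rho_0}\cC^- \cap \partial B_{1/2}$ with $\angle(\nu_{\cC^+}(q), \nu_{\cC^-}(q)) \geq \alpha_0$. Any limiting pair of putative counterexamples subconverges in $\Rot(\cC)^2$, admits a regular transversal intersection point by Step~1, and the point persists to the sequence by continuity of the regularity scale (cf.~Lemma~2.4 of \cite{CMS:generic.9.10}) and smooth convergence of the cones away from their singular sets.

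\emph{Step 3 (translation).} Fix $\sigma = \sigma(\cC) > 0$ with $C(\cC)\sigma < \min(\rho_0/2, 1/8)$, where $C(\cC)$ is the IFT constant determined by $\alpha_0$ and $\rho_0$. When $\cC^+ \neq \cC^-$, applying the implicit function theorem to the pair of smooth defining functions of $\cC^\pm + \mbfx^\pm$ at $q$ yields $p$ in their intersection with $|p-q| \leq C(\cC)\sigma$; then $p \in B_1 \setminus B_{2\sigma}$, and $p - \mbfx^\pm$ lies within $C(\cC)\sigma \ll \rho_0$ of $q \in \reg_{>\rho_0}\cC^\pm$, so $p - \mbfx^\pm \in \reg_{>3\sigma}\cC^\pm$. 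The degenerate subcase $\cC^+ = \cC^-$ is handled by exploiting the non-flatness of $\cC$: the Gauss image $\nu_\cC(\reg \cL)$ is not contained in any half-space of $\SSp^n$, so one may pick $q \in \reg_{>\rho_0}\cC \cap \partial B_{1/2}$ with $\nu_\cC(q) \perp (\mbfx^+ - \mbfx^-)$ and with the tangential component of $\mbfx^+ - \mbfx^-$ not annihilated by $A_\cC(q)$, and then the implicit function theorem applied to $\mbfy \mapsto \Sdist_\cC(\mbfy + \mbfx^+ - \mbfx^-)$ on $\cC$ near $q$ produces the required intersection point.

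\emph{Main obstacle.} The most delicate step is Step~1, which extends Frankel's theorem across the codimension-$7$ singular sets of the links by replacing the classical smooth statement with a strong maximum principle for stationary integer-rectifiable varifolds. The rest is a compactness argument plus a standard implicit function theorem; the subcase $\cC^+ = \cC^-$ requires its own first-order analysis that ultimately rests only on the non-flatness of $\cC$.
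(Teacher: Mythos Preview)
Your Step~2 contains a genuine gap. The ``routine contradiction argument'' for uniform transversality constants $(\rho_0,\alpha_0)$ fails precisely when the limiting pair satisfies $\cC^+_\infty = \cC^-_\infty$: in that case Step~1 does not apply, since Frankel plus the maximum principle only produce a transversal intersection for \emph{distinct} links. And this limit is unavoidable, because as $\cC^+ \to \cC^-$ in $\Rot(\cC)$ the two hypersurfaces become graphical over one another and any intersection angle necessarily tends to zero. So uniform $\alpha_0 > 0$ over all $\cC^+ \neq \cC^-$ simply does not exist, and your Step~3 implicit-function-theorem constant $C(\cC)$ (which you want to depend only on $\alpha_0, \rho_0$) blows up along the diagonal.

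Your Step~3 degenerate subcase treats only $\cC^+ = \cC^-$ exactly; it does not cover $\cC^+$ close to but distinct from $\cC^-$ together with the translations $\mbfx^\pm$, so it does not patch the gap. (That subcase also has its own imprecisions: the Gauss-image claim is false as stated whenever $\cC$ has nontrivial spine, since $\nu_\cC \perp \spine\cC$ everywhere; and the $A_\cC$ non-degeneracy condition need not be uniformly achievable without a separate argument.)

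The paper's proof is exactly the correct treatment of this near-diagonal regime and, in effect, \emph{is} the missing piece of your argument. One takes a sequence of counterexamples, rotates and translates so that $\cC^- = \cC$, writes $\cC^+ + \mbfx^+ - \mbfx^- = e^{\mbfA_j}\cC + \mbfx_j$ with $(\mbfA_j, \mbfx_j) \to 0$, mods out by the symmetries of $\cC$ so that the normalized parameters $\lambda_j^{-1}(\mbfA'_j,\mbfx'_j)$ have a nontrivial limit, and then passes to the limiting Jacobi field $u = \langle \mbfB \cdot + \mbfy, \nu_\cC\rangle \in \Jac_{\textnormal{trl}}(\cC)\oplus\Jac_{\textnormal{rot}}(\cC)$. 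The empty-intersection hypothesis forces $u \geq 0$, hence $u > 0$ by Harnack; but $u$ is bounded near the origin (being degree $\leq 1$ homogeneous), contradicting Simon's decay theorem for positive Jacobi fields on nonflat minimizing hypercones \cite{Simon:decay}. Note that your own Step~3 implicitly invokes this same analytic fact (the ``Gauss image not in a half-space'' statement is equivalent to the nonexistence of a bounded positive translation Jacobi field), so the Frankel/IFT route does not actually bypass it.
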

\begin{proof}
Rotating and translating a sequence of counterexamples, there is $\cC_j \in \Rot(\cC)$ and $|\mbfx_j| < 8j^{-1}$ so that 
\begin{equation} \label{eq:frankel-trl-rot-contr-0}
(\reg_{>3j^{-1}} \cC) \cap (\reg_{>3j^{-1}} \cC_j + \mbfx_j) \cap (B_{1/2} \setminus B_{6j^{-1}}) = \emptyset. 
\end{equation}
Letting $j\to\infty$, the strong maximum principle gives that $\cC_j$ limits to $\cC$. In particular,
\[ \cC_j = e^{\mbfA_j} \cC \]
for some $\mbfA_j \in \mathfrak{so}(n+1)$ with $\mbfA_j \to \mbfO$ as $j \to \infty$. Let's then rewrite \eqref{eq:frankel-trl-rot-contr-0} as:
\begin{equation} \label{eq:frankel-trl-rot-contr}
(\reg_{>3j^{-1}} \cC) \cap (e^{\mbfA_j} \reg_{>3j^{-1}} \cC + \mbfx_j) \cap (B_{1/2} \setminus B_{6j^{-1}}) = \emptyset. 
\end{equation}
(In the middle term, the translation by $\mbfx_j$ happens after the rotation by $e^{\mbfA_j}$.) By suitably modding out by symmetries, we arrange the following:

\begin{Cla} \label{clai:frankel-trl-rot}
    There exist $\mbfx'_j \in \RR^{n+1}$, $\mbfA'_j \in \mathfrak{so}(n+1)$ such that:
    \begin{equation} \label{eq:frankel-trl-rot-yj-Bj}
	(\reg_{>3j^{-1}}\cC) \cap (e^{\mbfA'_j} \reg_{>3j^{-1}} \cC + \mbfx'_j) \cap (B_{1/4} \setminus B_{15j^{-1}}) = \emptyset,
    \end{equation}
    \begin{equation} \label{eq:frankel-trl-rot-lambda}
	0 < |\mbfx'_j| + |\mbfA'_j| =: \lambda_j \to 0,
    \end{equation}
\end{Cla}
\begin{proof}[Proof of claim]
    We decompose
\[ \mbfx_j =: \mbfx_j^\parallel + \mbfx_j^\perp \in \spine \cC \oplus (\spine \cC)^\perp. \]
Moreover, denote
\[ \mathfrak{s} := \{ \mbfB \in \mathfrak{so}(n+1) : \langle \mbfB p, \nu_\cC(p) \rangle = 0 \text{ for all } p \in \cC \}, \]
and let $\mathfrak{s}^\perp$ be any complementary subspace in $\mathfrak{so}(n+1)$, e.g., one obtained by choosing an inner product and taking orthogonal complements. Then, decompose
\[ \mbfA_j =: \mbfA_j^\parallel + \mbfA_j^\perp \in \mathfrak{s} \oplus \mathfrak{s}^\perp. \]

Choose
\[ \mbfx_j' := e^{-\mbfA_j^\parallel} \mbfx_j^\perp, \]
\[ \mbfA_j' := \mbfA_j^\perp + \mbfB_j, \]
where $\mbfB_j$ is chosen so that
\[ e^{\mbfA_j^\perp + \mbfB_j} = e^{-\mbfA_j^\parallel} e^{\mbfA_j^\parallel + \mbfA_j^\perp} (= e^{-\mbfA_j^\parallel} e^{\mbfA_j}). \]
Note that this can be arranged by Lemma \ref{Lem:exp-addition-vs-mult-matrices}, which additionally guarantees that
\begin{equation} \label{eq:frankel-trl-rot-bj}
	|\mbfB_j| \leq C |\mbfA_j^\parallel| |\mbfA_j^\perp|.
\end{equation}

We may now obtain \eqref{eq:frankel-trl-rot-yj-Bj} from \eqref{eq:frankel-trl-rot-contr}: translate by $-\mbfx_j^\parallel$, rotate by $e^{-\mbfA_j^\parallel}$ (these actions preserve the $\reg_{>3j^{-1}} \cC$ on the left), and use $|\mbfx_j| < 8j^{-1}$.

We fix these $\mbfx_j'$, $\mbfA_j'$ for the remainder of the proof, and set $\lambda_j := |\mbfx_j'| + |\mbfA_j'|$. Then, the decay assertion in \eqref{eq:frankel-trl-rot-lambda} immediately follows from $|\mbfx_j| \to 0$, $|\mbfA_j| \to 0$, and \eqref{eq:frankel-trl-rot-bj}. To see $\lambda_j > 0$, note that if $\lambda_j = 0$, then $\mbfx_j' = \orig$ and $\mbfA_j' = \mbfO$, so \eqref{eq:frankel-trl-rot-yj-Bj} cannot hold. This completes the proof of \eqref{eq:frankel-trl-rot-lambda}, and thus the claim.
\end{proof}

We also have:

\begin{Cla} \label{clai:frankel-trl-rot-jf}
    Assume the notation of Claim \ref{clai:frankel-trl-rot}. Writing 
    \[
	e^{\mbfA'_j} \cC + \mbfx'_j \supset \graph_\cC h_j,
    \]
    for some $h_j$ over a connected exhaustion of $\cC \cap B_{1/4}$, then subsequentially
    \begin{equation} \label{eq:frankel-trl-rot-linearized}
	\lambda_j^{-1} h_j \to u \in \Jac_\textnormal{trl}(\cC) \oplus \Jac_\textnormal{rot}(\cC) \text{ in } C^\infty_{\textnormal{loc}}(\cC \cap B_{1/4}),
    \end{equation}
    and after perhaps swapping unit normals:
    \begin{equation} \label{eq:frankel-trl-rot-linearized-2}
        u > 0.
    \end{equation}
\end{Cla}
\begin{proof}[Proof of claim]
    Passing to a subsequence, denote
    \begin{equation} \label{eq:frankel-trl-rot-yB}
	\lambda_i^{-1} (\mbfx_j', \mbfA_j') \to (\mbfy, \mbfB) \in ((\spine \cC)^\perp \oplus \mathfrak{s}^\perp) \setminus \{ (\orig, \mbfO) \}.
    \end{equation}
    The fact that $\mbfB \in \mathfrak{s}^\perp$ follows from $\mbfA_j^\perp \in \mathfrak{s}^\perp$ and \eqref{eq:frankel-trl-rot-bj}. 

    We relate $h_j$ to $\mbfy$, $\mbfB$. Fixing some $p \in \cC$ (to be determined), we can write $\cC$ locally as a graph over $T_p \cC$ of some function $h$ with $h(\orig) = |\nabla h(\orig)| = 0$ and $|D^2 h| \leq C r_\cC(p)^{-1}$ on $\cC \cap B_{cr_{\cC}(p)} \subset T_p \cC$. In particular $\|\nabla h\|_{C^0(\cC \cap B_t)} \leq C r_{\cC}(p)^{-1} |t|$ for $t \in (0,cr_\cC(p))$. For $j$ sufficiently large (depending on $p$), Lemma \ref{Lem:affine-acting-graph} thus applies with $p$, $\mbfx_j'$, $\mbfA_j'$ in place of $\mbfz$, $\mbfx$, $\mbfA$, to show that
\begin{equation} \label{eq:frankel-trl-rot-hj-est}
	|h_j(p) - \langle \mbfA_j' p + \mbfx_j', \nu_\cC(p) \rangle| \leq C(p) (|\mbfx_j'| + |\mbfA_j'|)^2 = C(p) \lambda_j^2
\end{equation}
Moreover, by \eqref{eq:frankel-trl-rot-yB},
\begin{equation} \label{eq:frankel-trl-rot-yjBj-est}
	|\lambda_j^{-1} \langle \mbfA_j' p + \mbfx_j', \nu_\cC(p) \rangle - \langle \mbfB p + \mbfy, \nu_\cC(p) \rangle| \leq \epsilon_{p,i} \to 0
\end{equation}
as $i \to \infty$. It follows from \eqref{eq:frankel-trl-rot-hj-est}, \eqref{eq:frankel-trl-rot-yjBj-est}, that
\begin{equation}\label{eq:frankel-trl-rot-limit-p}
\lambda_j^{-1}h_j(p) \to \langle \mbfB p + \mbfy,\nu_\cC(p)\rangle \text{ pointwise for } p \in \cC \cap B_{1/4}.
\end{equation}
Note that the right hand side above is an element of $\Jac_\textnormal{trl}(\cC) \oplus \Jac_\textnormal{rot}(\cC)$ (see Definitions \ref{defi:translation.jf}, \ref{defi:rotation.jf}). This implies \eqref{eq:frankel-trl-rot-linearized}.

Moreover, \eqref{eq:frankel-trl-rot-contr} and \eqref{eq:frankel-trl-rot-yj-Bj} imply that $h_j \geq 0$, perhaps after swapping unit normals. Thus, \eqref{eq:frankel-trl-rot-linearized-2} will follow from \eqref{eq:frankel-trl-rot-limit-p}, the Harnack inequality, and standard elliptic theory if we can show that the right hand side of \eqref{eq:frankel-trl-rot-limit-p} is not identically $0$.

Suppose not. Then, 
\[ p \mapsto \langle \mbfB p + \mbfy, \nu_\cC(p) \rangle \]
is a linear combination of a translation and a rotation Jacobi field that vanishes on $\cC \cap B_{1/4}$, and thus on $\cC$ by unique continuation. Thus, both the translation and the rotation parts vanish:
\[ p \mapsto \langle \mbfB p, \nu_\cC(p) \rangle \equiv \langle \mbfy, \nu_\cC(p) \rangle \equiv 0, \]
since one grows linearly and the other does not. But recall that $\langle \mbfy, \nu_\cC(p) \rangle \equiv 0$ implies $\mbfy \in \spine \cC$, and $\langle \mbfB p, \nu_\cC(p) \rangle \equiv 0$ implies $\mbfB \in \mathfrak{s}$. This is a contradiction to \eqref{eq:frankel-trl-rot-yB}. This completes the proof that there exists $p \in \cC \cap B_{1/4}$ so that the right hand side of \eqref{eq:frankel-trl-rot-limit-p} is nonvanishing. This completes the proof of \eqref{eq:frankel-trl-rot-linearized-2} and thus the claim.
\end{proof}

The $u$ constructed in Claim \ref{clai:frankel-trl-rot-jf} leads to a contradiction. Indeed, $u$ is bounded near $\orig$, being the sum of translation and a rotation Jacobi fields by  \eqref{eq:frankel-trl-rot-linearized}, and is positive by  \eqref{eq:frankel-trl-rot-linearized-2}. This violates a well-known result of L. Simon \cite{Simon:decay} (cf.\ \cite[Theorem 1.4]{Wang:smoothing}) about positive Jacobi fields. This contradiction completes the proof.
\end{proof}

\begin{Prop}\label{prop:close-to-cone-close-to-eachother}
There exists $\sigma_0=\sigma_0(\cC) > 0$ such that for all $\sigma \in (0, \sigma_0)$ we can find $\tau_0 > 0$, $\eps_0 > 0$, and $C_0 > 0$ depending on $\cC, \sigma$ and have the following properties. 

Consider $\tau \in (0,\tau_0)$, $p \in \breve M$, minimizing $\Sigma^\pm$, $p^\pm \in B_{\sigma \tau}(p) \cap \supp \Sigma^\pm$, and copies $\cC^\pm = \iota_{p^\pm}(\cC) \subset T_{p^\pm} M$ such that:
\begin{enumerate}
	\item[(1)] $\mbfd_{\cC^\pm}(\Sigma^\pm;p^\pm,\tau) < \eps_0$, and
	\item[(2)] $\Sigma^\pm$ do not cross smoothly.
\end{enumerate}
Then, we have the scale-invariant distance estimate:
\[
	\frac{d_g(\supp \Sigma^-, \reg_{>\tau \sigma} \Sigma^+ \cap \partial B_{\tau}(p))}{\tau} \leq C_0 \cdot (\mbfd_{\cC^+}(\Sigma^+;p^+,\tau) + \mbfd_{\cC^-}(\Sigma^-;p^-,\tau) + \tau^2).
\]
\end{Prop}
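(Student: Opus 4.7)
The plan is to combine Proposition \ref{Prop:frank-transl-rot} with Allard's regularity and the non-crossing hypothesis on $\Sigma^\pm$, upgrading pointwise agreement of the two cones at a common regular point into a scale-invariant separation bound on $\partial B_\tau(p)$.

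First I would rescale at scale $\tau$ by pulling everything back through $(\eta_{p,\tau}\circ \iota_p)^{-1}$ for a fixed linear isometry $\iota_p:\RR^{n+1}\to (T_pM,g_p)$. Lemmas \ref{Lem:norm-coord-derivatives} and \ref{Lem:change-bspt} record only $O(\tau^2)$ perturbations from flat Euclidean geometry, so in these coordinates the rescaled hypersurfaces $\tilde\Sigma^\pm\subset \RR^{n+1}$ are minimizing for a metric $g_{p,\tau}$ that is a $C^3$-small perturbation of $g_\eucl$, the $\cC^\pm$ become translated rotations $\tilde\cC^\pm+\mbfx^\pm$ with $\tilde\cC^\pm\in \Rot(\cC)$ and $\mbfx^\pm\in B_\sigma$, and the rescaled $L^2$-distances from $\tilde\Sigma^\pm$ to $\tilde\cC^\pm+\mbfx^\pm$ on $B_1$ are bounded by $\eps_\pm+C\tau^2$, where $\eps_\pm:=\mbfd_{\cC^\pm}(\Sigma^\pm;p^\pm,\tau)$. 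Setting $\sigma_0(\cC):=\sigma(\cC)/10$ with $\sigma(\cC)$ as in Proposition \ref{Prop:frank-transl-rot}, that proposition furnishes a common regular point
\[
q\in (\reg_{>3\sigma}\tilde\cC^+ + \mbfx^+)\cap(\reg_{>3\sigma}\tilde\cC^- + \mbfx^-)\cap (B_1\setminus B_{2\sigma}).
\]
Choosing $\eps_0,\tau_0$ sufficiently small, Allard's theorem together with standard interior Schauder theory for the minimal surface equation (as in the setup of Proposition \ref{Prop:nonconc-Jac}) expresses $\tilde\Sigma^\pm$ near $q$ as $C^{2,\alpha}$ graphs $u^\pm$ over the regular parts of $\tilde\cC^\pm+\mbfx^\pm$, with $\|u^\pm\|_{C^{2,\alpha}}\leq C(\eps_\pm+\tau^2)$ on a fixed neighborhood of $q$ bounded away from $\spine(\tilde\cC^\pm+\mbfx^\pm)$.

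Next, I would use the scale invariance of $\tilde\cC^+$ about its vertex $\mbfx^+$ to extend the ray from $\mbfx^+$ through $q$ outward until it meets $\partial B_1$ at a point $q^+_\cC\in \reg_{>3\sigma}\tilde\cC^+ + \mbfx^+$. The graph estimate above, together with a Harnack-type argument along a tubular neighborhood of this ray, produces $q^+\in \tilde\Sigma^+\cap\partial B_1$ with $|q^+-q^+_\cC|\leq C(\eps_++\tau^2)$, and $q^+\in \reg_{>\sigma}\tilde\Sigma^+$ once $\eps_0$ is small enough. The same graph estimate applied near $q$ yields a point $q^-\in \supp\tilde\Sigma^-$ with $|q^--q|\leq C(\eps_-+\tau^2)$.

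The hard part will be bounding $|q^+-q^-|$ by $C(\eps_++\eps_-+\tau^2)$, since a priori $q^+_\cC$ lies on $\tilde\cC^++\mbfx^+$ but not on $\tilde\cC^-+\mbfx^-$. The key observation is that the non-crossing hypothesis on $\tilde\Sigma^\pm$ forces the two cones to be $C^0$-close throughout $B_1\setminus B_{2\sigma}$. Near $q$, writing both cones as graphs $f^\pm$ over the tangent plane $T_q\tilde\cC^+$ (both vanishing at $q$) and $\tilde\Sigma^\pm$ as graphs $f^\pm+u^\pm$, the one-signed condition from non-crossing forces $(f^+-f^-)+(u^+-u^-)$ to have a fixed sign in a neighborhood of $q$; combined with $|u^\pm|\leq C(\eps_\pm+\tau^2)$, this pins down $f^+-f^-$ up to an error $C(\eps_++\eps_-+\tau^2)$ on the side of the intersection locus $\{f^+=f^-\}$ where the comparison is binding. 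Decomposing the rotation relating $\tilde\cC^+$ and $\tilde\cC^-$ into spine-preserving and spine-tilting components (as in Claim \ref{clai:frankel-trl-rot} and via Lemma \ref{Lem:exp-addition-vs-mult-matrices}), using the scale invariance of the cones about $\mbfx^\pm$, and invoking L.\ Simon's positive Jacobi-field theorem (as in the closing paragraph of the proof of Proposition \ref{Prop:frank-transl-rot}), this local estimate promotes to a global Hausdorff bound of order $C(\eps_++\eps_-+\tau^2)$ for $\tilde\cC^++\mbfx^+$ and $\tilde\cC^-+\mbfx^-$ on $B_1\setminus B_{2\sigma}$. Combined with the graph estimate for $\tilde\Sigma^-$ over $\tilde\cC^-+\mbfx^-$, this gives $|q^+-q^-|\leq C_0(\eps_++\eps_-+\tau^2)$ at unit scale, and rescaling by $\tau$ yields the proposition.
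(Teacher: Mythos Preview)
Your setup through the common regular point $q$ is correct and matches the paper. The gap is in your ``hard part'': extending along the ray of $\tilde\cC^++\mbfx^+$ out to $\partial B_1$ forces you to compare the two cones at $q^+_\cC$, where they need not agree, and you then try to prove the cones are globally Hausdorff-$C(\eps_++\eps_-+\tau^2)$ close on $B_1\setminus B_{2\sigma}$. That argument is not carried through. The non-crossing condition on $\tilde\Sigma^\pm$ only gives one-sided information about $(f^+-f^-)+(u^+-u^-)$, which controls $f^+-f^-$ from one side but not from both; where the cones diverge from each other this is no constraint. Your appeal to the rotation decomposition of Claim \ref{clai:frankel-trl-rot} and Simon's positive Jacobi-field theorem is the contradiction mechanism inside the proof of Proposition \ref{Prop:frank-transl-rot}, not a quantitative device that yields the Hausdorff bound you assert.

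The paper avoids this difficulty entirely by staying at $q$. Since $q$ lies on \emph{both} cones, the graph estimates give $d(z,\reg_{>\tau\sigma}\Sigma^\pm)\leq C\tau(\eps_\pm+\tau^2)$ for the corresponding point $z\in M$, and the triangle inequality immediately yields
\[
\frac{d_g\big(\reg_{>\tau\sigma}\Sigma^-,\ \reg_{>\tau\sigma}\Sigma^+\cap (B_{2\tau}(p)\setminus B_{\frac12\sigma\tau}(p))\big)}{\tau}\leq C(\eps_++\eps_-+\tau^2).
\]
The passage from this annular estimate to $\partial B_\tau(p)$ is then a Harnack step (Claim \ref{claim:harnack-to-bdry-sphere} in the paper): if it failed along a sequence with $\eps_0,\tau_0\to 0$, then after rescaling both $\tilde\Sigma_j^\pm$ converge to the same cone $\cC$ by Lemma \ref{lemm:one.sided.cone}, and the normalized height difference limits to a positive Jacobi field on $\cC$ violating the standard Harnack inequality on the connected regular part. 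No comparison of $\tilde\cC^+$ with $\tilde\cC^-$ away from $q$ is needed.
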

\begin{proof}
We always assume $\sigma, \tau < \tfrac12$ below.

Let $\iota_{p^\pm \to p} : (T_{p^\pm}M, g_{p^\pm}) \to (T_{p}M, g_{p})$ be the parallel transport map along the minimizing geodesic from $p^\pm$ to $p$. This is well-defined since $p \in \breve M$. Then
\[
\tilde \cC^\pm := \iota_{p^\pm\to p}(\cC^\pm) \subset T_{p}M,
\]
are copies of $\cC$ in $T_{p} M$. Recalling \eqref{eq:defn-eta-tau} and \eqref{eq:defn-g-tau} we can write
\[
p^{\pm} := \eta_{p,\frac \tau2}(\mbfv^{\pm}), \; \mbfv^\pm \in B_{2\sigma} \subset T_{p}M.
\]
Invoke Proposition \ref{Prop:frank-transl-rot} to find $\sigma := \sigma(\cC)<\frac 18$ and $\mbfz \in T_{p}M$ so that
\[
\mbfz \in (\reg_{>3\sigma} \tilde \cC^- + \mbfv^-) \cap (\reg_{>3\sigma} \tilde\cC^+ +\mbfv^+) \cap (B_{1}\setminus B_{2\sigma}) \subset T_p M.
\]
Let $z=\eta_{p,\frac\tau2}(\mbfz) \in B_{\frac \tau 2}(p)\setminus B_{\sigma\tau}(p)$. Taking $\tau_0$ small, Lemma \ref{Lem:change-bspt} gives
\[
|\eta_{p^\pm,\frac \tau2}^{-1}(z) - \iota_{p\to p^\pm}(\mbfz - \mbfv^\pm)| \leq C\tau^2
\]
where $\iota_{p\to p^{\pm}} = (\iota_{p^\pm \to p})^{-1}$ is the parallel transport map. As such, we can find $\mbfw^\pm \in \reg_{>3\sigma}\cC^\pm$ with
\[
d_{g_{p^\pm,\frac \tau2}}(\eta_{p^\pm,\frac\tau2}^{-1}(z),\mbfw^\pm) \leq C\tau^2.
\]
Taking $\tau_0$ even smaller, we have  
\[
d_g(p^\pm,\eta_{p^\pm,\frac\tau 2}(\mbfw^\pm)) \leq d_g(p^\pm,p)+ d_g(p,z) + d_g(z,\eta_{p^\pm,\frac\tau 2}(\mbfw^\pm)) \leq  \sigma\tau + \frac 12 \tau + C\tau^3 \leq \frac {3\tau}{4}.
\]
As such, if we take $\eps_0,\tau_0$ even smaller, elliptic estimates (for the graphical function of $(\Sigma^\pm)_{p^\pm,\frac\tau2}$ over $\cC^\pm$ near $\mbfw^\pm$) and the triangle inequality give
\[
d_{g_{p^{\pm},\frac\tau2}}(\eta_{p^\pm,\frac\tau2}^{-1}(z), \reg_{>2\sigma} (\Sigma^\pm)_{p^\pm,\frac\tau2}) \leq C(\mbfd_{\cC^\pm}(\Sigma^\pm;p^\pm,\tau) + \tau^2). 
\]
Thus
\[
\frac{d_{g}(z, \reg_{>\tau \sigma} \Sigma^\pm)}{\tau} \leq C(\mbfd_{\cC^\pm}(\Sigma^\pm;p^\pm,\tau) + \tau^2). 
\]
Taking $\eps_0,\tau_0$ even smaller, $z \in B_\tau(p) \setminus B_{\sigma\tau}(p)$ implies that 
\[
\frac{d_{g}(z, \reg_{>\tau \sigma} \Sigma^\pm \cap (B_{2\tau}(p) \setminus B_{\frac 12\sigma\tau}(p)))}{\tau} \leq C(\mbfd_{\cC^\pm}(\Sigma^\pm;p^\pm,\tau) + \tau^2). 
\]
Using the triangle inequality we thus conclude that
\begin{align*}
	& \frac{d_g(\reg_{>\tau \sigma} \Sigma^-, \reg_{>\tau \sigma} \Sigma^+ \cap (B_{2\tau}(p)\setminus B_{\frac12 \sigma \tau}(p)))}{\tau} \\
	& \qquad \leq C(\mbfd_{\cC^+}(\Sigma^+;p^+,\tau) + \mbfd_{\cC^-}(\Sigma^-;p^-,\tau) + \tau^2). 
\end{align*}
Taking $\eps_0$, $\tau_0$ smaller if necessary, the assertion follows from the Harnack inequality as encoded by Claim \ref{claim:harnack-to-bdry-sphere} below.
\end{proof}

\begin{Cla}\label{claim:harnack-to-bdry-sphere}
For all sufficiently small $\tau$, $\eps > 0$ and all sufficiently large $C>0$ depending on $\cC, \sigma$, we have
\begin{align*}
	& d_g(\supp \Sigma^-, \reg_{>\tau\sigma} \Sigma^+ \cap \partial B_{\tau}(p)) \\
	& \qquad \leq C \cdot d_g(\reg_{>\tau \sigma}\Sigma^-, \reg_{>\tau \sigma} \Sigma^+ \cap (B_{2\tau}(p)\setminus B_{\frac12 \tau\sigma}(p)))
\end{align*}
for all $p,p^\pm,\Sigma^\pm,\cC^\pm$ as in the statement of Proposition \ref{prop:close-to-cone-close-to-eachother}.
\end{Cla}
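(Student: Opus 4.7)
The plan is to reduce the statement to a Harnack inequality for the graphical difference $v=u^+-u^-$ of $\Sigma^+$ and $\Sigma^-$ viewed as graphs over a common reference copy of $\cC$ in $T_pM$. First I would rescale via $\eta_{p,\tau}$ to work at unit scale, writing $\tilde\Sigma^\pm:=(\Sigma^\pm)_{p,\tau}$. By hypothesis (1) and Allard's theorem, $\tilde\Sigma^\pm$ converge smoothly to copies of $\cC$ on compact subsets of $\reg\cC$ as $\eps_0,\tau_0\to 0$. Using Proposition~\ref{Prop:frank-transl-rot} together with Lemma~\ref{Lem:change-bspt} (to reconcile the two basepoints $p^\pm$), I would pick a common reference hypercone $\tilde\cC\subset T_pM$ in $\Rot(\cC)$ such that, for $\eps_0,\tau_0$ sufficiently small, both $\tilde\Sigma^+$ and $\tilde\Sigma^-$ can be expressed as smooth graphs of functions $u^+,u^-$ over an open exhaustion $\Omega$ of $\reg_{>\sigma/4}\tilde\cC\cap(B_3\setminus B_{\sigma/4})$. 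Setting $v:=u^+-u^-$ on $\Omega$, this $v$ solves a uniformly elliptic second-order linear equation (the linearized difference of the two minimal surface operators in the metric $g_{p,\tau}$, with coefficient error $O(\tau^2)$); and since $\Sigma^\pm$ do not cross smoothly and $\reg\tilde\cC$ is connected, I may orient the normals so that $v\geq 0$ throughout $\Omega$.

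Next, let $q^\pm$ realize the distance on the right-hand side of the claim, set $d:=d_g(q^-,q^+)$, and write $\tilde q^+:=\eta_{p,\tau}^{-1}(q^+)\in\tilde\Sigma^+$. Let $\tilde y^+\in\reg_{>\sigma/4}\tilde\cC$ be its nearest-point projection onto $\tilde\cC$. The graph structure yields $v(\tilde y^+)\leq C(\cC)\,d/\tau$ directly. Applying Proposition~\ref{Prop:frank-transl-rot} again with $\cC^\pm=\cC$ and $\mbfx^\pm=\orig$, followed by a small radial adjustment, produces a point $\tilde x_0\in\reg_{>2\sigma}\tilde\cC\cap\partial B_1(\orig)$. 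I would then connect $\tilde y^+$ to $\tilde x_0$ by a path in $\reg_{>\sigma/2}\tilde\cC\cap(B_{9/4}\setminus B_{\sigma/3})$ and invoke the classical Harnack inequality along a finite chain of balls to conclude
\[
v(\tilde x_0)\leq C_H(\cC,\sigma)\,v(\tilde y^+)\leq C(\cC,\sigma)\,d/\tau.
\]

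Finally, the graph of $u^+$ over $\tilde x_0$ provides a point $\tilde x_0^+\in\reg_{>\sigma}\tilde\Sigma^+$ (for $\eps_0,\tau_0$ small) at distance $\leq C(\cC)\,v(\tilde x_0)\leq C(\cC,\sigma)\,d/\tau$ from $\tilde\Sigma^-$. A tangential correction of size $O(v(\tilde x_0))$ places this point exactly on $\partial B_1(\orig)$ while preserving membership in $\reg_{>\sigma}\tilde\Sigma^+$, after enlarging $C$ accordingly. Rescaling by $\tau$ and noting $\reg_{>\sigma}\tilde\Sigma^+$ corresponds to $\reg_{>\tau\sigma}\Sigma^+$ yields the claim. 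The main obstacle I anticipate is the bookkeeping needed to ensure the final corrected point lies in $\reg_{>\tau\sigma}\Sigma^+\cap\partial B_\tau(p)$ with the prescribed $\tau\sigma$-regularity threshold rather than something slightly smaller; this forces a careful choice of the reference cone, the Harnack chain, and the margins in Proposition~\ref{Prop:frank-transl-rot} so that the tangential adjustment has room to live, but it is routine given the $C^2$-closeness from Allard's theorem and the continuity of the regularity scale recalled in Section~\ref{sec:nonlinear}.
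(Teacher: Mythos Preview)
Your argument is correct but takes a more hands-on route than the paper. The paper argues by contradiction: a failing sequence with $\tau_j\to 0$ and $C=j$ is rescaled, Lemmas~\ref{lemm:one.sided.cone} and~\ref{Lem:change-bspt} force both $(\Sigma_j^\pm)_{p_j,\tau_j}$ to converge to the \emph{same} copy of $\cC$, and the normalized graphical difference limits to a positive Jacobi field $u$ on $\cC$ with $\inf_{\reg_{>2\sigma}\cC\cap\partial B_1}u\geq 1$ but $\inf_{\reg_{>\sigma/2}\cC\cap(B_2\setminus B_{\sigma/2})}u=0$, contradicting Harnack on the connected regular part. You instead stay at finite $\eps,\tau$ and run a Harnack chain directly on $v=u^+-u^-$, which satisfies a variable-coefficient elliptic equation. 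The core mechanism is identical; the paper's compactness packaging is simply shorter and absorbs precisely the endpoint bookkeeping (landing on $\partial B_\tau(p)$ with the exact regularity threshold $\tau\sigma$) that you correctly flag as the main nuisance. Two small corrections: your first appeal to Proposition~\ref{Prop:frank-transl-rot} to obtain a common reference cone is misdirected, since that proposition yields an intersection point of two cones, not a common graphical model; what you actually need is Lemma~\ref{lemm:one.sided.cone} plus the compactness already implicit in your phrase ``for $\eps_0,\tau_0$ sufficiently small.'' Your second appeal, to produce $\tilde x_0\in\reg_{>2\sigma}\tilde\cC\cap\partial B_1$, reduces to Lemma~\ref{lemm:jacobi.decay.cone}(i).
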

\begin{proof}
    If this fails, we can find $\Sigma^\pm_j,\cC^\pm_j,p^\pm_j,p_j$, and $\tau_j\to 0$, so that $\mbfd_{\cC^\pm_j}(\Sigma^\pm_j;p^\pm_j,\tau_j)\to 0$ but the assertion fails with $C = j$. By Lemmas \ref{lemm:one.sided.cone} and \ref{Lem:change-bspt} we find that $\iota_{p_j}^{-1} \; (\Sigma_j^\pm)_{p_j,\tau_j}$ both converge to $\cC$. Normalizing the difference of the graphs appropriately, we can use the Harnack inequality to find a positive Jacobi field $u$ on $\cC$ with 
    \[
    \inf_{\reg_{>2 \sigma} \cC \cap \partial B_1} u \geq 1,
    \]
    but
    \[
    \inf_{\reg_{>\frac12 \sigma} \cC \cap (B_2\setminus B_{\frac12 \sigma})}u = 0.
    \]
    This contradicts the Harnack inequality due to the connectedness of $\cC$.
\end{proof}

\subsection{Main dichotomy}

Recall the space $\mathscr{C}_n^{\textnormal{qd-cyl}}$ of minimizing hypercones that are cylindrical over a quadratic hypercone from \eqref{eq:mathscr.n.qd-cyl}.

For the reasons outlined in the introduction, we will have to have different arguments to treat points modeled by $\cC \in \mathscr{C}_n^{\textnormal{qd-cyl}}$. Moreover, among such points, our arguments will further depend on the decay order of the approximating Jacobi field. Let us denote, for fixed $k \in \{ 0, 1, \ldots, n-7 \}$,
\begin{equation} \label{eq:delta.hf.gap}
	\Delta^{\textnormal{qd}}_{n-k} := \min \{ \Delta^{>1}_{\cC} : \cC \in \mathscr{C}_n^{\textnormal{qd-cyl}}, \; \dim \spine \cC \geq k \} \in (0, 1],
\end{equation}
where $\Delta^{>1}_\cC$ is as in Remark \ref{rema:gamma.star.c}. Note that this is indeed a minimum since $\Rot(\cdot)$ does not affect $\Delta^{>1}_\cC$, and $\mathscr{C}_{n-k}^{\textnormal{qd}}$ is finite up to rotations. We also denote
\begin{equation} \label{eq:delta.hf.gap.all}
	\Delta^{\textnormal{qd}}_{} := \min \{ \Delta^{\textnormal{qd}}_{n-k} : k \in \{ 0, 1, \ldots, n-7 \} \} \in (0, 1],
\end{equation}
This depends on $n$ of course, but we omit this from the notation.

\begin{Lem}[Dichotomy] \label{lemm:main.dichotomy}
	Fix $\delta \in (0, \tfrac12 \Delta^{\textnormal{qd}}_{})$. There exist $\eta > 0$, $L \in \ZZ_{\geq 0}$ depending on $n$, $\delta$ with the following significance.
	
	Suppose that $L_0 \in \ZZ_{\geq 0}$, $L_0 \geq L$, and $p \in \cS(\eta, L_0)$. Write $p \in \sing \Sigma$ for $\Sigma \in \mathscr{F}$. Then, exactly one of the following holds:
	\begin{enumerate}
		\item[(I)] There exists a unique $\cC \in \mathscr{C}_n^{\textnormal{qd-cyl}}$ up to rotations, and a fixed copy $\cC_p \subset T_p M$ of it (independent of scale) such that:
			\[ \cN^1_{\cC_p}(\Sigma; p, 2^{-\ell}) > 1 + \Delta^{>1}_{\cC} - \delta \text{ for all } \ell \in \ZZ_{\geq 0}, \; \ell \geq L_0. \]
		\item[(II)] There exists some $L_1 \in \ZZ_{\geq 0}$, $L_1 \geq L_0$, such that for all $\cC \in \mathscr{C}_n^{\textnormal{qd-cyl}}$:
			\[ \bar \cN^{1}_{\cC}(\Sigma; p, 2^{-\ell}) \leq 1 + \delta \text{ for all } \ell \in \ZZ_{\geq 0}, \; \ell \geq L_1. \]
	\end{enumerate}
\end{Lem}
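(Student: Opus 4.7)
The plan is to exhibit the dichotomy via a straightforward case analysis on whether the decay order at $p$ along some $\cC \in \mathscr{C}_n^{\textnormal{qd-cyl}}$ (modulo rotations) exceeds $1$ at infinity. Concretely, let $\cC_1, \ldots, \cC_N$ be representatives of $\mathscr{C}_n^{\textnormal{qd-cyl}}$ modulo $SO(n+1)$; this list is finite because $\mathscr{C}_{n-k}^{\textnormal{qd}}$ is isolated (Corollary \ref{coro:quadratic.isolated}) and there are finitely many spine dimensions. Apply the quantitative fast-decay result, Corollary \ref{Cor:unif-high-freq-quantitative}, with $\delta/2$ in place of $\delta$ to each $\cC_i$ separately; note that $\delta/2 < \tfrac{1}{2}\Delta^{\textnormal{qd}}_{} \leq \tfrac{1}{2}\Delta^{>1}_{\cC_i}$, so this application is allowed. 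This produces constants $\eta_i > 0$, $L_i \in \ZZ_{\geq 0}$. Set $\eta := \min_i \eta_i$ and $L := \max_i L_i$.

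For Case (I), suppose there is some $i$ with $\overline{\cN}^{1}_{\cC_i}(\Sigma; p) > 1$. Then Corollary \ref{Cor:unif-high-freq-qualitative} yields a linear isometry $\iota_p$ such that $\cC_p := \iota_p(\cC_i)$ is the unique tangent cone to $\Sigma$ at $p$, and Corollary \ref{Cor:unif-high-freq-quantitative} (with $\delta/2$) yields
\[
    \cN^{1}_{\cC_p}(\Sigma; p, 2^{-\ell}) \geq 1 + \Delta^{>1}_{\cC_i} - \delta/2 > 1 + \Delta^{>1}_{\cC_i} - \delta \quad \text{for all } \ell \geq L_0.
\]
Uniqueness of $\cC$ up to rotations in this conclusion is forced by uniqueness of the tangent cone: any other $\cC_j$ satisfying (I) would yield a second tangent cone, so $\cC_j$ must equal $\cC_i$ up to rotation.

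For Case (II), suppose $\overline{\cN}^{1}_{\cC_i}(\Sigma; p) \leq 1$ for every $i$. By the definition of $\overline{\cN}^{1}_{\cC_i}(\Sigma; p)$ as a $\limsup$, for each $i$ there exists $L_1^{(i)}$ with $\overline{\cN}^{1}_{\cC_i}(\Sigma; p, 2^{-\ell}) \leq 1 + \delta$ for all $\ell \geq L_1^{(i)}$. Since $\overline{\cN}^{1}_{\cC}$ is rotation-invariant by construction (the sup is taken over all linear isometries $\iota_p$), taking $L_1 := \max_i L_1^{(i)}$ gives the bound uniformly for every $\cC \in \mathscr{C}_n^{\textnormal{qd-cyl}}$, which is (II). Finally, (I) and (II) are mutually exclusive because (I) implies
\[
    \overline{\cN}^{1}_{\cC}(\Sigma; p, 2^{-\ell}) \geq \cN^{1}_{\cC_p}(\Sigma; p, 2^{-\ell}) > 1 + \Delta^{>1}_{\cC} - \delta > 1 + \delta
\]
for $\ell \geq L_0$, using $\delta < \tfrac{1}{2}\Delta^{\textnormal{qd}}_{} \leq \tfrac{1}{2}\Delta^{>1}_{\cC}$, which directly violates (II).

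No step is expected to present substantial difficulty, since all of the hard analysis (the fast-decay propagation and tangent-cone uniqueness) is already encapsulated in Corollaries \ref{Cor:unif-high-freq-qualitative} and \ref{Cor:unif-high-freq-quantitative}. The only mild bookkeeping point is ensuring that $\eta$ and $L$ can be chosen independently of $\cC_i$: this is automatic because $\mathscr{C}_n^{\textnormal{qd-cyl}}$ has only finitely many rotation classes, so the min/max over $i$ are effective.
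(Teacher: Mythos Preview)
Your proposal is correct and follows essentially the same approach as the paper's proof, which is a one-liner invoking Corollary~\ref{Cor:unif-high-freq-quantitative} together with the finiteness of $\mathscr{C}_n^{\textnormal{qd-cyl}}$ modulo rotations and the inequality $\delta < \tfrac12 \Delta^{\textnormal{qd}}$. Your write-up simply spells out the case split and the mutual exclusivity in more detail; in particular, your use of $\delta/2$ to secure the strict inequality in (I) and your invocation of Corollary~\ref{Cor:unif-high-freq-qualitative} for tangent-cone uniqueness are the natural elaborations the paper leaves implicit.
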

\begin{proof}
	Note that up to rotations $\mathscr{C}_n^{\textnormal{qd-cyl}}$ is a finite set. The statement then follows from Corollary \ref{Cor:unif-high-freq-quantitative} and that $\delta < \tfrac12 \Delta^{\textnormal{qd}}_{}$.
\end{proof}

\begin{Rem} \label{rema:L0.L1}
	We emphasize that in case (II) of Lemma \ref{lemm:main.dichotomy} we have no control over how large $L_1 \geq L_0$ may need to be, so we need to consider all possibilities.
\end{Rem}

\begin{Def} \label{defi:main.dichotomy}
	Let $\delta$, $\eta$, $L$ be as in Lemma \ref{lemm:main.dichotomy}. Take any $L_0 \in \ZZ_{\geq 0}$ with $L_0 \geq L$.
	\begin{enumerate}
		\item[(I)] Denote $\cS^{>1}_{\cC}(\eta, L_0) \subset \cS(\eta, L_0)$ the set of $p$ for which Lemma \ref{lemm:main.dichotomy} (I) holds with $\eta$, $L_0$, and a copy $\cC_p \subset T_p M$ of some $\cC \in \mathscr{C}_n^{\textnormal{qd-cyl}}$, i.e.:
			\[ \cN^1_{\cC_p}(\Sigma; p, 2^{-\ell}) > 1 + \Delta^{>1}_{\cC} - \delta \text{ for all } \ell \in \ZZ_{\geq 0}, \; \ell \geq L_0. \]
		\item[(II)] Denote $\cS^{=1}_{L_1}(\eta, L_0) \subset \cS(\eta, L_0)$ the set of $p$ for which Lemma \ref{lemm:main.dichotomy} (II) holds with $\eta$ and some $L_1 \in \ZZ_{\geq 0}$ with $L_1 \geq L_0$, i.e., for every $\cC \in \mathscr{C}_n^{\textnormal{qd-cyl}}$:
			\[ \bar \cN^{1}_{\cC}(\Sigma; p, 2^{-\ell}) \leq 1 + \delta \text{ for all } \ell \in \ZZ_{\geq 0}, \; \ell \geq L_1. \]
	\end{enumerate}
\end{Def}

An immediate consequence of Lemma \ref{lemm:main.dichotomy} and Definition \ref{defi:main.dichotomy} is that:

\begin{equation} \label{eq:main.dichotomy}
	\cS(\eta, L_0) = \left( \bigcup_{\cC \in \mathscr{C}_n^{\textnormal{qd-cyl}}} \cS^{>1}_\cC(\eta, L_0) \right) \cup \left( \bigcup_{L_1 \geq L_0} \cS^{=1}_{L_1}(\eta, L_0) \right).
\end{equation}

\begin{Rem} \label{rema:main.dichotomy.hf.qd}
	The first union, over $\cC \in \mathscr{C}_n^{\textnormal{qd-cyl}}$, is really a finite union since $\cS^{>1}_\cC(\eta, L_0)$ is independent of rotations of $\cC$.
\end{Rem}

\subsection{Points in $\cS^{>1}_{\cC}(\eta, L_0)$} \label{sec:hf.qd.points}

Fix $\delta \in (0, \tfrac12 \Delta^{\textnormal{qd}}_{})$ (see \eqref{eq:delta.hf.gap.all}), $\sigma \in (0, \min \{ \sigma_0(n), \sigma_1(n) \})$ (see Proposition \ref{prop:close-to-cone-close-to-eachother} and Lemma \ref{lemm:jacobi.decay.cone}), and some $\theta = 2^{-m}$, $m \in \ZZ_{\geq 1}$, to be determined.

Given these, we take $\eps = \eps(n, \sigma, \theta) > 0$, $\eta = \eta(n, \delta, \sigma, \theta) > 0$, $L = L(n, \delta, \sigma, \theta) \in \ZZ_{\geq 0}$ so that Proposition \ref{prop:conical.structure}, Lemmas \ref{lemm:conical.structure}, \ref{lemm:main.dichotomy}, Proposition \ref{prop:close-to-cone-close-to-eachother}, and Corollary \ref{Cor:unif-high-freq-quantitative} all hold for all $\cC \in \mathscr{C}_n^{\textnormal{qd-cyl}}$ (there is only a discrete choice of $\cC$, see Remark \ref{rema:main.dichotomy.hf.qd}).

Fix some $\cC \in \mathscr{C}_n^{\textnormal{qd-cyl}}$ and $L_0 \in \ZZ_{\geq 0}$ with $L_0 \geq L$. Then Corollary \ref{Cor:unif-high-freq-quantitative} and Lemma \ref{lemm:main.dichotomy} (I) imply that there exists
\[ L_2 = L_2(n, L_0, \eps) \geq L_0 \]
so that:
\begin{equation} \label{eq:hf.qd.points.eps}
	\mbfd_{\cC_p}(\Sigma; p, 2^{-\ell}) < \eps \text{ for all } \ell \geq L_2,
\end{equation}
where $p \in \cS^{>1}_\cC(\eta, L_0)$, $p \in \sing \Sigma$, $\Sigma \in \mathscr{F}$, and some copy $\cC_p \subset T_p M$ of $\cC$.

We proceed to construct iterative covers $\mathfrak{C}_j$ of $\cS^{>1}_\cC(\eta, L_0)$ by subsets of radius-$2^{-L_2} \theta^j$ balls in a manner compatible with Proposition \ref{prop:covering}. Each element
\[ B_j \in \mathfrak{C}_j \]
will be constructed to be an intersection of a chain of balls with geometrically decreasing radii:
\[ B_j = \bigcap_{i=0}^j B_{2^{-L_2} \theta^i}(p_i); \]
cf. Lemma \ref{lemm:conical.structure.iter}'s (2) for why this format is useful. 

The construction of the various $\mathfrak{C}_j$ and the parent map $\mathfrak{p} : \mathfrak{C}_{j+1} \to \mathfrak{C}_j$ is as follows:

\begin{itemize}
	\item For the base case, $j=0$, take any Vitali cover
		\[ \mathfrak{C}_0 = \{  B_{2^{-L_2}}(p_0) \}_{p_0} \text{ of } \cS^{>1}_\cC(\eta, L_0) \]
		using radius-$2^{-L_2}$ balls centered at various points $p_0 \in \cS^{>1}_\cC(\eta, L_0)$.
        \item For the inductive step, consider $j \geq 0$ and any already constructed
            \[ B_j := \bigcap_{i=0}^j B_{2^{-L_2} \theta^i}(p_i). \]
        Then, take any Vitali cover
		\[ \mathfrak{C}_{j+1}[B_j] = \{ B_{2^{-L_2} \theta^{j+1}}(p_{j+1}) \}_{p_{j+1}} \text{ of } B_j \cap \cS^{>1}_\cC(\eta, L_0) \]
        using radius-$2^{-L_2} \theta^{j+1}$ balls centered at various points $p_{j+1} \in B_j \cap \cS^{>1}_\cC(\eta, L_0)$. For each such $p_{j+1}$, consider the corresponding
        \[ B_{j+1} := B_{2^{-L_2} \theta^{j+1}}(p_{j+1}) \cap B_j = \bigcap_{i=0}^{j+1} B_{2^{-L_2} \theta^i}(p_i). \]
        If it's empty or has already occurred at the $j$-th step, discard it. Otherwise, set
        \[ \mathfrak{p}(B_{j+1}) := B_j. \]
    \item Once all $B_j \in \mathfrak{C}_j$ have been considered and their $\mathfrak{C}_{j+1}[B_j]$ has been constructed as above, define
        \[ \mathfrak{C}_{j+1} := \bigcup_{B_j \in \mathfrak{C}_j} \mathfrak{C}_{j+1}[B_j]. \]
\end{itemize}

We now seek to establish the quantitative properties for $\mathfrak{C}_j$ required by Proposition \ref{prop:covering} (3) and (4). To that end, revisit the inductive construction starting from 
\[ B_j = \bigcap_{i=0}^j B_{2^{-L_2} \theta^i}(p_i). \]
In view of \eqref{eq:hf.qd.points.eps}, we know that Proposition \ref{prop:conical.structure} applies for $p_j \in \sing \Sigma$, $\Sigma \in \mathscr{F}$, at scale $2^{-L_2} \theta^j$ with some ``a priori'' hypercone $\cC(B_j)$, and yields some ``a posteriori'' hypercone $\check \cC(B_j)$, and that both hypercones are copies of our fixed $\cC$ by (iv) of the proposition. 

By (i) of Proposition \ref{prop:conical.structure}, 
\[ B_j \cap \cS^{>1}_\cC(\eta, L_0) \subset \eta_{p_j, 2^{-L_2} \theta^j}(B_\theta(\spine \check \cC(B_j))). \]
Since $\dim \spine \check \cC(B_j) = \dim \spine \cC = k$, our curvature bounds on $(M, g)$ allow us to estimate the cardinality of the Vitali cover by
\begin{equation} \label{eq:hf.qd.points.card}
    \# \mathfrak{p}^{-1}(B_j) = \# \mathfrak{C}_{j+1}[B_j] \leq C_1' \cdot \theta^{-k}
\end{equation}
for a geometric constant $C_1' = C_1'(n)$. 

Next, we claim that, for every $p^\pm \in B_j \cap \cS^{>1}_\cC(\eta, L_0)$,
\begin{equation} \label{eq:hf.qd.points.holder}
    |\frT(p^+) - \frT(p^-)| \leq (\operatorname{Lip} \frT) \cdot 2^{-L_2} \cdot C_2' \cdot H^j \cdot \theta^{j(1 + \alpha(\cC; \sigma) + \Delta^{>1}_\cC - \delta)},
\end{equation}
with $C_2' = C_2'(n, \sigma) > 0$. We defer the proof of \eqref{eq:hf.qd.points.holder} until the section's end. (It follows by combining Lemma \ref{lemm:conical.structure.iter} with Proposition  \ref{prop:close-to-cone-close-to-eachother} and Corollary \ref{Cor:unif-high-freq-quantitative}.) Taking \eqref{eq:hf.qd.points.holder} for granted and then taking suprema over $p^\pm \in B_j \cap \cS^{>1}_\cC(\eta, L_0)$:
\begin{equation} \label{eq:hf.qd.points.diam}
    \operatorname{diam} \frT(B_j \cap \cS^{>1}_\cC(\eta, L_0)) \leq C_2 \cdot (C_1'')^j \cdot \theta^{j(1 + \alpha(\cC; \sigma) + \Delta^{>1}_\cC - \delta)},
\end{equation}
for $C_2 = C_2(n, \sigma, L_2, \operatorname{Lip} \frT) > 0$ and $C_1'' = H(n, \sigma) > 0$.
  
Fix $C_1 = \max \{ C_1', C_1'' \}$ and $C_2$ as above. Then, \eqref{eq:hf.qd.points.card}, \eqref{eq:hf.qd.points.diam} imply that Proposition \ref{prop:covering}'s (3), (4) are satisfied with
\[ \mathfrak{D}(B_j) := k, \]
\[ \mathfrak{h}(B_j) := 1 + \alpha(\cC; \sigma) + \Delta^{>1}_{\cC} - \delta, \]
for all $B_j \in \mathfrak{C}_j$, $j = 0, 1, 2, \ldots$ Thus, Proposition \ref{prop:covering} implies:

\begin{Cor} \label{coro:hf.qd.points}
	Fix $\xi > 0$, $\delta \in (0, \tfrac12 \Delta^{\textnormal{qd}}_{})$ (see Lemma \ref{lemm:main.dichotomy}) and $\sigma \in (0, \min \{ \sigma_0(n), \sigma_1(n) \})$ (see Proposition \ref{prop:close-to-cone-close-to-eachother} and Lemma \ref{lemm:jacobi.decay.cone}). Then, there exist $\eta > 0$, $L \in \ZZ_{\geq 0}$ depending on $n, \delta, \sigma, \xi$ so that, for all $L_0 \in \ZZ_{\geq 0}$ with $L_0 \geq L$, 
	\begin{equation} \label{eq:hf.qd.points.dim.epsilon}
		\dim \frT(\cS^{>1}_\cC(\eta, L_0)) \leq \frac{k + \xi}{1 + \alpha(\cC; \sigma) + \Delta^{>1}_{\cC} - \delta},
	\end{equation}
	and, for a.e. $t \in \RR$,
	\begin{equation} \label{eq:hf.qd.points.codim.epsilon}
		\dim (\cS^{>1}_\cC(\eta, L_0) \cap \frT^{-1}(t)) \leq \max \{ k - (1 + \alpha(\cC; \sigma) + \Delta^{>1}_{\cC} - \delta), 0 \} + \xi.
	\end{equation}
	We emphasize that, above, $k = \dim \spine \cC \in \{ 0, 1, \ldots, n-7 \}$. 
\end{Cor}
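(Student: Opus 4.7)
The plan is to feed the nested cover $\{\mathfrak{C}_j\}$ constructed just above into Proposition \ref{prop:covering}, once the deferred estimate \eqref{eq:hf.qd.points.holder} has been established. Conditions (1)--(3) of Proposition \ref{prop:covering} are already in hand via the Vitali construction and \eqref{eq:hf.qd.points.card} with $\mathfrak{D}(B_j):=k$, while proving \eqref{eq:hf.qd.points.holder} supplies condition (4) through \eqref{eq:hf.qd.points.diam} with $\mathfrak{h}(B_j) := 1 + \alpha(\cC;\sigma) + \Delta_\cC^{>1} - \delta$. I would then apply Proposition \ref{prop:covering} with the auxiliary dimensions $d := (k+\xi)/(1 + \alpha(\cC;\sigma) + \Delta_\cC^{>1} - \delta)$ and $d := \max\{k - (1 + \alpha(\cC;\sigma) + \Delta_\cC^{>1} - \delta),\, 0\} + \xi$ to produce parts (i) and (ii) of the corollary, respectively; in each case the required inequality between $d$ and $(\mathfrak{D},\mathfrak{h})$ holds by construction. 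The ratio $\theta = 2^{-m}$ is fixed last, small enough that $\theta \leq \theta_0(\xi, C_1)$ from Proposition \ref{prop:covering}, which in turn pins down $m$ and the earlier constants $\eps, \eta, L$.

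The substantive step is to prove \eqref{eq:hf.qd.points.holder}, which I would do by combining Corollary \ref{Cor:unif-high-freq-quantitative}, Proposition \ref{prop:close-to-cone-close-to-eachother}, and Lemma \ref{lemm:conical.structure.iter}, as indicated in the setup. For any chain $p^\pm, p_0, \ldots, p_j \in B_j \cap \cS_\cC^{>1}(\eta, L_0)$, each basepoint $p_i$ admits a scale-independent tangent cone $\cC_{p_i}\subset T_{p_i}M$ that is a copy of the fixed $\cC$ (Corollary \ref{Cor:unif-high-freq-quantitative}), and the fast-decay estimate (ii) of that corollary gives
\[
\mbfd_{\cC_{p_i}}(\Sigma_i; p_i, 2^{-L_2}\theta^i) \;\leq\; C(n)\cdot 2^{-(\Delta_\cC^{>1}-\delta)(L_2 + im - L_0)} \;\leq\; C(n, L_2, L_0)\, \theta^{i(\Delta_\cC^{>1}-\delta)}.
\]
Feeding this into Proposition \ref{prop:close-to-cone-close-to-eachother} at center $p_j$ and scale comparable to $2^{-L_2}\theta^j$ (with the $\tau^2$ contribution absorbed because $\Delta_\cC^{>1}-\delta < 2$), I would upgrade the trivial bound $d_j\leq 2$ from Lemma \ref{lemm:conical.structure.iter} (i) to $d_j \leq C'\, \theta^{j(\Delta_\cC^{>1}-\delta)}$. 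Property (iv) of Proposition \ref{prop:conical.structure} permits taking each a posteriori cone $\check\cC_i$ to be a rotation of the same $\cC$, so that $\alpha(\check\cC_i;\sigma) = \alpha(\cC;\sigma)$ along the entire chain; substituting this constant exponent together with the refined $d_j$ bound into Lemma \ref{lemm:conical.structure.iter} (ii) yields \eqref{eq:hf.qd.points.holder}, and taking the supremum over $p^\pm$ gives \eqref{eq:hf.qd.points.diam}.

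The main obstacle I anticipate is bookkeeping rather than new analysis: one must ensure that the scale-independent tangent cone $\cC_{p_i}$ supplied by Corollary \ref{Cor:unif-high-freq-quantitative} is compatible with the scale-dependent a posteriori cone $\check\cC_i$ supplied by Proposition \ref{prop:conical.structure}, both being rotations of a single fixed $\cC \in \mathscr{C}_n^{\textnormal{qd-cyl}}$; this compatibility follows from the fast-decay hypothesis $p_i \in \cS_\cC^{>1}(\eta,L_0)$ combined with the isolatedness of $\mathscr{C}_n^{\textnormal{qd-cyl}}$ modulo rotations (Corollary \ref{coro:quadratic.isolated}). A secondary subtlety is that Proposition \ref{prop:close-to-cone-close-to-eachother} requires $p^\pm \in B_{\sigma\tau}(p_j)$, a condition slightly stronger than $p^\pm \in B_j$; this costs only a $\sigma$-dependent constant once one applies the proposition at the enlarged scale $\tau = \sigma^{-1}\cdot 2^{-L_2}\theta^j$ and converts back to the desired annulus via Harnack, exactly as in the proof of Proposition \ref{prop:close-to-cone-close-to-eachother}.
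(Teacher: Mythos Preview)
Your proposal is correct and follows the same route as the paper. One small correction: the fast-decay estimate from Corollary \ref{Cor:unif-high-freq-quantitative} (ii) should be applied at the two endpoints $p^\pm$ (both of which lie in $\cS_\cC^{>1}(\eta,L_0)$) at the single scale $2^{-L_2}\theta^j$, not at each chain point $p_i$ at its own scale $2^{-L_2}\theta^i$; it is the bound $\mbfd_{\cC_{p^\pm}}(\Sigma^\pm; p^\pm, 2^{-L_2}\theta^j)\leq C(n)\,\theta^{j(\Delta_\cC^{>1}-\delta)}$ (using $L_2\geq L_0$ to drop the $L_0,L_2$ dependence from the constant) that feeds into Proposition \ref{prop:close-to-cone-close-to-eachother} to control $d_j$. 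Your identification of the $B_{\sigma\tau}$ versus $B_\tau$ discrepancy is apt, and the rescaling fix you propose is exactly what is needed.
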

\begin{proof}
    Our choice of $\sigma$ fixes $C_1 = C_1(n, \sigma)$ in \eqref{eq:hf.qd.points.diam}. 
    
    We may then fix $\theta = \theta(n, \sigma, \xi)$ of the form $2^{-m}$, $m \in \ZZ_{\geq 1}$, so that Proposition \ref{prop:covering} applies with this $C_1 = C_1(n, \sigma)$ and our chosen $\xi$. 
	
	Then take $\eps = \eps(n, \sigma, \xi) > 0$ so that Propositions \ref{prop:conical.structure} and  \ref{prop:close-to-cone-close-to-eachother} both hold, and $\eta = \eta(n, \delta, \sigma, \xi) > 0$, $L = L(n, \delta, \sigma, \xi) \in \ZZ_{\geq 0}$ so that Proposition \ref{prop:conical.structure}, Lemmas \ref{lemm:conical.structure}, \ref{lemm:main.dichotomy}, and Corollary \ref{Cor:unif-high-freq-quantitative} all apply. 
    
    Then, for all $L_0 \geq L$, the coverings we constructed precisely fit into Proposition \ref{prop:covering}, which yields the required result.
\end{proof}

\begin{proof}[Proof of \eqref{eq:hf.qd.points.holder}]
	Corollary \ref{Cor:unif-high-freq-quantitative} (ii) implies the following distance to the unique tangent cones $\cC_{p^\pm}$ at $p^\pm \in \sing \Sigma^\pm$:
	\begin{equation} \label{eq:hf.qd.points.holder.1}
		\mbfd_{\cC_{p^\pm}}(\Sigma^\pm; p^\pm, 2^{-L_2} \theta^j) \leq C(n) \cdot (2^{-L_2} \theta^j \cdot  2^{L_0})^{(\Delta^{>1}_\cC - \delta)} \leq C(n) \cdot \theta^{j(\Delta^{>1}_\cC - \delta)},
	\end{equation}
	where we used $L_2 \geq L_0$. We may plug \eqref{eq:hf.qd.points.holder.1} into Proposition  \ref{prop:close-to-cone-close-to-eachother} to obtain:
	\begin{align} \label{eq:hf.qd.points.holder.2}
		d_j 
			& := \frac{d_g(\supp \Sigma^-, \reg_{> 2^{-L_2} \theta^j \sigma} \Sigma^+ \cap \partial B_{2^{-L_2} \theta^j}(p_j))}{2^{-L_2} \theta^j} \nonumber \\
			& \leq C_0 \cdot \left( 2 \cdot C(n) \cdot \theta^{j(\Delta^{>1}_\cC - \delta)} + (2^{-L_2} \theta^j)^2 \right) \nonumber \\
			& \leq C_2' \cdot \theta^{j(\Delta^{>1}_\cC - \delta)},
	\end{align}
	with $C_2' = C_2'(n, \sigma) > 0$. The result follows from Lemma \ref{lemm:conical.structure.iter} (ii) applied with $\ell = L_2$ (recall that $\cS(\eta, L_0) \subset \cS(\eta, L_2)$ by Remark \ref{rema:conical.stratification}), with  \eqref{eq:hf.qd.points.holder.2} used to bound $d_j$, and with Proposition \ref{prop:conical.structure} (iv) to see that the $\check \cC_i$ are rotations of our fixed $\cC$.
\end{proof}

\subsection{Points in $\cS^{=1}_{L_1}(\eta, L_0)$} \label{sec:lf.nonqd.points}

Fix $\sigma \in (0, \sigma_1(n))$ (see Lemma \ref{lemm:jacobi.decay.cone}) and some $\theta = 2^{-m}$, $m \in \ZZ_{\geq 1}$, to be determined. 

Now take $\delta = \delta(n, \theta) \in (0, \tfrac12 \Delta^{\textnormal{qd}}_{})$ (see \eqref{eq:delta.hf.gap.all}), $\eps = \eps(n, \sigma, \theta) > 0$, $\eta = \eta(n, \sigma, \theta) > 0$, $L = L(n, \sigma, \theta) \in \ZZ_{\geq 0}$ so that Proposition \ref{prop:conical.structure}, Lemmas \ref{lemm:conical.structure}, \ref{lemm:main.dichotomy}, and Theorem \ref{theo:freq-1-subspace} all hold for all $\cC \in \mathscr{C}_n^{\textnormal{qd-cyl}}$. In what follows, we work with $L_0, L_1 \in \ZZ_{\geq 0}$, with
\[ L_1 \geq L_0 \geq L. \]
and fix
\[ L_2 := L_1 + L. \]

We proceed to construct iterative covers $\mathfrak{C}_j$ of $\cS^{=1}_{L_1}(\eta, L_0)$ by subsets of radius-$2^{-L_2} \theta^j$ balls in a manner compatible with Proposition \ref{prop:covering}. The process is like the one in the previous section. However, rather than improve the coarse H\"older exponents (by improving Lemma \ref{lemm:conical.structure.iter} (ii) using Proposition \ref{prop:close-to-cone-close-to-eachother}), we will improve the coarse dimensions (by improving Proposition \ref{prop:conical.structure} (ii) using Theorem \ref{theo:freq-1-subspace}). The construction of the various $\mathfrak{C}_j$ and the parent map $\mathfrak{p} : \mathfrak{C}_{j+1} \to \mathfrak{C}_j$ is as follows:

\begin{itemize}
	\item For the base case, $j=0$, take any Vitali cover
		\[ \mathfrak{C}_0 = \{ B_{2^{-L_2}}(p_0) \}_{p_0} \text{ of } \cS^{=1}_{L_1}(\eta, L_0) \]
		using radius-$2^{-L_2}$ balls centered at various points $p_0 \in \cS^{=1}_{L_1}(\eta, L_0)$. 
        \item For the inductive step, consider $j \geq 0$ and any already constructed
            \[ B_j := \bigcap_{i=0}^j B_{2^{-L_2} \theta^i}(p_i). \]
        Then, take any Vitali cover
		\[ \mathfrak{C}_{j+1}[B_j] = \{ B_{2^{-L_2} \theta^{j+1}}(p_{j+1}) \}_{p_{j+1}} \text{ of } B_j \cap \cS^{=1}_{L_1}(\eta, L_0) \]
        using radius-$2^{-L_2} \theta^{j+1}$ balls centered at various points $p_{j+1} \in B_j \cap \cS^{=1}_{L_1}(\eta, L_0)$. For each such $p_{j+1}$, consider the corresponding
        \[ B_{j+1} := B_{2^{-L_2} \theta^{j+1}}(p_{j+1}) \cap B_j = \bigcap_{i=0}^{j+1} B_{2^{-L_2} \theta^i}(p_i). \]
        If it's empty or has already occurred at the $j$-th step, discard it. Otherwise, set
        \[ \mathfrak{p}(B_{j+1}) := B_j. \]
    \item Once all $B_j \in \mathfrak{C}_j$ have been considered and their $\mathfrak{C}_{j+1}[B_j]$ has been constructed as above, define
        \[ \mathfrak{C}_{j+1} := \bigcup_{B_j \in \mathfrak{C}_j} \mathfrak{C}_{j+1}[B_j]. \]
\end{itemize}

We now establish the quantitative properties for $\mathfrak{C}_j$ required by Proposition \ref{prop:covering} (3) and (4). To that end, revisit the inductive construction starting from 
\[ B_j = \bigcap_{i=0}^j B_{2^{-L_2} \theta^i}(p_i). \]
In view of Lemma \ref{lemm:conical.structure}, we know that Proposition \ref{prop:conical.structure} applies for $p_j \in \sing \Sigma$, $\Sigma \in \mathscr{F}$, at scale $2^{-L_2} \theta^j$ with some ``a priori'' hypercone $\cC(B_j)$, and yields some ``a posteriori'' hypercone $\check \cC(B_j)$. These need not be congruent hypercones anymore.

It follows from Proposition \ref{prop:conical.structure} (i) and Theorem \ref{theo:freq-1-subspace} that
\[ B_j \cap \cS^{=1}_{L_1}(\eta, L_0) \subset \eta_{p_j, 2^{-L_2} \theta^j}(B_\theta(V(B_j))) \]
where $V(B_j)$ is a linear subspace determined as follows:
\begin{itemize}
    \item If $\cC(B_j)$ is a hypercopy of a cone in $\mathscr{C}_n^{\textnormal{qd-cyl}}$, then by Theorem \ref{theo:freq-1-subspace} $V(B_j)$ can be taken to be a proper linear subspace of $\spine \cC(B_j)$, i.e.,
    \[ \dim V(B_j) \leq \dim \spine \cC(B_j)-1 = \dim \spine \check \cC(B_j)-1. \]
    The last equality is from Proposition \ref{prop:conical.structure} (iv).
    \item In all other cases, $V(B_j) = \spine \check \cC(B_j)$ by Proposition \ref{prop:conical.structure}, so
    \[ \dim V(B_j) = \dim \spine \check \cC(B_j). \]
\end{itemize}
Our curvature bounds on $(M, g)$ allow us to estimate
\begin{equation} \label{eq:lf.nonqd.points.card}
    \# \mathfrak{p}^{-1}(B_j) = \# \mathfrak{C}_{j+1}[B_j] \leq C_1' \cdot \theta^{-\dim V(B_j)}
\end{equation}
for a geometric constant $C_1' = C_1'(n)$.

It follows from Lemma \ref{lemm:conical.structure.iter} (i), (ii) and taking suprema over $B_j \cap \cS^{=1}_{L_1}(\eta, L_0)$ that
\begin{equation} \label{eq:lf.nonqd.points.diam}
    \operatorname{diam} \frT(B_j \cap \cS^{=1}_{L_1}(\eta, L_0)) \leq C_2 \cdot (C_1'')^j \cdot \theta^{\sum_{i=1}^{j} (1 + \alpha(\check \cC(\mathfrak{p}^i(B_j)); \sigma))},
\end{equation}
for $C_2 = C_2(n, L_2, \operatorname{Lip} \frT) > 0$ and $C_1'' = H(n, \sigma) > 0$.

Fix $C_1 = \max \{ C_1', C_1'' \}$ and $C_2$ as above. Then, \eqref{eq:lf.nonqd.points.card}, \eqref{eq:lf.nonqd.points.diam} imply that Proposition \ref{prop:covering}'s (3), (4) are satisfied with
\[ \mathfrak{D}(B_j) := \dim V(B_j), \]
\[ \mathfrak{h}(B_j) := 1 + \alpha(\check \cC(B_j); \sigma), \]
for all $B_j \in \mathfrak{C}_j$, $j = 0, 1, 2, \ldots$ Thus, Proposition \ref{prop:covering} implies:

\begin{Cor} \label{coro:lf.nonqd.points}
	Fix $\xi > 0$, $\sigma \in (0, \sigma_1(n))$ (see Lemma \ref{lemm:jacobi.decay.cone}). For all  sufficiently small $\delta > 0$ depending on $n, \sigma, \xi$, there exist $\eta > 0$, $L \in \ZZ_{\geq 0}$ depending on $n, \sigma, \delta, \xi$ so that for all $L_0, L_1 \in \ZZ_{\geq 0}$ with $L_1 \geq L_0 \geq L$,
	\begin{align} \label{eq:lf.nonqd.points.dim.epsilon}
		& \dim \frT(\cS^{=1}_{L_1}(\eta, L_0)) \nonumber \\
		& \qquad \leq \max \Big\{ \max_{\cC \in \mathscr{C}_{n}^{\textnormal{qd-cyl}}} \Big\{ \frac{\dim \spine \cC - 1 + \xi}{1 + \alpha(\cC; \sigma)} \Big\}, \nonumber \\
		& \qquad \qquad \qquad \sup_{\cC \in \mathscr{C}_n \setminus \mathscr{C}_n^{\textnormal{qd-cyl}}} \Big\{ \frac{\dim \spine \cC + \xi}{1 + \alpha(\cC; \sigma)} \Big\} \Big\},
	\end{align}
	and, for a.e. $t \in \RR$,
	\begin{align} \label{eq:lf.nonqd.points.codim.epsilon}
		& \dim (\cS^{=1}_{L_1}(\eta, L_0) \cap \frT^{-1}(t)) \nonumber \\
		& \qquad \leq \max \Big\{ \max_{\cC \in \mathscr{C}_{n}^{\textnormal{qd-cyl}}} \Big\{ \dim \spine \cC -(2+\alpha(\cC; \sigma)) \Big\}, \nonumber \\
		& \qquad \qquad \qquad \sup_{\cC \in \mathscr{C}_n \setminus \mathscr{C}_n^{\textnormal{qd-cyl}}} \Big\{ \dim \spine \cC - (1 + \alpha(\cC; \sigma)) \Big\}, 0 \Big\} + \xi.
	\end{align}
\end{Cor}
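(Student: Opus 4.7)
The plan is to apply Proposition \ref{prop:covering} to the coverings $\{\mathfrak{C}_j\}_{j \geq 0}$ constructed above, whose hypotheses (3), (4) have already been verified in \eqref{eq:lf.nonqd.points.card} and \eqref{eq:lf.nonqd.points.diam} with
\[
\mathfrak{D}(B_j) := \dim V(B_j), \qquad \mathfrak{h}(B_j) := 1 + \alpha(\check\cC(B_j); \sigma),
\]
and $C_1 = C_1(n, \sigma)$, $C_2 = C_2(n, L_2, \operatorname{Lip} \frT)$. First I would fix $\xi > 0$; since $C_1$ depends only on $n$ and $\sigma$, Proposition \ref{prop:covering} supplies a $\theta_0 = \theta_0(n, \sigma, \xi) > 0$ such that any choice $\theta \leq \theta_0$ of the form $2^{-m}$ with $m \in \ZZ_{\geq 1}$ reduces the dimension bounds to local-in-$B$ estimates on $\mathfrak{D}$ and $\mathfrak{h}$. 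With $\theta$ fixed, I would then choose $\delta > 0$ small enough that Lemma \ref{lemm:main.dichotomy} applies and that Theorem \ref{theo:freq-1-subspace} applies for each of the (finitely many, up to rotations) elements of $\mathscr{C}_n^{\textnormal{qd-cyl}}$; subsequently $\eta$ and $L$ are fixed to make Proposition \ref{prop:conical.structure}, Lemmas \ref{lemm:conical.structure} and \ref{lemm:main.dichotomy}, and Theorem \ref{theo:freq-1-subspace} all available at these parameters.

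The substantive step is the case-by-case verification of bounds on $(\mathfrak{D}(B_j) + \xi)/\mathfrak{h}(B_j)$ and $\mathfrak{D}(B_j) - \mathfrak{h}(B_j) + \xi$ for every $B_j \in \cup_j \mathfrak{C}_j$, as required to apply Proposition \ref{prop:covering} (i), (ii). I would always choose the a priori cone $\cC(B_j)$ to be a copy of an element of $\mathscr{C}_n^{\textnormal{qd-cyl}}$ whenever some such copy attains $\mbfd < \eps$ to $\Sigma$ at scale $2^{-L_2}\theta^j$. This produces a dichotomy: in the qd-cyl branch, $\check\cC(B_j)$ is a rotation of the same $\cC_0 \in \mathscr{C}_n^{\textnormal{qd-cyl}}$ by Proposition \ref{prop:conical.structure} (iv), so $\mathfrak{D}(B_j) \leq \dim \spine \cC_0 - 1$ and $\mathfrak{h}(B_j) = 1 + \alpha(\cC_0; \sigma)$; in the non-qd-cyl branch, $\check\cC(B_j) \in \mathscr{C}_n \setminus \mathscr{C}_n^{\textnormal{qd-cyl}}$ (see below), so $\mathfrak{D}(B_j) = \dim \spine \check\cC(B_j)$ and $\mathfrak{h}(B_j) = 1 + \alpha(\check\cC(B_j); \sigma)$. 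Taking a max over $\mathscr{C}_n^{\textnormal{qd-cyl}}$ in the first case and a sup over $\mathscr{C}_n \setminus \mathscr{C}_n^{\textnormal{qd-cyl}}$ in the second, with an extra $\max\{\cdot, 0\}$ in \eqref{eq:lf.nonqd.points.codim.epsilon} to accommodate the requirement $d \geq 0$ in Proposition \ref{prop:covering} (ii), yields exactly the right-hand sides of \eqref{eq:lf.nonqd.points.dim.epsilon} and \eqref{eq:lf.nonqd.points.codim.epsilon}.

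The main obstacle is to justify the parenthetical claim in the non-qd-cyl branch: that if no qd-cyl copy can serve as $\cC(B_j)$, then $\check\cC(B_j)$ itself fails to be qd-cyl. The plan here is to revisit the compactness argument in the proof of Proposition \ref{prop:conical.structure}, which produces $\check\cC(B_j)$ as a subsequential limit of copies of cones $\cC_i$ that are a priori close to $\Sigma$; then, by the isolatedness of $\mathscr{C}_n^{\textnormal{qd-cyl}}$ modulo rotations (Corollary \ref{coro:quadratic.isolated}), a qd-cyl $\check\cC(B_j)$ would force a qd-cyl copy to lie $\eps$-close in $\mbfd$ to $\Sigma$ at scale $2^{-L_2}\theta^j$, contradicting our choice of $\cC(B_j)$. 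Beyond this technicality, the rest of the argument is direct bookkeeping of Proposition \ref{prop:covering} using the already established coverings.
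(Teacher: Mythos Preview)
Your overall approach matches the paper's: choose parameters so Proposition \ref{prop:covering} applies to the coverings of Section \ref{sec:lf.nonqd.points}, then bound $(\xi + \mathfrak{D}(B_j))/\mathfrak{h}(B_j)$ and $\xi + \mathfrak{D}(B_j) - \mathfrak{h}(B_j)$ case-by-case according to whether the model cone is a qd-cyl cylinder. You go beyond the paper in explicitly flagging a point it leaves unaddressed: that in the residual branch one must actually know $\check\cC(B_j) \notin \mathscr{C}_n^{\textnormal{qd-cyl}}$ for the second supremum in \eqref{eq:lf.nonqd.points.dim.epsilon}--\eqref{eq:lf.nonqd.points.codim.epsilon} to apply.

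Your proposed resolution of that point needs adjustment in two places, however. First, ``isolatedness of $\mathscr{C}_n^{\textnormal{qd-cyl}}$ modulo rotations'' is not the right tool: Corollary \ref{coro:quadratic.isolated} isolates \emph{quadratic} hypercones, and the paper only ever uses that $\mathscr{C}_n^{\textnormal{qd-cyl}}$ is \emph{discrete} modulo rotations; qd-cyl cylinders are not isolated in $\mathscr{C}_n$ (Proposition \ref{prop:alpha.sigma.good} (iii) is precisely about non-qd-cyl cones arbitrarily near qd-cyl ones), so a non-qd-cyl sequence of a priori cones $\cC_i$ can perfectly well converge to a qd-cyl limit. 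What you should use instead is \eqref{eq:conical.structure.limit.cone}: the limit cone equals $\lim(\eta_{p_i,2^{-\ell_i}}\circ\iota_{p_i})^{-1}\Sigma_i$, so if it is qd-cyl then it is itself close to $\Sigma_i$ in $\mbfd$---no isolatedness needed. Second, ``$\eps$-close'' at the \emph{same} threshold is too strong: the compactness argument in Proposition \ref{prop:conical.structure} sends the hypothesis threshold $\eps_i \to 0$, so you only get closeness below any \emph{fixed} $\eps_0 > 0$, not below $\eps_i$. The clean repair is a two-threshold setup: first fix $\eps_0$ small enough for Theorem \ref{theo:freq-1-subspace}, then strengthen Proposition \ref{prop:conical.structure} to additionally output $\mbfd_{\check\cC_{p,\ell}}(\Sigma; p, 2^{-\ell}) < \eps_0$ (immediate from \eqref{eq:conical.structure.limit.cone} since $\eps_0$ is fixed), and base the dichotomy at each $B_j$ on whether $\check\cC(B_j)$ is qd-cyl rather than on $\cC(B_j)$. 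In the qd-cyl case $\check\cC(B_j)$ is itself $\eps_0$-close, so Theorem \ref{theo:freq-1-subspace} applies to it directly and supplies $V(B_j) \subsetneq \spine\check\cC(B_j)$; in the non-qd-cyl case $\check\cC(B_j)$ falls under the second supremum immediately.
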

\begin{proof}
    Our choice of $\sigma$ forces a $C_1 = C_1(n, \sigma)$ as defined after \eqref{eq:lf.nonqd.points.diam}. 
    
    We may then fix $\theta = \theta(n, \sigma, \xi)$ of the form $2^{-m}$, $m \in \ZZ_{\geq 1}$, so that Proposition \ref{prop:covering} applies with this $C_1 = C_1(n, \sigma)$ and our chosen $\xi$. 
    
    Having fixed $\theta = \theta(n, \sigma, \xi)$, our smallness requirement on $\delta$ is:
	\begin{itemize}
		\item We allow any $\delta = \delta(n, \theta) = \delta(n, \sigma, \xi) > 0$ so that Lemma \ref{lemm:main.dichotomy} and Theorem \ref{theo:freq-1-subspace} apply.
	\end{itemize}
	
	Then take $\eps = \eps(n, \sigma, \xi) > 0$ so that Proposition \ref{prop:conical.structure} and Theorem \ref{theo:freq-1-subspace} hold, and $\eta = \eta(n, \sigma, \delta, \xi) > 0$, $L = L(n, \sigma, \delta, \xi) \in \ZZ_{\geq 0}$ so that Proposition \ref{prop:conical.structure}, Lemmas \ref{lemm:conical.structure}, \ref{lemm:main.dichotomy}, and Theorem \ref{theo:freq-1-subspace} all apply.
    
    Then, for all $L_0, L_1 \in \ZZ_{\geq 0}$ with $L_1 \geq L_0 \geq L$, the coverings we constructed for $\cS^{=1}_{L_1}(\eta, L_0)$ precisely fit into Proposition \ref{prop:covering}. Moreover, for $\cC \in \mathscr{C}_n^{\textnormal{qd-cyl}}$, we saw how to obtain the asserted 1-dimensional improvement in $\dim \spine \cC$.
\end{proof}

\subsection{Proof of Theorem \ref{theo:small.sing.dim}}

(i) Note that $\dim \frT(\sing \mathscr{F}) \leq 1$ is automatic since $\frT$ is $\RR$-valued, so it suffices to just prove $\dim \frT(\sing \mathscr{F}) \leq { \mathfrak{d}_{n}^{\textnormal{img}}}$.

Recall from the remark after Definition \ref{defi:conical.stratification} that, for any $\eta > 0$, 
\[ L_0 \mapsto \cS(\eta, L_0) \]
is a monotone exhaustion of $\sing \mathscr{F}$. In particular, since Hausdorff dimension bounds are preserved under countable unions, it suffices to prove
\begin{equation} \label{eq:small.sing.dim.a.wts}
	\dim \frT(\cS(\eta, L_0)) \leq {\mathfrak{d}_{n}^{\textnormal{img}}}
\end{equation}
for a fixed $\eta > 0$, and all large enough $L_0 \in \ZZ_{\geq 0}$.

Let $\xi > 0$ be arbitrary. Fix $\epsilon > 0$ so that Proposition \ref{prop:alpha.sigma.good} (i) and (iii) apply with this choice of $\xi$. Then choose $\sigma > 0$ sufficiently small so that Proposition \ref{prop:alpha.sigma.good} (ii) also applies with this choice of $\epsilon$, and so do Corollaries \ref{coro:hf.qd.points}, \ref{coro:lf.nonqd.points} with these $\sigma, \xi$. Finally, choose any $\delta > 0$ small enough that Lemma \ref{lemm:main.dichotomy}
and Corollaries \ref{coro:hf.qd.points}, \ref{coro:lf.nonqd.points} holds. 

Lemma \ref{lemm:main.dichotomy} and Corollaries \ref{coro:hf.qd.points}, \ref{coro:lf.nonqd.points} then yield $\eta > 0$, $L \in \ZZ_{\geq 0}$. Choose any $L_0 \in \ZZ_{\geq 0}$ where $L_0 \geq L$. By our previous discussion, it suffices to prove \eqref{eq:small.sing.dim.a.wts} with this $L_0$. To that end, we use the decomposition of $\cS(\eta, L_0)$ from \eqref{eq:main.dichotomy}, i.e.,
\begin{equation} \label{eq:small.sing.dim.decomposition}
	\cS(\eta, L_0) = \left( \bigcup_{\cC \in \mathscr{C}_n^{\textnormal{qd-cyl}}} \cS^{>1}_\cC(\eta, L_0) \right) \cup \left( \bigcup_{L_1 \geq L_0} \cS^{=1}_{L_1}(\eta, L_0) \right).
\end{equation}

We now observe that Corollary \ref{coro:hf.qd.points} can bound, for $\cC \in \mathscr{C}_n^{\textnormal{qd-cyl}}$ with $\dim \spine \cC = k$,
\begin{align} \label{eq:small.sing.dim.hf.qd.fixed}
	\dim \frT(\cS^{>1}_\cC(\eta, L_0)) 
		& \leq \frac{k + \xi}{1 + \alpha(\cC; \sigma) + \Delta^{>1}_{\cC} - \delta} \nonumber \\
		& \leq \frac{k + \xi}{1 + \alpha_{n-k} + \Delta^{\textnormal{qd}}_{n-k} - \delta - \xi}.
\end{align}
The first inequality is merely Corollary \ref{coro:hf.qd.points}'s \eqref{eq:hf.qd.points.dim.epsilon}. For the second inequality we used Proposition \ref{prop:alpha.sigma.good} (i) to estimate $\alpha(\cC; \sigma) > \alpha_{n-k} - \xi$ and \eqref{eq:delta.hf.gap} to estimate $\Delta^{>1}_{\cC} \geq \Delta^{\textnormal{qd}}_{n-k}$. Taking unions over all $\cC \in \mathscr{C}_n^{\textnormal{qd-cyl}}$ (finitely many up to rotation, see Remark \ref{rema:main.dichotomy.hf.qd}):
\begin{equation} \label{eq:small.sing.dim.hf.qd}
	\dim \frT \left( \bigcup_{\cC \in \mathscr{C}_n^{\textnormal{qd-cyl}}} \cS^{>1}_\cC(\eta, L_0) \right) \leq \max_{k=0,1,\ldots,n-7} \Big\{ \frac{k + \xi}{1 + \alpha_{n-k} + \Delta^{\textnormal{qd}}_{n-k} - \delta - \xi} \Big\}.
\end{equation}

We also observe that Corollary \ref{coro:lf.nonqd.points} can bound, for all $L_1 \geq L_0$,
\begin{align} \label{eq:small.sing.dim.lf.nonqd.fixed}
		& \dim \frT(\cS^{=1}_{L_1}(\eta, L_0)) \nonumber \\
		& \qquad \leq \max \Big\{ \max_{\cC \in \mathscr{C}_{n}^{\textnormal{qd-cyl}}} \Big\{ \frac{\dim \spine \cC - 1 + \xi}{1 + \alpha(\cC; \sigma)} \Big\}, \sup_{\cC \in \mathscr{C}_n \setminus \mathscr{C}_n^{\textnormal{qd-cyl}}} \Big\{ \frac{\dim \spine \cC + \xi}{1 + \alpha(\cC; \sigma)} \Big\} \Big\} \nonumber \\
		& \qquad \leq \max_{k=0,1,\ldots,n-7} \Big\{ \frac{k - 1 + \xi}{1 + \alpha_{n-k} - \xi}, \frac{k + \xi}{1 + \alpha_{n-k} + \Delta_{n-k}^{\textnormal{non-qd}} - \xi} \Big\}.
\end{align}
The first inequality is again merely Corollary \ref{coro:lf.nonqd.points}'s \eqref{eq:lf.nonqd.points.dim.epsilon}. For the second inequality, need a combination of arguments. 
\begin{itemize}
	\item For $\cC \in \mathscr{C}_{n}^{\textnormal{qd-cyl}}$ with $\dim \spine \cC = k$, Proposition \ref{prop:alpha.sigma.good}'s (i) estimates
		\[ \alpha(\cC; \sigma) > \alpha_{n-k} - \xi, \]
		so
		\[ \frac{\dim \spine \cC - 1 + \xi}{1 + \alpha(\cC; \sigma)} \leq \frac{k-1+\xi}{1 + \alpha_{n-k} - \xi}, \]
		which is present in the final maximum expression.
	\item For $\cC \in \mathscr{C}_n \setminus \mathscr{C}_n^{\textnormal{qd-cyl}}$ with $\dim \spine \cC = k$ which \emph{is not} $\epsilon$-close to $\mathscr{C}_n^{\textnormal{qd-cyl}}$ in the metric sense of Proposition \ref{prop:alpha.sigma.good}, conclusion (ii) of Proposition \ref{prop:alpha.sigma.good} estimates
		\[ \alpha(\cC; \sigma) > \alpha_{n-k} + \Delta_{n-k}^{\textnormal{non-qd}} - \xi, \]
		so
		\[ \frac{\dim \spine \cC + \xi}{1 + \alpha(\cC; \sigma)} \leq \frac{k + \xi}{1 + \alpha_{n-k} + \Delta_{n-k}^{\textnormal{non-qd}} - \xi}, \]
		which is present in the final maximum expression.
	\item For $\cC \in \mathscr{C}_n \setminus \mathscr{C}_n^{\textnormal{qd-cyl}}$ with $\dim \spine \cC = k$ which \emph{is} $\epsilon$-close to $\mathscr{C}_n^{\textnormal{qd-cyl}}$ in the metric sense of Proposition \ref{prop:alpha.sigma.good}, conclusion (iii) of Proposition \ref{prop:alpha.sigma.good} guarantees that $\cC$ is $\epsilon$-close to a cone $\cC' \in \mathscr{C}_n$ with $k' := \dim \spine \cC' \geq k+1$. Then, conclusion (i) of Proposition \ref{prop:alpha.sigma.good} estimates
		\[ \alpha(\cC; \sigma) > \alpha_{n-(k+1)} - \xi, \]
		so
		\[ \frac{\dim \spine \cC + \xi}{1 + \alpha(\cC; \sigma)} \leq \frac{k + \xi}{1 + \alpha_{n-(k+1)} - \xi} = \frac{(k+1)-1 + \xi}{1 + \alpha_{n-(k+1)} - \xi}, \]
		which is present in the final maximum expression (since $k+1 \leq k' \leq n-7$).		
\end{itemize}
Now that we have justified \eqref{eq:small.sing.dim.lf.nonqd.fixed}, we recall Definition \ref{defi:main.dichotomy} and take countable unions over $L_1 \in \ZZ_{\geq 0}$ with $L_1 \geq L_0$ to obtain:
\begin{align} \label{eq:small.sing.dim.lf.nonqd}
		& \dim \frT \left( \bigcup_{L_1 \geq L_0} \cS^{=1}_{L_1}(\eta, L_0) \right) \nonumber \\
		& \qquad \leq \max_{k=0,1,\ldots,n-7} \Big\{ \frac{k - 1 + \xi}{1 + \alpha_{n-k} - \xi}, \frac{k + \xi}{1 + \alpha_{n-k} + \Delta_{n-k}^{\textnormal{non-qd}} - \xi} \Big\}.
\end{align}
Together, \eqref{eq:small.sing.dim.decomposition}, \eqref{eq:small.sing.dim.hf.qd}, \eqref{eq:small.sing.dim.lf.nonqd} and the definition of ${ \mathfrak{d}_{n}^{\textnormal{img}}}$ in \eqref{eq:delta.n} imply:
\[
	\dim \frT(\cS(\eta, L_0)) \leq { \mathfrak{d}_{n}^{\textnormal{img}}} + \Psi(\xi, \delta),
\]
where $\Psi(a, b)$ denotes an expression which $\to 0$ as $|a| + |b| \to 0$. Sending the arbitrary $L_0 \to \infty$, and recalling Remark \ref{rema:conical.stratification}, we deduce that
\[ \dim \frT(\sing \mathscr{F}) = \dim \frT \left( \bigcup_{L_0} \cS(\eta, L_0) \right) \leq {\mathfrak{d}_{n}^{\textnormal{img}}} + \Psi(\xi, \delta). \]
The result now follows by sending $\delta \to 0$, $\xi \to 0$.

(ii) The proof is essentially the same, so we describe the modifications. Instead of \eqref{eq:small.sing.dim.hf.qd.fixed}, we prove that, for a.e. $t \in \RR$,
\[
	\dim (\cS^{>1}_\cC(\eta, L_0) \cap \frT^{-1}(t)) \leq \max \{ k - (1 + \alpha_{n-k} + \Delta^{\textnormal{qd}}_{n-k} - \delta - \xi), 0 \} + \xi.
\]
by using Corollary \ref{coro:hf.qd.points}'s \eqref{eq:hf.qd.points.codim.epsilon} instead of \eqref{eq:hf.qd.points.dim.epsilon}. Instead of \eqref{eq:small.sing.dim.lf.nonqd.fixed}, we prove that for a.e. $t \in \RR$, 
\begin{align*}
		& \dim (\cS^{=1}_{L_1}(\eta, L_0) \cap \frT^{-1}(t)) \\
		& \qquad \leq \max_{k=0,1,\ldots,n-7} \{ k - (2 + \alpha_{n-k} - \xi), k - (1 + \alpha_{n-k} + \Delta_{n-k}^{\textnormal{non-qd}} - \xi) \}, 0 \Big\} + \xi.
\end{align*}
by using Corollary \ref{coro:lf.nonqd.points}'s \eqref{eq:lf.nonqd.points.codim.epsilon} instead of \eqref{eq:lf.nonqd.points.dim.epsilon}. Then, since $\Delta_{n-k}^{\textnormal{qd}} \in (0, 1]$, it follows from these two inequalities and the definition of ${\mathfrak{d}_{n}^{\textnormal{dom}}}$ in \eqref{eq:cee.n} that for a.e. $t \in \RR$:
\[
	\dim (\sing \mathscr{F} \cap \frT^{-1}(t)) \leq {\mathfrak{d}_{n}^{\textnormal{dom}}} + \Psi(\xi, \delta).
\]
We may thus conclude as before. This completes the proof of Theorem \ref{theo:small.sing.dim}.

\subsection{Proof of Corollary \ref{coro:small.sing.dim}}

	We use that $\Delta^{\textnormal{non-qd}}_{n-k}, \Delta^{\textnormal{qd}}_{n-k} > 0$ for all $k \leq n-7$ without further comment. This follows from Lemma \ref{lemm:kappa.zhu.nonqd} and Remark \ref{rema:gamma.star.c}, respectively.
	
	(i) Note that $n+1 \leq 11$ implies $k \leq 3$. We study the fractions in the ``$\max$'' expression for $\mathfrak{d}_{n}^{\textnormal{img}}$ as $0 \leq k \leq 3$ varies and show they are all $< 1$. There are two cases:
	\begin{itemize}
		\item $k \leq 2$. The first fraction has numerator $k \leq 2$, and denominator $> 1 + \alpha_{n-k} > 2$ by Lemma \ref{lemm:kappa.zhu}, so its value is $< 1$. The second fraction has numerator $k-1 \leq 1$ and denominator still $> 2$, so this fraction's value, too, is $< 1$.
		\item $k = 3$. Then, $n=10$. The first fraction has numerator $k = 3$ and denominator $> 1 + \alpha_7 = 3$ by Lemma \ref{lemm:kappa.zhu}, so its value is $< 1$. The second fraction has numerator $k-1 = 2$ and denominator $= 3$, so this fraction's value, too, is $< 1$.
	\end{itemize}
	Altogether, $\mathfrak{d}_{n}^{\textnormal{img}} < 1$.
	
	(ii) Likewise, we show that each term in the ``$\max$'' expression for $\mathfrak{d}_{n}^{\textnormal{dom}}$, as $0 \leq k \leq n-7$ varies,  is $\leq n-10-\epsilon_n$ where 
	\begin{align*}
		\epsilon_n 
			& := \min \Big\{ \alpha_7 + \min \{ \Delta^{\textnormal{qd}}_7, \Delta^{\textnormal{non-qd}}_7 \} - 2, \\
			& \qquad \qquad \min_{k=0,1,\ldots,n-8} \{ \alpha_{n-k} +  \min \{ \Delta^{\textnormal{qd}}_{n-k}, \Delta^{\textnormal{non-qd}}_{n-k} \} - 1 \} \Big\}.
	\end{align*}
	Note that $\epsilon_n > 0$. There are two cases:
	\begin{itemize}
		\item $k \leq n-8$. The parenthetical term is
			\[ 1 + \alpha_{n-k} + \min \{ \Delta^{\textnormal{qd}}_{n-k}, \Delta^{\textnormal{non-qd}}_{n-k} \} \geq 2 + \epsilon_n.  \]
			Thus, the full term is $\leq k - (2 + \epsilon_n) \leq n-10 - \epsilon_n$.
		\item $k = n-7$. The parenthetical term is
			\[ 1 + \alpha_7 + \min \{ \Delta^{\textnormal{qd}}_{n-k}, \Delta^{\textnormal{non-qd}}_{n-k} \} \geq 3 + \epsilon_n. \]
			Thus, the full term is $\leq k - (3 + \epsilon_n) \leq n - 10 - \epsilon_n$.
	\end{itemize}
This completes the proof.

\section{Proof of Theorem \ref{theo:main.rn.11} (Plateau problem in $\RR^{n+1}$)} \label{sec:plateau.rn}

In this section we provide the proof of Theorem \ref{theo:main.rn.11}. 

\begin{Prop} \label{prop:plateau.rn}
	Let $\Gamma \subset \RR^{n+1}$ be as in Theorem \ref{theo:main.rn.11} and $T$ be a minimizing integral $n$-current with $\partial T = \llbracket \Gamma \rrbracket$. Assume that $T$ additionally satisfies:
	\begin{enumerate}
		\item[(1)] $T$ is uniquely minimizing, and
		\item[(2)] $T$ has multiplicity one on $\reg T$.
	\end{enumerate}
	There exist $C^\infty$-small perturbations $\Gamma'$ of $\Gamma$ for which all minimizing integral $n$-currents $T'$ with $\partial T' = \llbracket \Gamma' \rrbracket$ have the desired regularity of Theorem \ref{theo:main.rn.11}, i.e., $T' = \llbracket \Sigma' \rrbracket$ for a smooth, precompact, oriented hypersurface $\Sigma' \subset \RR^{n+1}$ with $\partial \Sigma' = \Gamma'$ and
	\[ \sing \Sigma' = \emptyset \text{ if } n+1 \leq 11, \text{ else } \dim \sing \Sigma' \leq n-10-\epsilon_n, \]
	where $\epsilon_n > 0$ is a dimensional constant.
\end{Prop}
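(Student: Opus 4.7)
The plan is to construct a one-parameter family of area-minimizing currents $\{T_s\}_{s \in (-s_0, s_0)}$ with $T_0 = T$, whose boundaries $\Gamma_s = \partial T_s$ form a smooth perturbation family of $\Gamma$, whose supports are pairwise disjoint, and whose timestamp map $\frT(p) := s$ for $p \in \supp T_s$ is Lipschitz with respect to the ambient distance. Once this is in place, the family $\mathscr{F} := \{T_s\}$ will satisfy the hypotheses of Theorem \ref{theo:small.sing.dim}, and Corollary \ref{coro:small.sing.dim} will yield that for a.e. $s$, $\sing T_s = \emptyset$ when $n+1 \leq 11$, or $\dim \sing T_s \leq n-10-\epsilon_n$ otherwise. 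Density of such $s$ together with the uniqueness/multiplicity-one hypotheses and a standard Allard–Hardt--Simon argument completes the proof.

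First I would construct the local one-sided Hardt--Simon foliations at each $p \in \sing T$. Every tangent cone to $T$ at $p$ is, by assumption (2) and \cite{Hardt-Simon:boundary-regularity, Simon:uniqueness}, a nonflat minimizing hypercone $\cC_p \in \mathscr{C}_n$; by the strict stability/strict minimization properties of such cones (here one uses the full $\mathscr{C}_n$ rather than $\mathscr{C}_n^{\textnormal{qd-cyl}}$, cf.\ Remark \ref{rem:R12}), the one-sided foliation of $\cC_p$ by Hardt--Simon leaves transfers to a local family of disjoint minimizers through $p$. Next I would globalize: push these local one-sided deformations outward to the boundary $\Gamma$, producing smooth deformations $\Gamma_s$ of $\Gamma$ for $|s| < s_0$, and corresponding minimizers $T_s$ with $\partial T_s = \llbracket \Gamma_s \rrbracket$. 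This is the construction carried out in \cite{CMS:generic.9.10, CMS:generic.codim.plus.2}, and by assumption (1) uniqueness of $T_0$ is automatic; for $s \neq 0$, any two minimizers with boundary $\Gamma_s$ must be close to $T$, so by the strict maximum principle (for currents) and the one-sidedness of the foliation leaves, they coincide.

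Second, I would verify the hypotheses of Theorem \ref{theo:small.sing.dim}. Pairwise disjointness of the supports follows from the one-sided nature of the Hardt--Simon foliation combined with the strong maximum principle: if $T_{s_1}, T_{s_2}$ were to touch at a regular point, they would coincide (contradicting $s_1 \neq s_2$); at a singular point, one uses the local foliation structure near that singularity together with the disjointness of Hardt--Simon leaves. Property (1), $\sing \mathscr{F} \Subset \RR^{n+1}$, holds because the singular sets of $T_s$ accumulate only in the interior, away from $\Gamma_s$, by boundary regularity. The timestamp $\frT$ is well-defined and constant on each $\supp T_s$ by disjointness. The Lipschitz estimate (4) on $\frT$ is the content of the quantitative foliation construction, and follows (as in \cite[\S 5]{CMS:generic.9.10}) from the fact that the leaves of Hardt--Simon move at a definite rate in $s$ transverse to each leaf.

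Finally, with $\mathscr{F} = \{T_s\}$ and $\frT$ as above, Corollary \ref{coro:small.sing.dim} applies directly: when $n+1 \leq 11$, for a.e.\ $s \in (-s_0, s_0)$, $\sing T_s = \emptyset$; when $n+1 \geq 12$, for a.e.\ $s$, $\dim \sing T_s \leq n - 10 - \epsilon_n$. Since the set of such $s$ is dense (indeed, of full measure) in any neighborhood of $0$, we obtain $C^\infty$-small perturbations $\Gamma' = \Gamma_s$ of $\Gamma$ with the required regularity. The main obstacle I anticipate is not the application of the technical ingredient but rather confirming that the global one-sided foliation construction goes through cleanly in the Plateau setting, ensuring all $T_s$ exist, are unique, have disjoint supports, and induce a Lipschitz timestamp; this is essentially a Plateau-problem analogue of the homological construction and follows the same template as \cite[\S 5]{CMS:generic.9.10}.
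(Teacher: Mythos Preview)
Your overall architecture---build a one-parameter family $\{T_s\}$ with disjoint supports and Lipschitz timestamp, then invoke Corollary \ref{coro:small.sing.dim}---matches the paper's. But your proposed \emph{construction} of the family has a genuine gap.

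You propose to build the family from local Hardt--Simon foliations at each $p \in \sing T$, then ``globalize'' to a boundary deformation. This does not work. The Hardt--Simon foliation from \cite{HardtSimon:isolated} is only available when the tangent cone is \emph{regular} (isolated singularity at the origin); for $n+1 \geq 9$ the tangent cones to $T$ can be cylindrical, e.g., $\cC_\circ \times \RR^k$ with $k \geq 1$, and there is no analogous smooth one-sided foliation of such cones. Moreover, even for regular cones, strict minimizing and strict stability are not known in general---Remark \ref{rem:R12}, which you cite, says exactly this (it is an open problem, not a theorem). Finally, even granting all this, ``pushing local one-sided deformations outward to the boundary'' is not a standard or well-defined procedure, and there is no reason the local pieces would patch coherently into a global family parametrized by a single $s$.

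The paper (following \cite{CMS:generic.9.10, CMS:generic.codim.plus.2}) avoids all of this by working entirely at the \emph{boundary}. One first chooses a precompact open $M \subset \RR^{n+1}$ with smooth $\partial M$ so that $\Gamma \subset \partial M$, $\spt T$ meets $\partial M$ transversely along $\Gamma$, and $T \mres M$ is a minimizing boundary in $M$. Then one takes $(\Gamma_t)_{t \in [0,1]}$ to be a \emph{monotone} local foliation of $\Gamma$ inside $\partial M$ (positive normal speed, so that $\llbracket \Gamma_{t^+} \rrbracket - \llbracket \Gamma_{t^-} \rrbracket$ bounds a positively oriented domain in $\partial M$). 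Disjointness of the supports of all minimizers $T_t$, for all $t$ and all choices of minimizer, then follows from the standard cut-and-paste lemma (see \cite[Lemma 2.8]{CMS:generic.9.10}), not from any maximum principle at singular points. The Lipschitz bound on $\frT$ follows from boundary regularity: the distance between $\spt T^\pm$ is attained at the boundaries $\Gamma_{t^\pm}$, and these separate at a definite rate because the foliation has positive normal speed. No cone analysis, no Hardt--Simon, no uniqueness of $T_t$ for $t \neq 0$ is needed.
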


\begin{proof}[Proof of Theorem \ref{theo:main.rn.11} assuming Proposition \ref{prop:plateau.rn}]
This reduction was carried out in \cite[Section 6]{CMS:generic.codim.plus.2}, and we recall the main steps here.

We may perturb  $\Gamma \mapsto \Gamma'$ so that minimizing integral $n$-currents $T'$ with $\partial T' = \llbracket \Gamma' \rrbracket$ satisfy the two extra constraints of Proposition \ref{prop:plateau.rn}. Specifically, (1) gets arranged as in \cite[Section 6.1]{CMS:generic.codim.plus.2}, and (2) as in \cite[Section 6.3]{CMS:generic.codim.plus.2}. The argument relied on the Hardt--Simon boundary regularity theorem for minimizing integral $n$-currents and White's improvement, \cite{Hardt-Simon:boundary-regularity, White:boundary-regularity-multiplicity}.\footnote{See \cite[Proof of Theorem 4.1 assuming Lemmas 4.1, 4.2]{CMS:generic.9.10} for another approach.} 

Thus, we may apply Proposition \ref{prop:plateau.rn} with $\Gamma', T'$ in place of $\Gamma, T$. This yields $\Gamma''$ and $\Sigma''$ with the asserted improved regularity. 
\end{proof}

\begin{proof}[Proof of Proposition \ref{prop:plateau.rn}]
	First we note:
	
	\begin{Cla} \label{clai:plateau.rn}
	We may find a precompact open set $M \subset \RR^{n+1}$ with smooth boundary $\partial M$ and the following properties:
	\begin{enumerate}
		\item[(i)] $\Gamma \subset \partial M$, $\spt T \setminus \Gamma \subset M$;
		\item[(ii)] $\spt T$ and $\partial M$ meet smoothly and transversely along $\Gamma$;
		\item[(iii)] $T \mres M$ is a minimizing boundary in $M$.
	\end{enumerate}
	We may also construct a smooth deformation $(\Gamma_t)_{t \in [0,1]} \subset \partial M$ of $\Gamma_0 := \Gamma$ so that:
	\begin{enumerate}
		\item[(iv)] for all $t \in [0,1]$, every minimizing $T_t$ with $\partial T_t = \llbracket \Gamma_t \rrbracket$ satisfies (i)-(iii) with $T_t$ and $\Gamma_t$ in place of $T$ and $\Gamma$;
		\item[(v)] for all pairs $t^\pm \in [0,1]$, every pair of distinct minimizing $T^\pm$ with $\partial T^\pm = \llbracket \Gamma_{t^\pm} \rrbracket$ have disjoint supports in $M$;
		\item[(vi)] there exists $c > 0$ such that, for all $t^\pm \in [0,1]$, $T^\pm$ as in (v),
			\[ |t^+-t^-| \leq c |\mbfx^+ - \mbfx^-|, \; \text{ for all } \mbfx^\pm \in \spt T^\pm. \]
	\end{enumerate}
	\end{Cla}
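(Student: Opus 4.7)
The plan is to build $M$ as a bounded domain in $\RR^{n+1}$ whose boundary contains $\Gamma$ and meets $\spt T$ transversely there, and then to construct $(\Gamma_t)$ as a strictly monotone one-parameter deformation of $\Gamma$ inside $\partial M$; the resulting minimizers will form the desired foliation.

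\emph{Construction of $M$.} By assumptions (1) and (2), the Hardt--Simon boundary regularity theorem \cite{Hardt-Simon:boundary-regularity}, refined by White \cite{White:boundary-regularity-multiplicity}, implies that $\spt T$ is a smooth embedded hypersurface-with-boundary in a neighborhood $V$ of $\Gamma$, meeting $\Gamma$ transversely. Using a tubular neighborhood of $\Gamma$ in $\RR^{n+1}$, I would construct a smooth $n$-submanifold $\Pi \subset V$ containing $\Gamma$ and transverse to $\spt T$ along $\Gamma$. Truncating $\Pi$ and gluing it smoothly to a large sphere $\partial B_R$ enclosing $\spt T$ (which exists by the convex-hull property of minimizers), I take $M$ to be the resulting bounded open region whose closure contains $\spt T$. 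Properties (i) and (ii) are built in; (iii) follows since any competitor for $T \mres M$ inside $M$ extends by $T \mres (\RR^{n+1} \setminus M)$ to a competitor for $T$, and $T$ is minimizing.

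\emph{Construction of $\Gamma_t$.} I would fix a smooth vector field $X$ on $\partial M$, compactly supported near $\Gamma$, nowhere tangent to $\Gamma$, and pointing consistently to one side of $\Gamma$ within $\partial M$. Letting $\phi_t$ be its time-$t$ flow, I set $\Gamma_t := \phi_t(\Gamma_0)$ for $t \in [0,1]$, after rescaling the speed of $X$ so that the deformation stays within a neighborhood of $\Gamma$ in which $\spt T$ (and small perturbations thereof) remain smooth and transverse to $\Pi$. By choice of $X$ the family $\{\Gamma_t\}$ is strictly monotone in $\partial M$, i.e., $\Gamma_{t_1} \cap \Gamma_{t_2} = \emptyset$ whenever $t_1 \neq t_2$.

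\emph{Verification of (iv)--(vi) and main obstacle.} Property (iv) combines the convex-hull principle (keeping $\spt T_t \subset \overline{M}$) with Hardt--Simon boundary regularity at $\Gamma_t$ and the persistence of transversality under small perturbations. For (v), I would invoke the strong maximum principle for minimizing boundaries in the Solomon--White tradition: if $T^\pm$ are distinct minimizers with $\partial T^\pm = \llbracket \Gamma_{t^\pm} \rrbracket$, any interior touching of their supports forces coincidence, contradicting distinctness; boundary touching would require $\Gamma_{t^+} \cap \Gamma_{t^-} \neq \emptyset$, impossible when $t^+ \neq t^-$ by monotonicity of $\phi_t$, while the case $t^+ = t^-$ with $T^+ \neq T^-$ is handled analogously. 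Property (vi) near $\Gamma$ is immediate from $|X|$ being bounded below and from boundary regularity. The global version of (vi) is the main obstacle: I would prove it by a compactness-contradiction argument, in which sequences violating (vi) would, after a suitable rescaling, yield two distinct minimizing limit hypersurfaces in $\RR^{n+1}$ with identical boundary whose supports meet at an interior point, contradicting either the strong maximum principle or, in the degenerate limit $t^\pm \to 0$, the uniqueness hypothesis (1). The delicate point is to propagate the linear separation rate uniformly through the (possibly singular) parts of $\spt T_t$; this is precisely where assumptions (1) and (2) are essential, to rule out pathological clustering of distinct minimizers along the family.
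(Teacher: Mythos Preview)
Your construction of $M$ and of the family $(\Gamma_t)$ via a transverse flow on $\partial M$ matches the paper's approach, and your handling of (i)--(iii) is fine. The issues are in (v) and (vi).

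\textbf{The gap in (v).} Your invocation of the Solomon--White strong maximum principle is incomplete. That principle says that if two stationary varifolds touch \emph{and one lies on one side of the other}, then they coincide near the touching point. But a priori two minimizers $T^\pm$ with distinct boundaries could intersect transversely at regular points, in which case their supports meet without any one-sided touching, and the maximum principle says nothing. You are implicitly assuming the one-sidedness that (v) is meant to establish. The paper instead uses the cut-and-paste trick (see \cite[Lemma 2.8]{CMS:generic.9.10}): writing $T^\pm = \partial\llbracket E^\pm\rrbracket$, one observes that $\partial\llbracket E^+ \cap E^-\rrbracket$ and $\partial\llbracket E^+ \cup E^-\rrbracket$ are again minimizers, and the monotonicity of the boundary family (your flow construction gives exactly that $\llbracket\Gamma_{t^+}\rrbracket - \llbracket\Gamma_{t^-}\rrbracket$ bounds a smooth subdomain of $\partial M$) ensures these new currents have the correct boundaries $\Gamma_{t^\pm}$. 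Only then does regularity plus the strong maximum principle yield disjointness.

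\textbf{The overcomplication in (vi).} Your compactness-contradiction argument is vague on the rescaling and on how to handle the singular set; you yourself flag this as the main obstacle. The paper's route is far simpler and you should use it: once (v) gives that $\spt T^+$ and $\spt T^-$ are disjoint (and ordered), the maximum principle implies that the distance between them is attained at their boundaries $\Gamma_{t^\pm}$. Since all $T_t$ are uniformly smooth graphs over $T_0$ near $\Gamma$ (this is where assumptions (1), (2), and Allard boundary regularity enter, via $T_{t_i} \to T_0$ smoothly up to the boundary), the boundary distance is comparable to $d(\Gamma_{t^+}, \Gamma_{t^-})$, which by the positive normal speed of your flow is comparable to $|t^+ - t^-|$. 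No compactness-contradiction is needed.
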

	\begin{proof}[Proof of claim]	
	Statements (i)-(iv) follow in a standard way. Indeed, from the uniqueness of $T_0$ (assumption (1)), any choice of minimizers $T_{t_i}$ with $t_i \to 0$ converge to $T_0$. Moreover, from the unit density of $T_0$ (assumption (2)) and Allard's interior \cite{Allard:first-variation} and boundary regularity \cite{Allard:boundary-regularity}, the convergence $T_{t_i} \to T_0$ is smooth up to the boundary.\footnote{See the proof of \cite[Lemma 4.3]{CMS:generic.9.10} for a full exposition.} 
	
	It remains to establish (v)-(vi). To that end, we will take the deformation $(\Gamma_t)_{t \in [0,1]}$ to be via a local foliation along $\partial M$ such that:
		\begin{enumerate}
			\item[(I)] the normal speed of the foliation $(\Gamma_t)_{t \in [0,1]}$ is positive;
			\item[(II)] for all $t^\pm \in [0,1]$ with $t^+ > t^-$, $\llbracket \Gamma_{t^+} \rrbracket - \llbracket \Gamma_{t^-} \rrbracket$ is the boundary of a positively oriented smooth subdomain in $\partial M$.
		\end{enumerate}
		
	Now, (v) follows from the standard cut-and-paste trick (see \cite[Lemma 2.8]{CMS:generic.9.10}), whose boundary monotonicity assumptions are guaranteed by (II) above.

	Finally, in view of (I), the inequality in (vi) is equivalent (for a different $c$) to:
		\begin{equation} \label{eq:plateau.rn.1}
			d(\Gamma_{t^+}, \Gamma_{t^-}) \leq c \cdot d(\spt T^+, \spt T^-).
		\end{equation}
		The validity of \eqref{eq:plateau.rn.1} follows by elementary arguments since, by the maximum principle, the distance between $\spt T^\pm$ is attained at either boundary, and all $T^\pm$ are uniformly graphical as discussed for (i)-(iv).
	\end{proof}
	
	We wish to invoke Corollary \ref{coro:small.sing.dim} on:
	\begin{itemize}
		\item the manifold $M$ with the Euclidean metric,
		\item the collection of minimizing boundaries
		\[ \mathscr{F} := \bigcup_{t \in [0,1]} \{ T \mres M : T \text{ minimizing with } \partial T = \llbracket \Gamma_t \rrbracket \}, \]
		\item the function
		\[ \frT(\mbfx) := t, \; \mbfx \in M \cap \spt T_t, \; t \in [0, 1]. \]
	\end{itemize}
	Note that the conditions of Corollary \ref{coro:small.sing.dim} (i.e., of Theorem \ref{theo:small.sing.dim} therein) are satisfied: (1) follows from (iv), (2) from (v), (3) is trivial, and (4) follows from (vi). Upon applying Corollary \ref{coro:small.sing.dim}, the result follows by choosing $t$ close enough to $0$.
\end{proof}

\section{Proof of Theorem \ref{theo:main.homology.11} (Homology minimization problem)} \label{sec:homology.min}

In this section we provide the proof of Theorem \ref{theo:main.homology.11}. 

\begin{Prop} \label{prop:homology.min}
	Let $(N^{n+1}, g)$ and $[\alpha] \in H_n(N, \ZZ) \setminus \{ [0] \}$ be as in Theorem \ref{theo:main.homology.11} and $T$ be a $g$-minimizing integral $n$-current with in $[\alpha]$. Assume that $T$ additionally satisfies:
	\begin{enumerate}
		\item[(1)] $T$ is uniquely minimizing, and
		\item[(2)] $T$ has multiplicity one on $\reg T$.
	\end{enumerate}
	There exist $C^\infty$-small perturbations $g'$ of $g$ for which all $g'$-minimizing integral $n$-currents $T'$ in $[\alpha]$ have the desired regularity of Theorem \ref{theo:main.homology.11}, i.e., $T' = \llbracket \Sigma' \rrbracket$ for a smooth, precompact, oriented hypersurface $\Sigma' \subset N$ with 
	\[ \sing \Sigma' = \emptyset \text{ if } n+1 \leq 11, \text{ else } \dim \sing \Sigma' \leq n-10-\epsilon_n, \]
	where $\epsilon_n > 0$ is a dimensional constant.
\end{Prop}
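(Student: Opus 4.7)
The argument will closely parallel that of Proposition \ref{prop:plateau.rn}, with the role of the Plateau boundary deformation $(\Gamma_t)$ played by a deformation $(g_t)$ of the ambient metric near $\spt T$. Assumptions (1) and (2) on $T$, combined with Allard's interior and boundary regularity, ensure that $\reg T$ is smooth and two-sided (since $[\alpha]\neq 0$) and $\sing T$ has Hausdorff dimension at most $n-7$. I would fix a smooth unit normal $\nu$ on $\reg T$, extend it to a signed distance function $\Sdist_T$ on a tubular neighborhood $U$ of $\spt T$, and choose a precompact open $M \Supset \spt T$ on which we will ultimately apply Corollary \ref{coro:small.sing.dim}.

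The main construction is a smooth one-parameter family $(g_t)_{t\in[-\eps,\eps]}$ of metrics on $N$ together with a smooth foliation $(\Sigma_t)_{t\in[-\eps,\eps]}$ of $U$, with $g_0=g$, $\Sigma_0=\spt T$, $g_t-g$ supported in $U$, such that $\Sigma_t$ is the unique $g_t$-minimizer in $[\alpha]$ for every $t$. I would take $g_t$ as a conformal perturbation $g_t=e^{2f_t}g$ with $f_t$ chosen so that $\Sigma_t$ has vanishing $g_t$-mean curvature and remains strictly $g_t$-stable (a calibration-type set-up in the spirit of \cite{HardtSimon:isolated,Smale:generic}). On $\reg T$ this reduces to a non-degenerate elliptic problem solvable by the implicit function theorem, using the strict stability of $\spt T$; across $\sing T$ the required regularity and the foliation property are obtained using the non-concentration estimates of Proposition \ref{Prop:nonconc-Jac} together with the Jacobi-field analysis of Sections \ref{Subsec_Cylind Min Cone}--\ref{sec:nonlinear} -- this yields the genuine foliation alluded to in the introduction and in the footnote after Theorem \ref{theo:small.sing.dim}. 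The leaves are pairwise disjoint by construction, and the timestamp $\frT(\mbfx) := t$ on $\Sigma_t$ is Lipschitz by the uniform transversality of $(\Sigma_t)$ to $\nu$.

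With this foliation in hand, I would then apply Corollary \ref{coro:small.sing.dim} to
\[
    \mathscr{F} := \{\llbracket \Sigma_t \rrbracket \mres M : t\in[-\eps,\eps]\}, \qquad \frT(\mbfx) := t \text{ for } \mbfx\in\Sigma_t.
\]
Hypotheses (1)--(4) of Theorem \ref{theo:small.sing.dim} are immediate from the construction. The one subtlety is that each $\Sigma_t$ is $g_t$-minimizing rather than $g$-minimizing, whereas Corollary \ref{coro:small.sing.dim} is stated for a single fixed reference metric; this is handled by observing that the entire machinery of Sections \ref{sec:nonlinear}--\ref{sec:proof} (decay-order monotonicity, non-concentration, cone-splitting, Jacobi-field decay) is uniformly valid under $C^\infty$-small background metric perturbations with uniformly bounded derivatives of curvature, a condition which $(g_t)_{t\in[-\eps,\eps]}$ automatically satisfies for $\eps$ small. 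Corollary \ref{coro:small.sing.dim} then produces, for a.e.\ $t$, a leaf $\Sigma_t$ with $\sing\Sigma_t=\emptyset$ if $n+1\leq 11$, and $\dim\sing\Sigma_t\leq n-10-\epsilon_n$ if $n+1\geq 12$; taking $g':=g_t$ for such $t$ arbitrarily close to $0$ yields the proposition.

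The main obstacle is Step 2: constructing a genuine foliating family $(g_t,\Sigma_t)$ in the presence of the singularities of $T_0$. The construction on $\reg T$ is a standard perturbative argument, but the uniform control across $\sing T$ (needed to secure the monotone, non-crossing structure of the leaves) is an improvement over the ``sufficiently dense set of leaves'' approach of \cite{CMS:generic.9.10} and relies crucially on the refined non-concentration and cylindrical Jacobi-field analysis developed earlier in this paper.
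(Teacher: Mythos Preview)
Your proposal has a genuine gap in its central step. You propose to construct a smooth foliation $(\Sigma_t)$ of a tubular neighborhood of $\spt T$, with each $\Sigma_t$ being $g_t$-minimal, by solving an elliptic problem on $\reg T$ via the implicit function theorem and then extending across $\sing T$ using non-concentration and Jacobi-field analysis. This is not what those tools accomplish in the paper: they are used to analyze minimizers that are already known to exist, not to construct nearby minimal hypersurfaces through a singular set. A Hardt--Simon-type foliation construction is available for isolated singularities, but here $\sing T$ may have dimension up to $n-7$, and producing a genuine foliation of minimal leaves through such a set is essentially as hard as the generic regularity problem itself. Your reading of the footnote after Theorem~\ref{theo:small.sing.dim} is also off: the ``genuine foliation'' there refers to the family of all $g_t$-minimizers for varying $t$ having pairwise disjoint supports, not to a smooth foliation built a priori.

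The paper's proof is considerably more elementary on this point and avoids constructing $\Sigma_t$ altogether. One chooses $g_t = e^{2tw} g$ where $w$ is supported in a small Fermi cylinder $C_{r,h}$ around a \emph{regular} point of $T$, with $w$ engineered (conditions (I)--(III)) so that $\partial_z w$ has a sign. The leaves of $\mathscr{F}$ are then simply whatever $g_t$-minimizers exist in $[\alpha]$. Disjointness of distinct minimizers (Claim~\ref{clai:homology.min}~(iv)) is obtained by a cut-and-paste mass comparison combined with a mean-curvature computation and the maximum principle inside $C_{r,h}$; the Lipschitz bound (v) follows from a PDE sliding argument. Your concern about applying Corollary~\ref{coro:small.sing.dim} with a varying metric is also resolved differently: since $\spt w \subset C_{r,h}$, one takes $M := \breve N \setminus C_{r,h}$, on which all $g_t$ agree with $g$, so every $T_t \mres M$ is a $g$-minimizing boundary in a single fixed metric.
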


\begin{proof}[Proof of Theorem \ref{theo:main.homology.11} assuming Proposition \ref{prop:homology.min}]
This reduction was essentially carried out in \cite{CMS:generic.9.10} but not in \cite{CMS:generic.codim.plus.2}, and we recall it here.

\begin{Cla} \label{clai:homology.min.unique}
	Let $T$ be $g$-minimizing in $[\alpha]$. There exist $C^\infty$-small perturbations $g'$ of $g$ such that $T$ is uniquely $g'$-minimizing in $[\alpha]$.
\end{Cla}
\begin{proof}[Proof of claim]
	One may take $g' := (1+\eps u^2) g$, where $\eps > 0$ is small and $u : N \to [0, \infty)$ is smooth and $u = 0$ precisely on $\supp T$. See \cite[Lemma 5.2]{CMS:generic.9.10}.
\end{proof}

Use Claim \ref{clai:homology.min.unique} to replace $g, T$ with $g', T' = T$ so that (1) of Proposition \ref{prop:homology.min} holds. Write
\begin{equation} \label{eq:homology.min.tprime}
	T' =: \sum_{i=1}^Q k'_i \llbracket \Sigma'_i \rrbracket,
\end{equation}
where the $\Sigma'_i$ are disjoint, connected, smooth, precompact, oriented hypersurfaces (with the usual a priori regularity $\dim \sing \Sigma_i' \leq n-7$). Note that
\[ T_1' := \sum_{i=1}^Q \llbracket \Sigma'_i \rrbracket \]
is $g'$-minimizing in some $[\alpha_1]$ by \cite[Theorem 33.4]{Simon:GMT}, and it clearly satisfies (2) of Proposition \ref{prop:homology.min} by construction. It also satisfies (1) of Proposition \ref{prop:homology.min}, otherwise $T'$ would not. 

We may invoke Proposition \ref{prop:homology.min} with $g', [\alpha_1] \ni T_1'$ in place of $g, [\alpha] \ni T$. We obtain $g''$ with the improved regularity property for all $g''$-minimizers in $[\alpha_1]$. Choose some $g''$-minimizing $T_1''$ in $[\alpha_1]$. Since $T_1'$ satisfies (1) and (2), if $g''$ is close enough to $g'$, then we may write
\[ T_1'' =: \sum_{i=1}^Q \llbracket \Sigma''_i \rrbracket \]
where, for each $i$,
\begin{equation} \label{eq:homology.min.homologous}
	\llbracket \Sigma''_i \rrbracket \text{ is homologous to } \llbracket \Sigma'_i \rrbracket,
\end{equation}
in addition to having the desired improved regularity
\begin{equation} \label{eq:homology.min.improved}
	\sing \Sigma''_i = \emptyset \text{ if } n+1 \leq 11, \text{ else } \dim \sing \Sigma''_i \leq n-10-\epsilon_n.
\end{equation}
Fix such $g'', T_1''$. Define
\[ T'' := \sum_{i=1}^Q k_i' \llbracket \Sigma''_i \rrbracket, \]
with $k'_i$ from \eqref{eq:homology.min.tprime}. It follows that $T'' \in [\alpha]$ in view of \eqref{eq:homology.min.homologous}. Moreover, $T''$ is also $g''$-minimizing in $[\alpha]$, in view of \cite[Corollary 3]{White:boundary-regularity-multiplicity} since $T_1''$ is $g''$-minimizing in $[\alpha_1]$. Note that $T''$ has the desired improved regularity in view of \eqref{eq:homology.min.improved}. To conclude, we may guarantee that $T''$ is uniquely $g''$-minimizing in $[\alpha]$ by applying Claim \ref{clai:homology.min.unique} once more.
\end{proof}

Our proof of Proposition \ref{prop:homology.min} mirrors our proof of Proposition \ref{prop:plateau.rn}:

\begin{proof}[Proof of Proposition \ref{prop:homology.min}]
	Retracing the proof of \cite[Lemma 5.7]{CMS:generic.9.10}, we may arrange for an open subset $\breve N \subset N$ with the following properties:
	\begin{enumerate}
		\item[(i)] $\spt T \subset \breve N$;
		\item[(ii)] $T$ is a $g$-minimizing boundary in $\breve N$.
	\end{enumerate}
	The result now follows as in Proposition \ref{prop:plateau.rn} provided we prove:
	\begin{Cla} \label{clai:homology.min}
		There exists $w \in C^\infty(N)$ so that the smooth deformation
		\[ g_t := e^{2tw} g, \; t \in [0,1], \]
		of $g_0 := g$ exhibits the following properties:
		\begin{enumerate}
			\item[(iii)] for all $t \in [0,1]$, all $g_t$-minimizing $T$ in $[\alpha]$ satisfy (i)-(ii) with $g_t$ in place of $g$;
			\item[(iv)] for all $t^\pm \in [0,1]$, all distinct $g_{t^\pm}$-minimizing $T^\pm$ in $[\alpha]$ have disjoint supports;
			\item[(v)] there exists $c' > 0$ such that, for all $t^\pm \in [0,1]$, $T^\pm$ as in (iv),
			\[ d_g(\mbfx^+, \mbfx^-) \geq c' |t^+ - t^-|, \; \text{ for all } \mbfx^\pm \in \spt T^\pm. \]
		\end{enumerate}
	\end{Cla}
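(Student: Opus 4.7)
The plan is to mirror the Plateau strategy of Proposition \ref{prop:plateau.rn}: the conformal family $g_t = e^{2tw}g$ will play the role of the moving boundary $\Gamma_t$, with $w$ chosen so that $g_t$-minimizers in $[\alpha]$ form a monotone, Lipschitz-separated family near $T$.

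First I would construct $w$. By assumption (2), $\reg T$ is a smooth embedded two-sided hypersurface and $\sing T$ has Hausdorff codimension at least $7$. Using a tubular neighborhood of $\reg T$ and a partition of unity, I would build a global $w \in C^\infty(N)$ with $w = 0$ on $\spt T$, $w > 0$ on one chosen side and $w < 0$ on the other, and $\nabla w \cdot \nu_T > 0$ on $\reg T$ (where $\nu_T$ is the unit normal into the positive side); the extension across $\sing T$ is straightforward since $\sing T$ has sufficiently high codimension. After multiplying $w$ by a small constant, I will arrange that all of the assertions below hold throughout $t \in [0, 1]$ rather than just on a small subinterval.

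Next I would check (iii) and (iv). For (iii), assumptions (1)--(2) together with Allard's theorem \cite{Allard:first-variation} force any family of $g_t$-minimizers $T_t$ to converge smoothly to $T$ on compact subsets of $\reg T$ and in flat norm globally as $t \to 0$, so for small $t$ one has $\spt T_t \subset \breve N$ and $T_t$ automatically restricts to a $g_t$-minimizing boundary in $\breve N$; rescaling $w$ makes this uniform on $[0, 1]$. For (iv), I would apply a cut-and-paste argument in the spirit of \cite[Lemma 2.8]{CMS:generic.9.10} to two distinct $g_{t^{\pm}}$-minimizers $T^{\pm} \in [\alpha]$ with $t^+ \geq t^-$, swapping the portions lying in $\{w > 0\}$. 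Since mass densities transform as $d\mu_{g_t} = e^{ntw} \, d\mu_g$ and $e^{nt^+ w} > e^{nt^- w}$ strictly on $\{w > 0\}$ whenever $t^+ > t^-$, the swap produces homologous competitors with strictly smaller combined $g_{t^+}$- plus $g_{t^-}$-mass unless the supports were already disjoint; the case $t^+ = t^-$ reduces to uniqueness of the common minimizer via (1) and Step 1.

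Finally, for (v) I would linearize. Assumption (1) implies that $T$ is strictly stable (otherwise a nontrivial Jacobi field kernel element would generate a distinct minimizer). Differentiating the conformal mean-curvature identity at $t = 0$ for graphs $u_t$ representing $T_t$ over $\reg T$ yields
\begin{equation*}
    (\Delta_T + |A_T|^2) \, \dot{u} = -n \, (\nabla w \cdot \nu_T)|_T < 0,
\end{equation*}
where $\dot{u} := \partial_t u_t|_{t=0}$. Strict stability combined with weighted Fredholm theory near $\sing T$ (using the bound $\alpha(\cC) < (n-2)/2$ from Lemma \ref{lemm:kappa.zhu} to place $\dot{u}$ in the correct function space) produces a unique bounded solution with $\dot{u} \geq c_0 > 0$ on compact subsets of $\reg T$, giving the infinitesimal separation rate. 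To promote this to the pointwise bound (v) uniformly across $\sing T$, I would use the Hardt--Simon-type barrier constructions from \cite{HardtSimon:isolated, Hardt-Simon:boundary-regularity} applied to the singular tangent cones of $T$, which enforce linear separation of $\spt T^{\pm}$ in singular regions as well. The main obstacle I expect is precisely this last step: the graphical/stability analysis handles $\reg T$ without difficulty, but bridging the separation uniformly across $\sing T$ in the global Lipschitz form required by (v) is the genuinely new ingredient that distinguishes a true foliation from the ``sufficiently dense set of leaves'' construction used in \cite{CMS:generic.9.10}, and is exactly the improvement alluded to in the footnote after the statement of Theorem \ref{theo:small.sing.dim}.
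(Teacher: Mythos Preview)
Your proposal differs from the paper's argument in a fundamental way, and the difference is exactly what causes the gap you yourself flag at the end.

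You construct $w$ \emph{globally}, with $w=0$ on $\spt T$ and $\nabla w\cdot\nu_T>0$ along $\reg T$. The paper does something quite different: it picks a single point $p\in\reg T$, takes Fermi coordinates on a small cylinder $C_{r,h}$ around $p$, and takes $w$ to be a bump function supported in a small ball lying \emph{strictly above} the disk $D_r$ (at height $\varsigma/3$), so in particular $\spt w\cap\breve N\subset C_{r,h}$. The payoff is that outside $C_{r,h}$ every $g_t$-minimizer is in fact a $g$-minimizer (they all solve a Plateau problem in $N\setminus C_{r,h}$ for the \emph{same} metric $g$), so the separation estimate (v) outside the cylinder reduces to the same boundary-distance argument used in Proposition~\ref{prop:plateau.rn}. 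Inside the cylinder one only needs to control graphs over a \emph{smooth} disk $D_r\subset\reg T$, and there the conformal mean-curvature identity plus a sliding argument gives $u^+-u^-\geq c'(t^+-t^-)$ directly. The singular set is never touched.

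Your global $w$ forces you to prove linear separation of $\spt T^\pm$ uniformly across $\sing T$. The tools you invoke do not deliver this. First, ``assumption (1) implies $T$ is strictly stable'' is not justified: unique minimality does not in general rule out a nontrivial Jacobi kernel (a degenerate but unique minimizer can exist), so the linearized equation for $\dot u$ need not be invertible. Second, even granting a bounded $\dot u>0$ on compact subsets of $\reg T$, the Hardt--Simon barriers you cite control graphical separation near a \emph{regular} isolated conical singularity; they do not by themselves give a uniform pointwise lower bound on $d_g(\spt T^+,\spt T^-)$ in terms of $|t^+-t^-|$ near an arbitrary stratum of $\sing T$. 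This is precisely the ``genuinely new ingredient'' you identify as an obstacle, and it is the reason the paper avoids it entirely by localizing $w$ away from $\sing T$.

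Your sketch of (iv) is also too loose: swapping ``the portions lying in $\{w>0\}$'' of two currents minimizing for \emph{different} metrics does not obviously produce homologous competitors with the right mass inequality. The paper instead forms $\hat T^+=\partial\llbracket E^+\cap E^-\rrbracket$, $\hat T^-=\partial\llbracket E^+\cup E^-\rrbracket$ and analyzes a two-parameter interpolated area $A(\sigma,\tau)$ inside $C_{r,h}$, using the sign condition $\partial_z w<0$ to force $A_{\tau\sigma}>0$ and derive a contradiction unless $u^+\geq u^-$ on $\spt w$; the strict inequality and global disjointness then come from the mean-curvature comparison $H(T^-,g_{t^+})\leq 0$ (strict over $D_{r/c}$) and the strong maximum principle.
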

	
	Obtaining $\breve N$ satisfying (i)-(ii) follows from \cite[Lemma 5.7]{CMS:generic.9.10} and assumption (2). Moreover, (iii) is a standard consequence of assumption (1).
	
	It remains to establish (iv)-(v). To that end, we will take
		\[ g_t := e^{2tw} g, \; t \in [0,1], \]
		for suitable $w \in C^\infty(N)$. We also fix $p \in \reg T$, and let $r, h > 0$ be such that
		\[ \Phi : D_{r} \times (-h, h) \stackrel{\approx}{\to} C_{r,h} \subset N \]
		are well-defined Fermi coordinates off of a radius-$r$ normal ball $D_{r} \subset \reg T$ centered at $p$, to a height-$h$ cylinder over it. We may take $w \in C^\infty(N)$ to satisfy, for some $c > 1$:
		\begin{enumerate}
			\item[(I)] $\spt w \cap \breve N \subset C_{r,h} \cap \breve N$;
			\item[(II)] $C_{r/c,h} \cap \breve N \subset \{ w \neq 0 \}$;
			\item[(III)] $\partial_z w \leq -c^{-1}(|\nabla_g w|_g + |w|)$ on $\breve N$, with $z$ the vertical Fermi coordinate.
		\end{enumerate}
		These conditions can be arranged by taking $w$ to be $(-1)$ times a suitable smoothing of the indicator of $B^g_{\varsigma/2}(\Phi(p, \varsigma/3))$, where $0 < \varsigma < \min\{r,h\}$, and a posteriori shrinking $\breve N$ ($\supset \spt T$) and enlarging $c$ as appropriate. Note that to arrange for (III) it is convenient to shrink $\breve N$ so far such that $B^g_{\varsigma/4}(\Phi(p, \varsigma/3)) \cap \breve N = \emptyset$. In view of assumption (1), we may take a small enough multiple of $w$ (note this does not affect (I)-(III)) to also guarantee:
		\begin{enumerate}
			\item[(IV)] for all $t \in [0,1]$, every $g_t$-minimizing $T$ in $[\alpha]$ satisfies
				\[ \spt T \subset \breve N; \]
			\item[(V)] for all $t$, $T$ as above, there exists a smooth $u : D_r \to (-h, h)$ with
				\[ \spt T \cap C_{r,h} = \graph_{D_r} u, \]
				\[ \|u\|_{C^2(D_r)} + \|w\|_{C^1(C_{r, h})} < \delta, \]
				and
				\begin{equation} \label{eq:homology.min.graph.grad}
					g(\nu, \nabla_g w) \leq - c^{-1} |\partial_z w| \text{ on } C_{r,h},
				\end{equation}
				where $c$ is from (III), $\delta \in (0, h)$ is small, and $\nu$ is the upward pointing normal to $\spt T$. Graphs are always with respect to the fixed background metric $g$.  
		\end{enumerate}
	
	\begin{proof}[Proof of Claim \ref{clai:homology.min} (iv)]
		Consider, first, the following weaker version of (iv):
		\begin{enumerate}
			\item[(iv')] If $t \in [0,1]$, and $T$, $T'$ are distinct $g_t$-minimizers in $[\alpha]$, then their supports are disjoint.
		\end{enumerate}
		We know that this holds from the standard cut-and-paste trick (see \cite[Lemma 2.8]{CMS:generic.9.10}) on a manifold with a fixed metric. For distinct metrics, we assert:
		\begin{enumerate}
			\item[(iv'')] If $t^\pm \in [0,1]$, $t^+ > t^-$, $T^\pm$ are $g_{t^\pm}$-minimizers in $[\alpha]$, and $u^\pm$ are as in (V) for $T^\pm$, then the supports of $T^\pm$ are disjoint, with $u^+ > u^-$ on $D_r$.
		\end{enumerate}
		Let us reorient the boundaries $T^\pm$ so that, in $C_{r,h}$, the upward pointing unit normals $\nu^\pm$ from (V) are both inward pointing (with respect to the regions they bound). Write $T^\pm =: \partial \llbracket E^\pm \rrbracket$. We may then construct new minimizing boundaries,
		\[ \hat T^+ := \partial \llbracket E^+ \cap E^- \rrbracket, \]
		\[ \hat T^- := \partial \llbracket E^+ \cup E^- \rrbracket. \]
		A weaker version of (V) holds for $\hat T^\pm$. The Lipschitz functions
		\[ \hat u^+ := \max \{ u^+, u^- \}, \]
		\[ \hat u^- := \min \{ u^+, u^- \}. \]
		satisfy
		\begin{equation} \label{eq:homology.min.decomp.1}
			\spt \hat T^\pm \cap C_{r,h} = \graph_{D_r} \hat u^\pm. 
		\end{equation}
		  By construction and our assumption, we have
		\begin{equation}
			u^+ \leq \hat u^+ \text{ on } D_r, 
		\end{equation}
		By construction, $T^\pm$, $\hat T^\pm$ are all homologous and, with respect to any metric, we have (see \cite[Decomposition Theorem]{White:boundary-regularity-multiplicity}) the following equality of Radon measures:
		\begin{equation} \label{eq:homology.min.decomp.2}
			\Vert T^+ \Vert + \Vert T^- \Vert = \Vert \hat T^+ \Vert + \Vert \hat T^- \Vert.
		\end{equation}
        		Note that \eqref{eq:homology.min.decomp.2} and the $g_{t^-}$-minimizing nature of $T^-$ imply
		\begin{equation} \label{eq:homology.min.area.1}
			\MM_{g_{t^-}}(T^+) - \MM_{g_{t^-}}(\hat T^+) = \MM_{g_{t^-}}(\hat T^-) - \MM_{g_{t^-}}(T^-) \geq 0.
		\end{equation}
		Likewise, $T^+$ being $g_{t^+}$-minimizing implies
		\begin{equation} \label{eq:homology.min.area.2}
			\MM_{g_{t^+}}(T^+) - \MM_{g_{t^+}}(\hat T^+) \leq 0.
		\end{equation}
		On the other hand, we also have
		\begin{align} \label{eq:homology.min.area.3}
			\MM_{g_{t^+}}(T^+) - \MM_{g_{t^+}}(\hat T^+)
				& = \MM_{g_{t^+}}(T^+) - \MM_{g_{t^-}}(T^+) \nonumber \\
				& \qquad + \MM_{g_{t^-}}(T^+) - \MM_{g_{t^-}}(\hat T^+) \nonumber \\
				& \qquad + \MM_{g_{t^-}}(\hat T^+) - \MM_{g_{t^+}}(\hat T^+) \nonumber \\
				& \geq \MM_{g_{t^+}}(T^+) - \MM_{g_{t^-}}(T^+) \nonumber \\
				& \qquad + \MM_{g_{t^-}}(\hat T^+) - \MM_{g_{t^+}}(\hat T^+),
		\end{align}
		where, for the inequality, we invoked \eqref{eq:homology.min.area.1}. Thus, \eqref{eq:homology.min.area.3} implies: 
		\begin{align} \label{eq:homology.min.area.4}
			\MM_{g_{t^+}}(T^+) - \MM_{g_{t^+}}(\hat T^+)
				& \geq (\MM_{g_{t^+}}(T^+) - \MM_{g_{t^-}}(T^+)) \nonumber \\
				& \qquad - (\MM_{g_{t^+}}(\hat T^+) - \MM_{g_{t^-}}(\hat T^+)).
		\end{align}
		We evaluate the right hand side of \eqref{eq:homology.min.area.4} within $C_{r,h}$ using (I) and (IV):
		\begin{align} \label{eq:homology.min.area.5}
			\MM_{g_{t^+}}(T^+) - \MM_{g_{t^+}}(\hat T^+)
				& \geq (\MM_{g_{t^+}}(\graph_{D_r} u^+) - \MM_{g_{t^-}}(\graph_{D_r} u^+)) \nonumber \\
				& \qquad - (\MM_{g_{t^+}}(\graph_{D_r} \hat u^+) - \MM_{g_{t^-}}(\graph_{D_r} \hat u^+)).
		\end{align}
		We will study the right hand side of \eqref{eq:homology.min.area.5} via the following interpolating families: for $\sigma, \tau \in [0,1]$
        \begin{align*}
           u^{(\sigma)} & := \hat u^+ + \sigma (u^+ - \hat u^+),  \\
           w^{(\tau)} & := (t^- + \tau(t^+-t^-)) w, \\
           g^{(\tau)} & := g_{t^- + \tau (t^+ - t^-)} = e^{2w^{(\tau)}} g.
        \end{align*}
		  Define $A : [0,1] \times [0,1] \to \RR$ via
		\[ A(\sigma, \tau) = \MM_{g^{(\tau)}}(\graph_{D_r} u^{(\sigma)}) = \int_{\graph_{D_r} u^{(\sigma)}} e^{nw^{(\tau)}}. \]
		All integrals are with respect to the measure induced by the fixed background metric $g$. We see that \eqref{eq:homology.min.area.5} implies:
		\begin{align} \label{eq:homology.min.area.6}
			\MM_{g_{t^+}}(T^+) - \MM_{g_{t^+}}(\hat T^+)
				& \geq (A(1,1) - A(1,0)) - (A(0,1) - A(0,0)) \nonumber \\
				& = \int_0^1 A_\tau(1,\tau) \, d\tau - \int_0^1 A_\tau(0,\tau) \, d\tau \nonumber \\
				& = \int_0^1 \int_0^1 A_{\tau \sigma}(\sigma, \tau) \, d\sigma \, d\tau.
		\end{align}
        To take the differentials, we rewrite, using the area formula
        \begin{align*}
            A(\sigma, \tau) = \int_{D_r} F[u^{(\sigma)}]\cdot (e^{nw^{(\tau)}}\circ\Phi_\sigma)
        \end{align*}
        where:
        \begin{itemize}
        	\item $\Phi_\sigma(x):= \Phi(x, u^{(\sigma)}(x))$ is the graphical parametrization of $\graph_{D_r} u^{(\sigma)}$, and 
			\item $F=F(x, z, p)$ satisfies the estimate on the derivatives on $D_r$:
	        \begin{align}
            F(x, 0 ,0) = 1, \quad 
            F_z(x, 0, 0) = F_p(x, 0, 0) = 0; \quad
            \|F\|_{C^2}\leq C(T, N); \label{eq:AreaIntegrandEst}
	        \end{align}
                see \cite[(A.12)]{ChodoshMantoulidis:ac.3d} for an exact formula for $F$. And $F[u]:= F(x, u(x), \nabla u(x))$.
	    \end{itemize}
        Differentiating $A$ in $\tau$ and rearranging:
		\[ \frac{A_\tau(\sigma, \tau)}{n(t^+-t^-)} = \int_{D_r} F[u^{(\sigma)}]\cdot \big( (e^{n w^{(\tau)}}w) \circ\Phi_\sigma \big). \]
		Then differentiating in $\sigma$ too and letting $v:= u^+ - \hat u^+(\leq 0)$:
		\begin{align*}
			\frac{A_{\tau \sigma}(\sigma, \tau)}{n(t^+-t^-)} 
			& = \int_{D_r} \left(F_z(x, u^{(\sigma)}, \nabla u^{(\sigma)})v + F_p(x, u^{(\sigma)}, \nabla u^{(\sigma)})\cdot \nabla v\right)\cdot \big( (e^{n w^{(\tau)}}w)\circ\Phi_\sigma \big) \\
            & \qquad + \int_{D_r} F[u^{(\sigma)}]\cdot \big(\partial_z(e^{n w^{(\tau)}}w)\circ\Phi_\sigma \big) \cdot v ,
		\end{align*}

        Let $\Omega:= \{x\in D_r: u^+<\hat u^+\}$. We now assert that, for $\delta$ sufficiently small depending on $N, g, T, D_r, c$ in (V), we have $\Omega\cap \Phi_\sigma^{-1}(\spt w) = \emptyset$ for every $\sigma\in [0, 1]$.
		Suppose for contradiction that $\bar\sigma\in [0, 1]$ such that $\Omega \cap \Phi_{\bar\sigma}^{-1}(\spt w) \neq \emptyset$.       
        To handle the term with $\nabla v$ in the equation of $A_{\tau\sigma}$ above, first note that $v = u^+ - \hat u^+$ is smooth on $\Omega$ and vanishes on $D_r\setminus \Omega$. Furthermore $\nabla v = 0$ a.e.~on $D_r\setminus \Omega$. 
        Let $\Omega_j\Subset \Omega$ be a increasing smooth exhaustion of $\Omega$ such that $\|w\circ\Phi_\sigma\|_{C^1(\partial \Omega_j)}\cdot \|v\|_{C^0(\partial \Omega_j)}$ tends to $0$ as $j\to \infty$.
        Then
        \begin{align*}
            & \int_{D_r} \Big(F_p(x, u^{(\sigma)}, \nabla u^{(\sigma)})\cdot \nabla v\Big)\cdot \big( (e^{n w^{(\tau)}}w)\circ\Phi_\sigma \big) \\
            & \qquad = \lim_{j\to \infty} \int_{\Omega_j} (F_p(x, u^{(\sigma)}, \nabla u^{(\sigma)})\cdot \nabla v)\cdot \big( (e^{n w^{(\tau)}}w)\circ\Phi_\sigma \big) \\
            & \qquad = \lim_{j\to \infty} \int_{\Omega_j} -\Div \Big( F_p(x, u^{(\sigma)}, \nabla u^{(\sigma)})\cdot \big( (e^{n w^{(\tau)}}w)\circ\Phi_\sigma \big) \Big) v \\
            & \qquad \qquad + \lim_{j\to \infty} \int_{\partial\Omega_j} \left(F_p(x, u^{(\sigma)}, \nabla u^{(\sigma)})\cdot \zeta_j\right)\cdot \big( (e^{n w^{(\tau)}}w)\circ\Phi_\sigma \big) v \\
            & \qquad = - \int_{\Omega} \Div \Big( F_p(x, u^{(\sigma)}, \nabla u^{(\sigma)})\cdot \big( (e^{n w^{(\tau)}}w)\circ\Phi_\sigma \big) \Big) v
        \end{align*}
        where $\zeta_j$ denotes the outward unit normal field of $\partial\Omega_j$ in $\Omega$. Thus we find, 
        \begin{align}
            \frac{A_{\tau \sigma}(\sigma, \tau)}{n(t^+-t^-)} = \int_{\Omega} \Big( L(\sigma, \tau) + 
            F[u^{(\sigma)}]\cdot \big(\partial_z(e^{n w^{(\tau)}}w)\circ\Phi_\sigma \big) \Big)v, \label{eq:A_(tau sigma)expression}            
        \end{align}
        where 
        \begin{align*}
            L(\sigma, \tau) 
            	& := F_z(x, u^{(\sigma)}, \nabla u^{(\sigma)})\cdot \big( (e^{n w^{(\tau)}}w)\circ\Phi_\sigma\big) \\
				& \qquad - \Div \Big( F_p(x, u^{(\sigma)}, \nabla u^{(\sigma)})\cdot \big( (e^{n w^{(\tau)}}w)\circ\Phi_\sigma \big) \Big)
        \end{align*}
        Note that on $\Omega$, $u^{(\sigma)}=u^- + \sigma(u^+-u^-)$ is smooth, hence together with (\ref{eq:AreaIntegrandEst}), we have pointwise estimate on $\Omega$, 
        \begin{align}
            |L(\sigma, \tau)|\leq C(T, N, D_r)\left(\|u^+\|_{C^2(D_r)}+\|u^-\|_{C^2(D_r)}\right)(|w|+|\nabla_g w|_g)\circ \Phi_\sigma\,. \label{eq:ErrorEstL(sigma, tau)}
        \end{align}
        While by (III), $\partial_z w<0$, then by \eqref{eq:AreaIntegrandEst} and (V) with $\delta(N, g, T, D_r)>0$ small enough, we have
        \begin{align}
            F[u^{(\sigma)}]\cdot \left(\partial_z(e^{n w^{(\tau)}}w)\circ\Phi_\sigma \right) \leq \big(\tfrac12\partial_z w + C(n)\|w\|_{C^1(C_{r, h})}|w| \big)\circ \Phi_\sigma\,. \label{eq:ErrorEstMain}
        \end{align}
        Since we assumed $\Omega\cap \Phi_{\bar\sigma}^{-1}(\spt w) \neq \emptyset$ (for contradiction) and we have that $v <0$ in $\Omega$, we can combine \eqref{eq:A_(tau sigma)expression}-\eqref{eq:ErrorEstMain} with (III) and (V) to choose $\delta(N, g, T, D_r, c)>0$ small enough so that $A_{\tau\sigma}(\bar\sigma, \tau)>0$. Since $A_{\tau\sigma}(\sigma, \tau)$ is continuous in $\sigma$, this contradicts \eqref{eq:homology.min.area.2} and \eqref{eq:homology.min.area.6}.

  		Now we fix the choice of $\delta$ in (V) so that $\Omega \cap \Phi_{\sigma}^{-1}(\spt w)=\emptyset$ for all $\sigma\in [0, 1]$, i.e., so that we have the weak version of the inequality asserted in (iv''):
		\[ u^+ = \hat u^+\geq u^- \text{ on } D_r\cap (\cup_{\sigma\in [0, 1]}\Phi_{\sigma}^{-1}(\spt w)). \]
		Plugging this back into \eqref{eq:A_(tau sigma)expression}, \eqref{eq:homology.min.area.6} and recalling \eqref{eq:homology.min.area.2} yields
		\[ \MM_{g_{t^+}}(T^+) = \MM_{g_{t^+}}(\hat T^+). \]
		Therefore, $T^+$, $\hat T^+$ are both $g_{t^+}$-minimizers in $[\alpha]$.
		
		We claim that $T^+$, $\hat T^+$ coincide. Indeed, if they did not, then they would be disjoint by (iv'). Thus, $\hat T^+ = T^-$ would be disjoint from $T^+$ and $\hat u^+ = u^- > u^+$, this contradicts the weak inequality $u^+\geq u^-$ on $D_r\cap (\cup_{\sigma\in [0, 1]}\Phi_{\sigma}^{-1}(\spt w))$ that we have already concluded. Thus $\hat T^+ = T^+$. 
		
		We assert that strict inequality $u^+ > u^-$ holds on $D_r$ and that the supports of $T^+$, $T^-$ are disjoint. Indeed, carrying out a conformal metric change computation shows that
		\begin{equation} \label{eq:homology.min.mean.curv}
			H(T^-, g_{t^+}) = (n-1) (t^+-t^-) e^{-t^+ w} \langle \nabla_{g} w, \nu \rangle_g,
		\end{equation}
		where $\nu$ is the upward unit normal to $\spt T^-$ with respect to $g$. Using (II), (III), and (V)'s \eqref{eq:homology.min.graph.grad}, we see that $H(T^-, g_{t^+}) \leq 0$ and that this inequality is strict over $D_{r/c}$. Since $H(T^+, g_{t^+}) = 0$, the maximum principle implies that the supports of $T^+$, $T^-$ are disjoint, with $u^+ > u^-$. This completes the proof of (iv'') which yields \emph{(iv)}.
	\end{proof}
	
	\begin{proof}[Proof of Claim \ref{clai:homology.min} (v)]	
		We pick up from \eqref{eq:homology.min.mean.curv}. By a standard PDE sliding argument (see, e.g., the proof of \cite[Proposition 5.6 (f)]{CMS:generic.9.10}), and the fact that $H(T^-, g_{t^+}) < 0$ over $D_{r/c}$, we deduce that
		\begin{equation} \label{eq:homology.min.dist.1}
			u^+ - u^- \geq c'(t^+-t^-) \text{ over } D_{r},
		\end{equation}
		for a constant $c' > 0$ independent of $t^\pm$, $T^\pm$. Note that \eqref{eq:homology.min.dist.1} trivially implies
		\begin{equation} \label{eq:homology.min.dist.2}
			d_{g}(\spt \partial (T^+ \mres (N \setminus C_{r,h})), \spt \partial (T^- \mres (N \setminus C_{r,h}))) \geq c'(t^+-t^-),
		\end{equation}
		for a possibly smaller $c' > 0$. In view of condition (I), both these currents are minimizing, so arguing as in \eqref{eq:plateau.rn.1}, we deduce from \eqref{eq:homology.min.dist.2} that
		\begin{equation} \label{eq:homology.min.dist.3}
			d_{g}(\spt T^+, \spt T^-) \geq c'(t^+-t^-),
		\end{equation}
		for a different yet $c' > 0$. This implies the required result.
	\end{proof}
	
	The result now follows as in Proposition \ref{prop:plateau.rn}. We wish to invoke Corollary \ref{coro:small.sing.dim} on:
	\begin{itemize}
		\item the manifold $(M, g) := (\breve N \setminus C_{r,h}, g)$,
		\item the collection of minimizing boundaries
		\[ \mathscr{F} := \bigcup_{t \in [0,1]} \{ T \mres (\breve N \setminus C_{r,h}) : T \text{ $g_t$-minimizing in } [\alpha] \}, \]
		\item the function
		\[ \frT(\mbfx) := t, \; \mbfx \in (\breve N \setminus C_{r,h}) \cap \spt T_t, \; t \in [0, 1]. \]
	\end{itemize}
	Note that the conditions of Corollary \ref{coro:small.sing.dim} (i.e., of Theorem \ref{theo:small.sing.dim} therein) are satisfied: (1) follows from (iii), (2) from (iv') and (iv''), (3) is trivial, and (4) follows from (v). Upon applying Corollary \ref{coro:small.sing.dim}, the result follows by choosing $t$ close enough to $0$.
\end{proof}

\appendix

\section{Quadratic hypercones} \label{app:quadratic}

We recall that quadratic hypercones are precisely rotations of 
\[ \cC_{p,q} = \{ (x, y) \in \RR^{p+1} \times \RR^{q+1} : q |x|^2 = p |y|^2 \} \subset \RR^{n+1}, \]
where $p, q > 0$ are integers with $p+q = n-1$. 

We shall recall facts about quadratic hypercones that are pertinent to this paper, but we direct the interested reader to \cite{Simoes:quadratic, SimonSolomon:quadratic, EdelenSpolaor:quadratic} for a comprehensive treatment.
\begin{Prop}
  $\cC_{p,q}$ is stable if and only if $p+q\geq 6$ (or equivalently $n+1\geq 8$);

  $\cC_{p,q}$ is minimizing if and only if $p+q\geq 6$ and $(p, q)\neq (1, 5),\ (5, 1)$.
\end{Prop}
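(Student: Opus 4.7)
My plan is to treat stability and minimality as separate assertions and reduce each to a classical spectral computation or a known result from the literature.

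For stability, I exploit that the link $\cL_{p,q} := \cC_{p,q} \cap \SSp^{n}$ is isometric to the generalized Clifford torus $\SSp^p(\sqrt{p/(p+q)}) \times \SSp^q(\sqrt{q/(p+q)})$ minimally embedded in $\SSp^{n}$, the choice of sphere radii being precisely the minimality condition. A direct curvature computation (as in Simons \cite{Simons:minvar}) gives $|A_{\cL_{p,q}}|^2 \equiv p+q = n-1$ as a constant on $\cL_{p,q}$. By the standard separation of variables on cones (cf.\ Subsection \ref{Subsec_reg Min Cone}), stability of $\cC_{p,q}$ is equivalent to $\mu(\cC_{p,q}) + (n-2)^2/4 \geq 0$, with $\mu(\cC_{p,q})$ as in \eqref{eq:lambda} of Definition \ref{defi:kappa.lambda}. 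Since $|A_{\cL_{p,q}}|^2$ is constant, the principal eigenfunction of $-(\Delta_{\cL_{p,q}} + |A_{\cL_{p,q}}|^2)$ on $\cL_{p,q}$ is itself constant, giving $\mu(\cC_{p,q}) = -(n-1)$. Stability thus reduces to the arithmetic inequality $(n-2)^2 \geq 4(n-1)$, i.e., $n^2 - 8n + 8 \geq 0$, which for positive integers $n$ is equivalent to $n \geq 7$, i.e., $p+q \geq 6$. Instability when $p+q \leq 5$ follows by testing the second variation against a compactly supported cutoff of a constant-on-link, power-in-$r$ function, whose Rayleigh quotient is strictly negative in this range.

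For minimality, necessity of $p+q \geq 6$ is immediate since minimizing cones are stable. Sufficiency when $\min\{p,q\} \geq 2$ is classical: for $p=q$ this is the theorem of Bombieri--De Giorgi--Giusti \cite{BDG:Simons}, and the remaining stable cases with $\min\{p,q\} \geq 2$ were established by Simoes \cite{Simoes:quadratic} (see also \cite{SimonSolomon:quadratic}). The strategy is to construct a foliation of each component of $\RR^{n+1} \setminus \cC_{p,q}$ by smooth minimal hypersurfaces asymptotic to $\cC_{p,q}$ at infinity: one solves the rotationally symmetric minimal surface ODE in the $(|x|, |y|)$ half-plane, and the unit normals of the resulting leaves glue into a divergence-free calibration field certifying minimality of $\cC_{p,q}$.

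The main obstacle is ruling out minimality of $\cC_{1,5}$ and $\cC_{5,1}$: in these borderline cases stability holds only on the nose, the $\SSp^1$ factor of the link degenerates, and the calibration sketched above fails to close up. The appropriate non-minimality argument, carried out in \cite{SimonSolomon:quadratic, EdelenSpolaor:quadratic}, constructs an explicit area-decreasing competitor by exploiting the homogeneous Jacobi fields on $\cC_{1,5}$: a specific ``smoothing'' of the cone along a prescribed Jacobi field is shown to have strictly smaller area in any compact region containing $\orig$. This construction is the technical heart of the proof, and I would rely on the cited references rather than rederive it here.
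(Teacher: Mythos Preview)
Your stability computation is correct and is essentially the argument of \cite{Simons:minvar} that the paper cites. The minimality discussion, however, has a gap and a couple of inaccuracies.

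The gap: you establish minimality only for $\min\{p,q\} \geq 2$ and then turn to excluding $(1,5)$ and $(5,1)$, but never address the cases $\cC_{1,m}$ with $m \geq 6$ (and symmetrically $\cC_{m,1}$). The statement asserts these \emph{are} minimizing, and nothing in your sketch covers them. The paper closes this by citing \cite{Lawson72} alongside \cite{BDG:Simons, Simoes:quadratic}.

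The inaccuracies concern your treatment of $(1,5)$. Stability there is not ``on the nose'': with $n=7$ one has $(n-2)^2/4 - (n-1) = 25/4 - 6 = 1/4 > 0$, so $\cC_{1,5}$ is strictly stable, and its non-minimality is unrelated to marginal stability. Your citations for the non-minimality are also not the right ones --- \cite{SimonSolomon:quadratic} concerns uniqueness of tangent cones and the Jacobi-field structure on quadratic cones, not a competitor construction for $\cC_{1,5}$. The classical argument (again \cite{Lawson72}) uses the $SO(2)\times SO(6)$ symmetry to reduce to a weighted one-dimensional variational problem in the $(|x|,|y|)$ quarter-plane, where the ray corresponding to $\cC_{1,5}$ is shown not to minimize; the mechanism is not ``smoothing along a Jacobi field.''
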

\begin{proof}
  The stability of $\cC_{p,q}$ follows from \cite{Simons:minvar}; The minimizing property of $\cC_{p, q}$ follows from \cite{BDG:Simons, Lawson72, Simoes:quadratic}.
\end{proof}

\begin{Prop} \label{prop:quadratic.strictly.stable}
	Each stable quadratic hypercone $\cC$ is strictly stable (Definition \ref{defi:strictly.stable}).
\end{Prop}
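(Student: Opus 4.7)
The plan is to use separation of variables in polar coordinates on $\cC$ to reduce the weighted stability inequality of Definition \ref{defi:strictly.stable} to a spectral inequality on the link $\cL = \cC \cap \SSp^{n}$, and then resolve that spectral inequality by an explicit computation using the product structure of the link.

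First I would decompose an arbitrary test function $\varphi \in C_c^1(\cC)$ in polar coordinates $(r, \omega) \in (0, \infty) \times \cL$ as $\varphi(r, \omega) = \sum_k f_k(r) \psi_k(\omega)$, where $\{\psi_k\}$ is an $L^2(\cL)$-orthonormal eigenbasis of the Jacobi operator $-\Delta_\cL - |A_\cL|^2$ with eigenvalues $\mu_1 \leq \mu_2 \leq \cdots$. Using $|\nabla_\cC \varphi|^2 = \varphi_r^2 + r^{-2}|\nabla_\cL \varphi|^2$, $|A_\cC|^2 = r^{-2}|A_\cL|^2$, and the fact that the volume element factors as $r^{n-1}\,dr\,d\omega$, the quadratic form block-diagonalizes as
\[ Q_\cC(\varphi, \varphi) = \sum_k \int_0^\infty \bigl[ (f_k')^2 r^{n-1} + \mu_k f_k^2 r^{n-3} \bigr] \, dr, \]
while $\int_\cC r^{-2}\varphi^2 = \sum_k \int_0^\infty f_k^2 r^{n-3}\,dr$. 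The sharp one-dimensional Hardy inequality $\int_0^\infty (f')^2 r^{n-1}\,dr \geq \tfrac{(n-2)^2}{4}\int_0^\infty f^2 r^{n-3}\,dr$, valid for all $f \in C_c^1((0,\infty))$, then forces
\[ Q_\cC(\varphi, \varphi) \geq \bigl(\mu_1 + \tfrac{(n-2)^2}{4}\bigr) \int_\cC r^{-2}\varphi^2, \]
and Definition \ref{defi:strictly.stable} reduces to the strict spectral inequality $\mu_1 > -\tfrac{(n-2)^2}{4}$.

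Next I would compute $\mu_1$ for $\cC = \cC_{p,q}$. The link is the Riemannian product $\cL = \SSp^p(\sqrt{p/(n-1)}) \times \SSp^q(\sqrt{q/(n-1)})$, and $|A_\cL|^2 \equiv n-1$ is constant, by the standard computation of the second fundamental form of a Clifford-type product of round spheres embedded in $\SSp^n$. The Laplacian on $\cL$ separates into the sum of the two round-sphere Laplacians, each with non-negative spectrum and zero attained only on constants. The constant eigenfunction thus realizes the eigenvalue $-(n-1)$ of $-\Delta_\cL - |A_\cL|^2$, while any non-constant product eigenfunction contributes a strictly positive Laplace eigenvalue and hence an eigenvalue strictly greater than $-(n-1)$. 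Therefore $\mu_1 = -(n-1)$, and this is the only step requiring genuine verification beyond bookkeeping.

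Finally, the stability hypothesis forces $p+q \geq 6$, i.e., $n \geq 7$ with $n = p+q+1$ the dimension of $\cC$, and an elementary check yields $n^2 - 8n + 8 \geq 1 > 0$ for all such $n$. This is precisely $(n-2)^2 > 4(n-1)$, so
\[ \mu_1 + \tfrac{(n-2)^2}{4} = \tfrac{n^2 - 8n + 8}{4} > 0, \]
giving strict stability with $\mu_\cC := \tfrac{n^2 - 8n + 8}{4}$. Note the main conceptual point is that the stability threshold $n \geq 7$ is open: even the borderline dimension $n=7$ (ambient $\RR^8$) satisfies $\mu_1 = -6 > -\tfrac{25}{4}$, so no exceptional argument is needed for quadratic cones in $\RR^8$.
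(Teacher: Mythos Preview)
Your proof is correct and takes essentially the same approach as the paper. The paper's proof is a terse two-line version of yours: it cites \cite[Theorem 4.5]{CaffarelliHardtSimon} for the Hardy-inequality reduction to the spectral condition $\mu_1 > -\tfrac{(n-2)^2}{4}$, and then records the key computation $|A_\cL|^2 = n-1 < \tfrac{(n-2)^2}{4}$ for $n+1 \geq 8$, which is exactly your steps 1--3 compressed into a citation and a one-line inequality.
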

\begin{proof}
	This follows from the explicit spectral characterization in \cite[Theorem 4.5]{CaffarelliHardtSimon} and the fact that $|A|^2 = n-1 < \tfrac{(n-2)^2}{4}$ (for $n+1 \geq 8$) on the link of every such $\cC$.
\end{proof}

\begin{Prop} \label{prop:quadratic.strictly.minimizing}
	Each minimizing quadratic hypercone $\cC$ is strictly minimizing (Definition \ref{def:strictly-minimizing}).
\end{Prop}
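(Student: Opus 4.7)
The plan is to invoke the one-sided regularity/foliation theorem of Hardt--Simon \cite{HardtSimon:isolated}, whose hypotheses are exactly that $\cC$ be a regular, strictly stable, minimizing hypercone. We have regularity by definition of quadratic hypercone and strict stability from Proposition \ref{prop:quadratic.strictly.stable}. The Hardt--Simon theorem then furnishes a smooth foliation of $\RR^{n+1} \setminus \{\orig\}$ by minimal hypersurfaces $\{\cS_t\}_{t \in \RR}$, with $\cS_0 = \cC \setminus \{\orig\}$ and each $\cS_t$, $t \neq 0$, a smooth, complete, embedded minimal hypersurface on one side of $\cC$, asymptotic to $\cC$ at infinity and at the origin.

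From this foliation I build a calibration on a neighborhood $U$ of $\cC \cap \bar B_2 \setminus \{\orig\}$. Namely, choosing a continuous unit normal field $\nu$ to the foliation, set
\[
	\omega_p := \iota_{\nu(p)} \, dx^1 \wedge \cdots \wedge dx^{n+1},
\]
which is a smooth $n$-form on $U$ with pointwise comass $1$. Since each leaf is minimal and the leaves foliate $U$, $\omega$ is closed. By construction $\omega$ restricts to the volume form of $\cC$ on $\cC$, and to that of each leaf on each $\cS_t$. A standard calibration computation then gives $\mathbf{M}(T) \geq \mathbf{M}(\cC \cap B_R)$ for any integral competitor $T$ with $\partial T = \partial \llbracket \cC \cap B_R \rrbracket$ and $\spt T$ close to $\cC$. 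Combining this with the already known fact that $\cC$ is minimizing (away from the small-neighborhood regime) gives that $\cC$ is the unique minimizer with its boundary.

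The remaining step, and the main technical point, is to upgrade uniqueness to the quantitative area gap required by Definition \ref{def:strictly-minimizing}. I would do this by a standard compactness-contradiction argument: suppose there existed a sequence $T_i$ of competitors with $\partial T_i = \partial \llbracket \cC \cap B_2 \rrbracket$ and $\mathbf{M}(T_i) - \mathbf{M}(\cC \cap B_2) \to 0$ but with $T_i$ not converging to $\llbracket \cC \cap B_2 \rrbracket$ in the relevant (flat / varifold / Hausdorff) sense that the definition prescribes. Standard compactness for minimizing currents (the $T_i$ are almost-minimizers) plus the calibration inequality above forces subsequential convergence to a minimizer $T_\infty$ with the same boundary and the same mass as $\cC \cap B_2$. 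By uniqueness $T_\infty = \llbracket \cC \cap B_2 \rrbracket$. Strict stability of $\cC$ (Proposition \ref{prop:quadratic.strictly.stable}) and the Hardt--Simon $\epsilon$-regularity for minimizers close to $\cC$ then promote this convergence to smooth graphical convergence on compact subsets of $\cC \setminus \{\orig\}$, contradicting the assumed non-convergence in the definition's metric.

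The hard part is matching whatever exact quantitative formulation \ref{def:strictly-minimizing} uses (e.g.\ a linear area gap versus flat distance, or versus a Hausdorff-type distance in an annulus); the calibration gives the qualitative inequality essentially for free, but the quantitative version may require either exploiting strict stability to get an $L^2$-gap on the linearized Jacobi operator and transferring it back via Simon's graphical approximation, or quoting a ready-made statement (e.g.\ along the lines of \cite[Theorem 4.5]{CaffarelliHardtSimon}, \cite{SimonSolomon:quadratic}, or \cite{EdelenSpolaor:quadratic}) which, for quadratic hypercones, gives exactly the desired gap. Once that bookkeeping is done the proposition follows.
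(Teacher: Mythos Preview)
The paper's Definition~\ref{def:strictly-minimizing} is not an area-gap statement at all: it says $\cC_\circ$ is strictly minimizing precisely when the Hardt--Simon foliation leaves on \emph{both} sides decay toward $\cC_\circ$ at the fast rate $r^{\gamma_1}$ rather than the slow rate $r^{2-n_\circ-\gamma_1}$ (these being the two a priori possibilities furnished by \cite{HardtSimon:isolated}). Your argument never engages with this. The foliation, the calibration, and the uniqueness of $\cC$ among competitors with the same boundary all hold regardless of which of the two decay rates occurs --- indeed, Hardt--Simon produce the foliation and the calibration under exactly the hypotheses you invoke (regular, strictly stable, minimizing), without assuming strict minimizing. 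So steps (1)--(3) of your proposal are correct but prove nothing beyond what is assumed. Your compactness-contradiction in step (4) yields only a qualitative area gap for competitors bounded away from $\cC$ in some fixed metric; it cannot produce the specific scale-dependent inequality (in Hardt--Simon's original formulation, an excess $\geq \theta\rho^n$ for competitors agreeing with $\cC$ outside $B_\rho$) that is equivalent to the fast-decay condition.

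The paper's own proof is simply a citation to Lin \cite{Lin:strict-min} and Lawlor \cite{Lawlor:sufficient}, who verify the strict-minimizing property for quadratic cones directly --- Lawlor via his vanishing-calibration criterion, which gives explicit quantitative lower bounds on the area of competitors that are sharp enough to detect the required $\rho^n$ scaling. To fix your argument you would need either to compute the asymptotics of the Hardt--Simon leaves for $\cC_{p,q}$ explicitly and read off $\gamma(\pm)=\gamma_1$, or to import one of these quantitative results; the soft calibration-plus-compactness route cannot close the gap.
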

\begin{proof}
See \cite{Lin:strict-min} or alternatively \cite{Lawlor:sufficient} (see the discussion in (3) on p.\ 100 there and then the calculation on p.\ 69--70).
\end{proof}

\begin{Prop} \label{prop:quadratic.integrable}
	Each quadratic hypercone $\cC$ is strongly integrable (Definition \ref{defi:strongly.integrable}).
\end{Prop}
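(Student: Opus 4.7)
Since strong integrability is a property invariant under rotations of $\RR^{n+1}$, we may assume $\cC = \cC_{p,q}$ for some $p, q \geq 1$ with $p + q = n-1$. The approach is a direct spectral computation on the link, leveraging Lemma \ref{lemm:gamma.c0.discrete} (which equates homogeneous Jacobi fields on $\cC$ of degree $\gamma$ with eigenfunctions of $-(\Delta_\cL + |A_\cL|^2)$ on $\cL = \cC \cap \SSp^n$ of eigenvalue $\gamma^2 + (n-2)\gamma$), combined with a dimension count matching the known translation and rotation spaces.

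The link of $\cC_{p,q}$ is the Riemannian product $\cL = \SSp^p(\alpha) \times \SSp^q(\beta)$ with $\alpha^2 = p/(n-1)$, $\beta^2 = q/(n-1)$, on which $|A_\cL|^2 \equiv n-1$. Using product spherical harmonics $Y_k(x/\alpha) \otimes Y_\ell(y/\beta)$, the eigenvalues of $-(\Delta_\cL + |A_\cL|^2)$ are
\[
\lambda_{k,\ell} = (n-1)\left[\frac{k(k+p-1)}{p} + \frac{\ell(\ell+q-1)}{q} - 1\right].
\]
Setting $\lambda_{k,\ell} = \mu(0) = 0$ gives $\frac{k(k+p-1)}{p} + \frac{\ell(\ell+q-1)}{q} = 1$, whose only solutions are $(k,\ell) \in \{(1,0),(0,1)\}$, yielding an eigenspace of dimension $(p+1) + (q+1) = n+1$. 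Setting $\lambda_{k,\ell} = \mu(1) = n-1$ gives $\frac{k(k+p-1)}{p} + \frac{\ell(\ell+q-1)}{q} = 2$, and an elementary case check (since $k \geq 2$ forces the first term $> 2$, and similarly for $\ell$) shows $(k,\ell) = (1,1)$ is the unique solution, yielding an eigenspace of dimension $(p+1)(q+1)$.

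Next, I compute the translation and rotation Jacobi field dimensions directly. Since $\nu_\cC(x,y) = (qx, -py)/(r\sqrt{pq})$, the map $\mbfv \mapsto \langle \mbfv, \nu_\cC\rangle$ is injective on $\RR^{n+1}$ (as the coefficients of the linear functions $x_i$ and $y_j$ force $\mbfv = 0$), so $\dim \Jac_\textnormal{trl}(\cC) = n+1$. For rotations, writing $\mbfA \in \mathfrak{so}(n+1)$ in block form with diagonal blocks $M \in \mathfrak{so}(p+1)$, $P \in \mathfrak{so}(q+1)$ and off-diagonal block $N$, the antisymmetries kill the diagonal contributions and yield
\[
\langle \mbfA(x,y), \nu_\cC\rangle = \tfrac{n-1}{\sqrt{pq}} \cdot \tfrac{\langle Ny, x\rangle}{r}.
\]
Thus the kernel of $\mbfA \mapsto \langle \mbfA \cdot, \nu_\cC\rangle$ is exactly $\mathfrak{so}(p+1) \oplus \mathfrak{so}(q+1)$, and a short arithmetic check with $n = p+q+1$ gives $\dim \Jac_\textnormal{rot}(\cC) = \tfrac{n(n+1)}{2} - \tfrac{p(p+1)}{2} - \tfrac{q(q+1)}{2} = (p+1)(q+1)$.

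Combining these observations: the inclusions $\Jac_\textnormal{trl}(\cC) \subset \Jac_0(\cC)$ and $\Jac_\textnormal{rot}(\cC) \subset \Jac_1(\cC)$ from \eqref{eq:translation.inclusion}--\eqref{eq:rotation.inclusion} are between spaces of matching dimension ($n+1$ and $(p+1)(q+1)$, respectively), so they are equalities. This yields strong integrability per Definition \ref{defi:strongly.integrable}. The only potential obstacle is ruling out exotic eigenspaces of $-(\Delta_\cL + |A_\cL|^2)$ at the prescribed eigenvalues, but this is handled cleanly by the monotonicity of $k \mapsto k(k+p-1)/p$ on nonnegative integers, which reduces to a finite enumeration.
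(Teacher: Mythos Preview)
Your proposal is correct. The paper's own proof is simply a one-line citation to \cite[Proposition 2.7]{SimonSolomon:quadratic}, without reproducing any of the argument. What you have written is essentially the content of that cited result: the explicit product-spherical-harmonic decomposition of the Jacobi operator on $\cL = \SSp^p(\alpha) \times \SSp^q(\beta)$, the enumeration of the $(k,\ell)$ bidegrees hitting the eigenvalues $\mu(0)=0$ and $\mu(1)=n-1$, and the dimension match against $\Jac_\textnormal{trl}$ and $\Jac_\textnormal{rot}$. So there is no genuinely different route here --- you have unpacked the cited black box rather than invoking it. The computations (the normal $\nu_\cC = (qx,-py)/(r\sqrt{pq})$, the identification of the rotation kernel with $\mathfrak{so}(p+1)\oplus\mathfrak{so}(q+1)$, the arithmetic check that $\tfrac{n(n+1)}{2} - \tfrac{p(p+1)}{2} - \tfrac{q(q+1)}{2} = (p+1)(q+1)$, and the monotonicity ruling out $k\geq 2$ or $\ell\geq 2$) are all sound.
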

\begin{proof}
	This follows from \cite[Proposition 2.7]{SimonSolomon:quadratic}.
\end{proof}

\begin{Cor} \label{coro:quadratic.isolated}
	Each quadratic hypercone is $\cC$ isolated in the space of minimal hypercones.
\end{Cor}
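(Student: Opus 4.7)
The goal is to show that if $\cC_i \to \cC$ in $\mathscr{C}_n$ with $\cC$ a quadratic hypercone, then for all large $i$ we have $\cC_i \in \Rot(\cC)$. Since quadratic hypercones come in finitely many $\Rot$-orbits, and each orbit is compact, this yields the desired isolation in $\mathscr{C}_n$.

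First I would promote the convergence from the flat topology to smooth convergence of links. Since $\cC$ is regular, Allard's theorem together with elliptic regularity applied to the minimal surface equation on the link $\cL := \cC \cap \SSp^n$ forces, for $i$ large, that $\cL_i := \cC_i \cap \SSp^n$ is smooth and writeable as a normal graph $\cL_i = \graph_{\cL} \varphi_i$ with $\|\varphi_i\|_{C^{k,\alpha}(\cL)} \to 0$ for every $k$. (Here it is important that $\cC_i$ and $\cC$ are both cones, so convergence in $\mathscr{C}_n$ is equivalent to convergence of links.) By the cone structure, $\cC_i = O_i \cdot \cC$ will follow precisely from $\cL_i = O_i \cdot \cL$ in $\SSp^n$ for some $O_i \in SO(n+1)$.

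Next I would set up a Lyapunov--Schmidt reduction for minimal hypersurfaces in $\SSp^n$ near $\cL$. The minimality condition $M(\varphi)=0$ is a smooth nonlinear elliptic equation on $\cL$ with $M(0)=0$ and linearization $DM(0) = J_{\cL} := \Delta_{\cL} + |A_{\cL}|^2 + (n-1)$. By Lemma \ref{lemm:gamma.c0.discrete} (applied with $\gamma=1$, so $\mu(1) = 1 + (n-2) = n-1$), the kernel of $J_{\cL}$ consists of the restrictions to $\cL$ of elements of $\Jac_1(\cC)$. Strong integrability (Proposition \ref{prop:quadratic.integrable}, Definition \ref{defi:strongly.integrable}) identifies this kernel with $\Jac_\textnormal{rot}(\cC)|_{\cL}$. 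Strict stability (Proposition \ref{prop:quadratic.strictly.stable}) ensures $J_{\cL}$ is self-adjoint with discrete spectrum and invertible on the $L^2$-orthogonal complement $K^\perp$ of $K := \ker J_{\cL}$, so by the standard implicit function theorem applied to the projection $(1-\Pi_K) \circ M$, the local solution set $\{\varphi : M(\varphi)=0\}$ near $0$ is a smooth finite-dimensional manifold $\cM$ tangent to $K$ at $0$, of dimension $\dim K$.

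Finally, the $SO(n+1)$-orbit of $\cL$ in $\SSp^n$ provides a smooth family of minimal hypersurfaces through $\cL$, parametrized by $SO(n+1)/\mathrm{Stab}(\cL)$, whose tangent space at $\cL$ equals $\Jac_\textnormal{rot}(\cC)|_{\cL} = K$. Thus the rotation orbit is a smooth submanifold of $\cM$ of the same dimension, and hence coincides with $\cM$ in a neighborhood of $\cL$. In particular, the graphs $\cL_i$ are rotations of $\cL$ for large $i$, so $\cC_i \in \Rot(\cC)$, proving the corollary. The main (and only nontrivial) obstacle is verifying that the kernel $K$ is exactly integrated by the $SO(n+1)$-orbit; this is precisely what strong integrability of quadratic hypercones buys us, reducing a potential moduli computation to a purely linear statement.
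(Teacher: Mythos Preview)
Your argument is correct and is precisely the standard Lyapunov--Schmidt/implicit function theorem proof that strong integrability implies isolation. The paper gives no proof of this corollary at all; it is stated immediately after Proposition~\ref{prop:quadratic.integrable} as a direct consequence (implicitly referring the reader to \cite{SimonSolomon:quadratic} for the underlying spectral facts), so there is nothing to compare against beyond noting that you have supplied the details the paper omits.

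One small correction: you do not need strict stability for the spectral setup. Self-adjointness, discrete spectrum, and Fredholmness of $J_{\cL} = \Delta_{\cL} + |A_{\cL}|^2 + (n-1)$ follow purely from ellipticity on the compact manifold $\cL$, and invertibility on $K^\perp$ is automatic for a self-adjoint operator. The only nontrivial input is strong integrability, which identifies $K = \ker J_{\cL}$ with the tangent space to the $SO(n+1)$-orbit; once you have that, the orbit already furnishes a $(\dim K)$-parameter family of solutions, so the obstruction map $k \mapsto \Pi_K M(k + \Psi(k))$ vanishes identically near $0$ and the local solution set coincides with the orbit, exactly as you argue.
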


\section{A Frankel property of nonflat minimizing hypercones}

\begin{Lem} \label{lemm:one.sided.cone}
	Suppose that $\cC \subset \RR^{n+1}$ is a minimizing hypercone, and $\Sigma$ is a minimizing boundary in $\RR^{n+1}$ that does not cross $\cC$ smoothly that is conical about some $p \in \sing \Sigma$. Then,
	\[ \Sigma = \pm \cC \text{ and } p \in \spine \cC. \]
\end{Lem}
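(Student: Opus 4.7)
The plan is to translate $\Sigma$ to produce a hypercone about $\orig$, identify it with $\cC$ via a strong maximum principle argument, and then deduce $p \in \spine \cC$ from a translation Jacobi field argument.

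First I would use the non-crossing hypothesis, together with standard cut-and-paste, to orient both boundaries so that the open regions $E_\cC, E_\Sigma$ they bound satisfy $E_\cC \subset E_\Sigma$. Then set $\tilde\Sigma := \Sigma - p$, a minimizing boundary that is a hypercone about $\orig$ (since $\Sigma$ is conical about $p$) and nonflat (since $p \in \sing \Sigma$ gives $\orig \in \sing \tilde\Sigma$). Writing $E_\Sigma = E_{\tilde\Sigma} + p$ and rescaling the inclusion about $\orig$ by $\lambda > 0$, while using that $E_\cC$ and $E_{\tilde\Sigma}$ are cones about $\orig$, will give $E_\cC \subset E_{\tilde\Sigma} + \lambda p$ for every $\lambda > 0$; passing to $\lambda \to 0^+$ and using openness of $E_\cC$ will yield $E_\cC \subset E_{\tilde\Sigma}$. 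Two applications of Ilmanen's strong maximum principle (to each of $E_\cC \subset E_{\tilde\Sigma}$ and $E_{\tilde\Sigma}^c \subset E_\cC^c$), enabled by $\orig \in \spt \cC \cap \spt \tilde\Sigma$ and connectedness of the supports, should then force $\spt \cC = \spt \tilde\Sigma$. Matching orientations will give $\tilde\Sigma = \cC$, i.e., $\Sigma = \cC + p$ (the alternative $\tilde\Sigma = -\cC$ would yield $E_\cC \subset E_\cC^c$, absurd).

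To conclude, I would observe that $E_\cC \subset E_\Sigma = E_\cC + p$ is equivalent to $E_\cC - p \subset E_\cC$, which infinitesimally forces the translation Jacobi field $u := \langle p, \nu_\cC \rangle$ to satisfy $u \geq 0$ on $\reg \cC$ (for the appropriate orientation of $\nu_\cC$). Being translation-induced, $u$ is $0$-homogeneous, so its restriction to the link $\cL := \cC \cap \SSp^n$ will be an eigenfunction of $-(\Delta_\cL + |A_\cL|^2)$ with eigenvalue $0$ (via the relation $\mu(\gamma) = \gamma^2 + (n-2)\gamma$). Since $\cC$ is nonflat, the Rayleigh quotient of this operator at $\varphi \equiv 1$ equals $-\int_\cL |A_\cL|^2 < 0$, so its principal eigenvalue is strictly negative, and any eigenfunction with eigenvalue $0$ is $L^2$-orthogonal to the positive principal eigenfunction and therefore sign-changing unless identically zero. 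Combined with $u \geq 0$, this will force $u \equiv 0$, i.e., $p$ is tangent to $\spt \cC$ everywhere on $\reg \cC$, hence $p \in \spine \cC$, $\cC + p = \cC$, and $\Sigma = \pm \cC$. The main technical obstacle will be the careful application of Ilmanen's strong maximum principle at the common singular point $\orig$, together with rigorously executing the $\lambda \to 0^+$ blow-down limit for the translated sets $E_{\tilde\Sigma} + \lambda p$.
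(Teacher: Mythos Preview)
Your proof follows the same two-step skeleton as the paper's: first identify $\Sigma$ with a translate of $\cC$, then show the translation Jacobi field $u = \langle p, \nu_\cC \rangle$ must vanish. Your translation-plus-rescaling in the first step is essentially the paper's blow-down argument rephrased, and both finish with a strong maximum principle for singular minimizers (the paper cites Simon's, you cite Ilmanen's).

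The genuine difference is in how you rule out $u > 0$. The paper invokes Simon's growth/Harnack theory for positive Jacobi fields on nonflat minimizing hypercones (which forces such a field to blow up like $r^{-\alpha(\cC)}$ near $\orig$ and hence not be bounded). You instead argue spectrally on the link: $u|_{\cL}$ would be a nonnegative eigenfunction at eigenvalue $0$, $L^2$-orthogonal to a positive principal eigenfunction at eigenvalue $\mu(\cC) < 0$, forcing $u \equiv 0$. This is clean when $\cC$ is \emph{regular}, but the lemma allows singular $\cC$, and then your argument has a gap as written: for a non-compact link $\cL = \reg\cC \cap \SSp^n$ you have not justified existence of a positive $L^2$ principal eigenfunction, nor that $\varphi \equiv 1$ is an admissible test function. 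The fix is easy and avoids the principal eigenfunction entirely: if $u > 0$ on $\reg\cL$, substitute $\varphi = (u|_\cL)\psi$ in the Rayleigh quotient to get $Q(\varphi,\varphi) = \int_\cL u^2|\nabla\psi|^2 \geq 0$ for all $\psi \in C^1_c(\cL)$, whence $\mu(\cC) \geq 0$, contradicting nonflatness. The paper's route via Simon's decay sidesteps this issue and is moreover the mechanism driving the rest of the paper (Section~\ref{sec:kappa.lambda}).
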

\begin{proof}
	This is contained in the proof of  \cite[Proposition 3.3]{CMS:generic.9.10}. We briefly recall the strategy. By blowing down $\Sigma$, one obtains a minimizing hypercone $\cC' \subset \RR^{n+1}$ that does not cross $\cC$ smoothly (or else $\Sigma$ would have since $\cC$ is dilation invariant). Therefore, by Simon's strong maximum principle for minimizers (\cite{Simon:smp}), $\cC' = \pm \cC$. 
	
	Since $\Sigma$ was conical about $p$ to begin with, we must have
	\[ \Sigma = \cC' + p = \cC + p. \]
	If $p = \orig$, we are done. Otherwise, consider the rescaled minimizers
	\[ \Sigma_t := t \Sigma = \cC + tp, \; t \in (0, 1). \]
	By sending $t \to 0$ and writing $\Sigma_t$ as graphs locally over compact subsets of $\cC$ and using the regular Harnack inequality, we obtain the Jacobi field $\langle \nu_\cC, p \rangle$ on $\cC$. Since $\cC$ is dilation invariant, none of the $\Sigma_t$ cross $\cC$ smoothly, and therefore
	\[ \langle \nu_\cC, p \rangle \geq 0 \text{ on } \cC. \]
	By the strong maximum principle on the Jacobi field $\langle \nu_\cC, p \rangle$, $\langle \nu_\cC, p \rangle \equiv 0$ or $\langle \nu_\cC, p \rangle > 0$. The latter is impossible: $\langle \nu_\cC, p \rangle$ would be a bounded Jacobi field on $\cC$, in contradiction to Simon's Harnack theory for positive Jacobi fields on nonflat minimizing hypercones \cite{Simon:decay} (cf. \cite[Theorem 1.4]{Wang:smoothing}). Thus, $\langle \nu_\cC, p \rangle \equiv 0$, so $\cC$ splits a line parallel to $p$, so $p \in \spine \cC$, and $\Sigma = \cC + p = \cC$. This completes the proof.
\end{proof}

\section{$\alpha(\cC; \sigma)$ as decay rate of positive Jacobi fields}

\begin{Lem} \label{lemm:jacobi.decay.cone}
	There exist $\sigma_1 = \sigma_1(n)$ such that for all $\cC \in \mathscr{C}_n$:
	\begin{enumerate}
		\item[(i)] $\reg_{>4 \sigma_1} \cC \cap \partial B_1(\orig) \neq \emptyset$, and
		\item[(ii)] for all $\sigma \in (0, \sigma_1]$, there exists $H = H(n, \sigma)$ such that
			\[ \inf_{\reg_{> 2 \sigma} \cC \cap \partial B_1(\orig)} u < H \rho^{\alpha(\cC; \sigma/2)} \inf_{\reg_{> \frac12 \rho \sigma} \cC \cap \partial B_\rho(\mbfy)} u \]
			 for all positive Jacobi fields $u$ on $\cC$, $\mbfy \in \spine \cC \cap \bar B_1(\orig)$, and $\rho \in (0,1)$.
	\end{enumerate}
\end{Lem}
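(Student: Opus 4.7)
The plan is to prove (i) by a compactness and lower-semicontinuity argument, and (ii) by constructing a barrier from the principal Dirichlet eigenfunction of the sub-cone and applying a weighted maximum principle.

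For (i), suppose no uniform $\sigma_1$ exists. Then one has $\cC_i \in \mathscr{C}_n$ with $\reg_{>4/i}\cC_i \cap \partial B_1(\orig) = \emptyset$. By standard compactness of $\mathscr{C}_n$ (via monotonicity and Federer--Almgren dimension reduction), pass to a subsequential limit $\cC_\infty \in \mathscr{C}_n$. Since $\cC_\infty$ is nonflat and $\sing \cC_\infty$ has Hausdorff codimension at least $7$ in $\cC_\infty$, there exists a point $p \in \reg \cC_\infty \cap \partial B_1(\orig)$ with regularity scale $r_{\cC_\infty}(p) > 8\delta$ for some $\delta>0$. The lower semicontinuity of the regularity scale under varifold convergence (recorded after the Preliminaries) yields $p_i \in \reg_{>4\delta}\cC_i \cap \partial B_1(\orig)$ for large $i$, contradicting $4/i < 4\delta$.

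For (ii), first reduce to $\mbfy = \orig$: since $\cC = \cC + \mbfy$ for $\mbfy \in \spine \cC$, the shifted function $\tilde u(\cdot) = u(\cdot + \mbfy)$ is again a positive Jacobi field on $\cC$, and a Harnack chain in $\reg_{>\sigma}\cC$ of length bounded by $n$ and $\sigma$ (via (i) to supply reference points) shows that the infima of $u$ and $\tilde u$ on $\reg_{>2\sigma}\cC \cap \partial B_1(\orig)$ are comparable up to a multiplicative constant, absorbable into $H$. Then construct the barrier: let $\varphi > 0$ be the principal Dirichlet eigenfunction of $-\Delta_\cL - |A_\cL|^2$ on $\cL_{>\sigma/2}$ with eigenvalue $\mu(\cC;\sigma/2)$; as in the proof of Proposition~\ref{prop:kappa.spine}, the function $v(\rho\omega) := \rho^{-\alpha(\cC;\sigma/2)}\varphi(\omega)$ is a positive Jacobi field on the sub-cone $\operatorname{Cone}(\cL_{>\sigma/2})$, vanishing on its lateral boundary.

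The comparison takes place on the annulus $A := \operatorname{Cone}(\cL_{>\sigma/2}) \cap (B_1 \setminus B_\rho(\orig))$, where $f := u/v$ satisfies the divergence-form equation $\operatorname{div}(v^2 \nabla f) = 0$. Since $u>0$ while $v \to 0$ on the lateral boundary of $A$, $f \to +\infty$ there, so the maximum principle forces $\inf_A f =: \tau^*$ to be attained on the inner or outer spherical boundary, and $u \geq \tau^* v$ on $\bar A$. Evaluating this at points of $\cL_{>2\sigma}$ on $\partial B_1$ (where $\varphi$ is bounded above by some $M = M(\cC,\sigma)$) and at points $\rho\omega$ with $\omega \in \cL_{>\sigma}$ on $\partial B_\rho$ (where $\varphi$ is bounded below by some $m_1 = m_1(\cC,\sigma)>0$), then using the interior elliptic Harnack inequality on positive Jacobi fields at both scales (chain length controlled by $n,\sigma$ via (i)) to pass from pointwise bounds to infima over the relevant regularity-scale sets, yields the claimed inequality.

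The main obstacles are twofold. First, uniformity: the Harnack constants and the bounds $m_1, M$ on $\varphi$ a priori depend on $\cC$, and one must argue via compactness of $\mathscr{C}_n$ modulo rotations (combined with (i) to obtain uniform reference regular points) that they can be taken as $H(n,\sigma)$. Second, if $\tau^*$ is attained on the inner boundary $\partial B_\rho$, then applying the pointwise bound $u\ge \tau^* v$ at the outer sphere does not immediately convert into an estimate in the direction the lemma asks for; in that case one closes the argument with a separate radial Harnack chain along $\{t\omega_0 : t \in [\rho,1]\}$ for a fixed $\omega_0 \in \reg_{>4\sigma_1}\cC \cap \partial B_1$ (again furnished by (i)), whose per-step constant is controlled by $n,\sigma$, producing the remaining comparison between $\inf u$ at radius $1$ and $\rho^{\alpha(\cC;\sigma/2)} \inf u$ at radius $\rho$.
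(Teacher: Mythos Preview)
Your argument for (i) is fine and matches the spirit of the paper's citation to \cite[Proposition 3.10]{CMS:generic.9.10}.

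For (ii), the barrier setup via $v(r\omega)=r^{-\alpha(\cC;\sigma/2)}\varphi(\omega)$ and the reduction to $\mbfy=\orig$ are reasonable, but your handling of the second case (when $\tau^*=\inf_A(u/v)$ is attained on the inner sphere $\partial B_\rho$) has a genuine gap. A radial Harnack chain along $\{t\omega_0:t\in[\rho,1]\}$ requires $\sim\log(1/\rho)$ steps, each contributing a fixed multiplicative constant $C_H$ depending on $n,\sigma$; chaining gives only
\[
u(\rho\omega_0)\ \ge\ C_H^{-c\log(1/\rho)}\,u(\omega_0)\ =\ \rho^{\,c'}\,u(\omega_0)
\]
for some $c'>0$ determined by $C_H$, which is a \emph{decay} bound, not the \emph{growth} bound $u(\rho\omega_0)\gtrsim\rho^{-\alpha(\cC;\sigma/2)}u(\omega_0)$ the lemma requires. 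The exponent $c'$ has nothing to do with $\alpha(\cC;\sigma/2)$, so this step cannot close the argument with the stated exponent. The issue is structural: the barrier maximum principle by itself does not force the direction of comparison you need when the infimum of $u/v$ falls on the inner sphere.

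The paper avoids this case split entirely by invoking \cite[Lemmas 3.8, B.1]{CMS:generic.9.10}, which first prove the asymmetric inequality $\inf_{\reg_{>\sigma}\cC\cap\partial B_1}u\le\rho^{\alpha(\cC;\sigma)}\sup_{\reg_{>\sigma}\cC\cap\partial B_\rho}u$ and then apply Harnack at a \emph{single} scale to convert the $\sup$ to an $\inf$. The route to that asymmetric inequality is not a barrier comparison but an ODE argument: one sets $I(r)=\int_{\cL_{>\sigma}}u(r\omega)\varphi(\omega)\,d\omega$ and integrates by parts using $\varphi|_{\partial\cL_{>\sigma}}=0$, $\partial_\nu\varphi\le 0$, and $u>0$ to obtain the differential inequality $I''+(n-1)r^{-1}I'-\mu(\cC;\sigma)r^{-2}I\ge 0$, from which the required monotonicity of an appropriate $r$-weighted version of $I$ follows directly, with no case analysis. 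If you want to give a self-contained argument, that is the ingredient to supply in place of the radial Harnack chain.
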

\begin{proof}
    (i) follows from \cite[Proposition 3.10]{CMS:generic.9.10}.

    For (ii), note that the proofs of \cite[Lemmas 3.8, B.1]{CMS:generic.9.10} apply with $\Omega = \reg_{>\sigma}$ and with $\alpha(\cC; \sigma)$ in place of the cruder $\kappa_n^*$ to yield
    \[ \inf_{\reg_{> \sigma} \cC \cap \partial B_1(\mbfy)} u \leq \rho^{\alpha(\cC; \sigma)} \sup_{\reg_{> \sigma} \cC \cap \partial B_\rho(\mbfy)} u. \]
    The result now follows by replacing $\sigma$ with $\sigma/2$ and two invocations of Harnack (see, e.g., \cite[Proposition 3.10]{CMS:generic.9.10}) to adjust the left and right hand sides to the desired ones.
\end{proof}

\section{A splitting lemma for homogeneous polynomials}
    
    Recall that for a polynomial $p \neq 0$ on $\RR^k$ and $y_0 \in \RR^k$ we denote by
    \[ \Ord_p(y_0) \in \ZZ_{\geq 0} \]
    the degree of the lowest-order nonzero term in the expansion of $p$ about $y_0$. First we record a trivial consequence of Taylor expansions:
    
    \begin{Lem} \label{lemm:polynomial.ord.characterization}
    	Let $p \neq 0$ be a polynomial on $\RR^k$ and $y_0 \in \RR^k$. Then
	\[ \Ord_p(y_0) \geq d \iff  D^\alpha p(y_0)=0 \text{ for every } \alpha \in (\ZZ_{\geq 0})^k \text{ with } |\alpha| \leq d-1. \]
    \end{Lem}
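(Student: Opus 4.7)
The plan is to reduce the claim directly to the Taylor expansion of $p$ about $y_0$, since polynomials are equal to their own (finite) Taylor series. Specifically, I would write
\[ p(y) = \sum_{\alpha \in (\ZZ_{\geq 0})^k} \frac{D^\alpha p(y_0)}{\alpha!} (y-y_0)^\alpha, \]
where the sum is finite because $p$ is a polynomial. Grouping terms by total degree $|\alpha|$, this rewrites as $p(y) = \sum_{j \geq 0} P_j(y-y_0)$, where $P_j$ is the homogeneous polynomial of degree $j$ given by $P_j(z) = \sum_{|\alpha|=j} \frac{D^\alpha p(y_0)}{\alpha!} z^\alpha$.

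By the definition of $\Ord_p(y_0)$ stated immediately before the lemma, $\Ord_p(y_0)$ is the smallest $j$ for which $P_j \not\equiv 0$. Thus $\Ord_p(y_0) \geq d$ is equivalent to $P_j \equiv 0$ for all $j \leq d-1$. Since the monomials $\{z^\alpha\}_{|\alpha|=j}$ are linearly independent, $P_j \equiv 0$ is equivalent to $D^\alpha p(y_0) = 0$ for all $|\alpha| = j$. Taking this over all $j \leq d-1$ yields the claimed equivalence.

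Both directions of the iff are immediate from the above, so there is no real obstacle; the statement is essentially a restatement of the Taylor expansion at the level of coefficients. The only point to keep straight is the identification of ``lowest-order nonzero term in the Taylor expansion'' (the definition of $\Ord_p$) with the vanishing of the corresponding partial derivatives $D^\alpha p(y_0)$, which follows from the standard factorial normalization in Taylor's formula.
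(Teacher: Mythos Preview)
Your proposal is correct and matches the paper's approach exactly: the paper records this lemma as ``a trivial consequence of Taylor expansions'' and gives no further proof, and your write-up simply spells out that trivial consequence.
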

    
    We also have the following splitting result for homogeneous polynomials:
    
    \begin{Lem}\label{Lem_homog-poly-splitting}
    Let $p\neq 0$ be a homogeneous polynomial on $\RR^k$.
    \begin{enumerate}
    	\item[(i)] For every $y_0 \in \RR^k$, we have
	\[ \Ord_p(y_0) \geq d \iff D^\alpha p(y_0)=0 \text{ for every } \alpha \in (\ZZ_{\geq 0})^k \text{ with } |\alpha| = d-1. \]
        \item[(ii)] Assume $\deg p = d$. Then, the set
        \[ V = \{ y_0 \in \RR^k : \Ord_p(y_0) \geq d \} \]
        is a linear subspace of $\RR^k$, and $p$ is invariant under translations in $V$.
      \end{enumerate}
    \end{Lem}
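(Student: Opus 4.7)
The plan is to prove (i) by a downward induction on $|\alpha|$ using Euler's identity for homogeneous polynomials, and then to derive (ii) from (i) by exploiting the fact that when $d = \deg p$ the relevant derivatives $D^\alpha p$ with $|\alpha| = d-1$ are linear forms.

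For (i), the forward direction is immediate from Lemma \ref{lemm:polynomial.ord.characterization}. For the reverse direction, I would assume $D^\alpha p(y_0) = 0$ for every $|\alpha| = d-1$ and show by downward induction on $j = d-2, d-3, \ldots, 0$ that $D^\beta p(y_0) = 0$ whenever $|\beta| = j$. The key tool is Euler's identity applied to the homogeneous polynomial $D^\beta p$ of degree $\deg p - |\beta|$:
\[ (\deg p - |\beta|) \, D^\beta p(y) = \sum_{i=1}^k y_i \, D^{\beta + e_i} p(y), \]
evaluated at $y = y_0$. At stage $|\beta| = j$ each multi-index $\beta + e_i$ has $|\beta + e_i| = j+1 \leq d-1$, so the right-hand side vanishes by the base case or by the previous inductive step. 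Dividing by $\deg p - |\beta|$, which is nonzero in the relevant range $d \leq \deg p$, yields $D^\beta p(y_0) = 0$. Lemma \ref{lemm:polynomial.ord.characterization} then converts this into $\Ord_p(y_0) \geq d$.

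For (ii), take $d = \deg p$. By (i), we may rewrite
\[ V = \bigcap_{|\alpha| = d-1} \{ y_0 \in \RR^k : D^\alpha p(y_0) = 0 \}. \]
Each $D^\alpha p$ with $|\alpha| = d-1$ is homogeneous of degree $1$, i.e., a linear functional on $\RR^k$, so $V$ is an intersection of hyperplanes through the origin, hence a linear subspace. For translation invariance, fix $y_0 \in V$ and expand $p$ around $y_0$ via the exact (finite) Taylor formula
\[ p(y_0 + y) = \sum_{\alpha} \tfrac{1}{\alpha!} D^\alpha p(y_0) \, y^\alpha. \]
All contributions with $|\alpha| \leq d-1$ vanish since $y_0 \in V$ (by (i)), while for $|\alpha| = d$ the derivatives $D^\alpha p$ are constants, so $D^\alpha p(y_0) = D^\alpha p(0)$. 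Comparing with the Taylor expansion of $p$ at the origin, which reduces to $p$ itself by degree-$d$ homogeneity, gives $p(y_0 + y) = p(y)$, as required.

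The only subtlety, rather than a genuine obstacle, is ensuring in the Euler-identity induction of (i) that the scalar factor $\deg p - |\beta|$ stays nonzero throughout the iteration; this is automatic in the regime $d \leq \deg p$, which is the only one relevant for the application of (ii) and for the proof of Proposition \ref{Prop_Jac field_Max deg mod transl}.
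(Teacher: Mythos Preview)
Your argument is correct. Both you and the paper use Euler's identity for (i), but the inductions are organized differently: you fix $p$ and descend on the derivative order $|\beta|$, applying Euler's formula at each step, whereas the paper inducts upward on $d$, passes to the lower-degree homogeneous polynomials $D_i p$, applies the inductive hypothesis to them, and invokes Euler only once at the end to recover $p(y_0)=0$. For (ii), the identification of $V$ as a linear subspace is identical, but for translation invariance the paper takes a different route: it first observes that $p(\cdot + y_0)$ is itself homogeneous of degree $d$, then derives the identity $p(y_0+y)=p(t^{-1}y_0+y)$ by combining homogeneity at $y_0$ with homogeneity at the origin, and finally sends $t\to\infty$. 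Your Taylor-expansion comparison is more elementary and reaches the same conclusion without a limiting argument. Your explicit caveat restricting (i) to the regime $d\leq\deg p$ is appropriate and covers all applications of the lemma in the paper.
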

    \begin{proof}
    
    {(i)} In view of Lemma \ref{lemm:polynomial.ord.characterization}, we need only check ``$\impliedby$''. The proof for that follows by induction on $d$. The case $d=1$ is again handled by Lemma \ref{lemm:polynomial.ord.characterization}. Now suppose that $d \geq 2$ and that the assertion holds for $d-1$ in place of $d$. Fix $y_0 \in \RR^k$ with $D^\alpha p(y_0) = 0$ for all $|\alpha| = d-1$. Then for every $i \in \{ 1, \ldots, k \}$, $D_i p$ is a homogeneous polynomial with
    \[ D^\beta (D_i p)(y_0) = 0 \text{ for all } |\beta| = d-2. \]
    By the inductive assumption, then,
    \[ \Ord_p D_i p(y_0) \geq d-1 \text{ for all } i \in \{ 1, \ldots, k \}. \]
    By Lemma \ref{lemm:polynomial.ord.characterization} again, for all $i \in \{ 1, \ldots, k \}$,
    \[ D^\beta (D_i p)(y_0) = 0 \text{ for all } |\beta| \leq d-2, \]
    i.e.,
    \[ D^\alpha p(y_0) = 0 \text{ for all } 1 \leq |\alpha| \leq d-1, \]
    For $\alpha=0$ it suffices to note that by homogeneity and the case of $|\alpha|=1$ above,
    \[ (\deg p)\cdot p(y_0) = \sum_{i=1}^k (y_0)_i D_i p(y_0) = 0, \]
    which completes the inductive step and thus the proof.
    
    {(ii)} Clearly $V$ is a linear subspace since $D^\alpha p$ is linear if $|\alpha|=d-1 = \deg p - 1$. It remains to show the translation invariance along $V$. To that end, fix $y_0 \in V$. Then 
    \[ \Ord_p(y_0) \geq d = \deg p \implies \Ord_p(y_0) = \deg p = d \]
    in turn implies that $p(\cdot+y_0)$ is homogenous. Now for $t \neq 0$ we can use homogeneity at $y_0$ and then homogeneity at the origin to conclude that,  
    \[
    p(y_0+y) = t^{-d} p(y_0+ty) =  p(t^{-1} y_0 + y_0)
    \]
    for all $y \in \RR^k$. Sending $t \to \infty$ yields $p(y_0 + y) = p(y)$ for all $y \in \RR^k$.
    \end{proof}

\section{Normal coordinates} \label{app:normal.coordinates}

We consider $(M^d,g)$ satisfying
\begin{equation}\label{eq:curv-assumption-app}
|\Rm| + |\nabla \Rm| + |\nabla^2\Rm| \leq C^{-1} \, ,
\end{equation}
We will choose $C = C(d)$ later, in Lemma \ref{Lem:norm-coord-derivatives}. We will work with the subset:
\[ \breve M := \{ p \in M : \injrad(M, g, p) \geq 2 \}. \]

\subsection{Basics} 

Let $p \in \breve M$. For $r \in (0, 2]$, write $B_r \subset T_pM$ for the radius $r$ ball centered at $\orig \in T_p M$ (with respect to the inner product $g_p$), and $B_r(p) \subset M$ for the radius $r$ ball centered at $p \in M$. For $\tau > 0$, let
\begin{equation}\label{eq:defn-eta-tau}
\eta_{p,\tau} : (B_{2\tau^{-1}} \subset T_p M) \to (B_2(p) \subset M), \qquad \mbfv \mapsto \exp_p(\tau \mbfv).
\end{equation}
The inverse $\eta_{p,\tau}^{-1} : B_2 (p) \to B_{2\tau^{-1}}$ is also well-defined and smooth. We define 
\begin{equation}\label{eq:defn-g-tau}
g_{p,\tau} : = \tau^{-2} \eta_{p,\tau}^*g \, .
\end{equation} 
This is a Riemannian metric on $B_{2\tau^{-1}} \subset T_p M$. If $g_p$ denotes the flat metric on $T_p M$ obtained by the inner product induced by $g$ at $p$, then:

\begin{Lem}\label{Lem:norm-coord-derivatives}
There is $C=C(d)$ for \eqref{eq:curv-assumption-app} with the following property.

For every $p\in \breve M$, $k\in \{0,1,2\}$, and $R, \tau >0$ with $R \tau \leq 2$, we have
\[
\rVert \nabla^k(g_{p,\tau}-g_p)\rVert_{C^0(B_R)} \leq C R^{2-k} \tau^2.
\]
The covariant derivative is taken with respect to $g_p$.
\end{Lem}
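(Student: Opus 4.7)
The plan is to identify $g_{p,\tau}$ with the pullback of $g$ under a rescaled exponential chart, apply the classical Taylor expansion of the metric in normal coordinates, and then rescale by $\tau$. The required estimate is essentially the standard fact that in normal coordinates the metric deviates from the flat one by $O(|x|^2)$, with curvature controlling the coefficients.

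First, I would fix $p \in \breve M$ and identify $(T_p M, g_p)$ with Euclidean $\RR^d$ by choosing a $g_p$-orthonormal basis $\{e_i\}$. In the resulting normal coordinates $x^i$ on $B_2(p)$, the metric components $g_{ij}(x) := g_p(\partial_i, \partial_j)|_{\exp_p(x^k e_k)}$ admit the classical expansion
\[
g_{ij}(x) = \delta_{ij} - \tfrac{1}{3} R_{ikjl}(p)\, x^k x^l + O(|x|^3),
\]
and, more importantly for this lemma, on $B_2 \subset T_p M$ one has the pointwise bounds
\[
|g_{ij}(x) - \delta_{ij}| \leq C_1(d) |x|^2, \quad |\partial g_{ij}(x)| \leq C_1(d) |x|, \quad |\partial^2 g_{ij}(x)| \leq C_1(d),
\]
where $C_1(d)$ depends only on the uniform bounds \eqref{eq:curv-assumption-app} (this is where the curvature/derivative bounds, with $C \geq C(d)$, are used, to ensure the expansion is valid on all of $B_2$ and that the relevant constants are dimensional). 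These estimates are straightforward to derive from the radial Jacobi equation (for the $C^0$ bound) and its differentiations up to order two (for the $C^1$, $C^2$ bounds), using $|\nabla^j \Rm| \leq C^{-1}$ for $j = 0, 1, 2$.

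Next, I would use that $g_{p,\tau} = \tau^{-2} \eta_{p,\tau}^* g$ has coordinate components in the linear chart $\mbfv \mapsto v^i e_i$ on $T_p M$ given by
\[
(g_{p,\tau})_{ij}(\mbfv) = g_{ij}(\tau \mbfv).
\]
Differentiating in $\mbfv$ (covariantly with respect to $g_p$, which in linear coordinates is just partial differentiation) produces a factor of $\tau$ per derivative, so
\[
|\nabla^k (g_{p,\tau} - g_p)_{ij}(\mbfv)| = \tau^k |\partial^k g_{ij}(\tau \mbfv)| \leq C_1(d)\, \tau^k (\tau |\mbfv|)^{2-k}
\]
for $k \in \{0,1,2\}$ and $|\mbfv| \leq R$, $\tau R \leq 2$. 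Simplifying yields $C_1(d)\, \tau^2 R^{2-k}$, giving the claimed bound.

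I do not foresee a substantive obstacle: the only mildly delicate point is the constants, namely choosing $C = C(d)$ in \eqref{eq:curv-assumption-app} large enough so that the normal chart $\exp_p$ is a diffeomorphism on $B_2 \subset T_pM$ (already built into the definition of $\breve M$) and so that the radial Jacobi equation's standard ODE comparison gives uniform control on $g_{ij}$ and its derivatives through order two on $B_2$. The rest is just scaling.
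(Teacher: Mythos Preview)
Your proposal is correct and follows essentially the same approach as the paper: both rest on Jacobi field estimates in normal coordinates under the curvature bounds \eqref{eq:curv-assumption-app}. The only difference is packaging---you first invoke the unscaled bounds $|\partial^k(g_{ij}-\delta_{ij})(x)| \leq C|x|^{2-k}$ on $B_2$ and then rescale via $(g_{p,\tau})_{ij}(\mbfv)=g_{ij}(\tau\mbfv)$, whereas the paper carries out the Jacobi field ODE argument directly in the rescaled variables $(t,s_1,\dots,s_d)\mapsto \exp_p(t\tau(\mbfv+\sum s_i\mbfe_i))$, obtaining the estimates \eqref{eq:norm-coord-app-est-0th-order-S_i}--\eqref{eq:norm-coord-app-est-2nd-order-S_i} and reading off the metric bounds from $g_{p,\tau}(\mbfe_i,\mbfe_j)=\tau^{-2}g(\mbfS_i,\mbfS_j)$.
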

\begin{proof}
Fix $p,\tau,R$ and a $g_p$-orthonormal basis $\mbfe_1,\dots,\mbfe_d$ for $T_pM$. Our goal is to prove the asserted estimate at $\mbfv = (v_1,\dots,v_d) \in B_R$. To this end, define
\[
\Gamma(t,s_1,\dots,s_d) := \exp_p(t\tau(\mbfv + s_1\mbfe_1 + \dots + s_d\mbfe_d)). 
\]
Let $\mbfT=\partial_t \Gamma$ and $\mbfS_i = \partial_{s_i}\Gamma$. We define $\mbfE_i(t)$ by parallel transporting $\mbfe_i$ along $\gamma(t) : =\Gamma(t,0,\dots,0)$. 

Noting that $\Gamma(1,s_1,\dots,s_d) = \eta_{p,\tau}(s_1+v_1,\dots,s_d+v_d)$, we have\footnote{We write $D_{s_i}$ for the derivative along the curve $s \mapsto \Gamma(t,s_1,\dots,s_{i-1},s,s_{i+1},\dots,s_d)$. (Alternatively, this is the pullback connection $\Gamma^*\nabla$ in the direction $\partial_{s_i}$.)}
\begin{align*}
\mbfS_i(1,0,\dots,0) & = \partial_{i} \eta_{p,\tau}|_{\mbfv}\\
D_{s_i} \mbfS_j(1,0,\dots,0)&  = \nabla_{\partial_i \eta_{p,\tau}} \partial_j \eta_{p,\tau}|_\mbfv\\
D_{s_i}D_{s_j} \mbfS_k(1,0,\dots,0) & = \nabla_{\partial_i \eta_{p,\tau}} \nabla_{\partial_j \eta_{p,\tau}} \partial_k \eta_{p,\tau}|_\mbfv.
\end{align*}
We observe the following boundary conditions for $\mbfS_i$ and its derivatives:
\begin{align*}
\mbfS_i(0,0,\dots,0) & = 0 \\
D_t\mbfS_i(0,0,\dots,0) & = D_{s_i} \mbfT(0,0,\dots,0) = \tau \mbfe_i \\
D_{s_i} \mbfS_j(0,0,\dots,0) & = 0\\
D_t D_{s_i} \mbfS_j(0,0,\dots,0) & = (\Rm(\mbfT, \mbfS_i)\mbfS_j + D_{s_i} D_{s_j} \mbfT)(0,0,\dots,0) = 0\\
D_{s_i} D_{s_j} \mbfS_k(0,0,\dots,0) & = 0 \\
D_t D_{s_i} D_{s_j} \mbfS_k(0,0,\dots,0) & 
= (\Rm(\mbfT, \mbfS_i) D_{s_j}\mbfS_k  + D_{s_i}(\Rm(\mbfT,\mbfS_j)\mbfS_k)\\
&\qquad + D_{s_i}D_{s_j}D_{s_k}\mbfT)(0,0,\dots,0) = 0. 
\end{align*}
Note also that $\mbfS_i$ satisfies the Jacobi equation
\[
D^2_t \mbfS_i + \Rm(\mbfS_i,\mbfT)\mbfT = 0.
\]
Combined with $|\mbfT(t,0,\dots,0)|\leq R \tau$, the boundary conditions for $\mbfS_i$, and curvature estimates \eqref{eq:curv-assumption-app} we obtain 
\begin{equation}\label{eq:norm-coord-app-est-0th-order-S_i}
|\mbfS_i(t,0,\dots,0) - t \tau \mbfE_i(t)| + |D_t\mbfS_i(t,0,\dots,0) - \tau \mbfE_i(t)| \leq CR^2\tau^3. 
\end{equation}
Indeed, for $C>0$ to be chosen these estimates obviously hold for $t>0$ very small. If the first time they fail is $t_0 \in (0,1)$ then on $(0,t_0]$ we find that
\begin{align*}
 | D_t^2(\mbfS_i(t) - t\tau \mbfE_i(t))| & \leq \lvert\Rm(\mbfS_i(t),\mbfT)\mbfT\rvert \\
& \leq \lvert\Rm(t\tau \mbfE_i(t),\mbfT)\mbfT\rvert + \lvert\Rm(\mbfS_i(t)-t\tau \mbfE_i(t),\mbfT)\mbfT\rvert \\
& \leq C^{-1} t R^2 \tau^3 + R^4 \tau^5 \\
& \leq (C^{-1} + 4)R^2 \tau^3
\end{align*}
Integrating this twice and taking $C>0$ sufficiently large proves that the asserted estimates hold past $t_0$, a contradiction. 

Now, we consider the higher derivatives. Differentiating the Jacobi field equation and then commuting we obtain
\begin{align*}
0 & = D_{s_i} D^2_t\mbfS_j + D_{s_i}(\Rm(\mbfS_j,\mbfT)\mbfT)\\
& = D_t^2D_{s_i} \mbfS_j  + D_t(\Rm(\mbfS_i,\mbfT)\mbfS_j) + \Rm(\mbfS_i,\mbfT)D_t\mbfS_j+ D_{s_i}(\Rm(\mbfT,\mbfS_j)\mbfT). 
\end{align*}
As before, we obtain that 
\begin{equation}\label{eq:norm-coord-app-est-1st-order-S_i}
|D_{s_i}\mbfS_j(t,0,\dots,0)|+|D_tD_{s_i}\mbfS_j(t,0,\dots,0)|\leq CR \tau^3
\end{equation}
possibly after increasing $C$. A similar argument yields
\begin{equation}\label{eq:norm-coord-app-est-2nd-order-S_i}
|D_{s_i}D_{s_j}\mbfS_k(t,0,\dots,0)|+|D_tD_{s_i}D_{s_j}\mbfS_k(t,0,\dots,0)|\leq C \tau^3
\end{equation}

To complete the proof, we now observe that
\[
g_{\tau,p}(\mbfe_i,\mbfe_j) = \tau^{-2}g(\partial_i\eta_{p,\tau},\partial_j\eta_{p,\tau})
\]
so
\[
g_{\tau,p}(\mbfe_i,\mbfe_j)|_{\mbfv} = \tau^{-2}g(\mbfS_i,\mbfS_j)|_{(1,0,\dots,0)}
\]
Using \eqref{eq:norm-coord-app-est-0th-order-S_i} we thus obtain (after taking $C$ larger if necessary):
\[
|g_{\tau,p}(\mbfe_i,\mbfe_j)|_{\mbfv} - \delta_{ij}| \leq C R^2 \tau^2
\]
Similarly, we have that
\[
\partial_i (g_{\tau,p}(\mbfe_j,\mbfe_k)) |_{\mbfv} = \tau^{-2}(g(D_{s_i}\mbfS_j,\mbfS_k) + g(\mbfS_j,D_{s_i}\mbfS_k))|_{(1,0,\dots,0)}
\]
so \eqref{eq:norm-coord-app-est-0th-order-S_i} and \eqref{eq:norm-coord-app-est-1st-order-S_i} give
\[
|\partial_i (g_{\tau,p}(\mbfe_j,\mbfe_k)) |_{\mbfv}| \leq C R \tau^2. 
\]
Using also \eqref{eq:norm-coord-app-est-2nd-order-S_i} the second derivatives follow similarly. This proves the assertion. 
\end{proof}

\subsection{Change of basepoint} Let $p \in \breve M$, $q \in B_2(p)$.

We define the \textbf{change of basepoint} map, from $p$ to $q$ at some scale $\tau > 0$, to be
\begin{equation}\label{eq:cob-map}
\eta_{q,\tau}^{-1} \circ \eta_{p,\tau}  : (B_{(2-d(p,q))\tau^{-1}} \subset T_p M) \to (B_{2\tau^{-1}} \subset T_q M).
\end{equation}

In a purely Euclidean background we would have had
\[ (\eta_{q,\tau}^{-1} \circ \eta_{p,\tau})(\mbfv) = \mbfv - \tau^{-1} (q-p). \]
In our Riemannian setting, there is an analogous estimate that makes use of the linear isometry that parallel transports along the minimizing geodesic from $p$ to $q$:
\[ \iota_{p\to q} : (T_p M, g_p) \to (T_q M, g_q) \]

\begin{Lem}\label{Lem:change-bspt}
Choose $C=C(d)$ for \eqref{eq:curv-assumption-app} such that Lemma \ref{Lem:norm-coord-derivatives} holds.

Let $p \in \breve M$, $R,\tau > 0$, $B_\tau(p) \subset \breve M$, $\mbfq \in B_1 \subset T_p M$, $q = \eta_{p,\tau}(\mbfq)$. If $(1+R)\tau < 2$, then 
\[ \|(\eta_{q,\tau}^{-1} \circ \eta_{p,\tau})(\cdot) - \iota_{p \to q}(\cdot - \mbfq) \|_{C^0(B_R)} \leq C'\tau^2, \]
where $C' = C'(C, R)$.\end{Lem}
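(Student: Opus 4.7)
The plan is to fix $\mbfv \in B_R$ and interpolate along a one-parameter family of basepoints. Set $\gamma(s) := \exp_p(s\tau\mbfq)$ for $s \in [0,1]$, so $\gamma(0) = p$ and $\gamma(1) = q$. For each $s$, define $\mbfv(s) \in T_{\gamma(s)}M$ implicitly by $\exp_{\gamma(s)}(\tau\mbfv(s)) = \exp_p(\tau\mbfv)$; the condition $(1+R)\tau < 2$ combined with $B_\tau(p) \subset \breve M$ ensures $\gamma(s) \in \breve M$ for all $s$ and that the equation is uniquely solvable, smoothly in $s$. It also yields the a priori bound $\tau|\mbfv(s)| = d(\gamma(s),\exp_p(\tau\mbfv)) \leq \tau(R+1)$. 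Set $V(s) := \iota_{\gamma(s)\to p}(\mbfv(s)) \in T_pM$. Since $V(0) = \mbfv$ and $(\eta_{q,\tau}^{-1} \circ \eta_{p,\tau})(\mbfv) = \mbfv(1) = \iota_{p\to q}(V(1))$, and since parallel transport is an isometry, the lemma reduces to showing $|V(1) - (\mbfv - \mbfq)|_{g_p} \leq C'\tau^2$ uniformly for $\mbfv \in B_R$.

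To compute $\dot V$, I would introduce the geodesic variation $\Gamma(s,t) := \exp_{\gamma(s)}(t\tau\mbfv(s))$, which satisfies $\Gamma(s,1) \equiv \exp_p(\tau\mbfv)$ and $\Gamma(s,0) = \gamma(s)$. For each fixed $s$, the transverse vector field $J(s,\cdot) := \partial_s\Gamma(s,\cdot)$ is a Jacobi field along the geodesic $t \mapsto \Gamma(s,t)$, with boundary values
\[
J(s,0) = \gamma'(s) = \tau\,\iota_{p\to\gamma(s)}(\mbfq), \qquad J(s,1) = 0.
\]
Using the symmetry $D_sD_t\Gamma = D_tD_s\Gamma$ together with $\partial_t\Gamma(s,0) = \tau\mbfv(s)$, one finds
\[
D_tJ(s,0) = D_s(\tau \mbfv(s)) = \tau\,\iota_{p\to\gamma(s)}(\dot V(s)).
\]

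In a flat background the Jacobi field would simply be $(1-t)\tau\iota_{p\to\gamma(s)}(\mbfq)$, giving $D_tJ(s,0) = -\tau\iota_{p\to\gamma(s)}(\mbfq)$ and hence $\dot V \equiv -\mbfq$. In the Riemannian case, I would perturb off this flat solution using the Jacobi equation $D_t^2J = -\Rm(J,\partial_t\Gamma)\partial_t\Gamma$, the curvature bound $|\Rm| \leq C^{-1}$ from \eqref{eq:curv-assumption-app}, and $|\partial_t\Gamma| \leq \tau(R+1)$. A direct bootstrap — entirely analogous to the one carried out for $\mathbf{S}_i$ in the proof of Lemma \ref{Lem:norm-coord-derivatives} — gives first $|J(s,t)| \leq 2\tau$ on $[0,1]$, and then
\[
\bigl|D_tJ(s,0) + \tau\,\iota_{p\to\gamma(s)}(\mbfq)\bigr| \leq C(R)\,\tau^3.
\]
Dividing by $\tau$ and applying the isometry $\iota_{\gamma(s)\to p}$ produces $|\dot V(s) + \mbfq|_{g_p} \leq C(R)\tau^2$ uniformly in $s \in [0,1]$. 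Integrating in $s$ yields $|V(1) - (\mbfv - \mbfq)|_{g_p} \leq C(R)\tau^2$, which is the claim.

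The one place requiring care is the Jacobi-field bootstrap that produces the $O(\tau^3)$ remainder: one must track how the geodesic length $\tau(R+1)$ enters through $|\partial_t\Gamma|^2$ in the curvature term, and verify that the a priori bound $|\mbfv(s)| \leq R+1$ survives the bootstrap. Both are routine, but they account for the dependence $C' = C'(C,R)$ in the stated constant. Everything else is standard differential-geometric bookkeeping, and no new ideas beyond those already used for Lemma \ref{Lem:norm-coord-derivatives} are needed.
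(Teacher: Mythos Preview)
Your proposal is correct and follows essentially the same argument as the paper. Both introduce the geodesic variation $\Gamma(s,t) = \exp_{\gamma(s)}(t\tau\mbfv(s))$, compare the resulting Jacobi field $J = \partial_s\Gamma$ to the flat model $(1-t)$ times the parallel transport of $\gamma'(s)$, extract an $O(\tau^3)$ bound on $D_tJ(s,0) + \gamma'(s)$ from the Jacobi ODE with Dirichlet data, and integrate in $s$; the only cosmetic difference is that you pull everything back to $T_pM$ via $\iota_{\gamma(s)\to p}$ and work with $V(s)$, whereas the paper works with the vector field $\hat{\mbfV}(s) = \mbfV(s) - \bar{\mbfV}(s) + s\tau^{-1}\gamma'(s)$ along $\gamma$ and its covariant derivative --- these are isometric reformulations of each other.
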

\begin{proof}
Fix any $\mbfv \in B_{R} \subset T_p M$ and denote $v = \eta_{p,\tau}(\mbfv)$. Also let 
\[ \gamma(s) : = \eta_{p,\tau}(s\mbfq) = \exp_p(s\tau \mbfq) \]
be the minimal geodesic between $p$ and $q$ parametrized by constant speed for $s\in [0,1]$. 

For $s \in [0,1]$, we have
\begin{equation} \label{eq:change-bspt-dist}
	d(\gamma(s), v) \leq d(p, \gamma(s)) + d(p, v) \leq d(p, q) + d(p, v) < (1+R) \tau.
\end{equation}
Recalling that, by assumption, $(1+R)\tau < 2$ and $\gamma([0,1]) \subset B_\tau(p) \subset \breve M$, there exists a smooth vector field $\mbfV(s)$ along $\gamma(s)$ so that
\[ \eta_{\gamma(s),\tau}(\mbfV(s)) = v. \] 
Note that $(\eta_{q,\tau}^{-1} \circ \eta_{p,\tau})(\mbfv) = \mbfV(1)$

On the other hand, let $\bar \mbfV(s)$ denote the parallel transport of $\mbfv$ along $\gamma(s)$, i.e.,
\[ \bar \mbfV(s) = \iota_{p \to \gamma(s)}(\mbfv). \]
Note that $\iota_{p \to q}(\mbfv - \mbfq) = \iota_{p \to q}(\mbfv) + \mbfp = \bar \mbfV(1) + \mbfp$, where $p = \eta_{q,\tau}(\mbfp)$.

Since $\mbfp = -\tau^{-1}\gamma'(1)$, if we define
\[
\hat \mbfV(s) : = \mbfV(s) - \bar\mbfV(s) + s \tau^{-1}\gamma'(s),
\]
then we will be done if we show that $|\hat \mbfV(1)| \leq C \tau^2$. Note that
\[
D_s \hat \mbfV(s) = D_s \mbfV(s) + \tau^{-1}\gamma'(s). 
\]
To estimate this quantity, we define the geodesic variation
\[
\Gamma(s,t) : = \exp_{\gamma(s)}(t\tau \mbfV(s)).
\]
In particular, $\mbfS : =\partial_s \Gamma$ is a Jacobi field along each $t\mapsto \Gamma(s,t)$. Note that $\tau \mbfV(s) = \mbfT(s,0)$ for $\mbfT=\partial_t\Gamma$. Let $\bar \mbfG(s,t)$ denote the parallel transport of $\gamma'(s) \in T_{\Gamma(s,0)}M$ along $t\mapsto\Gamma(s,t)$. Then $\hat \mbfS(s,t) : = \mbfS(s,t) - (1-t)\bar \mbfG(s,t)$ satisfies 
\begin{equation}\label{eq:geo-boundary-values}
    \hat \mbfS(s,0) = \orig \ \ \text{and}\ \  \hat\mbfS(s,1) = \orig
\end{equation}
 (note that $\mbfS(s,0) = \gamma'(s)$ and $\mbfS(s,1) = \orig$). We compute
\[
D^2_t \hat\mbfS = - \Rm(\mbfS,\mbfT)\mbfT = -\Rm(\hat\mbfS,\mbfT)\mbfT - (1-t) \Rm(\bar \mbfG,\mbfT)\mbfT .
\]
Because $|\mbfT| \leq (1+R)\tau$ by \eqref{eq:change-bspt-dist} and $|\bar \mbfG| = d(p,q) \leq \tau$, we can combine this with \eqref{eq:curv-assumption-app}, \eqref{eq:geo-boundary-values} to prove that $|\hat\mbfS(s,t)| + |D_t\hat\mbfS(s,t)| \leq C' \tau^3$. 
In particular, we find that 
\begin{align*}
D_s\hat \mbfV(s) & = D_s \mbfV(s) + \tau^{-1}\gamma'(s) \\
& = \tau^{-1} (D_s\mbfT(s,0) + \mbfS(s,0)) \\
& = \tau^{-1} (D_t\mbfS(s,0) + \mbfS(s,0))\\
& = \tau^{-1} (D_t\hat\mbfS(s,0)  - \bar \mbfG(s,0) + \hat \mbfS(s,0) + \bar \mbfG(s,0) )\\
& = \tau^{-1} (D_t\hat\mbfS(s,0)  + \hat \mbfS(s,0)  ). 
\end{align*}
Thus $|D_s\hat \mbfV(s)|\leq C'\tau^2$. Since $\hat \mbfV(0)=\orig$ we can integrate this to get
\[ |\hat\mbfV(s)|\leq C'\tau^2. \]
Evaluating this at $s=1$ yields the desired estimate.
\end{proof}

\section{The exponential map of a matrix} 

For $\mbfA \in M_d(\RR)$ (space of $d\times d$ matrices), we define the usual exponential map
\[ \exp \mbfA = e^\mbfA = \sum_{k=0}^\infty \frac{\mbfA^k}{k!}. \]
For a small neighborhood of $\operatorname{\textbf{Id}}$ we can define $\log := \exp^{-1}$. 
\begin{Lem}\label{Lem:exp-addition-vs-mult-matrices}
There is $\lambda = \lambda(d) >0$ so that
\[ \vert\log (e^{-\mbfA_0} e^{\mbfA_0+\mbfA_1}) - \mbfA_1\vert \leq 2\vert\mbfA_0\vert\vert\mbfA_1\vert \]
for all $\mbfA_0,\mbfA_1 \in M_d(\RR)$ with $\vert\mbfA_0\vert\vert\mbfA_1\vert < \lambda$.
\end{Lem}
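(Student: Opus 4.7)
My plan combines the Baker--Campbell--Hausdorff (BCH) expansion with a two-variable Hadamard integration identity. First I would define
\[ F(\mbfA_0,\mbfA_1) := \log(e^{-\mbfA_0}e^{\mbfA_0+\mbfA_1}), \]
which is real-analytic on a neighborhood of $(\mathbf{0},\mathbf{0}) \in M_d(\RR) \times M_d(\RR)$ (where the product $e^{-\mbfA_0}e^{\mbfA_0+\mbfA_1}$ is close to $\operatorname{\textbf{Id}}$). The crucial observation is that the boundary evaluations $F(\mathbf{0},\mbfA_1) = \log e^{\mbfA_1} = \mbfA_1$ and $F(\mbfA_0,\mathbf{0}) = \log\operatorname{\textbf{Id}} = \mathbf{0}$ show that the shifted function
\[ G(\mbfA_0,\mbfA_1) := F(\mbfA_0,\mbfA_1) - \mbfA_1 \]
vanishes identically along both coordinate axes $\{\mbfA_0 = \mathbf{0}\}$ and $\{\mbfA_1 = \mathbf{0}\}$.

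Next I would extract the leading bilinear part of $G$ at the origin from the first nonlinear BCH term: substituting $X = -\mbfA_0$, $Y = \mbfA_0+\mbfA_1$ into $\tfrac12[X,Y]$ gives $G(\mbfA_0,\mbfA_1) = -\tfrac12[\mbfA_0,\mbfA_1]$ plus higher-order BCH terms. In particular the mixed second partial $\partial_{\mbfA_0}\partial_{\mbfA_1} G$ at $(\mathbf{0},\mathbf{0})$ is the bounded bilinear map $(\mbfB_0,\mbfB_1) \mapsto -\tfrac12[\mbfB_0,\mbfB_1]$, which has operator norm $\leq 1$. Using real-analyticity of $F$ together with continuity of the bilinear-form-valued map $(\mbfA_0,\mbfA_1) \mapsto \partial_{\mbfA_0}\partial_{\mbfA_1} G$, I would then choose $\lambda = \lambda(d) > 0$ small enough that $\Vert \partial_{\mbfA_0}\partial_{\mbfA_1} G(s\mbfA_0,t\mbfA_1) \Vert \leq 2$ for every $s,t \in [0,1]$ whenever $|\mbfA_0||\mbfA_1| < \lambda$ (implicitly using individual smallness of $|\mbfA_0|, |\mbfA_1|$, securable by shrinking $\lambda$ further).

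Finally, I would apply the two-variable Hadamard identity to the axis-vanishing function $G$, yielding
\[ G(\mbfA_0,\mbfA_1) = \int_0^1 \!\! \int_0^1 \partial_s\partial_t G(s\mbfA_0,t\mbfA_1) \, ds\, dt = \int_0^1 \!\! \int_0^1 (\partial_{\mbfA_0}\partial_{\mbfA_1} G)(s\mbfA_0,t\mbfA_1)[\mbfA_0,\mbfA_1] \, ds\, dt, \]
whence taking norms gives $|G(\mbfA_0,\mbfA_1)| \leq 2|\mbfA_0||\mbfA_1|$, which is the claim. The main (though mild) obstacle is securing the uniform operator bound on $\partial_{\mbfA_0}\partial_{\mbfA_1} G$ throughout the relevant region; once BCH pins down the bilinear form at the origin, continuity of the analytic map propagates the bound to a sufficiently small neighborhood.
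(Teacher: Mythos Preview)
Your approach is essentially identical to the paper's: define the error $G(\mbfA_0,\mbfA_1) = \log(e^{-\mbfA_0}e^{\mbfA_0+\mbfA_1}) - \mbfA_1$, observe that it vanishes along both coordinate axes, and recover it via the double integral $\int_0^1\!\int_0^1 \partial_s\partial_t\, G(s\mbfA_0,t\mbfA_1)\,ds\,dt$ together with a uniform bound on the mixed second partial. The only cosmetic difference is that you invoke BCH to identify the bilinear part at the origin as $-\tfrac12[\cdot,\cdot]$ (operator norm $\leq 1$, hence $\leq 2$ nearby by continuity), whereas the paper simply appeals to smoothness for a dimensional constant; one caveat is that your parenthetical claim that individual smallness of $|\mbfA_0|,|\mbfA_1|$ is ``securable by shrinking $\lambda$ further'' does not follow from the stated product hypothesis $|\mbfA_0||\mbfA_1|<\lambda$ alone --- though the paper's own proof tacitly makes the same individual-smallness assumption.
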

\begin{proof}
Define 
\[
F(\mbfA_0,\mbfA_1) = \log (e^{-\mbfA_0} e^{\mbfA_0+\mbfA_1}) - \mbfA_1.
\]
We take $\lambda >0$ sufficiently small so that $F$ is smooth for $\vert\mbfA_0\vert,\vert\mbfA_1\vert < 2\lambda$. Define
\[ f(s,t) := F(s\mbfA_0,t\mbfA_1), \]
and note that
\[
f(s,0) = \log (\operatorname{\mathbf{Id}}) - \mathbf{O} = \mathbf{O}, \qquad f(0,t) = \log(e^{t\mbfA_1}) - t\mbfA_1 = \mathbf{O},
\]
and that
\[
\frac{\partial^2f}{\partial s \partial t}(s,t) = \frac{\partial}{\partial s}(DF|_{t\mbfA_0,s\mbfA_1}(\mbfA_0,\mathbf{O})) = D^2F|_{t\mbfA_0,s\mbfA_1}((\mbfA_0,\mathbf{O}),(\mathbf{O},\mbfA_1))
\]
so we find that 
\[
\left\vert \frac{\partial^2f}{\partial s \partial t}(s,t) \right\vert \leq C \vert\mbfA_0\vert\vert\mbfA_1\vert
\]
for $C=C(m)$. Then, we can write
\begin{align*}
f(s,t) & = \int_0^t \frac{\partial f}{\partial t}(s,\tau) d\tau\\
& = \int_0^s \int_0^t \frac{\partial^2 f}{\partial s\partial t}(\sigma,\tau) d\tau d\sigma + \int_0^t \frac{\partial f}{\partial t}(0,\tau) d\tau\\
& = \int_0^s \int_0^t \frac{\partial^2 f}{\partial s\partial t}(\sigma,\tau) d\tau d\sigma 
\end{align*}
where we used $f(s,0) = f(0,t) = 0$ in several places. Taking the norm of both sides and using the estimate for the second derivatives yields the assertion. 
\end{proof}

\section{Affine transformation acting on a graph} 

\begin{Lem}\label{Lem:affine-acting-graph}
For $\delta \in (0,1)$ and $d \in \ZZ_{\geq 1}$, there is $\lambda=\lambda(d,\delta)>0$, $C=C(d)>0$ with the following property. Suppose that $\mbfz \in B_2 \subset \RR^d$, $\mbfA \in B_\lambda \subset M_d(\RR)$, $\mbfx \in B_\lambda \subset \RR^d$, $\Pi\subset \RR^d$ is a codimension $1$ subspace with fixed unit normal $\nu$, and $w_0 \in C^\infty(B_\delta \subset \Pi)$ with $|w_0| + \delta |\nabla w_0| \leq \delta$ on $B_\delta$. Letting $\Sigma_0$ denote the $\nu$-normal graph of $w_0$ over $\mbfz + \Pi$ then there exists a unique $w\in \RR$ such that,
\[
\mbfz + w \nu \in e^{\mbfA} \Sigma_0 + \mbfx
\]
where 
\[
|w - w_0(\orig) - \langle \mbfA(\mbfz  + w_0(\orig)\nu),\nu \rangle - \langle \mbfx,\nu\rangle| \leq C( |\mbfA| + |\mbfx|) (|\mbfA| + \|\nabla w_0\|_{C^0(B_{C(|\mbfA|+|\mbfx|)})}). 
\]
\end{Lem}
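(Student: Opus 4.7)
\medskip

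\textbf{Proof plan.} The strategy is to parametrize the point of intersection on $\Sigma_0$, split the intersection equation into its $\Pi$- and $\nu$-components, and then solve the two resulting equations successively by a direct contraction argument and Taylor expansion.

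Concretely, a point of $\Sigma_0$ has the form $\mbfy = \mbfz + \mbfy_\Pi + w_0(\mbfy_\Pi)\nu$ for some $\mbfy_\Pi \in B_\delta \subset \Pi$, and the condition $\mbfz + w\nu \in e^{\mbfA}\Sigma_0 + \mbfx$ reads $e^\mbfA \mbfy + \mbfx - \mbfz = w \nu$. Writing $\mbfp := \mbfy_\Pi + w_0(\mbfy_\Pi)\nu$ and projecting onto $\Pi$ and $\RR\nu$, this splits as
\begin{align*}
\mbfy_\Pi &= -P_\Pi\bigl[(e^\mbfA-\Id)\mbfz + \mbfx\bigr] - P_\Pi\bigl[(e^\mbfA-\Id)\mbfp\bigr],\\
w &= w_0(\mbfy_\Pi) + \bigl\langle (e^\mbfA-\Id)\mbfz + \mbfx, \nu\bigr\rangle + \bigl\langle (e^\mbfA-\Id)\mbfp, \nu\bigr\rangle.
\end{align*}
The first equation I would solve by a Banach fixed-point argument on the closed ball $\{|\mbfy_\Pi|\le C(|\mbfA|+|\mbfx|)\} \subset \Pi$: using $|\mbfz|\le 2$, $|w_0|\le\delta$, $\|\nabla w_0\|_{C^0}\le 1$, and $|e^\mbfA-\Id|\le C|\mbfA|$, the right-hand side maps this ball into itself and is a contraction once $\lambda = \lambda(d,\delta)$ is small enough. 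This gives a unique $\mbfy_\Pi$ (and hence a unique $w$) satisfying $|\mbfy_\Pi|\le C(|\mbfA|+|\mbfx|)$.

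Next, I would substitute this $\mbfy_\Pi$ into the second equation and expand everything to first order. Using $e^\mbfA - \Id = \mbfA + O(|\mbfA|^2)$ and $w_0(\mbfy_\Pi) = w_0(\orig) + O(\|\nabla w_0\|_{C^0(B_{|\mbfy_\Pi|})}|\mbfy_\Pi|)$, together with $\mbfp = \mbfy_\Pi + w_0(\mbfy_\Pi)\nu$, the expression $\langle(e^\mbfA-\Id)\mbfp,\nu\rangle$ becomes $\langle \mbfA \mbfy_\Pi,\nu\rangle + w_0(\mbfy_\Pi)\langle \mbfA\nu,\nu\rangle + O(|\mbfA|^2)$. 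Rearranging so that the desired quantity $w - w_0(\orig) - \langle \mbfA(\mbfz + w_0(\orig)\nu),\nu\rangle - \langle \mbfx,\nu\rangle$ appears, I obtain the error
\[
(w_0(\mbfy_\Pi)-w_0(\orig)) + \langle \mbfA \mbfy_\Pi,\nu\rangle + (w_0(\mbfy_\Pi)-w_0(\orig))\langle\mbfA\nu,\nu\rangle + O(|\mbfA|^2).
\]
Each of these four terms is bounded by $C(|\mbfA|+|\mbfx|)(|\mbfA| + \|\nabla w_0\|_{C^0(B_{C(|\mbfA|+|\mbfx|)})})$ using the estimate on $|\mbfy_\Pi|$ from the previous step.

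The main bookkeeping subtlety—really the only nontrivial point—is that the naive first-order expansion at the base point $\mbfz$ would only produce $\langle \mbfA\mbfz,\nu\rangle$, not $\langle \mbfA(\mbfz + w_0(\orig)\nu),\nu\rangle$; the extra term $w_0(\orig)\langle\mbfA\nu,\nu\rangle$ is exactly what the expansion at $\mbfp$ supplies (since the vertical component of $\mbfp$ is $w_0(\mbfy_\Pi) \approx w_0(\orig)$), and recognizing this is what keeps the error free of any lone $|\mbfA|$ factor. Everything else is routine, and the constant $C$ depends only on $d$ once $\lambda = \lambda(d,\delta)$ is chosen small enough.
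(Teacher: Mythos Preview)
Your proof is correct and follows essentially the same route as the paper: split the intersection equation into its $\Pi$- and $\nu$-components, bound the tangential displacement by $C(|\mbfA|+|\mbfx|)$, then Taylor-expand the normal equation. The only tactical difference is that the paper applies $e^{-\mbfA}$ to make the $\Pi$-equation linear in the tangential unknown (expressing it in terms of $w$, then bootstrapping a coarse bound on $w$), whereas you run a contraction argument directly on the $\Pi$-equation; both are routine and lead to the same estimate.
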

\begin{proof}
Taking $\lambda$ sufficiently small, it's clear that a unique such $w$ exists (conversely, one may derive the existence of $w$ from the subsequent analysis). There is $\mbfw \in B_\delta$ so that 
\begin{equation}\label{eq:rot-trans-graph-orig-old-graphing-fct}
\mbfz + w \nu = e^\mbfA(\mbfz +\mbfw +  w_0(\mbfw)\nu) + \mbfx
\end{equation}
Rearranging \eqref{eq:rot-trans-graph-orig-old-graphing-fct} and projecting to $\Pi$ we obtain
\[
 \mbfw = \proj_\Pi((e^{-\mbfA}-\Id) \mbfz) + w\proj_\Pi(e^{-\mbfA}\nu) - \proj_\Pi(e^{-\mbfA}\mbfx)
\]
so 
\[
|\mbfw | \leq C(|\mbfA| (1+|w|) + |\mbfx|)
\]
with $C=C(m)$. On the other hand, projecting \eqref{eq:rot-trans-graph-orig-old-graphing-fct} to $\nu$ yields 
\begin{align*}
w & = \langle e^\mbfA(\mbfz + \mbfw + w_0(\mbfw)\nu) - \mbfz,\nu\rangle + \langle \mbfx ,\nu\rangle\\
& = w_0(\mbfw) + \langle \mbfA(\mbfz + \mbfw + w_0(\mbfw)\nu),\nu \rangle + \langle \mbfx,\nu\rangle\\
&  + \langle(e^\mbfA - \mbfA - \Id)(\mbfz + \mbfw + w_0(\mbfw)\nu),\nu \rangle
\end{align*}
A coarse estimate thus gives 
\[
|w| \leq \delta + C(|\mbfA|(1+|\mbfw|) + |\mbfx|),
\]
so we obtain
\[
|\mbfw| \leq C(|\mbfA| + |\mbfx|)
\]
as long as $\lambda$ is sufficiently small. Using this and Taylor's theorem in the $\nu$-projected equation to estimate terms involving $\mbfw$, we have proven the asserted expression.
\end{proof}

\section{Simon's non-concentration estimate in the Riemannian setting}	

  We use the same notations as in Sections \ref{Subsec_reg Min Cone} and \ref{Subsec_Cylind Min Cone}. Let $\cC_\circ\subset \RR^{n_\circ+1}$ be a strictly stable (cf.\ Definition \ref{defi:strictly.stable}) and strictly minimizing (cf.\ Definition \ref{def:strictly-minimizing} below) hypercone, with cross section $\cL_\circ:= \cC_\circ\cap \SSp^{n_\circ}$. Recall that we continue to identify minimizers with their regular part, and thus $\cC_\circ, \cL_\circ$ are smooth. We parametrize $\cC_\circ$ by 
  \[
    (0, +\infty)\times \cL_\circ \to \cC_\circ, \ \ \ (r, \omega) \mapsto x= r\omega\,.
  \]
  Fix a continuous unit normal field $\nu_{\cC_\circ}$ of $\cC_\circ$. Let $U_\circ^\pm$ be the connected components of $\RR^{n_\circ+1}\setminus \overline{\cC_\circ}$, where $\nu_\circ$ points into $U_\circ^+$.
  
  In the cylindrical setting, we fix an integer $k\geq 1$ and take $n=n_\circ +k$ and
  \[ \cC:= \cC_\circ\times \RR^k\subset \RR^{n_\circ+1} \times \RR^k = \RR^{n+1} \]
   parametrized by \[
     (0, +\infty)\times \cL_\circ\times \RR^k \to \RR^{n+1}, \quad (r, \omega, y)\mapsto (r\omega, y)\,.
   \]
   We also write $(x,y) \in \RR^{n_\circ+1}\times \RR^k$. Throughout this section, we will use $r$ to denote
    \[ r(x,y) = |x|, \]
    including when $(x,y) \not \in \cC$. We will also work with balls in any of $\RR^{n_\circ+1}$, $\RR^k$, $\RR^{n+1}$, correspondingly denoted 
    \[ B^{n_\circ+1}_s(x), \; B^k_s(y), \; B_s(x,y). \]
    It will be convenient to work with cylinders
   \[ Q_s(x,y) := B^{n_\circ+1}_s(x) \times B^k_s(y). \]
   The center point will be suppressed if it is the origin. We will also use without comment the fact that $Q_s \subset B_{\sqrt{2} s} \Subset B_{2s}$. We will write $\Sdist_\cC$ for the signed distance to $\cC$ (cf.\ \eqref{eq:Sdist-signed}). 

   We will study $g$-minimizing boundaries $\Sigma$ in $B_2$ with respect to smooth Riemannian metrics $g$ satisfying
\begin{equation}\label{eq metric assumption}
\max\{|g_{ij} - \bar{g}_{ij}| , |\partial_k g_{ij}| , |\partial^2_{kl}g_{ij}|\} \leq \delta \leq 10^{-1},
\end{equation}
    where $\bar g$ denotes the standard Euclidean metric.  We always assume \eqref{eq metric assumption} below. In applications we will always take $g$ to be (rescaled) normal coordinates in a Riemannian metric (curvature bounds as in \eqref{eq:curv-assumption-app} guaranteeing that \eqref{eq metric assumption} holds on small scales) but for the sake of generality we do not assume this here.

  \begin{Thm}[{\cite[Theorem 2.1]{Simon:uniqueness.some}}] \label{Thm_Simon's L^2 Noncon}
    There exist $\eps(\cC)$, $C(\cC)$, $\delta_1(\cC) > 0$ with the following property. Let $\delta \leq \delta_1$ in \eqref{eq metric assumption}, and $\Sigma$ is a $g$-minimizing boundary in $B_2$ with $\supp \Sigma \cap B_1 \neq \emptyset$ and  $\int_{\Sigma} \Sdist_\cC^2 d\|\Sigma\|_{\bar g} \leq \eps$, then there exist a $C^2_{\textnormal{loc}}$ function $u$ on a subset of $\cC$ and an open $U\supset Q_{1/2}\cap \{r>1/20\}$ so that $\graph_\cC u = \Sigma \cap U$ and 
    \begin{multline*}
      \int_{Q_{1/2}} r^{-2}\Sdist_\cC^2 d\|\Sigma\|_{\bar g} + \int_{Q_{1/2} \cap \Dom(u)} (|\nabla_\cC u|^2 + r^{-2}u^2)\ d\|\cC\|_{\bar g} \\
      \leq C(\cC) \left( \int_{Q_1} \Sdist_\cC^2 \ d\|\Sigma\|_{\bar g} + \delta \right) \, . 
    \end{multline*}
  \end{Thm}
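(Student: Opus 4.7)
The plan is to follow Simon's original non-concentration argument \cite{Simon:uniqueness.some, Simon:cylindrical-sing}, with Riemannian corrections of size $\delta$ tracked throughout using \eqref{eq metric assumption}.

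\textbf{Step 1 (Graphical decomposition).} First I would establish the existence of the graphing function $u$. Since $\cC_\circ$ is strictly minimizing (cf.\ Definition \ref{def:strictly-minimizing}) and $\int_\Sigma \Sdist_\cC^2\le \eps$ is small, the compactness theorem for minimizing boundaries with $|g-\bar g|\le\delta$, combined with Allard's regularity theorem, forces $\Sigma$ to be $C^2$-close to $\cC$ on $Q_{3/4}\cap\{r>1/40\}$. This yields the required open $U\supset Q_{1/2}\cap\{r>1/20\}$ and the $C^2_{\textnormal{loc}}$ function $u$ with $\graph_\cC u = \Sigma\cap U$, satisfying a uniformly elliptic Euler--Lagrange equation for $g$-minimal surfaces whose coefficients differ from the $\bar g$-minimal surface operator by terms of order $\delta$.

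\textbf{Step 2 (Hybrid stability estimate).} The core step is to test the stability inequality of $\Sigma$ against $\varphi = \eta \Sdist_\cC$ for a cutoff $\eta$ supported in $Q_1$. The $g$-stability of $\Sigma$ gives
\[ \int_\Sigma |A_\Sigma|^2 \varphi^2 \le \int_\Sigma |\nabla^\Sigma \varphi|^2 + C\delta\int_\Sigma \varphi^2, \]
where the $\delta$ term absorbs the Ricci correction. On $\cC$, the strict stability of $\cC_\circ$ extended via Fubini (cf.\ Lemma \ref{Lem_Jac field_Caccioppoli-type Ineq}) gives the Hardy inequality $Q_\cC(\psi,\psi)\ge\mu(\cC_\circ)\int_\cC r^{-2}\psi^2$. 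The transfer of this inequality from $\cC$ to $\Sigma$ proceeds by comparing $|A_\Sigma|^2$ and $|A_\cC|^2\circ\Pi$ (where $\Pi$ denotes the nearest-point projection), which on the graphical region $\Sigma\cap U$ agree to leading order, and by noting that $|\nabla^\Sigma \Sdist_\cC|\le 1$. Combining these after cutting off near $\{r=0\}$ and then letting the cutoff degenerate yields
\[ \int_\Sigma r^{-2}\Sdist_\cC^2 \le C(\cC)\left(\int_\Sigma\Sdist_\cC^2 + \delta\right), \]
which is the first part of the claim.

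\textbf{Step 3 (Caccioppoli on the graphing function).} Once $\int_\Sigma r^{-2}\Sdist_\cC^2$ is controlled, the bound on $\int_{Q_{1/2}\cap\Dom(u)}(|\nabla_\cC u|^2 + r^{-2}u^2)$ follows by testing the elliptic equation for $u$ against $\eta^2 u$ with the same cutoff, and using the pointwise comparison $u(x)\approx \Sdist_\cC(\Phi(x))$ on the graphical region (with error terms quadratic in $\nabla u$ and $u$, plus $O(\delta)$ terms from the metric).

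\textbf{Main obstacle.} The hardest part will be the transition between $\Sigma$ and $\cC$ in Step 2. The test function $\eta\Sdist_\cC$ is defined on $\RR^{n+1}$ but must be integrated on $\Sigma$ while one wants to exploit the Hardy inequality that lives on $\cC$; meanwhile $\Sigma$ is not graphical over $\cC$ on the ``bad set'' $\{r\lesssim\rho\}$. To handle this bad set, I would use the monotonicity formula $\|\Sigma\|(Q_\rho\cap\{r<\rho\})\le C\rho^n$ together with the pointwise bound $|\Sdist_\cC|\le C$ to show the contribution to $\int_{\{r<\rho\}} r^{-2}\Sdist_\cC^2$ is absorbable. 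Passing the Hardy inequality through the graphical comparison will require a careful Taylor expansion in $u$, which is itself only small in an integral sense at this stage; this forces the standard Simon iteration trick of first proving a weaker estimate on a larger domain and then absorbing lower-order errors. Throughout, all integration-by-parts computations on $\Sigma$ relative to $g$ produce Christoffel and Ricci errors that are $O(\delta)$ under \eqref{eq metric assumption}, accounting for the additive $\delta$ on the right-hand side.
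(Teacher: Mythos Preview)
Your Steps~1 and~3 correspond to parts of the paper's argument (essentially Lemma~\ref{Lem_Noncon_Maximal graphical region}), but Step~2 has a genuine gap. Testing the second variation of $\Sigma$ against $\eta\,\Sdist_\cC$ does not give non-concentration: the right-hand side contains $\int_\Sigma\eta^2|\nabla^\Sigma\Sdist_\cC|^2$, and the bound $|\nabla^\Sigma\Sdist_\cC|\le 1$ you quote produces an $O(1)$ term. If instead you use $|\nabla^\Sigma\Sdist_\cC|^2\sim|\nabla u|^2+r^{-2}u^2$ on the graphical part, that is precisely the quantity you want to control, so the argument is circular. More fundamentally, the stability inequality on $\Sigma$ and the Hardy inequality on $\cC$ are both \emph{lower} bounds on a Dirichlet form; they do not combine to give an upper bound on $\int_\Sigma r^{-2}\Sdist_\cC^2$. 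On the non-graphical ``bad'' set one has only $r^{-2}\Sdist_\cC^2\le 1$, and controlling $\int_{\text{bad}}1\,d\|\Sigma\|$ by the methods you sketch returns $\int r^{-2}\Sdist_\cC^2$ on the right, which does not close.

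The paper's proof is structurally different: it uses the \emph{first} variation with the test vector field $\zeta(y)\varphi(r)f^2\cdot(x,0)$ where $f=\frT_{S}$ is the leaf parameter of the Hardt--Simon foliation (Section~\ref{Subsec_HS foliation}). The strictly minimizing hypothesis on $\cC_\circ$ enters exactly here --- it forces $\gamma(\pm)=\gamma_1$, which yields the pointwise bound $|\nabla_\Sigma\frT_{S}|^2\le C\,r^{-2\gamma_1}(1-\langle\nu_\Sigma,\nu_{S}\rangle)$ of Proposition~\ref{Prop_HS_Est of F_H and |D F_H|}. Choosing $\varphi(r)=\psi(r)r^{2\gamma_1-2}$ and using strict stability as $n_\circ+2\gamma_1-2>0$ puts $\int r^{-2}\Sdist_\cC^2$ on the left; the resulting term $\int(1-\langle\nu_\Sigma,\nu_{S}\rangle)$ on the right is handled by Stokes' theorem using $\Div_{\bar g}\nu_{S}=0$ and the volume-difference estimate of Corollary~\ref{Cor_Noncon_Vol diff est btwn M and C}. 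This gives the intermediate bound $\int_{Q_{\tau_1}}r^{-2}\Sdist_\cC^2\le C(\tau_2-\tau_1)^{-5}\bigl(\int_{Q_{\tau_2}}r^{-1}\Sdist_\cC^2+\delta\bigr)$ (Lemma~\ref{Lem_Noncon_weak Noncon}), and the mismatch in exponents ($-2$ versus $-1$) is what allows the final H\"older--Moser iteration over dyadic radii to close. Your proposal never invokes the Hardt--Simon foliation, and without it this chain of estimates is unavailable.
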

The remainder of this appendix contains a proof of Theorem \ref{Thm_Simon's L^2 Noncon} including the adaptations to the non-flat background. We will use the spectral analysis on $\cC_\circ$ developed in Section \ref{Subsec_reg Min Cone}.

  \subsection{Weighted norm}\label{Subsec_weighted}
  
For $\Omega \subset \cC_\circ$ and $v \in C^2_\textnormal{loc}(\Omega)$ we define 
\[
\|v\|_{C^2_*(\Omega)} : = \sup_{x \in \Omega} \left( |x|^{-1}|v(x)| + |\nabla v(x)| + |x| |\nabla^2 v(x)| \right).
\]
If $\Omega = \cC_\circ$, we will write $v = O^*_2(r^\alpha)$ when  $r \|v\|_{C^2_*(\cC_\circ\setminus B_r)} = O(r^\alpha)$ for $r > 0$.

We also extend this definition to the cylindrical setting. For $\Omega \subset \cC$ and $v \in C^2_\textnormal{loc}(\Omega)$, we define
\[
\|v\|_{C^2_*(\Omega)} : = \sup_{(x,y) \in \Omega} \left( |x|^{-1}|v(x,y)| + |\nabla v(x,y)| + |x| |\nabla^2 v(x,y)| \right).
\]
If $\Omega = \cC$, we will write $v = O^*_2(r^\alpha)$ if $r \|v\|_{C^2_*((\cC_\circ\setminus B_r) \times \RR^k)} = O(r^\alpha)$ for $r > 0$. 

In both cases, $\Omega$ might be denoted $\Dom(v)$ or else suppressed if it can be understood from the context.

  \subsection{Hardt-Simon Foliation} \label{Subsec_HS foliation}
   By \cite{HardtSimon:isolated}, there exists a pair of real numbers
   \[ \gamma(\pm) = \gamma(\cC_\circ, \pm)\in \{\gamma_1, 2-n_\circ -\gamma_1\}, \]
   where $\gamma_1 = -\alpha(\cC_\circ)>-\frac{n_\circ - 2}{2}$ is defined in Section \ref{Subsec_reg Min Cone}, a unique pairwise-disjoint family of minimizing hypersurfaces $\{{S}_\circ(\lambda)\}_{\lambda\in \RR}$ foliating $\RR^{n_\circ+1}\setminus \{0\}$ parametrized by $\lambda\in\RR$, as well as a family of $C^\infty_{\textnormal{loc}}(\cC_\circ)$ functions $\{(f_\circ)_\lambda\}_{\lambda\in \RR}$ such that
   \begin{enumerate}[(i)]
    \item ${S}_\circ(0)=\cC_\circ$, $(f_\circ)_0 \equiv 0$; ${S}_\circ(\lambda)\in U_\circ^{\textrm{sgn}(\lambda)}$ for every $\lambda\neq 0$.
    \item For every $t>0$ and $\lambda\neq 0$,
    \begin{align}
      {S}_\circ(t\lambda) & = t^{\frac1{1-\gamma(\textrm{sgn}\lambda)}} \cdot {S}_\circ(\lambda)\,, \label{Equ_HS_scal H_0(lambda)}\\
      (f_\circ)_{t\lambda}(x) & = t^{\frac{1}{1-\gamma(\textrm{sgn}\lambda)}}\cdot (f_\circ)_{\lambda}(t^{-\frac{1}{1-\gamma(\textrm{sgn}\lambda)}}x) \label{Equ_HS_scal f_lambda} \,.
    \end{align} 
    \item There exists $R=R(\cC_\circ)>1$ and $\delta=\delta(\cC_\circ)\in (0, 1)$ such that for every $\lambda\neq 0$, outside the ball of radius $R|\lambda|^{\frac{1}{1-\gamma(\textrm{sgn}\lambda)}}$, we have 
    \begin{align}
     \begin{split}
      {S}_\circ(\lambda) & = \graph_{\cC_\circ} (f_\circ)_\lambda \\
      (f_\circ)_\lambda(x) & = \lambda\cdot r^{\gamma(\textrm{sgn}\lambda)}\psi_1(\omega) + O_2^*\left(|\lambda|^{1+\frac{\delta}{1-\gamma(\textrm{sgn}\lambda)}}\cdot r^{\gamma(\textrm{sgn}\lambda) -\delta} \right)\,,  
     \end{split} \label{Equ_HS_f_lambda = lambda r^gamma}
    \end{align}
   \end{enumerate}
   where $\psi_1>0$ is the $L^2$-unit first eigenfunction of $-(\Delta_{\cL_\circ} + |A_{\cL_\circ}|^2)$ as in section \ref{Subsec_reg Min Cone}. 
   
   We assume from now on that $\cC_\circ$ is strictly stable and strictly minimizing in the sense of \cite{HardtSimon:isolated}:

   \begin{Def}\label{def:strictly-minimizing}
   We say $\cC_\circ$ is \textbf{strictly minimizing} if $\gamma(+)=\gamma(-) = \gamma_1$ above. 
   \end{Def}

Let $\nu_{{S}_\circ}$ be the unit $C^1_{\textnormal{loc}}$ vector field on $\RR^{n_\circ+1}\setminus \{0\}$ such that
\[ \nu_{{S}_\circ}(x)\perp T_x{S}_\circ(\lambda) \text{ whenever } x\in {S}_\circ(\lambda), \]
\[ \nu_{{S}} = \nu_{\cC_\circ} \text{ along } \cC_\circ. \]
Note that $\nu_{{S}_\circ}$ is homogeneous with degree $0$, i.e. \[
     \nu_{{S}_\circ}(t x) = \nu_{{S}_\circ}(x) \,, \qquad \text{ for every }t>0 \text{ and } x\in \RR^{n_\circ+1}\setminus \{0\}\, . 
   \] 
   Also define $\frT_{{S}_\circ}(x)\in C^0(\RR^{n_\circ+1}) \cap C^1_\textrm{loc}(\RR^{n_\circ+1}\setminus \{0\})$ by \begin{equation}\label{eq:FH definition}
     \frT_{{S}_\circ}(x) := \lambda \qquad \text{ if }x\in {S}_\circ(\lambda) \,.
   \end{equation}
   
   We now extend everything to the cylindrical case, where
   \[ \cC:= \cC_\circ\times \RR^k\subset \RR^{n+1=n_\circ+1+k}. \]
    Let
   \[ U^\pm:= U^\pm_\circ\times \RR^k \]
   be the connected components of $\RR^{n+1}\setminus \overline{\cC}$, and
   \[ {S}(\lambda):= {S}_\circ(\lambda)\times \RR^k \]
   be the minimizing hypersurfaces foliating $\RR^{n+1}$. Then we write
   \[ f_\lambda, \; \frT_{{S}}, \; \nu_{{S}} \]
   for the $y$-invariant extensions of $(f_\circ)_\lambda, \frT_{{S}_\circ}, \nu_{{S}_\circ}$ to $\cC$, $\RR^{n+1}$, $\RR^{n+1} \setminus (\{0\} \times \RR^k)$, respectively. Note that, since all the ${S}(\lambda)$ are minimal, we have 
   \begin{align}
     \Div_{\RR^{n+1},\bar g}(\nu_{{S}}) \equiv 0\,. \label{HS_div(X_H)=0}
   \end{align}
 
   \begin{Prop}\label{Prop_HS_Est of F_H and |D F_H|}
     Let $\cC, U^\pm, {S}(\lambda), f_\lambda, \frT_{{S}}$ be defined as above.
     Then for every $(x, y)\in \RR^{n+1}\setminus(\{0\}\times \RR^k)$ and every unit vector $v\in \RR^{n+1}$, we have
     \begin{align}
        \frT_{{S}}(t x, y) & = t^{1-\gamma_1} \frT_{{S}}(x, y)\,; \label{Equ_HS_F_H 1-gamma homogen} \\
        C(\cC_\circ)^{-1}\cdot\frac{\dist_{\cC_\circ}(x)}{|x|^{\gamma_1}} & \leq \frT_{{S}}(x, y) \leq C(\cC_\circ)\cdot\frac{\dist_{\cC_\circ}(x)}{|x|^{\gamma_1}}\,; \label{Equ_HS_F_H sim r^(gamma)dist_C} \\
        |\nabla_{v^{\perp}}\frT_{{S}}(x, y)|^2 & \leq C(\cC_\circ) |x|^{-2\gamma_1}\left(1\pm \langle v, \nu_{{S}}(x, y) \rangle_{\bar g} \right) \label{Equ_HS_|D F_H|<r^(-2gamma)(1+<v,X_H>)}\,
     \end{align}
     where $\nabla_{v^{\perp}}\frT_{{S}}(x, y) := (\nabla \frT_{{S}}(x, y))^{\perp v}$ is the Euclidean gradient in $v^\perp$ direction and \eqref{Equ_HS_|D F_H|<r^(-2gamma)(1+<v,X_H>)} holds for either choice of sign.
   \end{Prop}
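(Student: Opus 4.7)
The plan is to dispatch the three claims in order, using the strict minimizing assumption $\gamma(+)=\gamma(-)=\gamma_1$ and the $y$-invariance of everything in sight to reduce to statements in $\RR^{n_\circ+1}\setminus\{0\}$. For \eqref{Equ_HS_F_H 1-gamma homogen}, I will read off the homogeneity directly from the scaling identity \eqref{Equ_HS_scal H_0(lambda)}: if $x\in{S}_\circ(\lambda)$, then setting $s=t^{1/(1-\gamma_1)}$ the identity ${S}_\circ(t\lambda)=s\cdot{S}_\circ(\lambda)$ yields $\frT_{{S}_\circ}(sx)=t\lambda=s^{1-\gamma_1}\frT_{{S}_\circ}(x)$. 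The $y$-invariance of ${S}(\lambda)={S}_\circ(\lambda)\times \RR^k$ extends this to $\cC$.

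For \eqref{Equ_HS_F_H sim r^(gamma)dist_C}, again by $y$-invariance it suffices to prove the claim for $x\in\RR^{n_\circ+1}\setminus\{0\}$. Both $\frT_{{S}_\circ}$ and $\dist_{\cC_\circ}$ are homogeneous (degree $1-\gamma_1$ and $1$ respectively, using that $\cC_\circ$ is a cone), so after dividing by $|x|^{1-\gamma_1}$ on the left and $|x|\cdot|x|^{-\gamma_1}$ on the right, the claim reduces to the uniform comparability
\[
    C(\cC_\circ)^{-1}\,\dist_{\cC_\circ}(\omega)\leq \frT_{{S}_\circ}(\omega)\leq C(\cC_\circ)\,\dist_{\cC_\circ}(\omega),\qquad \omega\in\SSp^{n_\circ}.
\]
Both sides vanish precisely along $\cL_\circ$, are continuous on the compact sphere, and are sign-matched on each component of $\SSp^{n_\circ}\setminus\cL_\circ$, hence both are bounded above and below by positive constants on any fixed neighborhood of $\cL_\circ$'s complement in $\SSp^{n_\circ}$. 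Near $\cL_\circ$ the comparability follows from the fact that $\frT_{{S}_\circ}\in C^1_\loc(\RR^{n_\circ+1}\setminus\{0\})$ with non-vanishing normal derivative along the regular part of $\cC_\circ$---a consequence of the foliation structure of $\{{S}_\circ(\lambda)\}_\lambda$ being a $C^1$ foliation transverse to $\nu_{\cC_\circ}$---so that $\frT_{{S}_\circ}$ is a $C^1$ defining function for $\cC_\circ$ near $\cL_\circ$. The main technical point is this non-degeneracy of $\nabla\frT_{{S}_\circ}$ along $\cL_\circ$, which is the only place that the structure theorem from \cite{HardtSimon:isolated} is used in a nontrivial way.

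Finally, for \eqref{Equ_HS_|D F_H|<r^(-2gamma)(1+<v,X_H>)}, since $\frT_{{S}}$ has level sets ${S}(\lambda)$ with unit normal $\pm\nu_{{S}}$, we have $\nabla\frT_{{S}}=|\nabla\frT_{{S}}|\nu_{{S}}$ (the sign convention being fixed by ${S}_\circ(\lambda)\subset U_\circ^{\textnormal{sgn}(\lambda)}$ and $\nu_{\cC_\circ}$ pointing into $U_\circ^+$). Therefore
\[
    |\nabla_{v^\perp}\frT_{{S}}|^2=|\nabla\frT_{{S}}|^2(1-\langle v,\nu_{{S}}\rangle^2)\leq 2\,|\nabla\frT_{{S}}|^2(1\pm\langle v,\nu_{{S}}\rangle),
\]
where in the last step I use $|1\mp\langle v,\nu_{{S}}\rangle|\leq 2$. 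It remains to show $|\nabla\frT_{{S}}|^2\leq C(\cC_\circ)|x|^{-2\gamma_1}$. Differentiating \eqref{Equ_HS_F_H 1-gamma homogen} in the $x$-variable gives the homogeneity $|\nabla\frT_{{S}}|(tx,y)=t^{-\gamma_1}|\nabla\frT_{{S}}|(x,y)$, while $y$-invariance gives $\nabla_y\frT_{{S}}\equiv 0$; so it suffices to bound $|\nabla\frT_{{S}}|$ on the unit $x$-sphere, which is immediate from the $C^1_\loc$ regularity of $\frT_{{S}_\circ}$ on $\RR^{n_\circ+1}\setminus\{0\}$.
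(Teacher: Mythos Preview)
Your proof is correct. The argument for \eqref{Equ_HS_F_H 1-gamma homogen} matches the paper's. For \eqref{Equ_HS_F_H sim r^(gamma)dist_C} and \eqref{Equ_HS_|D F_H|<r^(-2gamma)(1+<v,X_H>)} you take a softer route than the paper, and the comparison is worth recording. For \eqref{Equ_HS_F_H sim r^(gamma)dist_C}, the paper simply invokes the graphical expansion \eqref{Equ_HS_f_lambda = lambda r^gamma} directly; your compactness-plus-defining-function argument is fine, but note that your ``nonvanishing normal derivative along $\cL_\circ$'' is not a formal consequence of $\frT_{{S}_\circ}\in C^1_{\loc}$ alone (a $C^1$ function can have level sets foliating an open set yet have critical points on a leaf)---it requires precisely the positivity of the leading coefficient $r^{\gamma_1}\psi_1$ in \eqref{Equ_HS_f_lambda = lambda r^gamma}, so the two proofs ultimately rest on the same input. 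For \eqref{Equ_HS_|D F_H|<r^(-2gamma)(1+<v,X_H>)} your argument is genuinely simpler: the paper computes $|\nabla\frT_{{S}}|$ explicitly by pairing the Euler identity $\langle(x,0),\nabla\frT_{{S}}\rangle=(1-\gamma_1)\frT_{{S}}$ with a lower bound $\langle\nu_{{S}},(x,0)\rangle\geq C^{-1}|\frT_{{S}}|\,|x|^{\gamma_1}$ obtained from \eqref{Equ_HS_f_lambda = lambda r^gamma}, whereas you get the same bound $|\nabla\frT_{{S}}|\leq C|x|^{-\gamma_1}$ in one line from homogeneity and $C^1$-boundedness on $\{|x|=1\}$. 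The paper's computation has the minor advantage of giving an exact formula for $|\nabla\frT_{{S}}|$ (useful if one later wants two-sided bounds or the radial-projection inequality \eqref{eq radial XH comparable dist}), but for the proposition as stated your approach suffices.
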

   \begin{proof}
     Without loss of generality we can assume that $y=0$. Also, since $\frT_{{S}}\in C^1_{\textnormal{loc}}$ and $\nu_{{S}}\in C^0_{\textnormal{loc}}$, it suffices to consider when $x\notin \cC_\circ$. Now we see that \eqref{Equ_HS_F_H 1-gamma homogen} follows directly from \eqref{Equ_HS_scal H_0(lambda)} and \eqref{eq:FH definition}, while \eqref{Equ_HS_F_H sim r^(gamma)dist_C} follows directly from definition and \eqref{Equ_HS_f_lambda = lambda r^gamma}. We now prove \eqref{Equ_HS_|D F_H|<r^(-2gamma)(1+<v,X_H>)}. Since $\frT_{{S}}$ is constant on every ${S}(\lambda)$  and independent of $y$, we have \[
       \nabla_{\RR^{n+1}}\frT_{{S}}(x, y) = c(x) \nu_{{S}}(x, y) \,,  \]
     for some $c(x)\in \RR$. We now determine the value of $c(x)$. 
    Differentiating both sides of \eqref{Equ_HS_F_H 1-gamma homogen} with respect to $t$ and evaluating at $t=1$ we find \[
       c(x)\langle \nu_{{S}}(x, y), (x, 0) \rangle = \langle \nabla_{\RR^{n+1}}\frT_{{S}}(x, y), (x, 0) \rangle = (1-\gamma_1)\frT_{{S}}(x, y) \,.
     \]
     Solving for $c(x)$ (recall that $\gamma_1 = -\alpha(\cC_\circ)<-1$ by \eqref{eq:kappa.n}) we find 
     \[
     |\nabla_{v^\perp} \frT_{{S}}(x,y)|^2  = \frac{(1-\gamma_1)^2 \frT_{{S}}(x, y)^2}{\langle \nu_{{S}}(x,y),(x,0)\rangle^2}|\nu_{{S}}(x,y)^{\perp v}|^2.
     \]
     For any unit vectors $v,X$ we have $|X^{\perp v}|^2 = 1-(X\cdot v)^2 \leq 2(1\pm X\cdot v)$ (this holds for either choice of sign). Moreover, by (\ref{Equ_HS_f_lambda = lambda r^gamma}) and $\gamma_1 \neq 1$ we find: \[
       \langle \nu_{{S}_\circ}(x), x\rangle \geq C(\cC_\circ)^{-1} \lambda |x|^{\gamma_1} = C(\cC_\circ)^{-1} \frT_{{S}}(x, y) |x|^{\gamma_1}  \,.
     \]
     Putting these expressions together proves (\ref{Equ_HS_|D F_H|<r^(-2gamma)(1+<v,X_H>)}).     
   \end{proof}
   
     \subsection{Divergence}
   Fix a metric $g$ on $Q_4$ satisfying \eqref{eq metric assumption}. For a vector field $X$ on $Q_4$ we will write $\Div_{\RR^{n+1},g} X$ for the divergence of $X$ with respect to $g$ and $\Div_{\RR^{n+1},\bar g} X$ for the divergence of $X$ the Euclidean metric $\bar g$. Similarly, for $\Sigma\subset Q_4$ a hypersurface or varifold we write $\Div_{\Sigma,g} X$ for the divergence of $X$ along $\Sigma$ with respect to $g$ and $\Div_{\Sigma,\bar g}X$ for the divergence of $X$ along $\Sigma$ with respect to the Euclidean metric $\bar g$.    Set
   \[
   \cE_{\RR^{n+1}}(X,g) : = \Div_{\RR^{n+1},g} X - \Div_{\RR^{n+1},\bar g} X
   \]
   and
\[
\cE_{\Sigma}(X,g) : = \Div_{\Sigma,g} X - \Div_{\Sigma,\bar g} X 
\]
The following lemma is easily verified
\begin{Lem}\label{lemm div g bar g}
We have $|\cE_{\RR^{n+1}}(X,g)| \leq C_n \delta |X|$ and $|\cE_{\Sigma}(X,g)| \leq C_n  \delta(|X|+|\nabla_{\bar g} X|) $.
\end{Lem}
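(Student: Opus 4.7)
The plan is a direct coordinate computation: both bounds compare a pair of divergences coming from $C^2$-close metrics, so they reduce to controlling Christoffel symbols, differences of unit normals, and differences of metric pairings, each of which is $O(\delta)$ under \eqref{eq metric assumption}.

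For the first bound, in standard Euclidean coordinates on $Q_4$ one has $\Div_{\RR^{n+1},\bar g} X = \partial_i X^i$, while the $g$-divergence is
\[
\Div_{\RR^{n+1},g} X = \frac{1}{\sqrt{\det g}}\,\partial_i\!\big(\sqrt{\det g}\,X^i\big) = \partial_i X^i + \tfrac{1}{2} g^{jk}\,\partial_i g_{jk}\cdot X^i.
\]
Assumption \eqref{eq metric assumption} with $\delta \leq 10^{-1}$ gives $|g^{jk}| \leq C_n$ (since $g$ is within $\delta \leq 1/10$ of $\bar g$) and $|\partial g| \leq \delta$, whence $|\cE_{\RR^{n+1}}(X,g)| \leq C_n\delta|X|$. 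I would state this as a single line of computation.

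For the second bound I would reduce to the first by writing the tangential divergence as the ambient divergence minus a normal contribution:
\[
\Div_{\Sigma,g} X = \Div_{\RR^{n+1},g} X - g\!\big(\nabla^g_{\nu_g} X,\,\nu_g\big),
\]
with $\nu_g$ a $g$-unit normal to $\Sigma$, and analogously for $\bar g$. The ambient-divergence difference is controlled by the first bound. The normal-term difference I would expand into three pieces: (a) changing the pairing $g \leftrightarrow \bar g$ on a fixed argument, which costs $\delta|\nabla_{\bar g} X|$; (b) changing $\nabla^g X \leftrightarrow \nabla^{\bar g} X$, where the difference is (Euclidean) Christoffel symbols of $g$ applied to $X$, of size $\delta|X|$; and (c) changing $\nu_g \leftrightarrow \nu_{\bar g}$. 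For (c), since $|g-\bar g| \leq \delta$ on a common oriented tangent hyperplane, Gram--Schmidt/normalization yields $|\nu_g - \nu_{\bar g}| \leq C_n\delta$, contributing a further $\delta|\nabla_{\bar g} X|$. Summing the three pieces produces $|\cE_\Sigma(X,g)| \leq C_n\delta\big(|X| + |\nabla_{\bar g} X|\big)$.

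There is no substantive obstacle — both estimates are routine linear-algebra bookkeeping — but the one step worth writing out carefully is (c), verifying that the $g$- and $\bar g$-unit normals to the same tangent hyperplane of $\Sigma$ differ by $O(\delta)$; this is what ties the $\nabla X$-coefficient of the estimate to $\delta$ rather than to $1$, and is what forces the bound to involve $|\nabla_{\bar g} X|$ as well as $|X|$.
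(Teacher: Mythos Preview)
Your argument is correct. The paper does not actually give a proof of this lemma; it simply records ``the following lemma is easily verified'' and moves on. Your coordinate computation---the $\tfrac12 g^{jk}\partial_i g_{jk}\,X^i$ identity for the ambient case, and the decomposition $\Div_{\Sigma} X = \Div_{\RR^{n+1}} X - \langle \nabla_\nu X,\nu\rangle$ together with the three-term telescoping (metric pairing, connection, normal) for the hypersurface case---is exactly the routine verification the paper has in mind, and your handling of step~(c) via a Gram--Schmidt estimate $|\nu_g-\nu_{\bar g}|\leq C_n\delta$ is the natural way to close it.
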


  \subsection{$L^2$ non-concentration}
   Now we are ready to prove Simon's $L^2$ non-concentration estimate in the Riemannian setting. We continue to assume that the background metric $g$ satisfies \eqref{eq metric assumption}. 
   
   The following formula was essentially derived in \cite[Lemma 2.2, (24)]{Simon:uniqueness.some} with different notation.\footnote{Lemma \ref{Lem_Noncon_1st variation} also applies to stationary integral varifolds $\Sigma$ but we will not need that level of generality.}
   
   \begin{Lem} \label{Lem_Noncon_1st variation}
     Assume that \eqref{eq metric assumption} holds. Let $\Sigma$ be $g$-minimizing in $B_2$, $0\leq \zeta\in C^1_c(B^k_1)$, $0\leq \varphi \in C^1_c([0, 1))$, and $f\in \Lip(\RR^{n+1})$. Then we have 
     \begin{align}
      \begin{split}
        & \int_{Q_1} \zeta(y)\varphi(r)f(x, y)^2 (n_\circ + |\nu_y|^2)\ d\|\Sigma\|_{\bar g} \\
        =\ & \int_{Q_1} \Big[ \langle \nabla_{\bar g}\zeta(y), \nu_y  \rangle_{\bar g} \varphi(r)r\partial_r^\perp - \zeta(y)\varphi'(r)r(1-(\partial_r^\perp)^2) \Big]f^2  \\ & \qquad \qquad - \zeta(y)\varphi(r) \langle \nabla_{\Sigma,\bar g} (f^2), (x, 0) \rangle_{\bar g} \ d\|\Sigma\|_{\bar g}  \\
& + O\left( \int_{Q_1} (  | \zeta\varphi f^2| + r |\nabla_{\bar g}(\zeta\varphi f^2)| ) d\Vert \Sigma\Vert_{\bar g} \right) \delta 
      \end{split} \label{Equ_Noncon_1st variation}
     \end{align}
     where the implied constant depends only on $n$. Here 
     \[
       \partial_r^\perp:= \langle (\displaystyle\tfrac{x}{|x|}, 0),  \nu_{\Sigma,\bar g} \rangle_{\bar g},  \quad  \nu_y := \displaystyle\sum_{i=1}^k \langle \partial_{y_i}, \nu_{\Sigma,\bar g} \rangle_{\bar g} \partial_{y_i}\,,  \] 
     where $\nu_{\Sigma, \bar g}$ is a unit normal field on $\Sigma$ under $\bar g$ defined almost\footnote{Note that the formula is unchanged if we replace $\nu_\Sigma$ by $-\nu_\Sigma$.} everywhere. 

     There is $\kappa_\cC$ is sufficiently small depending only on $\cC$ so that if $\Sigma$ is locally written as a graph of some function $u$ with $\|u\|_{C^1_*} < \kappa_\cC$ over a subset of $\cC$, namely $(\hat{x}, y):= (x+u(x, y)\nu_{\cC_\circ}(x), y)\in \spt \Sigma$, then for some choice of sign $\pm$, we have
     \begin{align}
        \langle \partial_{y_i}, \nu_{\Sigma,\bar g} \rangle_{\bar g}|_{(\hat{x}, y)} & = \pm (1+O(\|u\|_{C^1_*}))\cdot \partial_{y_i}u(x, y)\,, \;\;\;\;\; \text{ for every }1\leq i\leq k\,; \label{Equ_Noncon_partial_y^perp}\\
        \partial_r^\perp |_{(\hat{x}, y)} & = \mp (1+O(\|u\|_{C^1_*}))\cdot \partial_ru(x, y) \,. \label{Equ_Noncon_partial_r^perp}
     \end{align}
   \end{Lem}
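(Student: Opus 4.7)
The plan is to derive \eqref{Equ_Noncon_1st variation} by applying the $g$-stationarity of $\Sigma$ to a carefully chosen test vector field, and to derive \eqref{Equ_Noncon_partial_y^perp}, \eqref{Equ_Noncon_partial_r^perp} by a direct computation of the graph's unit normal.

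First I would apply the first variation formula for $g$-stationary varifolds to the compactly supported vector field $X := \zeta(y)\varphi(r)f^2 \cdot (x,0)$ in $Q_1$, yielding $\int_\Sigma \Div_{\Sigma,g}(X)\,d\|\Sigma\|_g = 0$. Lemma \ref{lemm div g bar g}, together with $d\|\Sigma\|_g = (1+O(\delta))\,d\|\Sigma\|_{\bar g}$, converts this into $\int_\Sigma \Div_{\Sigma,\bar g}(X)\,d\|\Sigma\|_{\bar g} = O(\delta)\int_\Sigma(|X| + |\nabla_{\bar g} X|)\,d\|\Sigma\|_{\bar g}$. Since $|X| \leq r|\zeta\varphi f^2|$ and $|\nabla_{\bar g}X| \leq C_n(|\zeta\varphi f^2| + r|\nabla_{\bar g}(\zeta\varphi f^2)|)$, this produces exactly the claimed error term.

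Next I would expand $\Div_{\Sigma,\bar g}(X)$ by the Leibniz rule. A standard computation, using that the Euclidean divergence of $(x,0)$ in $\RR^{n+1}$ equals $n_\circ+1$ and that $\langle \nu_{\Sigma,\bar g},(x,0)\rangle = r\partial_r^\perp$, gives
\[
\Div_{\Sigma,\bar g}((x,0)) = (n_\circ+1) - |\nu_{\Sigma,\bar g} \text{ projected to } x\text{-factor}|^2 = n_\circ + |\nu_y|^2 .
\]
The product rule contributes $\langle \nabla_\Sigma(\zeta\varphi f^2),(x,0)\rangle$; decomposing $\nabla_\Sigma = \nabla_{\bar g} - \langle \nabla_{\bar g}\cdot,\nu\rangle\nu$, using that $\nabla_{\bar g}\zeta(y)$ is purely in the $y$-factor (so $\langle \nabla_{\bar g}\zeta,(x,0)\rangle = 0$), and that $\varphi'(r)\nabla r = \varphi'(r)(x/r,0)$ so $\langle \varphi'\nabla r,(x,0)\rangle = r\varphi'$, yields after collecting the three types of terms:
\[
\langle \nabla_\Sigma(\zeta\varphi f^2),(x,0)\rangle = \zeta f^2 \varphi'(r)r(1-(\partial_r^\perp)^2) - \varphi f^2\langle \nabla_{\bar g}\zeta,\nu_y\rangle_{\bar g}\,r\partial_r^\perp + \zeta\varphi\langle \nabla_{\Sigma,\bar g}(f^2),(x,0)\rangle.
\]
Substituting back into $\int_\Sigma \Div_{\Sigma,\bar g}(X) = \text{(error)}$ and rearranging reproduces \eqref{Equ_Noncon_1st variation}.

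For \eqref{Equ_Noncon_partial_y^perp} and \eqref{Equ_Noncon_partial_r^perp}, I would parametrize $\Sigma$ near the regular part of $\cC$ by $(x,y)\mapsto(\hat x,y)$ with $\hat x = x + u(x,y)\nu_{\cC_\circ}(x)$, and compute tangent vectors $\partial_r(\hat x,y) = (\omega + \partial_r u\cdot \nu_{\cC_\circ},0)$, $\partial_{y_j}(\hat x,y) = (\partial_{y_j}u\cdot \nu_{\cC_\circ},e_{y_j})$, and analogous vectors in the angular directions along $\cL_\circ$. Solving the orthogonality equations for $\nu_{\Sigma,\bar g}$ in the basis $\{\nu_{\cC_\circ}(x)\} \cup T_\omega \cL_\circ \cup \{e_{y_j}\}$ shows that, for $\kappa_\cC$ small enough, one can choose a sign so that $\nu_{\Sigma,\bar g} = \mp(1+O(\|u\|_{C^1_*}^2))^{-1/2}(\nu_{\cC_\circ} - \nabla_\cC u + O(\|u\|_{C^1_*}^2))$. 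Reading off the $\partial_{y_j}$ component gives \eqref{Equ_Noncon_partial_y^perp}, and taking the inner product with $(\hat x/|\hat x|,0)$ (and using $\hat x/|\hat x| = \omega + O(\|u\|_{C^1_*})$ together with $\langle \omega,\nabla_\cC u\rangle = \partial_r u$) gives \eqref{Equ_Noncon_partial_r^perp}.

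The main obstacle will be Step 1, specifically quantifying the error terms so that they collect into the single expression $O(\delta)\int_\Sigma(|\zeta\varphi f^2| + r|\nabla_{\bar g}(\zeta\varphi f^2)|)\,d\|\Sigma\|_{\bar g}$; the bookkeeping requires tracking separately the contribution from converting $\Div_{\Sigma,g}$ to $\Div_{\Sigma,\bar g}$, and from $d\|\Sigma\|_g$ to $d\|\Sigma\|_{\bar g}$, each of which is controlled by Lemma \ref{lemm div g bar g} and \eqref{eq metric assumption}.
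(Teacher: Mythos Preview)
Your proposal is correct and follows essentially the same approach as the paper: the same test vector field $X = \zeta(y)\varphi(r)f^2 \cdot (x,0)$, the same use of Lemma~\ref{lemm div g bar g} to pass from $g$ to $\bar g$, and the same expansion of $\Div_{\Sigma,\bar g}X$. Your computation of $\Div_{\Sigma,\bar g}(x,0) = (n_\circ+1) - |\nu_x|^2 = n_\circ + |\nu_y|^2$ is a slight variant of the paper's (which instead writes $n - \Div_{\Sigma,\bar g}(0,y) = n - \sum |\partial_{y_i}^T|^2$), and you supply more detail on the graphical normal computation than the paper, which simply calls \eqref{Equ_Noncon_partial_y^perp}--\eqref{Equ_Noncon_partial_r^perp} a ``standard computation.''
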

   \begin{proof}
    To prove (\ref{Equ_Noncon_1st variation}), we consider $X = \zeta(y)\varphi(r)f(x, y)^2\cdot (x, 0)$ to be a test vector field in the first variation formula,
    \[
     \int \diverg_{\Sigma,g} X \ d\|\Sigma\|_g = 0\,.
   \]
   Note that Lemma \ref{lemm div g bar g} allows us to replace our $g$'s with $\bar g$'s and obtain
   \[
     \int \diverg_{\Sigma,\bar g} X \ d\|\Sigma\|_{\bar g} = O\left( \delta \cdot \int_{Q_1} (  | \zeta\varphi f^2| + r |\nabla_{\bar g}(\zeta\varphi f^2)| ) d\Vert \Sigma\Vert_{\bar g} \right).
   \]
   In particular, it suffices to study the term on the left hand side. Observe that
\[
  \diverg_{\Sigma,\bar g} (x,0) = n - \diverg_{\Sigma,\bar g} (0, y)   = n - \sum_{i=1}^k |\partial_{y_i}^T|^2 = n_\circ + |\nu_y|^2,
\]
where $\partial^T_{y_i}$ is the orthogonal porjection of $\partial_{y_i}$ onto $T\Sigma$ under $\bar g$. Note also that
\begin{align*}
  \langle \nabla_{\Sigma,\bar g}\zeta(y)  ,(x,0) \rangle_{\bar g} & = - 
  \langle \nabla_{\bar g} \zeta(y), \nu_y \rangle_{\bar g} r \partial_r^\perp 
\end{align*}
Thus,
\begin{align*}
  \diverg_{\Sigma,\bar g} X & = \zeta(y)\varphi(r) f(x,y)^2 (n_\circ + |\nu_y|^2)\\
  & + \zeta(y) \varphi'(r)f(x,y)^2 r(1-(\partial_r^\perp)^2)\\
  & + \zeta(y)\varphi(r) \langle \nabla_{\Sigma,\bar g}(f^2),(x,0) \rangle_{\bar g} \\
  & - \varphi(r) \langle \nabla_{\bar g} \zeta(y),\nu_y \rangle_{\bar g} r \partial_r^\perp f(x,y)^2
\end{align*}
As explained, this proves \eqref{Equ_Noncon_1st variation}. The formulae \eqref{Equ_Noncon_partial_y^perp} and \eqref{Equ_Noncon_partial_r^perp} follow via a standard computation.
\end{proof}

Let us fix a sufficiently good choice of $\kappa_\cC$ in addition to satisfying \eqref{Equ_Noncon_partial_y^perp} and \eqref{Equ_Noncon_partial_r^perp} of Lemma \ref{Lem_Noncon_1st variation}. First, note that combining \eqref{Equ_HS_f_lambda = lambda r^gamma} and \eqref{Equ_Noncon_partial_r^perp} and by taking $\kappa_\cC$ smaller if necessary we can arrange (as in Proposition \ref{Prop_HS_Est of F_H and |D F_H|}) that
\begin{equation}
\langle r\partial_r, \nu_{{S}}(x) \rangle_{\bar g} \leq C(\cC_\circ) \cdot \dist(x, \cC_\circ)
\label{eq radial XH comparable dist}
\end{equation}
whenever $\dist(x, \cC_\circ) < \kappa_\cC |x|$. For $\kappa_\cC > 0$ smaller yet and the domain
\[
D_\Phi := \{ (x, y, t) \in \cC_\circ \times \RR^k \times \RR : |t| < \kappa_\cC |x| \},
\]
the graph map over hypercone 
\begin{equation} \label{eq:app.noncon.tub.nghd}
	\Phi: D_\Phi \to \RR^{n+1}\,,\quad (x, y, t)\mapsto ( x+t\nu_{\cC_\circ, \bar g}(x), y)\,,
\end{equation}
is a diffeomorphism onto its image. Having fixed $\kappa_\cC$, we know that for sufficiently small $\eps_0(\cC) > 0,\delta_0(\cC)$, any $g$-minimizing boundary $\Sigma$ in $B_2$ so that $g$ satisfies \eqref{eq metric assumption} and $\Sigma$ satisfies $\int_{\Sigma}\Sdist_\cC^2 d\|\Sigma\|_{\bar g} \leq \eps_0$ then there exists an open subset of $\cC$ and a $C^2$ function $u$ on it with
\begin{equation} \label{eq:app.noncon.eps0}
	\Sigma \cap (Q_1 \setminus \{ r < 1/4 \}) = \graph_\cC u, \qquad \| u \|_{C^2_*} < \kappa_\cC.
\end{equation}
We fix these $\kappa_\cC$, $\eps_0(\cC),\delta_0(\cC)$ for the rest of the appendix.

   The following lemma is a modified version of \cite[Lemma 1.4]{Simon:uniqueness.some}. We first introduce the following notation.  Let $Z\subset \RR^{n+1}$ be a closed subset and $h:Z\to \RR_{\geq 0}$ be an upper-semi-continuous function.  Denote by \begin{equation}\label{eq:def.base.set}
       \bar{B}_h = \bar B_h(Z) := \bigcup_{z\in Z} \bar{B}_{h(z)}(z) \subset \RR^{n+1}\,,
   \end{equation}
   which is also a closed subset (note, for $h(z) = 0$, $\bar B_{h(z)}(z)$ is a point). We will also need to work with annuli
   \[ A^{n_\circ+1}_{s_1, s_2} = B^{n_\circ+1}_{s_2} \setminus \bar B^{n_\circ+1}_{s_1} \]
   in the first $\RR^{n_\circ+1}$ factor. 

   Note that since $\Sigma$ is mininal with respect to $g$, its mean curvature with respect to $\bar g$ is controlled by $C\delta r^{-1}_\Sigma$.

   \begin{Lem} \label{Lem_Noncon_Maximal graphical region}
     Let $\alpha > 4-n_\circ$. There exist $\eps_1(\cC)\leq \eps_0(\cC)$, $\delta_1(\cC, \alpha)\leq \delta_0(\cC)$, $C(\cC, \alpha)$ with the following property. Let $\delta \leq \delta_1$ in \eqref{eq metric assumption} and $\Sigma$ be a $g$-minimizing boundary in $B_2$ with $\supp \Sigma \cap B_{1} \neq \emptyset$ and $\int_{\Sigma}\Sdist_\cC^2\, d\|\Sigma\|_{\bar g} \leq \eps_1$. Then there exists an upper-semi-continuous $s_\Sigma: \bar B_1^k \to [0, 1/20)$ and $u$ defined on an open subset of $\cC$ such that
     \[ \graph_\cC u = \Sigma \cap U \text{ with } \|u\|_{C^2_*} < \kappa_\cC \]
     where
     \[
       U := Q_1\setminus \bar{B}_{s_\Sigma}(\{0\}\times \bar{B}_1^k) \supset A_{1/2, 1}^{n_\circ+1} \times B_1^k \,.
     \]
     Moreover, for $1/2 \leq \tau_1 < \tau_2 \leq 1$, the following estimate holds if $\delta\leq \delta_1$ in \eqref{eq metric assumption}:
     \[
     \begin{split}
        \int_{Q_{\tau_1} \cap \Dom(u)} r^\alpha (|\nabla u|^2 + r^{-2}u^2)\ d\|\cC\|_{\bar g} + \int_{Q_{\tau_1} \cap\bar{B}_{2s_\Sigma}(\{0\}\times B_1^k)}r^\alpha\ d\|\Sigma\|_{\bar g} 
       \\ \leq\ \frac{C(\cC, \alpha)}{(\tau_2-\tau_1)^2} \cdot \int_{Q_{\tau_2}} r^{\alpha-2}\Sdist_\cC^2 \ d\|\Sigma\|_{\bar g} + C(\cC,\alpha) \cdot \delta^2 \,. 
     \end{split}
     \]
   \end{Lem}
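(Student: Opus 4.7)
My plan is to follow Simon's original strategy from \cite{Simon:uniqueness.some}, with the Hardt--Simon foliation function $\frT_{{S}}$ providing a ``straightened'' $L^2$-distance to $\cC$ that behaves well under first variation, while carefully tracking the new error terms of size $O(\delta)$ introduced by the non-Euclidean metric $g$.

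First I would construct $s_\Sigma : \bar B^k_1 \to [0, 1/20)$. For each $y \in \bar B^k_1$, let $s_\Sigma(y)$ be the infimum of $s \in [0, 1/20)$ so that $\Sigma \cap (A^{n_\circ+1}_{s,1} \times B^k_{(1+s)/2}(y))$ is a graph of some $u$ over the corresponding subset of $\cC$ with $\|u\|_{C^2_*} < \kappa_\cC$. For $\eps_1, \delta_1$ sufficiently small, existence of such $s$ follows from \eqref{eq:app.noncon.eps0} in the outer region combined with Allard's regularity theorem applied at scales proportional to the distance to $\sing \cC$, using the $L^2$-smallness and standard covering to upgrade to a graphical $C^2$ bound. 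Upper-semi-continuity of $s_\Sigma$ follows from continuous dependence of the graphing radius on the weak limit of $\Sigma$ slices at varying $y$.

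The heart of the proof is a weighted first variation computation on the vector field $X = \zeta(y) \varphi(r) f^2 (x, 0)$ appearing in Lemma \ref{Lem_Noncon_1st variation}, taken with $f := r^{(\alpha-2+2\gamma_1)/2}\, \frT_{{S}}$ and cutoffs $\zeta \in C^1_c(B^k_{\tau_2}), \varphi \in C^1_c([0, \tau_2))$ equal to $1$ on $B^k_{\tau_1}$ and $[0,\tau_1]$ respectively, with derivatives bounded by $C/(\tau_2 - \tau_1)$. By \eqref{Equ_HS_F_H sim r^(gamma)dist_C}, $f^2$ is comparable to $r^{\alpha-2} \Sdist_\cC^2$, which drives the right-hand side. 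On the left, the $(n_\circ + |\nu_y|^2)$ coefficient, together with the fact that $|\nu_y|^2 \gtrsim 1$ on the non-graphical region (as follows from $\cC$ being cylindrical in $y$) captures the $r^\alpha$-area of $\Sigma \cap \bar B_{2s_\Sigma}(\{0\} \times B_1^k)$; over the graphical region, relations \eqref{Equ_Noncon_partial_y^perp}--\eqref{Equ_Noncon_partial_r^perp} together with a weighted Hardy inequality (using $\alpha > 4 - n_\circ$) convert it to $r^\alpha(|\nabla u|^2 + r^{-2}u^2)$. The cross term $\langle \nabla_{\Sigma,\bar g}(f^2),(x,0)\rangle$ is controlled using the orthogonality $\nabla \frT_{{S}} = c(x)\nu_{{S}}$ together with the angle estimate \eqref{Equ_HS_|D F_H|<r^(-2gamma)(1+<v,X_H>)}, which provides the smallness factor $(1 \pm \langle\nu_\Sigma, \nu_{{S}}\rangle)$ that is absorbable into the LHS via weighted Cauchy--Schwarz; the homogeneity identity \eqref{Equ_HS_F_H 1-gamma homogen} ensures the power on $r$ balances correctly. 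The $\varphi'$ and $\nabla\zeta$ terms produce the $(\tau_2-\tau_1)^{-2}$ factor after another Cauchy--Schwarz.

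The main obstacle will be the Riemannian error $O(\delta)$ in \eqref{Equ_Noncon_1st variation}. By Lemma \ref{lemm div g bar g} it contributes
\[
C \delta \int_{Q_1} \bigl(|\zeta \varphi f^2| + r|\nabla_{\bar g}(\zeta\varphi f^2)|\bigr) \, d\|\Sigma\|_{\bar g} ,
\]
and since $|\nabla_{\bar g}(\zeta \varphi f^2)| \lesssim (\tau_2-\tau_1)^{-1}(\zeta\varphi f^2 + \zeta \varphi |f| |\nabla f|)$, weighted Cauchy--Schwarz splits this as $\tfrac12(\text{LHS}) + C(\cC,\alpha)\,\delta^2\, \|\Sigma\|_{\bar g}(Q_1)$; the first summand is absorbed into the left-hand side and the second is bounded by a dimensional constant via the monotonicity formula for the weighted area ratio $\Theta^1_\Sigma$. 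A secondary issue is that $\Sigma$ is $g$-minimal, not $\bar g$-minimal, so the Hardt--Simon identity \eqref{HS_div(X_H)=0} would not be directly applicable if one tried to test with $\nu_{{S}}$; this is circumvented by using $\frT_{{S}}$ as a \emph{scalar} multiplier in the Euclidean first variation of Lemma \ref{Lem_Noncon_1st variation} rather than as a direction. Choosing $\delta_1(\cC, \alpha)$ small enough to accommodate the absorption, and iterating once more to remove $\|u\|_{C^2_*}$ dependence in the constants, yields the stated estimate.
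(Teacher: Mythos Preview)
Your proposal conflates this lemma with the \emph{subsequent} Lemma~\ref{Lem_Noncon_weak Noncon}: the first-variation computation with $f \propto r^{\cdots}\frT_{{S}}$ is what the paper uses to prove the latter, not the lemma in question. Here the paper uses a direct covering argument with no first variation at all. For each $y$ with $s_\Sigma(y)>0$, a compactness/contradiction argument shows that on the annulus $A^{n_\circ+1}_{s_\Sigma(y),2s_\Sigma(y)}\times B^k_{2s_\Sigma(y)}(y)$ either the $r^{\alpha-2}\Sdist_\cC^2$ mass is $\gtrsim s_\Sigma(y)^{n+\alpha}$ or $\delta^2 s_\Sigma(y)^{n+\alpha}$ is comparable to $s_\Sigma(y)^{n+\alpha}$; the monotonicity formula then bounds $\int_{B_{10s_\Sigma(y)}(0,y)} r^\alpha\,d\|\Sigma\|_{\bar g}\lesssim s_\Sigma(y)^{n+\alpha}$, and a Vitali cover sums these contributions into the bad-region term. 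On the graphical region away from the bad balls, pointwise interior elliptic estimates at scale $\sim |x|$ (where $\Sigma$ is $g$-minimal with mean curvature error $\lesssim \delta r_\Sigma^{-1}$) give local bounds on $|\nabla u|^2+r^{-2}u^2$ that are summed by Besicovitch; the assumption $\alpha>4-n_\circ$ enters only here to make $\int_{Q_{\tau_2}} r^{\alpha-4}\,d\|\cC\|_{\bar g}$ finite, controlling the accumulated $\delta^2$ errors.

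Your approach has two concrete gaps. First, the assertion that $|\nu_y|^2\gtrsim 1$ on the non-graphical region is unfounded: failure of graphicality over $\cC$ near the spine says nothing about the $y$-component of $\nu_\Sigma$ (the obstruction could be large curvature or a ``wrong'' cone in the $x$-variables, with $\nu_y$ identically zero). Second, the integrand your first variation produces on the left is $\zeta\varphi f^2(n_\circ+|\nu_y|^2)\sim r^{\alpha-2}\Sdist_\cC^2$, which on the graphical region is $\sim r^{\alpha-2}u^2$; this is only the zeroth-order piece of what you need, and a Hardy-type inequality goes the wrong direction (it bounds $r^{-2}u^2$ by $|\nabla u|^2$, not conversely), so you cannot recover $\int r^\alpha|\nabla u|^2$ this way. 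Nor does this integrand reduce to $r^\alpha$ on the bad region without an unproven bound $\Sdist_\cC\gtrsim r$ there. The covering argument is genuinely needed.
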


   \begin{proof}
     For each $y\in \bar{B}^k_1$, we define 
     \begin{multline*}
       s_\Sigma(y):= \inf\big\{s\in (0, 1/2): \forall s'\in [s,1/2] \textrm{ we have } \Sigma \cap (A^{n_\circ+1}_{s', {2s'}}\times B_{2s'}^k(y)) = \graph_\cC u\\
       \text{for some function $u$ defined on an open subset of }\cC \text{ with } \|u\|_{C^2_*} < \kappa_\cC  \big\} \,. 
     \end{multline*}
     It is clear that $s_\Sigma$ is upper-semi-continuous in $y$. By taking $\eps_1,\delta_1$ even smaller, we may assume $s_\Sigma\leq 1/20$. By rescaling, a compactness argument, unique continuation and interior estimates for the minimal surface equation, we always have for every $y\in {B}_{\tau_1}^k \cap \{s_\Sigma >0\}$, 
 \begin{align}
       \int_{Q_{\tau_2} \cap (A^{n_\circ+1}_{s_\Sigma(y), 2s_\Sigma(y)}\times B^k_{2s_\Sigma(y)}(y))} r^{\alpha-2} \Sdist_\cC^2 \ d\|\Sigma\|_{\bar g} + \delta^2 \cdot s_\Sigma(y)^{n+\alpha} \geq C(\cC,\alpha)^{-1}s_\Sigma(y)^{n+\alpha} \,. \label{Equ_Noncon_L^2 dist control smallest scale vol weight}
\end{align}
The monotonicity formula yields for every $y\in {B}_{\tau_1}^k \cap \{s_\Sigma >0\}$,
\begin{equation}
  \int_{Q_{\tau_1} \cap B_{10s_\Sigma(y)}(0,y)} r^\alpha \, d\|\Sigma\|_{\bar g} \leq C(n, \alpha)s_\Sigma(y)^{n+\alpha}\,,\label{Equ_Noncon_L^2 bad balls r to alpha control}
\end{equation}
so when $\delta\leq \delta_1(\cC,\alpha)$, by combining \eqref{Equ_Noncon_L^2 dist control smallest scale vol weight} and \eqref{Equ_Noncon_L^2 bad balls r to alpha control} with the Vitali covering lemma we obtain
\[
  \int_{Q_{\tau_1} \cap\bar{B}_{2s_\Sigma}}r^\alpha\ d\|\Sigma\|_{\bar g} 
       \ \leq\ C(C(\cC,\alpha)^{-1}-\delta^2)^{-1} \int_{Q_{\tau_2}} r^{\alpha-2}\Sdist_\cC^2 d\|\Sigma\|_{\bar g}\, ,
\]
 where $\bar B_{2s_\Sigma}$ is defined via \eqref{eq:def.base.set} with $Z=\bar B^k_1$. Using $\Vert u \Vert_{C^2_*} < \kappa_\cC$ on $\Dom(u)$, a similar argument gives
\begin{multline}\label{eq noncon W12 estimates near bad}
\int_{Q_{\tau_1} \cap \Dom(u) \cap \overline B_{2s_\Sigma}} r^\alpha (|\nabla u|^2 + r^{-2} u^2) \ d\|\cC\|_{\bar g} \\
\leq\ C(C(\cC,\alpha)^{-1}-\delta^2)^{-1}  \int_{Q_{\tau_2}} r^{\alpha-2}\dist_\cC^2 \ d\|\Sigma\|_{\bar g} . 
\end{multline}
On the other hand, for any $(x,y) \in  (Q_{\tau_1} \cap \Dom(u))\setminus \overline B_{2s_\Sigma}$, it follows from the definition of $s_\Sigma$ that there exists a universal $c > 1$ such that
\[ Q_{|x|/c}(x,y) \subset \Dom(u), \]
\[ Q_{(\tau_2-\tau_1)|x|/c} \subset Q_{\tau_2}. \]
Interior estimates for the minimal surface equation yield 
\begin{multline*}
	\int_{Q_{(\tau_2-\tau_1) |x|/2c}(x,y)}  (r^2|\nabla u|^2 + u^2) \ d\|\cC\|_{\bar g} \\
	\leq C\frac{C(\cC)}{(\tau_2-\tau_1)^2} \cdot \int_{Q_{(\tau_2-\tau_1)|x|/c}(x,y)} u^2 \ d\|\cC\|_{\bar g} +C(\cC) \cdot |x|^{n-2} \cdot \delta^2.
\end{multline*}
Since $r$ and $|x|$ are comparable on $Q_{|x|/c}(x,y)$, i.e., $C^{-1} r \leq |x| \leq C r$ for a $C > 1$ (which depends on $c$), the previous estimate directly implies:
\begin{align*}
	& \int_{Q_{(\tau_2-\tau_1)|x|/2c}(x,y)}  r^\alpha (|\nabla u|^2 + r^{-2} u^2) \ d\|\cC\|_{\bar g} \\ 
	& \qquad \leq C^{\alpha-2} \cdot |x|^{\alpha-2} \cdot \int_{Q_{(\tau_2-\tau_1) |x|/2c}(x,y)}  (r^2 |\nabla u|^2 + u^2) \ d\|\cC\|_{\bar g} \\ 
	& \qquad \leq C^{\alpha-2} \cdot |x|^{\alpha-2} \cdot \left( \frac{C(\cC)}{(\tau_2-\tau_1)^2} \int_{Q_{(\tau_2-\tau_1) |x|/2c}(x,y)} u^2d\|\cC\|_{\bar g} + C(\cC) \cdot |x|^{n-2} \cdot \delta^2 \right) \\
	& \qquad \leq \frac{C(\cC,\alpha)}{(\tau_2-\tau_1)^2} \int_{Q_{(\tau_2-\tau_1) |x|/c}(x,y)} r^{\alpha-2}u^2 \ d\|\cC\|_{\bar g} \\
    & \qquad + C(\cC,\alpha) \cdot \left( \int_{Q_{(\tau_2-\tau_1) |x|/c}(x,y)} r^{\alpha - 4} \ d\|\cC\|_{\bar g} \right) \cdot \delta^2.
\end{align*}
Using the Besicovitch covering lemma we find that
\begin{align}\label{eq noncon W12 estimates far bad}
\begin{split}
	& \int_{Q_{\tau_1} \cap \Dom(u) \setminus \overline B_{2s_\Sigma}} r^\alpha(|\nabla u|^2 + r^{-2}u^2)\ d\|\cC\|_{\bar g} \\ 
	& \qquad \leq\ C \frac{C(\cC, \alpha)}{(\tau_2-\tau_1)^2} \cdot \int_{Q_{\tau_2} \cap \Dom(u)} r^{\alpha-2} u^2 \ d\|\cC\|_{\bar g} + C(\cC,\alpha) \cdot \left(\int_{Q_{\tau_2}} r^{\alpha-4} \ d\|\cC\|_{\bar g}\right) \cdot \delta^2 \\
	& \qquad \leq\ \frac{C(\cC, \alpha)}{(\tau_2-\tau_1)^2} \cdot \int_{Q_{\tau_2}} r^{\alpha-2} \dist_\cC^2 \ d\|\Sigma\|_{\bar g} + C(\cC,\alpha) \cdot \delta^2.
\end{split}
\end{align}
In the last step, we used the fact that $u$ is comparable to $\dist_\cC$ and $d\Vert \cC \Vert_{\bar g}$ to $d\Vert \Sigma \Vert_{\bar g}$ on $\Dom(u)$ and that $\alpha > 4-n_\circ$ guarantees the integrability of $r^{\alpha-4}$ on $\cC$. The assertion thus follows by combining estimates \eqref{eq noncon W12 estimates near bad} and  \eqref{eq noncon W12 estimates far bad}.
 \end{proof}

Below we fix $\eps_1(\cC),\delta_1(\cC,1) > 0$ as in Lemma \ref{Lem_Noncon_Maximal graphical region} with $\alpha=1$.

   \begin{Cor} \label{Cor_Noncon_Vol diff est btwn M and C}
      There exists $C(\cC) > 0$ with the following significance.
     
      If $\delta \leq \delta_1$ in \eqref{eq metric assumption}, $\Sigma$ is a $g$-minimizing boundary in $B_2$ with $\supp \Sigma \cap B_{1} \neq \emptyset$ and $\int_{\Sigma}\Sdist_\cC^2\, d\|\Sigma\|_{\bar g} \leq \eps_1$, $1/2 \leq \tau_1<\tau_2 \leq 1$, $\tau_3 = (\tau_1+\tau_2)/2$, $\zeta\in C_c^1(B_{\tau_3}^k)$, $\psi\in C_c^2([0, \tau_3))$, and $\psi|_{[0,\tau_1]} \equiv 1$, then
     \begin{align*}
       & \int_{Q_{\tau_2}} \zeta(y)\psi(r)\ (d\|\Sigma\|_{\bar g} - d\|\cC\|_{\bar g}) \\ 
       & \qquad \leq \frac{C(\cC)}{(\tau_2-\tau_1)^2}\cdot (\|\zeta\|_{C^1}\cdot\|\psi\|_{C^2})\int_{Q_{\tau_2}} r^{-1}\dist_\cC^2\ d\|\Sigma\|_{\bar g} + C(\cC) \cdot (\|\zeta\|_{C^1}\cdot\|\psi\|_{C^2}) \cdot\delta \,.
     \end{align*}
   \end{Cor}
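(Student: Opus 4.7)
The plan is to apply the first variation formula of Lemma~\ref{Lem_Noncon_1st variation} to both $\Sigma$ (which is $g$-minimizing) and $\cC$ (which is $\bar g$-minimizing) with the same test vector field $X = \zeta(y)\varphi(r)(x,0)$, and subtract. For $\cC$ the identity degenerates to the pure divergence relation $n_\circ \int_\cC \zeta\varphi + \int_\cC \zeta\varphi' r = 0$ (since $|\nu_y| = 0$ and $\partial_r^\perp = 0$ on $\cC$), while for $\Sigma$ one picks up correction terms involving $|\nu_y|^2$, $\partial_r^\perp$, and an $O(\delta)$ error from Lemma~\ref{lemm div g bar g}. Setting $A[h] := \int_\Sigma \zeta h\,d\|\Sigma\|_{\bar g} - \int_\cC \zeta h\,d\|\cC\|_{\bar g}$, subtraction yields
\[
n_\circ A[\varphi] + A[r\varphi'] = -\int_\Sigma \zeta\varphi|\nu_y|^2 + \int_\Sigma \nabla\zeta\cdot\nu_y\,\varphi r\partial_r^\perp + \int_\Sigma \zeta\varphi' r(\partial_r^\perp)^2 + O(\delta)\|\zeta\varphi\|_{C^1},
\]
which we shall denote by $A[n_\circ \varphi + r\varphi'] = E[\varphi]$.

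To convert this into a statement about $A[\psi]$, the plan is to choose $\varphi$ to solve the ODE $n_\circ \varphi + r\varphi' = \psi$, whose canonical solution is $\Psi(r) = r^{-n_\circ}\int_0^r s^{n_\circ-1}\psi(s)\,ds$. This $\Psi$ is smooth (extending across $r=0$ by L'H\^opital since $\psi \in C^2$), but decays only like $r^{-n_\circ}$ outside the support of $\psi$, so a cutoff is required to apply Lemma~\ref{Lem_Noncon_1st variation}. I would replace $\Psi$ by $\tilde\Psi = \Psi - \Psi_\star$ where $\Psi_\star$ is a correction supported in $[\tau_3, \tau_2]$ designed to ensure $\tilde\Psi \in C_c^2([0,\tau_2))$ while preserving $\tilde\Psi = \Psi$ on $[0,\tau_3]$; differentiating the cutoff twice accounts for both the $(\tau_2-\tau_1)^{-2}$ prefactor and the $\|\psi\|_{C^2}$ dependence appearing in the Corollary. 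The resulting identity is $A[\psi] = E[\tilde\Psi] + A[\text{(annular error supported on } [\tau_3,\tau_2])]$.

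The remaining step is to estimate both $E[\tilde\Psi]$ and the annular error by $(\tau_2-\tau_1)^{-2}\|\zeta\|_{C^1}\|\psi\|_{C^2}\int r^{-1}\dist_\cC^2\,d\|\Sigma\| + C\delta$. The annular error is supported where $r \sim 1$ and $\Sigma$ is graphical, so the difference $d\|\Sigma\| - d\|\cC\|$ expands as $J(u,\nabla u) - 1 = O(|\nabla u|^2 + u^2)$, which is directly controlled by Lemma~\ref{Lem_Noncon_Maximal graphical region} with $\alpha = 1$. For the two terms of $E[\tilde\Psi]$ that carry an explicit $r$-factor, I would use \eqref{Equ_Noncon_partial_y^perp}, \eqref{Equ_Noncon_partial_r^perp} to replace $\partial_r^\perp$ and $\nu_y$ on the graphical region by derivatives of $u$, yielding integrals of the form $\int r|\nabla u|^2\,d\|\cC\|$ that are bounded by $\int r^{-1}\dist_\cC^2\,d\|\Sigma\|$ via Lemma~\ref{Lem_Noncon_Maximal graphical region} with $\alpha = 1$; the remaining non-graphical contribution is handled by the mass bound $\int_{\bar B_{2s_\Sigma}} r\,d\|\Sigma\|$ from the same lemma, together with $|\nu_y|^2, |\partial_r^\perp|^2 \leq 1$.

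The main obstacle will be bounding the $|\nu_y|^2$ term $\int_\Sigma \zeta\tilde\Psi |\nu_y|^2\,d\|\Sigma\|$, which carries no explicit $r$-factor. To avoid the cruder bound by $\int r^{-2}\dist_\cC^2$ that a naive application of Lemma~\ref{Lem_Noncon_Maximal graphical region} with $\alpha=0$ would yield, I would combine Cauchy--Schwarz with the $y$-translation invariance of $\cC$: on the graphical part, $|\nu_y|^2 \sim |\nabla_y u|^2$ can be partially integrated by parts in $y$, using that $\zeta\tilde\Psi$ is $y$-dependent only through $\zeta$ and that $u$ satisfies the linearized minimal surface equation on $\cC_\circ \times \RR^k$, trading one factor of $\nabla_y u$ for a factor of $u$ and redistributing weights so that the $\alpha = 1$ bounds of Lemma~\ref{Lem_Noncon_Maximal graphical region} suffice; the non-graphical part is once more absorbed into $\int_{\bar B_{2s_\Sigma}} r\,d\|\Sigma\|$. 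Managing this interchange of weights cleanly, and ensuring the geometric error from comparing $g$-divergences to $\bar g$-divergences (Lemma~\ref{lemm div g bar g}) contributes only $O(\delta)$ rather than $O(\delta^2)$, are the key technical points in the proof.
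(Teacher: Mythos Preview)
Your starting identity is right, but you then take a detour that creates a phantom difficulty. The paper applies Lemma~\ref{Lem_Noncon_1st variation} with $f=1$ and $\varphi=\psi$ \emph{directly}, with no ODE. In your notation this yields
\[
n_\circ\,A[\psi] \;=\; -A[r\psi'] \;-\;\int_\Sigma \zeta\psi\,|\nu_y|^2\,d\|\Sigma\|_{\bar g} \;+\; \text{(terms with explicit $r$-factors)} \;+\; O(\delta).
\]
The point you miss is the \emph{sign}: since Lemma~\ref{Lem_Noncon_1st variation} already requires $\zeta,\varphi\geq 0$, the term $-\int_\Sigma \zeta\psi\,|\nu_y|^2$ is nonpositive and is simply dropped. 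There is no obstacle here at all. The remaining work is exactly what you describe for the easy terms: $A[r\psi']$ is supported in the annulus $\{\tau_1\leq r\leq \tau_3\}$ where $\Sigma$ is graphical, so the Jacobian expansion gives the bound; the terms carrying an explicit $r$ are handled on the graphical region via \eqref{Equ_Noncon_partial_y^perp}, \eqref{Equ_Noncon_partial_r^perp} and Lemma~\ref{Lem_Noncon_Maximal graphical region} with $\alpha=1$, and on the non-graphical region by the mass bound $\int_{\bar B_{2s_\Sigma}} r\,d\|\Sigma\|$ from the same lemma.

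Your ODE $\Psi$ and cutoff $\tilde\Psi$ are unnecessary, and your proposed integration-by-parts fix for the $|\nu_y|^2$ term is both unneeded and not clearly sound: the graph $u$ does \emph{not} satisfy the linearized Jacobi equation (it satisfies a nonlinear minimal-surface equation, and with respect to $g$, not $\bar g$), and integrating by parts in $y$ over the irregular domain $\Dom(u)$ would pick up boundary terms along $\partial(\bar B_{2s_\Sigma})$ that you have no control over. So the ``main obstacle'' you flag is an artifact of overlooking that the only inequality needed is one-sided, and the sign already works in your favor.
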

   \begin{proof}
     Take $f=1$ and $\varphi=\psi$ in Lemma \ref{Lem_Noncon_1st variation} we get
     \begin{align*}
      &  \int_{Q_{\tau_2}} \zeta(y)\psi(r)(n_\circ + |\nu_y|^2)\ d\|\Sigma\|_{\bar g} \\
       & \qquad \leq \underbrace{\int_{Q_{\tau_2}} \big( |\nabla_{\bar g}\zeta(y)|\psi(r) |\nu_y||r\partial_r^\perp| + \zeta(y)|\psi'(r)|r|\partial_r^\perp|^2\big) \ d\|\Sigma\|_{\bar g}}_{\text{(I)}} \\
       & \qquad \qquad + \underbrace{\int_{Q_{\tau_2}} \zeta(y)\psi'(r)r\ \big(d\|\cC\|_{\bar g} - d\|\Sigma\|_{\bar g}\big) }_{\text{(II)}} - 
       \underbrace{\int_{Q_{\tau_2}} \zeta(y)\psi'(r)r\ d\|\cC\|_{\bar g}}_{\text{(III)}} \\
       & \qquad \qquad + C(\cC) \cdot \delta \cdot \Vert \zeta \Vert_{C^1}  \Vert \psi\Vert_{C^1}.
     \end{align*}
     To deal with (III), notice that if we apply Lemma \ref{Lem_Noncon_1st variation} with $\Sigma=\cC$, $g = \bar g$, $f=1$ and $\zeta, \psi$ as above, then since $|\nu_y| = |\partial_r^\perp| = 0$, we get 
     \begin{align}
       \text{(III)} = -n_\circ \int_{Q_{\tau_2}} \zeta(y)\psi(r)\ d\|\cC\|_{\bar g} \,.   \label{Equ_Noncon_Pf vol diff bd, (III)}
     \end{align}
     To deal with (I), recall that $\supp(\zeta(y)\psi(r)) \subset Q_{\tau_3}$. Let $U$ be the domain given by Lemma \ref{Lem_Noncon_Maximal graphical region} with $\Sigma \cap U = \graph_\cC u$. We have 
     \begin{align}
      \begin{split}
       \text{(I)} & \leq 2\|\zeta\|_{C^1}\cdot \|\psi\|_{C^1} \cdot \int_{Q_{\tau_3}}r(|\partial_r^\perp|+|\nu_y|)^2 \ d\|\Sigma\|_{\bar g} \\
       & \leq 2\|\zeta\|_{C^1}\cdot \|\psi\|_{C^1} \cdot \left( \int_{Q_{\tau_3}\cap U} r(|\partial_r^\perp|+|\nu_y|)^2\ d\|\Sigma\|_{\bar g} + \int_{Q_{\tau_3}\setminus U} 4r\ d\|\Sigma\|_{\bar g} \right) \\
       & \leq C(\cC)\cdot\|\zeta\|_{C^1}\cdot \|\psi\|_{C^1} \cdot \left( \int_{Q_{\tau_3}\cap \Dom(u)} r|\nabla u|^2\ d\|\cC\|_{\bar g} + \int_{Q_{\tau_3}\setminus U} r\ d\|\Sigma\|_{\bar g} \right) \\
       & \leq \frac{C(\cC)}{(\tau_2-\tau_1)^2}\cdot\|\zeta\|_{C^1}\cdot \|\psi\|_{C^1} \cdot \int_{Q_{\tau_2}} r^{-1}\dist_\cC^2\ d\|\Sigma\|_{\bar g} + C(\cC)\cdot \|\zeta\|_{C^1}\cdot \|\psi\|_{C^1}\cdot  \delta^2
      \end{split} \label{Equ_Noncon_Pf vol diff bd, (I)}
     \end{align}
     where the third inequality follows from (\ref{Equ_Noncon_partial_y^perp}) and (\ref{Equ_Noncon_partial_r^perp}) by parametrizing $\Sigma \cap U = \graph_\cC u$, and the last inequality follows from Lemma \ref{Lem_Noncon_Maximal graphical region} with $\alpha=1$ and $\tau_3$ in place of $\tau_1$.

     To deal with (II), recall that $\spt(\psi') \subset \{\tau_1 \leq r\leq \tau_3\}$, and that by \eqref{eq:app.noncon.eps0} we know that $\Sigma$ is a $\kappa_\cC$-$C^2_*$ graph of some function $u$ over $\cC$ in $\spt(\psi')$. Therefore, if $F(x, z, p)$ denotes the corresponding Jacobian factor of the graph,
     \begin{align}
     \begin{split}
       \text{(II)} & = \int_{Q_{\tau_3}} \zeta(y)\left(r \psi'(r) - \sqrt{r^2+u^2}\psi'(\sqrt{r^2+u^2}) F(x, u, \nabla u) \right)\ d\|\cC\|_{\bar g} \\
       & \leq C(\cC)\cdot\|\zeta\|_{C^0}\cdot \|\psi\|_{C^2} \int_{Q_{\tau_3}\cap \Dom(u)} r(|\nabla u|^2 + r^{-2}u^2)\ d\|\cC\|_{\bar g} \\
       & \leq \frac{C(\cC)}{(\tau_2-\tau_1)^2}\cdot\|\zeta\|_{C^0}\cdot \|\psi\|_{C^2} \int_{Q_{\tau_2}} r^{-1}\dist_\cC^2\ d\|\Sigma\|_{\bar g} + C(\cC)\cdot \delta^2 \cdot \|\zeta\|_{C^0}\cdot \|\psi\|_{C^2}.         
     \end{split} \label{Equ_Noncon_Pf vol diff bd, (II)}
     \end{align}
     where the last inequality follows from Lemma \ref{Lem_Noncon_Maximal graphical region} with $\alpha =1$ and $\tau_3$ in place of $\tau_1$. Combining (\ref{Equ_Noncon_Pf vol diff bd, (III)}), (\ref{Equ_Noncon_Pf vol diff bd, (I)}) and (\ref{Equ_Noncon_Pf vol diff bd, (II)}) proves the Corollary.
   \end{proof}

   \begin{Lem} \label{Lem_Noncon_weak Noncon}
    There exists $C(\cC) > 0$ with the following significance. 
    
    If $\delta \leq \delta_1$ in \eqref{eq metric assumption} and $\Sigma$ is $g$-minimizing boundary in $B_2$ with $\supp \Sigma \cap B_1 \neq \emptyset$ and $\int_{\Sigma}\Sdist_\cC^2\, d\|\Sigma\|_{\bar g} \leq \eps_1$ and $1/2 \leq\tau_1<\tau_2\leq 1$, then
    \begin{align*}
      \int_{Q_{\tau_1}} r^{-2}\dist_\cC^2 \ d\|\Sigma\|_{\bar g} 
      & \leq \frac{C(\cC)}{(\tau_2-\tau_1)^5}\cdot \left( \int_{Q_{\tau_2}} r^{-1}\dist_\cC^2\ d\|\Sigma\|_{\bar g} + \delta \right) 
    \end{align*}
   \end{Lem}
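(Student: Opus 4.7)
The plan is to apply Lemma \ref{Lem_Noncon_1st variation} with $f = \frT_{{S}}$, the Hardt--Simon foliation function of Subsection \ref{Subsec_HS foliation}, and cutoffs $\zeta \in C^1_c(B^k_{\tau_3})$, $\varphi \in C^2_c([0,\tau_3))$ with $\zeta \equiv 1$ on $B^k_{\tau_1}$ and $\varphi \equiv 1$ on $[0,\tau_1]$, supports inside $\tau_3 = (\tau_1+\tau_2)/2$. The central property of $\frT_{{S}}$ to exploit is that $\frT_{{S}}^2 \sim r^{-2\gamma_1}\dist_\cC^2$ by \eqref{Equ_HS_F_H sim r^(gamma)dist_C}. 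Since the strict stability and strict minimizing assumptions give $-\gamma_1 = \alpha(\cC_\circ) > 1$ (Lemma \ref{lemm:kappa.zhu} combined with Remark \ref{rema:gamma1.negative}), the weight $r^{-2\gamma_1}$ exceeds $r^2$.

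The core computation is the expansion of the term $-\int\zeta\varphi\langle\nabla_{\Sigma,\bar g}\frT_{{S}}^2,(x,0)\rangle$ on the right-hand side of \eqref{Equ_Noncon_1st variation}. Using that $\nabla\frT_{{S}} \parallel \nu_{{S}}$ with $\nabla\frT_{{S}}\cdot(x,0) = (1-\gamma_1)\frT_{{S}}$ (by $(1-\gamma_1)$-homogeneity from \eqref{Equ_HS_F_H 1-gamma homogen}), this unfolds to
\[
\langle\nabla_{\Sigma,\bar g}\frT_{{S}}^2, (x,0)\rangle = 2(1-\gamma_1)\frT_{{S}}^2\Big[1 - \frac{\langle\nu_{{S}},\nu_\Sigma\rangle\, r\partial_r^\perp}{\langle\nu_{{S}},(x,0)\rangle}\Big].
\]
Moving the main piece $-2(1-\gamma_1)\int\zeta\varphi\frT_{{S}}^2$ to the left-hand side produces the coercive identity
\[
\big(n_\circ + 2(1-\gamma_1)\big) \int\zeta\varphi\frT_{{S}}^2 \le \text{(error terms)},
\]
since $n_\circ + 2(1-\gamma_1) \ge n_\circ + 2 > 0$ under our assumptions. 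To convert the resulting $r^{-2\gamma_1}\dist_\cC^2$ control into the desired $r^{-2}\dist_\cC^2$ bound, one replaces $\varphi$ by $\varphi(r)\cdot(r^2+\epsilon^2)^{\gamma_1-1}$ and passes to $\epsilon\to 0$, justified by the weighted graphical-region Caccioppoli estimate from Lemma \ref{Lem_Noncon_Maximal graphical region}.

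The error terms to control are of four types: (i) cutoff terms generated by $\nabla\zeta$ and $\varphi'$, handled by bounding $r\partial_r^\perp$ and $|\nu_y|$ pointwise in the graphical region via \eqref{Equ_Noncon_partial_y^perp} and \eqref{Equ_Noncon_partial_r^perp}, followed by Cauchy--Schwarz and Lemma \ref{Lem_Noncon_Maximal graphical region}; (ii) the ``cross'' remainder carrying the singular factor $1/\langle\nu_{{S}}, (x,0)\rangle$, treated below; (iii) the non-graphical contribution near $\bar B_{2s_\Sigma}(\{0\}\times\bar B^k_1)$, already controlled by the second integral on the LHS in Lemma \ref{Lem_Noncon_Maximal graphical region}; and (iv) the metric discrepancy $O(\delta)$ from Lemma \ref{lemm div g bar g}.

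The hard part will be the cross remainder: the denominator $\langle\nu_{{S}}, (x,0)\rangle = r\langle\hat r,\nu_{{S}}\rangle$ degenerates on $\cC$. The resolution is \eqref{eq radial XH comparable dist}, which gives $\langle\hat r, \nu_{{S}}\rangle \gtrsim \dist_\cC/r$ on the tubular neighborhood \eqref{eq:app.noncon.tub.nghd}. This cancels one power of $\dist_\cC$ in the numerator (coming from $r\partial_r^\perp$ in the graphical region, which is of order $|\nabla u| + r^{-1}|u|$), and the remaining Cauchy--Schwarz estimation against Lemma \ref{Lem_Noncon_Maximal graphical region} bounds it by $\int_{Q_{\tau_2}} r^{-1}\dist_\cC^2\, d\|\Sigma\|_{\bar g}$ up to the metric error. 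The exponent $5$ in $(\tau_2-\tau_1)^{-5}$ arises from stacking $(\tau_2-\tau_1)^{-2}$ factors from the Caccioppoli estimate together with the cutoff interpolations; we expect the bookkeeping to be routine but tedious.
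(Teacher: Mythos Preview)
Your setup is the same as the paper's --- cutoffs $\zeta,\psi$ on $Q_{\tau_3}$, test function $f=\frT_S$, weight $r^{2\gamma_1-2}$ --- but the treatment of the gradient term $\int\zeta\varphi\langle\nabla_{\Sigma,\bar g}\frT_S^2,(x,0)\rangle$ diverges, and your version does not close. After inserting the weight and using $\frT_S^2\sim r^{-2\gamma_1}\dist_\cC^2$ together with the lower bound $\langle\nu_S,(x,0)\rangle\gtrsim\dist_\cC$ (note: \eqref{eq radial XH comparable dist} is the \emph{upper} bound; the lower bound you need is the displayed inequality inside the proof of Proposition~\ref{Prop_HS_Est of F_H and |D F_H|}), your cross remainder becomes $\int\zeta\psi\, r^{-1}\dist_\cC\,|\partial_r^\perp|$. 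In the graphical region $|\partial_r^\perp|\sim|\partial_r u|$, so any Cauchy--Schwarz split produces either $\epsilon\int r^{-2}\dist_\cC^2$ (absorbable) plus $C_\epsilon\int|\partial_r u|^2$, and Lemma~\ref{Lem_Noncon_Maximal graphical region} with $\alpha=0$ only returns $\int|\nabla u|^2\lesssim\int r^{-2}\dist_\cC^2$ --- the very quantity you are trying to bound. The non-graphical contribution is no better: there $|\partial_r^\perp|\le 1$ and $\dist_\cC\le r$, so the remainder is $\lesssim\int_{\bar B_{2s_\Sigma}}1$, which Lemma~\ref{Lem_Noncon_Maximal graphical region} again only controls by $\int r^{-2}\dist_\cC^2$. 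The argument is circular.

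The paper avoids this by \emph{not} expanding $\langle\nabla_\Sigma\frT_S^2,(x,0)\rangle$ via homogeneity. Instead it bounds the full tangential gradient using \eqref{Equ_HS_|D F_H|<r^(-2gamma)(1+<v,X_H>)}, obtaining $r^{2\gamma_1}|\nabla_\Sigma\frT_S|^2\le C(1-\langle\nu_{\Sigma,\bar g},\nu_S\rangle_{\bar g})$; then it controls $\int\zeta\psi(1-\langle\nu_{\Sigma,\bar g},\nu_S\rangle_{\bar g})\,d\|\Sigma\|$ globally by rewriting it as a volume difference $\int\zeta\psi(d\|\Sigma\|-d\|\cC\|)$ plus a flux term. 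The flux term is handled by Stokes' theorem and the identity $\operatorname{div}_{\RR^{n+1},\bar g}\nu_S\equiv 0$ from \eqref{HS_div(X_H)=0}, which forces the integrand to be supported in $\{\psi'\ne 0\}\subset\{r\ge\tau_1\}$, where \eqref{eq radial XH comparable dist} applies directly. The volume difference is exactly what Corollary~\ref{Cor_Noncon_Vol diff est btwn M and C} supplies. This divergence-free/Stokes step is the missing idea in your outline.
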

   \begin{proof}
    Let $\tau_3:=(\tau_1+\tau_2)/2$. Choose
    \[ \zeta\in C^2_c(B^k_{\tau_3}), \; \zeta|_{B^k_{\tau_1}} = 1, \; 0 \leq \zeta \leq 1, \]
    \[ \psi\in C_c^2([0, \tau_3)), \; \psi|_{[0, \tau_1]} = 1, \; 0 \leq \psi \leq 1. \]
    Note that we can assume that
    \[ \|\psi\|_{C^l}+\|\zeta\|_{C^l}\leq 100 (\tau_2-\tau_1)^{-l}, \; l\in \{0, 1, 2\}. \]
    Also let $f(x, y):= \frT_{{S}}(x, y)$ as in \eqref{eq:FH definition} and $\varphi(r) := \psi(r)r^{2\gamma_1-2}$. Note that even though $\varphi$ diverges as $r \to 0$, it does so precisely with the decay bounds on $f^2$ as $r = |x| \to 0$ shown in Proposition \ref{Prop_HS_Est of F_H and |D F_H|}. By an elementary truncation argument, Lemma \ref{Lem_Noncon_1st variation}'s \eqref{Equ_Noncon_1st variation} is still applicable with these $\zeta, \varphi, f$. By combining with Proposition \ref{Prop_HS_Est of F_H and |D F_H|} we get 
    \begin{align*}
     & C(\cC_\circ)^{-1}\int_{Q_{\tau_2}} \zeta(y)\psi(r)r^{-2}\dist_\cC^2\cdot (n_\circ + 2\gamma_1-2 + |\nu_y|^2)\ d\|\Sigma\|_{\bar g} \\
     & \qquad \leq \int_{Q_{\tau_2}} \zeta(y)\psi(r)r^{2\gamma_1-2}\frT_{{S}}(x, y)^2 \cdot (n_\circ + 2\gamma_1-2 + |\nu_y|^2)\ d\|\Sigma\|_{\bar g} \\
     & \qquad \leq C(\cC_\circ) \cdot \int_{Q_{\tau_2}} \Big[ \Big(|\nabla_{\bar g} \zeta(y)|\psi(r) + \zeta(y)|\psi'(r)|\Big)r^{-1}\dist_\cC^2 \\
     & \qquad\qquad\qquad\qquad  + 2\zeta(y)\psi(r)\left(r^{-1}\dist_\cC \right) r^{\gamma_1}|\nabla_{\Sigma,\bar g} \frT_{{S}}|\ \Big] d\|\Sigma\|_{\bar g}\\
     & \qquad \qquad + C(\cC) \cdot \delta \cdot \int_{Q_{\tau_2}} (1+|\nabla \zeta| + |\psi'|) \ d\|\Sigma\|_{\bar g}.
    \end{align*}
    Since $\cC_\circ$ is strictly stable, we have $n_\circ+ 2\gamma_1-2>0$. Using AM-GM on the second term in the final expression, we thus obtain
    \begin{align*}
     & \int_{Q_{\tau_2}} \zeta(y)\psi(r)r^{-2}\dist_\cC^2\ d\|\Sigma\|_{\bar g} \\
     & \qquad \leq C(\cC_\circ) \cdot \int_{Q_{\tau_2}} \Big[ \Big(|\nabla \zeta(y)|\psi(r) + \zeta(y)|\psi'(r)|\Big)r^{-1}\dist_\cC^2 \\
     & \qquad \qquad \qquad \qquad + \zeta(y)\psi(r) r^{2\gamma_1}|\nabla_\Sigma \frT_{{S}}|^2 \Big] \ d\|\Sigma\|_{\bar g} \\
     & \qquad + C(\cC) \cdot \delta \cdot  \int_{Q_{\tau_2}} (1+|\nabla \zeta| + |\psi'|) \ d\|\Sigma\|_{\bar g}.
    \end{align*}
    Using \eqref{Equ_HS_|D F_H|<r^(-2gamma)(1+<v,X_H>)} and the definition of $\zeta,\psi$ we have
    \begin{align}
    \begin{split}
      & \int_{Q_{\tau_2}} \zeta(y)\psi(r)r^{-2}\dist_\cC^2\ d\|\Sigma\|_{\bar g} \\
      & \qquad \leq \frac{C(\cC_\circ)}{\tau_2-\tau_1} \cdot \int_{Q_{\tau_2}} r^{-1}\dist_\cC^2 \ d\|\Sigma\|_{\bar g} \\
      & \qquad \qquad + C(\cC_\circ) \cdot \int_{Q_{\tau_2}}\zeta(y)\psi(r) (1- \langle \nu_{\Sigma,\bar g}, \nu_{{S}} \rangle_{\bar g}) \ d\|\Sigma\|_{\bar g} \\
      & \qquad \qquad + \frac{C(\cC)}{\tau_2-\tau_1}\cdot \delta
    \end{split} \label{Equ_Noncon_Pf r^(-2)dist < r^(-1)dist_1st variation}
    \end{align}
    It remains to estimate the second to last term of \eqref{Equ_Noncon_Pf r^(-2)dist < r^(-1)dist_1st variation}. We may rewrite it as
        \begin{align}\label{eq estimating noncon rewrite}
    \begin{split}
     & \int_{Q_{\tau_2}}\zeta(y)\psi(r) (1- \langle \nu_\Sigma, \nu_{{S}} \rangle_{\bar g}) \ d\|\Sigma\|_{\bar g} \\
     & \qquad = \int_{Q_{\tau_2}}\zeta(y)\psi(r) \ d\|\Sigma\|_{\bar g} - \int_{Q_{\tau_2}}\zeta(y)\psi(r) \langle \nu_{\Sigma,\bar g}, \nu_{{S}} \rangle_{\bar g} \ d\|\Sigma\|_{\bar g} \\
     & \qquad = \int_{Q_{\tau_2}}\zeta(y)\psi(r) \ d\|\cC\|_{\bar g} - \int_{Q_{\tau_2}}\zeta(y)\psi(r) \langle \nu_{\Sigma,\bar g}, \nu_{{S}} \rangle_{\bar g} \ d\|\Sigma\|_{\bar g} \\
     & \qquad \qquad + \int_{Q_{\tau_2}}\zeta(y)\psi(r) \ (d\|\Sigma\|_{\bar g} - d\|\cC\|_{\bar g}) \\
     & \qquad = \int_{Q_{\tau_2}}\zeta(y)\psi(r) \underbrace{\langle \nu_{\cC,\bar g}, \nu_{{S}} \rangle_{\bar g}}_{=1} \ d\|\cC\|_{\bar g} - \int_{Q_{\tau_2}}\zeta(y)\psi(r) \langle \nu_{\Sigma,\bar g}, \nu_{{S}} \rangle_{\bar g} \ d\|\Sigma\|_{\bar g} \\
     & \qquad \qquad + \int_{Q_{\tau_2}}\zeta(y)\psi(r) \ (d\|\Sigma\|_{\bar g} - d\|\cC\|_{\bar g}).     
    \end{split}
    \end{align}
    We may estimate the second to last line in \eqref{eq estimating noncon rewrite} using Stokes's theorem. To that end, note first that, by \eqref{HS_div(X_H)=0}, we have 
    \begin{align*}
       \Div_{\RR^{n+1},\bar g}\left(\zeta(y)\psi(r)  \nu_{{S}}(x, y)\right)
      & = \zeta(y) \psi'(r) \langle \partial_r, \nu_{{S}}(x, y) \rangle_{\bar g} 
    \end{align*}
    and that this is supported in $Q_{\tau_3} \cap \{r \geq \tau_1\}$. Recall our graph map $\Phi$ from \eqref{eq:app.noncon.tub.nghd} and how, by \eqref{eq:app.noncon.eps0}, there exists a map $u$ such that
    \[ \Sigma \cap (Q_{\tau_2} \setminus \{r < 1/4\}) = \graph_\cC u \text{ with } \| u \|_{C^2_*} < \kappa_\cC. \]
    Let's denote $u^{\pm}:= \max\{0, \pm u\}$. Then:
        \begin{align}
    \begin{split}
      & \int_{Q_{\tau_2}} \zeta(y)\psi(r) \langle \nu_{\cC,\bar g}, \nu_{{S}} \rangle_{\bar g} \ d\|\cC\|_{\bar g} - \int_{Q_{\tau_2}} \zeta(y)\psi(r) \langle \nu_{\Sigma,\bar g}, \nu_{{S}} \rangle_{\bar g} \ d\|\Sigma\|_{\bar g} \\
      & \qquad \leq \int_{Q_{\tau_3} \cap \Dom(u)}\int_{-u^-}^{u^+} \zeta(y) \big|\psi'(\sqrt{r^2+t^2}) \langle \partial_r, \nu_{{S}} \rangle_{\bar g}|_{\Phi}\big|\ | \det(D\Phi)| \ dt\ d\|\cC\|_{\bar g}\\
        & \qquad \leq C(\cC) \cdot \|\psi\|_{C^1} \cdot \int_{Q_{\tau_3}\cap \Dom(u)}\int_{-u^-}^{u^+} r^{-1}|t|\ dt \ d\|\cC\|_{\bar g}  \\
        & \qquad \leq \frac{C(\cC)}{\tau_2-\tau_1} \cdot \int_{Q_{\tau_3} \cap \Dom(u)} r^{-1}u^2 \ d\|\cC\|_{\bar g} \\
        & \qquad \leq \frac{C(\cC)}{(\tau_2-\tau_1)^3} \cdot \int_{Q_{\tau_2}} r^{-1}\dist_\cC^2 \ d\|\Sigma\|_{\bar g} + \frac{C(\cC)\cdot \delta^2}{\tau_2-\tau_1} \,.
    \end{split} \label{Equ_Noncon_Est term using Stokes Thm}
    \end{align}
    In the second inequality we used \eqref{eq radial XH comparable dist}. In the last inequality, we used Lemma \ref{Lem_Noncon_Maximal graphical region}. 

The result follows by combining \eqref{Equ_Noncon_Pf r^(-2)dist < r^(-1)dist_1st variation}, \eqref{eq estimating noncon rewrite}, \eqref{Equ_Noncon_Est term using Stokes Thm}, and Corollary \ref{Cor_Noncon_Vol diff est btwn M and C}.
   \end{proof}

   \begin{proof}[Proof of Theorem \ref{Thm_Simon's L^2 Noncon}.]
     Assume $\delta \leq \delta_1$ in \eqref{eq metric assumption}. For each $\tau\in [1/2, 1]$, denote for simplicity, 
     \begin{align*}
       I(\tau):= \int_{Q_\tau} \dist_\cC^2\ d\|\Sigma\|_{\bar g} \ + \delta\,, & & 
       J(\tau):= \int_{Q_\tau} r^{-2}\dist_\cC^2\ d\|\Sigma\|_{\bar g} \ +\delta \,. 
     \end{align*}
     H\"older's inequality and Lemma \ref{Lem_Noncon_weak Noncon} guarantee, for every $1/2\leq \tau_1 < \tau_2\leq 1$,
     \begin{equation}\label{eq:moser-iteration-nonconc}
       J(\tau_1) \leq \frac{C(\cC)}{(\tau_2-\tau_1)^5}\sqrt{J(\tau_2)I(1)} \,.
     \end{equation}
     Let $t_q:= 1-2^{-q}$, $q\in \ZZ_{\geq 1}$. 
     Taking $\tau_1 =t_q,\tau_2=t_{q+1}$ in \eqref{eq:moser-iteration-nonconc}, we obtain \[
       \log J(t_q) \leq C(\cC)(1+q) + 2^{-1}\log J(t_{q+1}) + 2^{-1}\log I(1) \,.
     \]
     Multiplying both side by $2^{-q}$ and summing with respect to $q\in \ZZ_{\geq 1}$ we obtain \[
       \log J(1/2) = \log J(t_1) \leq C(\cC) + \log I(1) \,.
     \]
     Combining with Lemma \ref{Lem_Noncon_Maximal graphical region}, this proves Theorem \ref{Thm_Simon's L^2 Noncon}.
   \end{proof}

\bibliographystyle{alpha}
\bibliography{references}

\end{document}